\numberwithin{equation}{subsection}
\DeclareRobustCommand{\svdots}{% s for `scaling'
  \vbox{%
    \baselineskip=0.33333\normalbaselineskip
    \lineskiplimit=0pt
    \hbox{.}\hbox{.}\hbox{.}%
    \kern-0.1\baselineskip
  }%
}
\def\spam{\mathop{\rm span}\nolimits}
\def\grey{\cellcolor{gray!25}}
\def\Re{\mathop{\rm Re}\nolimits}
\def\Im{\mathop{\rm Im}\nolimits}
\def\Harm{\mathop{\rm Harm}\nolimits}
\def\Sym{\mathop{\rm Sym}\nolimits}
\def\bmat#1{\begin{bmatrix}#1\end{bmatrix}}
\def\pmat#1{\begin{pmatrix}#1\end{pmatrix}}
\def\mat#1{\begin{matrix}#1\end{matrix}}
\def\smallmat#1{\begin{smallmatrix}#1\end{smallmatrix}}
\def\question#1{{\bf Question: }#1}
\def\question#1{}
\def\identifywith{\quad\longleftrightarrow\quad}
\def\Sp{\mathop{\rm Sp}\nolimits}
\def\moveR{\xrightarrow{R}}
\def\moveRb{\xleftarrow{R^*\hskip-0.2truecm}}
\def\moveL{{\scriptstyle L} \hskip-0.1truecm \downarrow}
\def\moveLb{{\scriptstyle L^*} \hskip-0.1truecm \uparrow}
\long\def\proclaim#1#2\endproclaim{\medbreak
  \noindent{\bf#1.\enspace}{\it\ignorespaces#2\par}%
  \ifdim\lastskip<\medskipamount \removelastskip\penalty55\medskip\fi}
\def\cZ{{\cal Z}}
\def\R{\mathbb{R}}
\def\CC{\mathbb{C}}
\def\FF{\mathbb{F}}
\def\HH{\mathbb{H}}
\def\KK{\mathbb{K}}
\def\LL{\mathbb{L}}
\def\ZZ{\mathbb{Z}}
\def\Cd{{\CC^d}}
\def\Fd{\FF^d}
\def\Hd{{\HH^d}}
\def\Rd{\R^d}
\def\C{\mathbb{C}}
\def\SS{\mathbb{S}}
\newcommand{\RR}{\mathbb{R}}
\newtheorem{theorem}{Theorem}[section]
\newtheorem{ansatz}[theorem]{Ansatz}
\newtheorem{schematic}[theorem]{Schematic}
\newtheorem{corollary}[theorem]{Corollary}
\newtheorem{lemma}[theorem]{Lemma}
\newtheorem{example}[theorem]{Example}
\newtheorem{proposition}[theorem]{Proposition}
\newenvironment{proof}{{\noindent \it Proof.}}{\hfill$\Box$\medskip}
\newif\ifdraft\def\draft{\drafttrue\hoffset=.8truecm\showlabeltrue
\def\comment##1{{\bf comment: ##1}}
\headline={\sevenrm \hfill \ifx\filenamed\undefined\jobname\else\filenamed\fi%
(.tex) (as of \ifx\updated\undefined???\else\updated\fi)
 \TeX'ed at {\hour\time\divide\hour by 60{}%
\minutes\hour\multiply\minutes by 60{}%
\advance\time by -\minutes
\the\hour:\ifnum\time<10{}0\fi\the\time\  on \today\hfill}}
}
\def\inpro#1{\langle#1\rangle}
\def\ip#1{\langle\kern-.28em\langle#1\rangle\kern-.28em\rangle_\nu}
\def\cH{{\cal H}}
\def\norm#1{\Vert#1\Vert}%\def\norm#1{\|#1\|} does not work with verbatim.tex
\def\openR{{{\rm I}\kern-.16em {\rm R}}}
\def\Fd{\FF^d}
\let\ga\alpha
\let\gb\beta
\let\gG\Gamma
\let\gd\delta
\let\gD\Delta
\let\gep\varepsilon
\let\gs\sigma
\let\ga\alpha
\let\gG\Gamma
\let\gb\beta
\let\gd\delta
\let\gs\sigma
\def\inpro#1{\langle#1\rangle}
\def\dinpro#1{\langle\hskip-0.25em\langle#1\rangle\hskip-0.25em\rangle}
\def\dinpro#1{\langle\hskip-0.35em\langle#1\rangle\hskip-0.35em\rangle}
\def\dinpro#1{\langle\hskip-0.35em\langle\hskip-0.35em\langle#1\rangle\hskip-0.35em\rangle\hskip-0.35em\rangle}
\def\dinpro#1{\langle#1\rangle_\partial}
\def\Hom{\mathop{\rm Hom}\nolimits}
\def\Im{\mathop{\rm Im}\nolimits}
\def\ker{\mathop{\rm ker}\nolimits}
\def\Iff{\hskip1em\Longleftrightarrow\hskip1em}
\def\Implies{\hskip1em\Longrightarrow\hskip1em}
\def\formeq{\the\sectionno.\the\equationno}  %% equation numbering
\def\elabel#1/#2/#3/{\global\advance\equationno by 1 %
\ifx#1\empty\else\emember#1%
\ifshowlabel\marginal{\string#1}\fi\fi%
\ifmmode\eqno{#3(\formeq#2)}\else#3\formeq#2\fi} %<------ switch to \leqno ???
\def\makeblanksquare#1#2{
\dimen0=#1pt\advance\dimen0 by -#2pt
      \vrule height#1pt width#2pt depth0pt\kern-#2pt
      \vrule height#1pt width#1pt depth-\dimen0 \kern-#1pt
      \vrule height#2pt width#1pt depth0pt \kern-#2pt
      \vrule height#1pt width#2pt depth0pt
}
\title{\bf
%Unitarily invariant multivariate quaternionic polynomial spaces and the apolar inner product
An explicit construction of the unitarily invariant quaternionic polynomial spaces
on the sphere
}
\author{Shayne Waldron and Mozhgan Mohammadpour
\\ }
\begin{document}

\maketitle

\begin{abstract}

The decomposition of the polynomials on the quaternionic unit sphere in $\Hd$ 
into irreducible modules under the action of the quaternionic unitary (symplectic) 
group and quaternionic scalar multiplication has been studied by several 
authors. Typically, these abstract decompositions into 
``quaternionic spherical harmonics'' specify 
the irreducible representations involved and their multiplicities.

The elementary constructive approach taken here 
gives an orthogonal direct sum of irreducibles, 
which can be described by some low-dimensional subspaces,
to which commuting linear operators $L$ and $R$ are applied.
These operators map harmonic polynomials to harmonic polynomials,
and zonal polynomials to zonal polynomials. We give explicit formulas for the
relevant ``zonal polynomials'' and describe the symmetries, dimensions, and ``complexity''
of the subspaces involved.

Possible applications include the construction and analysis of desirable 
sets of points in quaternionic space, such as equiangular lines, lattices and
spherical designs (cubature rules).

%Using our variational characterisation we

%The variational characterisation has a natural analogue over the octonions
%possibly makes sense over the octonions.
\end{abstract}

\bigskip
\vfill

\noindent {\bf Key Words:}
irreducible representations of the quaternionic unitary group,
symplectic group,
quaternionic polynomials,
spherical harmonic polynomials,
zonal functions,
projective spherical $t$-designs,
%spherical $(t,t)$-designs,
finite tight frames,
quaternionic equiangular lines,
%quaternionic reflection groups,

\bigskip
\noindent {\bf AMS (MOS) Subject Classifications:}
primary
15B33, \ifdraft (Matrices over special rings (quaternions, finite fields, etc.) \else\fi
20G20, \ifdraft (Linear algebraic groups over the reals, the complexes, the quaternions) \else\fi
33C55, \ifdraft (Spherical harmonics) \else\fi
%51M05, \ifdraft (Euclidean geometries (general) and generalizations) \else\fi
42C15, \ifdraft General harmonic expansions, frames  \else\fi
\quad
secondary
17B10, \ifdraft (Representations of Lie algebras and Lie superalgebras, algebraic theory (weights)) \else\fi
42-08, \ifdraft (Computational methods for problems pertaining to harmonic analysis on Euclidean spaces) \else\fi
46S05, \ifdraft (Quaternionic functional analysis) \else\fi
51M20, \ifdraft (Polyhedra and polytopes; regular figures, division of spaces [See also 51F15]) \else\fi

\vskip .5 truecm
\hrule
\newpage

\section{Introduction}

There are several desirable sets of points that have been, and are, 
studied in real, complex and quaternionic space, which includes
equiangular lines \cite{ACFW18}, \cite{W18},
spherical designs (cubature rules) \cite{H84} and lattices \cite{BN02}.
These are usually classified up to ``unitary equivalence'', and are often
constructed as a group orbit of a ``unitary action''. 
These considerations have led to this paper.

We consider the invariant subspaces of harmonic polynomials
on quaternionic space $\Hd$ under the natural action of
the quaternionic
unitary matrices (the symplectic group) and scalar multiplication
by quaternions (acting on the other side),
and the associated zonal polynomials and reproducing
kernels. This question has been considered several times, independently,
e.g., \cite{S74}, \cite{BN02}, \cite{Ghent14}, \cite{BSW17}, \cite{ACMM20}.
The exact answer given depends on the precise definition of the
harmonic polynomials, in particular, the field in which they may
take values, and the precise group and its action.
The devil is in the details.

Here we give an elementary examples driven development of this question,
motivated by the more well known real and complex cases, the 
only partly trivial case of $\HH^1$, and our
interest in the construction of spherical designs for the quaternionic
sphere \cite{W20}, \cite{W20b}.
This proceeds from certain unambiguous definitions and well known facts
(the details). We hope that this illuminates the above literature as it applies,
and our results can be used for practical computations.
Key aspects of our development include:
\begin{itemize}
\item By considering the action of scalar multiplication by quaternions on 
polynomials $\Hd\to\CC$,
we are naturally led to the operators $L$ and $R$. The operator $R$ appears
in \cite{Ghent18} as $\gep^\dagger$, and implicitly in the development
of the irreducible representations of the multiplicative group $\HH^*$
given in \cite{F95}.
\item There is a natural correspondence between results for homogeneous
polynomials and for harmonic polynomials (given by the Fisher decomposition).
Ultimately, we are primarily interested in irreducible representations of 
harmonic polynomials. Sometimes we start with the homogeneous polynomials,
as these have natural inner products and explicit bases (of monomials).
\item There are many technical calculations %, and often self contained calculations, 
that could clutter the presentation. Some of these are proved later, and
we often give explicit examples, e.g., the operator $R$ in 
one dimension, or a zonal polynomial with pole $e_1=(1,0,\ldots0)$,
to convey the basic ideas behind the results.
\end{itemize}

\section{The devil is in the details}

We assume basic familiarity with the quaternions $\HH$,
with the basis elements $1,i,j,k$.
The noncommutative multiplication requires subtle modifications
to the associated linear algebra (see \cite{B51}, \cite{W20}).
Of particular use is the ``commutativity'' formula
\begin{equation}
\label{jzcommute}
j z= \overline{z}j, \qquad z\in\CC.
\end{equation}
%which requires vector spaces, etc, to be appropriately defined.
%This more a technicality, rather than fundamental limitation.
We will consider polynomials on real, complex and quaternionic
space $\Rd$, $\Cd$ and $\Hd$. With the {\bf Euclidean inner product}
\begin{equation}
\label{Innerproductdefn}
\inpro{v,w}:=v^*w=\sum_j \overline{v_j} w_j, \qquad v,w\in\Fd,
\quad\FF=\RR,\CC,\HH,
\end{equation}
where $\overline{q}$ is the conjugate on $\HH$ (and hence $\RR$ and
$\CC$). In the sum above, $j$ is an index, rather than the quaternion $j$,
for which we also use the same symbol (this is commonly done).
%When giving general results, we
We will use $\FF$ and $\KK$ to stand
for either of $\RR,\CC,\HH$, independently ($9$ cases in all
for maps $\FF^d\to\KK$).
Given our choice (\ref{Innerproductdefn}), it is natural to then treat
$\Hd$ as a right $\HH$-module, with the $\HH$-linear maps $L$ acting on the left, i.e.,
\begin{equation}
\inpro{v\ga,w\gb}= \overline{\ga}\inpro{v,w}\gb, \qquad
(Lv)\ga = L(v\ga),
\label{linmapscalarmultinpro}
\end{equation}
and in turn, to make the identification (\ref{CDHwithC}).

There are natural identifications of $\FF^d$ with $\RR^{md}$, where $m:=\dim_\RR(\FF)$,
given by the Cayley-Dickson construction of $\CC$ and $\HH$ from $\RR$,
e.g., with $(i_1,i_2,i_3,i_4):=(1,i,j,k)$, we have
%We choose
 (the $\RR$-linear map)
$[\cdot]_{\RR^{md}} :\Fd\to\RR^{md}$ given by
%$$ [x_1]_\RR=x_1, \quad [x_1+ix_2]_\RR=(x_1,x2), \quad
%[x_1+ix_2+jx_3+kx_4]_\RR=(x_1,x_2,x_3,x_4), \qquad
%x_1,x_2,x_3,x_4\in\Rd. $$
%$$ [x_1+i_2 x_2+\cdots+i_m x_m]_\RR = (x_1,\ldots,x_m),
%\qquad x_1,\ldots,x_m\in\Rd, \quad m=\dim_\RR(\FF). $$
\begin{equation}
\label{CayleyDforF}
[x_1+i_2 x_2+\cdots+i_m x_m]_{\RR^{md}} = (x_1,\ldots,x_m),
\qquad x_1,\ldots,x_m\in\Rd.
\end{equation}  %, \quad m=\dim_\RR(\FF). $$
%Then 
We say $f:\Fd\to\RR$ is a {\bf polynomial} if $f([\cdot]_{\RR^{md}}^{-1}):\RR^{md}\to\RR$
is a polynomial (of $md$ real variables).
In this way, we can define {\bf homogeneous} and ({\bf homogeneous})
{\bf harmonic} polynomials $f:\Fd\to\RR$ of {\bf degree} $k$.
%, and the {\bf degree} of such a polynomial.
These polynomials are real-valued, and naturally form $\RR$-vector spaces,
which we denote by
$\Hom_k(\Fd,\RR)$ and $\Harm_k(\Fd,\RR)$.

There is a purely algebraic way to make a finite-dimensional real-vector
space into a complex-vector space, and into a (left or right) $\HH$-vector
space ($\HH$-module), by formally multiplying by complex and quaternion
scalars. In this way, we define the $\KK$-valued $\KK$-vector spaces of
homogeneous and harmonic polynomials $\Fd\to\KK$, which we denote by
$\Hom_k(\Fd,\KK)$ and $\Harm_k(\Fd,\KK)$. Clearly, with $ r=\dim_\RR(\KK)$,
we have
$$ f=f_1+f_2i_2+\cdots+f_ri_r\in\Harm_k(\Fd,\KK)
\Iff f_1,\dots,f_r\in\Harm_k(\Fd,\RR), $$
and similarly for $\Hom_k(\Fd,\KK)$.
Such a $\KK$-vector spaces can also be viewed as a $\LL$-vector spaces,
where $\LL\in\{\RR,\CC,\HH\}$ and $\LL\subset\KK$.
We thereby have (from the real case) the following dimension formulas
\begin{align}
\label{HarmHomdims}
\dim_\LL(\Hom_k(\Fd,\KK)) &= {k+md-1\choose md-1} \dim_\LL(\KK) , \cr
\dim_\LL(\Harm_k(\Fd,\KK)) &=
\Bigl\{ {k+md-1\choose md-1}-{k+md-3\choose md-1}\Bigr\} \dim_\LL(\KK) ,
\quad  md\ne1 \cr % m=\dim_\RR(\FF).
& \hskip-1.5cm
= (2k+md-2){(k+md-3)!\over (md-2)!k!} \dim_\LL(\KK), \quad
k+md-3\ge0.
\end{align}
For polynomials $f:\Fd\to\KK$, we can define an action of
a group $G$ from its action on $\Fd$ via
\begin{equation}
\label{actiononpolys}
(g\cdot f)(x) := f( g\cdot x), \qquad x\in\Fd,
\end{equation}
provided that 
$[g]_{\RR^{md}}:\RR^{md}\to\RR^{md}:[x]_{\RR^{md}}\mapsto[g\cdot x]_{\RR^{md}}$
is $\RR$-linear.
Such a group action preserves the harmonic polynomials of degree $k$
provided that $[g]_{\RR^{md}}$ is orthogonal.
Since we are only interested in the invariant polynomial subspaces under
such an action, it makes no essential difference if we take
a left or right action. We say that a nonzero $\KK$-subspace $V$ of harmonic polynomials
$\Fd\to\KK$ is {\bf irreducible} (under the action of $G$) if its only
$G$-invariant subspaces are $V$ and $\{0\}$, i.e., for every nonzero
$f\in V$ we have $\spam_\KK\{f\}=V$.

We are primarily concerned with polynomials restricted to the
(unit) {\bf sphere}
$$ \SS:=\{x\in\Fd:\norm{[x]_{\RR^{md}}}=1\}. $$
Hence the linear maps $[g]_{\RR^{md}}$ above must be orthogonal,
i.e., belong to the orthogonal group $O(md)=U(\RR^{md})$.
We note that $f\mapsto f|_\SS$ gives a $\KK$-vector space
isomorphism between $\Harm_k(\Fd,\KK)$ and $\Harm_k(\Fd,\KK)|_\SS$,
with terms {\bf solid} and {\bf surface} used if it is necessary
to distinguish between them.
The basic principles in play are:
\begin{itemize}
\item We mostly consider polynomials $\Hd\to\CC$, since
$\HH$-valued polynomials do not commute, and there is a
well developed theory of representations over $\CC$.
\item Smaller subgroups $G$ of $O(md)$ give smaller irreducible
subspaces, which may lead to finer decompositions (more irreducibles).
\item Enlarging the field $\KK$ (to $\CC$ or $\HH$) preserves
invariance of subspaces, but may not preserve irreducibility,
which may lead to finer decompositions
(Example \ref{enlargefield}).
%\item The number of nonisomorphic irreducibles that are involved in a decomposition
%is of interest. The sum of all the isomorphic irreducible subspaces
%		is called the {\it homogeneous} or {\it isotypic component} (for the 
%		irreducible), and it is unique.
%As an extreme case, all the irreducibles for the action of the trivial
%group are the $1$-dimensional subspaces, and there is a single
%(uninteresting) homogeneous component.
%%(giving finer decompositions).

\item 
The irreducibles that are involved in a decomposition are of interest. 
The sum of all subspaces isomorphic to a given irreducible
is called the {\it homogeneous} or {\it isotypic component} 
(for the irreducible), and it is unique.
As an extreme case, all the irreducibles for the action of the trivial
group are the $1$-dimensional subspaces, and there is a single
(uninteresting) homogeneous component.

\item The reproducing kernel $K(x,y)$ for a unitarily invariant polynomial space
should depend only on the inner product $\inpro{x,y}$.
\item We begin with general homogeneous polynomials for which there are
natural (monomial) bases and useful inner products. We then specialise
to those which are harmonic, and then, ultimately,  zonal.
\end{itemize}

To develop explicit formulas, we %it is convenient to
%describe the
use the
Cayley-Dickson identifications $\Cd\cong \R^{2d}$ of 
(\ref{CayleyDforF}), and $\Hd\cong\C^{2d}$
%as follows
given by
\begin{equation}
\label{CDCwithR}
x+iy\in\Cd \identifywith [x+iy]_\RR:=[x+iy]_{\RR^{2d}}=(x,y)\in\RR^{2d},
\end{equation}
\begin{equation}
\label{CDHwithC}
z+jw\in\Hd \identifywith [z+jw]_\CC:=(z,w)\in\CC^{2d}.
\end{equation}
In particular, the identification
(\ref{CDHwithC})
ensures that $[\cdot]_{\CC}:\Hd\to\CC^{2d}$ is $\CC$-linear,
for $\Hd$ as a right vector space, i.e.,
$$ [(z+jw)\ga]_\CC=[z\ga+jw\ga]_\CC=\pmat{z\ga\cr w\ga}=\pmat{z\cr w} \ga
= [(z+jw)]_\CC\, \ga, \qquad\ga\in\CC. $$
It is convenient to define an identification $\Hd\cong\RR^{4d}$ by
\begin{equation}
\label{CDHwithR}
[z+jw]_\RR:= [[(z+jw)]_\CC]_\RR=[(z,w)]_\RR=(\Re(z),\Re(w),\Im(z),\Im(w)).
\end{equation}
We note that
$$ [z+jw]_{\RR^{4d}} %=[z+j\Re(w)-k\Im(w)]_\RR
=(\Re(z),\Im(z),\Re(w),-\Im(w)]. $$

We will use standard multi-index notation for monomials of degree $k$, e.g.,
$$ z^\ga := z_1^{\ga_1}z_2^{\ga_2}\cdots z_d^{\ga_d}, \qquad
|\ga|:= \ga_1+\cdots+\ga_d, \qquad
\ga\in\ZZ_+^d. $$
By a dimension count, the ${k+4d-1\choose 4d-1}$ monomials %polynomials
%$$ m_a:\Hd\to\CC: z+jw\mapsto z^{a_1} w^{a_2}\overline{z}^{a_3}\overline{w}^{a_4},\qquad
% |a|=k, \quad a=(a_1,\ldots,a_4)\in\ZZ_+^{4d}, $$
\begin{equation}
\label{monomialma}
m_a:\Hd\to\CC: z+jw\mapsto z^{a_1} w^{a_2}\overline{z}^{a_3}\overline{w}^{a_4},\qquad
 |a|=k, \quad a=(a_1,\ldots,a_4)\in\ZZ_+^{4d}, 
\end{equation}
are a basis for $\Hom_k(\Hd,\CC)$. 
We will often write $z^{a_1} w^{a_2}\overline{z}^{a_3}\overline{w}^{a_4}$ for
the monomial $m_a$.

For $z=x+iy\in\CC$, we define the {\bf Wirtinger derivatives} in the usual way, i.e.,
\begin{equation}
\label{Wirtingerdefn}
{\partial\over\partial z}
:= {1\over2}\Bigl({\partial\over\partial x}-i{\partial\over\partial y}\Bigr), \qquad
{\partial\over\partial\overline{z}}
:= {1\over2}\Bigl({\partial\over\partial x}+i{\partial\over\partial y}\Bigr).
\end{equation}
Let $\gD$ be the {\bf Laplacian} operator on functions $\Hd\to\CC$, which is
given by
\begin{equation}
\label{Laplaciandefn}
{1\over4}\gD = \sum_{j=1}^d \Bigl(
{\partial^2\over\partial\overline{z_j} \partial z_j}
+{\partial^2\over\partial\overline{w_j} \partial w_j}
\Bigr). 
\end{equation}
By applying this, we see that the monomials
$$ z^\ga w^\gb, \qquad \overline{z}^\ga \overline{w}^\gb,
\qquad z^\ga \overline{w}^\gb, \qquad \overline{z}^\ga {w}^\gb, $$
are harmonic, i.e., in the kernel of $\gD$. 

%This also follows for the first pair from the fact their real and imaginary parts are.
%$$ z^\ga \overline{w}^\gb, \qquad \overline{z}^\ga {w}^\gb. $$
% and the corresponding 
%subspace of $\Harm_k(\Hd,\CC)$ spanned by such harmonic monomials has dimension
%$$ 2{k+2d-1\choose 2d-1}. $$

Let $U\in\CC^{4d\times 4d}$ be the unitary matrix given by
$$ [z+wj]_\RR = \pmat{\Re(z)\cr\Re(w)\cr\Im(z)\cr\Im(w)}
= U\pmat{z\cr w\cr\overline{z}\cr\overline{w}}, \quad z,w\in\C, \qquad
U={1\over2}\pmat{1&0&1&0 \cr 0&1&0&1\cr -i&0&i&0\cr 0&-i&0&i}. $$

\begin{example}
Right scalar multiplication of $\Hd$ by $\ga+j\gb\in\HH$ under these
identifications is given by 
\begin{align*}
& \Hd\to\Hd : z+jw \mapsto (z+jw)(\ga+j\gb) 
= (z\ga-\overline{w}\gb)+j(\overline{z}\gb+w\ga), \cr
& \CC^{2d}\to\CC^{2d} : \pmat{z\cr w} \mapsto
\pmat{ z\ga-\overline{w}\gb \cr \overline{z}\gb+w\ga}, \cr
%& \RR^{4d}\to\RR^{4d} : \pmat{z\cr w\cr\overline{z}\cr\overline{w}}
%\mapsto\pmat{ z\ga-\overline{w}\gb \cr \overline{z}\gb+w\ga \cr
%\overline{z}\overline{\ga}-w\overline{\gb} \cr z\overline{\gb}
%+\overline{w}\overline{\ga}}
%=\pmat{\ga&0&0&-\gb\cr 0&\ga&\gb&0\cr 0&-\overline{\gb}&\overline{\ga}&0\cr
%\overline{\gb}&0&0&\overline{\ga}} \pmat{z\cr w\cr\overline{z}\cr\overline{w}},
%\qquad M_{\ga,\gb}:=\pmat{\ga&0&0&-\gb\cr 0&\ga&\gb&0\cr 0&-\overline{\gb}&\overline{\ga}&0\cr
%\overline{\gb}&0&0&\overline{\ga}} \cr
& \RR^{4d}\to\RR^{4d} : [z+wj]_\RR\mapsto M_{\ga+j\gb}[z+wj]_\RR,
\end{align*}
$$ M_{\ga+j\gb} 
= U\pmat{\ga&0&0&-\gb\cr 0&\ga&\gb&0\cr 0&-\overline{\gb}&\overline{\ga}&0\cr
\overline{\gb}&0&0&\overline{\ga}}\otimes I U^* 
= {1\over2} \pmat{\Re(\ga)&-\Re(\gb)&-\Im(\ga)&-\Im(\gb)\cr
\Re(\gb)&\Re(\ga)&\Im(\gb)&-\Im(\ga)\cr
\Im(\ga)&-\Im(\gb)&\Re(\ga)&\Re(\gb) \cr
\Im(\gb)&\Im(\ga)&-\Re(\gb)&\Re(\ga)}\otimes I, $$
where $I=I_d$ is the $d\times d$ identity matrix.
%with $\ga$ in the matrix standing for $\ga I$, etc.
Note that this map is only $\RR$-linear, and so there are no matrix 
representations for it as a map from $\Hd\to\Hd$ or $\CC^{2d}\to\CC^{2d}$.
\end{example}

\begin{example} Consider left multiplication by a 
linear map $L=A+jB$, $A,B\in\CC^{d\times d}$. 
By (\ref{jzcommute}), we
obtain the following matrix representations
under our identifications
\begin{align*}
& \Hd\to\Hd : z+jw \mapsto (A+jB)(z+jw)
= Az-\overline{B}w + j(Bz+\overline{A}w), \cr
& \CC^{2d}\to\CC^{2d} : \pmat{z\cr w} 
\mapsto \pmat{A&-\overline{B}\cr B&\overline{A}}\pmat{z\cr w}, \cr
& \RR^{4d}\to\RR^{4d} : [z+wj]_\RR\mapsto M_{\ga,\gb}[z+wj]_\RR,
%& \RR^{4d}\to\RR^{4d} : \pmat{z\cr w\cr\overline{z}\cr\overline{w}}
%\mapsto \pmat{A&-\overline{B}&0&0 \cr B&\overline{A} &0&0 \cr 0&0& \overline{A}&-B \cr 0&0& \overline{B}&A} \pmat{z\cr w\cr\overline{z}\cr\overline{w}}.
\end{align*}
$$ M_{A,B} = U \pmat{A&-\overline{B}&0&0 \cr B&\overline{A} &0&0 \cr
0&0& \overline{A}&-B \cr
0&0& \overline{B}&A} U^*
= {1\over2} \pmat{\Re(A)&-\Re(B)&-\Im(A)&-\Im(B)\cr
\Re(B)&\Re(A)&-\Im(B)&\Im(A)\cr
\Im(A)&\Im(B)&\Re(A)&-\Re(B) \cr
\Im(B)&-\Im(A)&\Re(B)&\Re(A)}. $$
\end{example}

Matrix multiplication on the left (which includes left scalar multiplication) 
commutes with right scalar multiplication, by the associative law
%We consider multiplication (on the left) by unitary matrices,
%These two actions commute, but the associativity law
\begin{equation}
\label{Lmatscalarcommute/associative}
(L v)\ga=L(v\ga), \qquad v\in\Hd,\ \ga\in\HH.
\end{equation}
Conversely, those matrices in $\RR^{4d}\times\RR^{4d}$ which commute with
all $M_{\ga+j\gb}$  (equivalently $M_1$, $M_i$, $M_j$ and $M_k$) correspond to the 
matrices $L\in\HH^{d\times d}$, and are said to be {\bf symplectic}.

The {\bf compact symplectic group} $\Sp(d)$,
{\bf quaternionic unitary group} $U(\Hd)$ or {\bf hyperunitary group}
(see \cite{H15} \S1.2.8) is the group of unitary matrices in $\HH^{d\times d}$ 
for the inner product (\ref{Innerproductdefn}), or, equivalently, the 
symplectic matrices in $\RR^{4d\times4d}$ which are orthogonal.
These may also be viewed as the unitary matrices of the form
$$ \CC^{2d}\to\CC^{2d} : \pmat{z\cr w} 
\mapsto \pmat{A&-\overline{B}\cr B&\overline{A}}\pmat{z\cr w}, \qquad
A^*A+B^*B=I, \quad A^TB-B^TA=0. $$
In particular, $\Sp(1)=\HH^*$ is the group of unit quaternions 
%(under multiplication) 
or, the special unitary group
$$ SU(2)=\bigl\{\pmat{\ga&-\overline{\gb}\cr\gb&\overline{\ga}}:
|\ga|^2+|\gb|^2=1 , \ga,\gb\in\CC \bigr\}, $$
which therefore have the same irreducible representations (see \cite{F95}
\S5.4).

\section{The operators $R$ and $L$}

A subspace $V$ of $\Harm_k(\HH^d,\CC)$ is {\bf invariant} under
the right multiplication by $\HH^*$ if $f(z+jw)\in V$ implies
$f\bigl((z+jw)(\ga+j\gb)\bigr)\in V$, and similarly for left multiplication.
The following elementary example shows how we came to the operators $R$ and $L$.

\begin{example}
\label{Harm3example}
%A subspace $V$ of $\Harm_k(\HH^d,\CC)$ is invariant under the right multiplication by $\HH^*$ if $f(z+jw)\in V$ implies $f\bigl((z+jw)(\ga+j\gb)\bigr)\in V$. 
Suppose that $V\subset \Harm_3(\HH^1,\CC)$ is invariant under right multiplication 
by $\HH^*$,
and $f\bigl((z+jw)\bigr)=z^2w\in V$, then
\begin{align*}
f\bigl((z+jw)(\ga+j\gb)\bigr) 
&= ( z\ga-\overline{w}\gb)^2(\overline{z}\gb+w\ga) \cr
& = \ga^3 z^2w 
+\ga^2\gb(z^2\overline{z}-2zw\overline{w})
+\ga\gb^2(w\overline{w}^2-2z\overline{z}\overline{w}) 
+\gb^3\overline{z}\overline{w}^2\in V. 
\end{align*}
By taking different choices for $\ga$ and $\gb$, it is not hard to see
that the ``coefficients'' of the monomials $\ga^3,\ga^2\gb,\ga\gb^2,\gb^3$
above are in $V$, i.e.,
$$ z^2w, z^2\overline{z}-2zw\overline{w}, w\overline{w}^2-2z\overline{z}\overline{w},
\overline{z}\overline{w}^2\in V.  $$
\end{example}

This example naturally generalises as follows.

%A natural way to obtain the coefficients of a polynomial via the Taylor expansion.

\begin{lemma}
\label{Rmultbyscalars}
Let $f\in\Hom_k(\HH^d,\CC)$
be given by $f=f(z+jw)=F(z, w, \overline{z}, \overline{w})$,
and $V_f$ be the subspace invariant
under right multiplication by $\HH^*$ generated by $f$. Then $V_f$ contains
%If $p$ is harmonic, then so is
$$ f\left((z+jw)(\ga+j\gb)\right)
= F(z\ga-\overline{w}\gb,\overline{z}\gb+w\ga,
\overline{z}\overline{\ga}-w\overline{\gb},z\overline{\gb}
+\overline{w}\overline{\ga}), \quad \ga,\gb\in\CC, $$
and all its partial derivatives in the variables
$\ga,\overline{\ga},\gb,\overline{\gb}$, including
\begin{equation}
\label{Raf}
R_a f 
:= {\partial^{a_1+a_2+a_3+a_4}\over \partial\ga^{a_1} \partial\gb^{a_2} 
\partial\overline{\ga}^{a_3} \partial\overline{\gb}^{a_4}}
 f\left((z+jw)(\ga+j\gb)\right)\Big|_{\ga=1,\gb=0}.
%=  {\partial^{a_1+a_2+a_3+a_4}\over \partial\ga^{a_1} \partial\gb^{a_2} 
%\partial\overline{\ga}^{a_3} \partial\overline{\gb}^{a_4}}\Bigr|
%:= {\partial^{a_1+a_2+a_3+a_4}\over \partial\ga^{a_1} \partial\gb^{a_2} 
%\partial\overline{\ga}^{a_3} \partial\overline{\gb}^{a_4}}
% f\left((z+jw)(\ga+j\gb)\right)\Big|_{\ga=1,\gb=0}.
\end{equation}
Moreover, if $f$ is harmonic, then so are all the polynomials in $V_f$.
\end{lemma}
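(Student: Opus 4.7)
The approach is to show that every partial derivative $R_a f$ (and more generally every partial derivative of $f((z+jw)(\ga+j\gb))$ in $\ga,\gb,\overline{\ga},\overline{\gb}$ at any point) can be written as a finite $\CC$-linear combination $\sum_{i=1}^N c_i^a\, f((z+jw)q_i)$ of right translates, with scalars $c_i^a\in\CC$ and points $q_i\in\HH^*$ depending only on $a$ and $k$, not on $f$ or on $(z+jw)$. Since $V_f$ is by definition a $\CC$-subspace of $\Hom_k(\HH^d,\CC)$ closed under right translation by $\HH^*$, this immediately places every such derivative in $V_f$. The harmonicity claim will then reduce to checking that right translation by $q\in\HH^*$ preserves harmonicity.

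For the interpolation step, each entry $z\ga-\overline{w}\gb$, $\overline{z}\gb+w\ga$, $\overline{z}\overline{\ga}-w\overline{\gb}$, $z\overline{\gb}+\overline{w}\overline{\ga}$ of the substituted argument is $\RR$-linear in $(\ga,\gb,\overline{\ga},\overline{\gb})$, so for each fixed $(z,w)\in\HH^d$ the map
$$p_{z,w}(\ga,\gb,\overline{\ga},\overline{\gb}) \;:=\; f\bigl((z+jw)(\ga+j\gb)\bigr)$$
lies in the finite-dimensional $\CC$-vector space $\cP_k$ of polynomials on $\CC^2\cong\RR^4$ of total real degree at most $k$ with complex coefficients. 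The Wirtinger functional $\Phi_a:\cP_k\to\CC$ sending $p$ to $(\partial^{|a|}p/\partial\ga^{a_1}\partial\gb^{a_2}\partial\overline{\ga}^{a_3}\partial\overline{\gb}^{a_4})|_{\ga=1,\gb=0}$ is $\CC$-linear, and point-evaluations at a sufficiently large generic family $q_1,\ldots,q_N\in\RR^4\setminus\{0\}\cong\HH^*$ span the dual $\cP_k^*$; hence $\Phi_a=\sum_i c_i^a\,\mathrm{ev}_{q_i}$ for some coefficients depending only on $a$ and $k$. Applying this identity to $p_{z,w}$ produces the desired representation of $R_a f$ as an element of $V_f$, and partial derivatives at other base points are handled identically. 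The only subtle point here is the \emph{uniformity} of the $c_i^a$ in $(z,w)$, which is precisely what the finite-dimensionality of $\cP_k$ guarantees.

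For the harmonicity claim, observe that right translation $\rho_q:x\mapsto xq$ on $\HH^d$ satisfies $|\rho_q(x)|=|q|\cdot|x|$, because $\overline{xq}\,xq=\overline{q}\,\overline{x}x\,q=|q|^2|x|^2$. Under the identification $\HH^d\cong\RR^{4d}$, $\rho_q$ is therefore $|q|$ times an orthogonal transformation, so the Euclidean Laplacian transforms as $\gD(f\circ\rho_q)=|q|^2(\gD f)\circ\rho_q$ and harmonicity of $f$ is preserved by each right translate. Since the kernel of $\gD$ is a $\CC$-subspace and $V_f$ is spanned by right translates of $f$, every element of $V_f$ is harmonic whenever $f$ is. The main potential obstacle --- writing an abstract derivative as a concrete finite linear combination uniformly in $(z,w)$ --- is handled by the finite-dimensional interpolation above, so no genuine difficulty remains.
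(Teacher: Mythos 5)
Your proof is correct, but the mechanism you use for the key step is genuinely different from the paper's. The paper first notes by homogeneity that $f((z+jw)q)=|q|^k f\bigl((z+jw)(q/|q|)\bigr)\in V_f$ for every nonzero $q$, and then argues that the first-order partials in $\ga,\overline{\ga},\gb,\overline{\gb}$ are limits of Newton quotients lying in $V_f$; since $V_f$ is finite-dimensional, hence closed, these limits (and, by iteration, all higher partials) remain in $V_f$. You avoid limits altogether: by viewing $(\ga,\gb)\mapsto f((z+jw)(\ga+j\gb))$ as an element of a fixed finite-dimensional space $\cP_k$, you write the Wirtinger functional $\Phi_a$ as a finite linear combination of point evaluations at nonzero quaternions with coefficients independent of $f$ and of $(z,w)$, which exhibits $R_af=\sum_i c_i^a\, f(\,\cdot\, q_i)$ as an explicit element of $V_f$. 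Both arguments lean on finite-dimensionality, but yours is purely algebraic and has the bonus of producing a ``discretized'' formula for $R_a$ as a universal linear combination of right translates, very much in the spirit of the paper's Ansatz of replacing continuous invariance by a discrete family of operators. One small point to add: the paper uses $\HH^*$ for the group of \emph{unit} quaternions, so your evaluation points $q_i\in\RR^4\setminus\{0\}$ are not literally in $\HH^*$; this is harmless because $f(\,\cdot\, q_i)=|q_i|^k f\bigl(\,\cdot\,(q_i/|q_i|)\bigr)$ by homogeneity of $f$ (the same observation with which the paper opens its proof), but it deserves a sentence. Your harmonicity argument, namely that $\rho_q$ is $|q|$ times an orthogonal map so that $\gD(f\circ\rho_q)=|q|^2(\gD f)\circ\rho_q$, is essentially the paper's, just written out with the conformal factor made explicit.
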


\begin{proof}
Clearly, $V_f$ is the  %(finite-dimensional) 
subspace of $\Hom_k(\Hd,\CC)$ given by 
$$ V_f=\spam_\CC\{f\left((z+jw)q\right):q=\ga+j\gb\in\HH^*\}. $$
%\subset\Hom_k(\Hd,\CC)$. 
and hence for $q=\ga+j\gb$ nonzero, we have
$$ f\left((z+jw)(\ga+j\gb)\right)=|q|^k f((z+jw)(q/|q|))\in V_f. $$
Since $V_f$ is a finite-dimensional vector space, it follows that the 
first order partials in $\ga,\overline{\ga},\gb,\overline{\gb}$,
which are limits of Newton quotients in $V_f$, are in $V_f$, 
and therefore so are all the partial derivatives.

%It is easy to show
A calculation shows
%We recall
that if
$f:\RR^n\to\RR$ is harmonic, then so is $f\circ U$ for $U:\RR^n\to\RR^n$ orthogonal.
%unitary.
Since scalar multiplication of $\Hd$ by a unit quaternion
(left or right) is an orthogonal map $\RR^{4d}\to\RR^{4d}$,
it follows that if $f(z+jw)$ is harmonic, then so is $f((z+jw)q)$, $q\in\HH^*$,
and hence every polynomial in $V_f$.
\end{proof}

In other words, if a subspace $V\subset\Hom_k(\Hd,\CC)$ % or $\Harm_k(\Hd,\CC)$
is invariant under right multiplication by $\HH^*$, then it is invariant under the 
action of the operators $R_a$ of (\ref{Raf}).
Since the partial derivatives in (\ref{Raf}) for $|a|=a_1+a_2+a_3+a_4=k$
do not depend on $\ga,\gb,\overline{\ga},\overline{\gb}$, they can 
be ``evaluated'' at $\ga=0,\gb=0$, to obtain the Taylor formula
%We have the Taylor formula
\begin{equation}
\label{RafTaylor}
f((z+jw)(\ga+j\gb)) = \sum_{|a|=k}
 R_a(f) {\ga^{a_1}\gb^{a_2}\overline{\ga}^{a_3}\overline{\gb}^{a_4}\over
a_1!a_2!a_3!a_4!}, \quad f\in\Hom_k(\Hd,\CC).
\end{equation}
We therefore have the following converse result.

\begin{proposition} A subspace $V\subset\Hom_k(\Hd,\CC)$ is invariant under right multiplication by $\HH^*$ if and only if it is invariant under the operators $R_a$, $|a|=k$.
\end{proposition}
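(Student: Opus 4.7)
The plan is to observe that this proposition is essentially the two sides of the Taylor identity~(\ref{RafTaylor}) being used in opposite directions, with Lemma~\ref{Rmultbyscalars} already doing the hard work for one implication.

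For the forward direction, I would simply invoke Lemma~\ref{Rmultbyscalars}: if $V$ is invariant under right multiplication by $\HH^*$ and $f\in V$, then $V_f\subset V$, and the lemma guarantees that each $R_a f$ lies in $V_f$, hence in $V$. So $V$ is $R_a$-invariant for every multi-index $a$ with $|a|=k$.

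For the converse, I would assume $V$ is invariant under every $R_a$, $|a|=k$, pick an arbitrary $f\in V$ and an arbitrary $q=\ga+j\gb\in\HH^*$, and invoke the Taylor formula~(\ref{RafTaylor}) to write
\[
f\bigl((z+jw)(\ga+j\gb)\bigr)
= \sum_{|a|=k}
\frac{\ga^{a_1}\gb^{a_2}\overline{\ga}^{a_3}\overline{\gb}^{a_4}}{a_1!a_2!a_3!a_4!}\,R_a f.
\]
Each $R_a f$ belongs to $V$ by hypothesis, and the coefficients are complex scalars, so since $V$ is a $\CC$-subspace of $\Hom_k(\Hd,\CC)$, the right-hand side lies in $V$. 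This shows $f\bigl((z+jw)q\bigr)\in V$, i.e., $V$ is right-$\HH^*$-invariant.

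There is no real obstacle here; the only point worth flagging is that the coefficients appearing in the Taylor expansion are elements of $\CC$ (not of $\HH$), which is precisely why the $\CC$-linear span closure of $V$ is exactly what is needed. One might also note that although invariance is only asserted for $q\in\HH^*$, the identity itself holds for every $q\in\HH$, and the case $q=0$ is trivial, so nothing is lost.
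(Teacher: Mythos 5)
Your proof is correct and follows essentially the same route as the paper: the forward implication is Lemma \ref{Rmultbyscalars}, and the converse is the Taylor formula (\ref{RafTaylor}) together with the observation that, for fixed $q=\ga+j\gb$, the coefficients are complex scalars so the expansion stays inside the $\CC$-subspace $V$. If anything, your converse is stated more cleanly than the paper's, whose ``Conversely'' paragraph inadvertently re-assumes invariance under right multiplication rather than invariance under the $R_a$.
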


\begin{proof} As already observed, the forward implication follows 
from Lemma \ref{Rmultbyscalars}.

Conversely, suppose that $V$ is invariant under right multiplication by $\HH^*$,
and $f\in V$.
%Let $V$ be invariant under right multiplication by $\HH^*$, and $f\in V$.
Since the monomials in $\ga,\gb,\overline{\ga},\overline{\gb}$ 
in the Taylor formula (\ref{RafTaylor})
are linearly independent, it follows that
$$ V_f =\spam_\CC\{ R_a f:|a|=k\} \subset V, $$
and so $V$ is invariant under right multiplication by the operators $R_a$, $|a|=k$.
\end{proof}

This observation means that 
\begin{itemize}
\item We can replace {\it invariance under right multiplication by the continuous group $\HH^*$}
by {\it invariance under the discrete set of operators $\{R_a\}_{|a|=k}$}.
\end{itemize}
This is the basic spirit of our development, where
\begin{itemize}
\item The operators $\{R_a\}_{|a|=k}$ are homogeneous differential operators
of order $k$ which are replaced by two first order operators $R$ and $R^*$,
with $R^{k+1}=(R^*)^{k+1}=0$.
%\item There are corresponding operators $L_a$, $L$ and $L^*$ for left multiplication
%by $\HH^*$.
\end{itemize}

There is an obvious parallel development for the left multiplication by $\HH^*$ 
where the role of $R_a$ is played by $L_a$, where
$$ f\left( (\ga+j\gb) (z+jw) \right) = F( \ga z -\overline{\gb} w, \overline{\ga}w+\gb z,
\overline{\ga}\overline{z}-\gb\overline{w},\ga\overline{w}+\overline{\gb}\overline{z}), $$
\begin{equation}
\label{Laf}
L_a f 
:= {\partial^{a_1+a_2+a_3+a_4}\over \partial\ga^{a_1} \partial\gb^{a_2} 
\partial\overline{\ga}^{a_3} \partial\overline{\gb}^{a_4}}
 f\left( (\ga+j\gb) (z+jw) \right)\Big|_{\ga=1,\gb=0}.
\end{equation}

\begin{example}
\label{Harm3exampleII}
For the Example \ref{Harm3example}, i.e., $f=z^2w$, the nonzero terms in  (\ref{RafTaylor})
are
$$ R_{3,0,0,0}f=6z^2w, \quad
   R_{2,1,0,0}f=2z^2\overline{z}-4zw\overline{w}, \quad
   R_{1,2,0,0}f=2w\overline{w}^2-4z\overline{z}\overline{w}, \quad
   R_{0,3,0,0}f=6\overline{z}\overline{w}^2. $$
The nonzero polynomials $R_a f$ are not a basis for $V_f$ in general, 
e.g., for $f=z\overline{w}$ one has
%These happen to form a basis for $V_f$, but this is not true 
%in general, as the example 
$$ R_{1,0,1,0}f=z\overline{w}, \quad
R_{0,1,0,1 }f=-z\overline{w}, \quad
R_{1,0,0,1}f=z^2, \quad
R_{0,1,1,0}f=-\overline{w}^2. $$
\end{example}

To illustrate the nature of the definition (\ref{Raf}) of $R_a$, we consider
the mechanics of calculating $R_{0,1,0,0} f$ for $d=1$. 
Let $C=(z\ga-\overline{w}\gb,\overline{z}\gb+w\ga,
\overline{z}\overline{\ga}-w\overline{\gb},z\overline{\gb}
+\overline{w}\overline{\ga})$. Then differentiating gives
%Then the rules of differentiation give
\begin{align*}
{\partial\over\partial\gb}\,
 f\left((z+jw)(\ga+j\gb)\right)
& = D_1F(C){\partial\over\partial\gb}(z\ga-\overline{w}\gb)
+D_2F(C){\partial\over\partial\gb}(\overline{z}\gb+w\ga)+\cdots \cr
& = -\overline{w}D_1F(C)+\overline{z}D_2F(C)
+0D_3F(C)+0D_4F(C),
\end{align*}
and evaluating this at $\ga=1$, $\gb=0$ gives
$C=(z, w, \overline{z}, \overline{w})$, and hence
\begin{equation}
\label{betapartial}
R_{0,1,0,0}f = {\partial\over\partial\gb}\,
 f\left((z+jw)(\ga+j\gb)\right)\Big|_{\ga=1,\gb=0}
= -\overline{w}{\partial f\over\partial z}+\overline{z}
{\partial f \over \partial w}.
\end{equation}
The calculation for $d\ge1$, is the same, leading to 
$$ R_{0,1,0,0}f = \sum_{j=1}^d \Bigl( -\overline{w_j}{\partial f\over\partial z_j}
+\overline{z_j} {\partial f \over \partial w_j}\Bigr). $$
For readability, we will often give the $d=1$ case, with general case
following by replacing $z$ by $z_j$, etc, and summing over $j$ (or applying
the Einstein summation convention).

For the first order differential operators $R_a$, 
we will use the suggestive notation
$$ R_\ga := R_{1,0,0,0} ={\partial\over\partial\ga}\Big|, \qquad
R_\gb := R_{0,1,0,0} ={\partial\over\partial\gb}\Big|, $$
$$ R_{\overline{\ga}} := R_{0,0,1,0} ={\partial\over\partial\overline{\ga}}\Big|, \qquad
R_{\overline{\gb}} := R_{0,0,0,1} ={\partial\over\partial\overline{\gb}}\Big|, $$
and similarly for $L_\ga,L_\gb,\ldots$.
All of these first order operators $T$ satisfy the product rule
\begin{equation}
\label{RLproductrule}
T(fg)=T(f)g+fT(g).
\end{equation}
For $d=1$, they are
%$$ R_\gb = -\overline{w}{\partial\over\partial z}+\overline{z} {\partial \over \partial w}, \quad R_{\overline{\gb}} = -w{\partial\over\partial\overline{z}} +z {\partial \over \partial\overline{w}}, \quad R_\ga = z{\partial\over\partial z}+w {\partial \over \partial w}, \quad R_{\overline{\ga}} = \overline{z}{\partial\over\partial\overline{z}} +\overline{w} {\partial \over \partial\overline{w}}, $$
\begin{equation}
\label{Rbeta1dim}
R_\gb = -\overline{w}{\partial\over\partial z}+\overline{z}
{\partial \over \partial w}, \qquad
R_{\overline{\gb}} = -w{\partial\over\partial\overline{z}}
+z {\partial \over \partial\overline{w}},
\end{equation}
\begin{equation}
\label{Ralpha1dim}
R_\ga = z{\partial\over\partial z}+w {\partial \over \partial w}, \qquad
R_{\overline{\ga}} = \overline{z}{\partial\over\partial\overline{z}}
+\overline{w} {\partial \over \partial\overline{w}},
\end{equation}
\begin{equation}
\label{Lbeta1dim}
L_\gb = z {\partial \over \partial w} 
-\overline{w} {\partial\over\partial\overline{z}}, \qquad
L_{\overline{\gb}} = \overline{z} {\partial \over \partial\overline{w}} 
-w{\partial\over\partial z}, 
\end{equation}
\begin{equation}
\label{Lalpha1dim}
L_\ga = z {\partial \over \partial z} 
+ \overline{w}{\partial \over \partial\overline{w}} , \qquad
L_{\overline{\ga}} = w {\partial \over \partial w} 
+ \overline{z} {\partial \over \partial\overline{z}}.
\end{equation}
Of particular interest, are the operators 
\begin{equation}
\label{RLdefn}
R:=-R_\gb, \qquad R^*:=R_{\overline{\gb}}, \qquad
L:=-L_{\overline{\gb}}, \qquad L^*:= L_{\gb},
\end{equation}
which in the $1$-dimensional case have the form
\begin{equation}
\label{Rformula}
R = \overline{w}{\partial\over\partial z}-\overline{z} {\partial\over\partial w}, \qquad
R^* = -w{\partial\over\partial\overline{z}}+z {\partial\over\partial\overline{w}},
\end{equation}
%A corresponding operator preserving subspaces invariant under
%left multiplication by quaternions is
\begin{equation}
\label{Lformula}
L = w{\partial\over\partial z}-\overline{z} {\partial\over\partial\overline{w}}, \qquad
L^*=z{\partial\over\partial w}-\overline{w} {\partial\over\partial\overline{z}}.
\end{equation}
and for general $d$ are given by
\begin{equation}
\label{Rformulagen}
R = \sum_{j=1}^d\Bigl(\overline{w_j}{\partial\over\partial z_j}
-\overline{z_j} {\partial\over\partial w_j}\Bigr), \qquad
R^* = \sum_{j=1}^d\Bigl(-w_j{\partial\over\partial\overline{z_j}}
+z_j {\partial\over\partial\overline{w_j}}\Bigr),
\end{equation}
\begin{equation}
\label{Lformulagen}
L = \sum_{j=1}^d\Bigl(w_j{\partial\over\partial z_j}
-\overline{z_j} {\partial\over\partial\overline{w_j}}\Bigr), \qquad
L^*=\sum_{j=1}^d\Bigl(z_j{\partial\over\partial w_j}
-\overline{w_j} {\partial\over\partial\overline{z_j}}\Bigr).
\end{equation}
The notation $R^*$ and $L^*$ is used, as we will see (Lemma \ref{Rstarisadjoint})
that they are the adjoints of $R$ and $L$, respectively, 
for two natural inner products. The operators $R$ and $R^*$ 
(but not $L$ and $L^*$) appear in the work of \cite{Ghent14}, \cite{Ghent18}
as $\gep=R^*$ and $\gep^\dagger= R$. Operators of this type (for $d=1$) also
appear in the construction of irreducible representations of 
$SU(2)=\HH^*$ on the homogeneous polynomials in $z$ and $w$ of degree $k$
given in \cite{F95}.

The following ``ansatz'' indicates our approach.

\begin{ansatz} 
\label{ansatzhopetoprove}
On $\Hom_k(\Hd,\CC)$, each operator $R_a$, $|a|=k$, can be written as
a polynomial of degree $k$ with real coefficients in the noncommuting
variables $R$ and $R^*$.
Therefore, a subspace  of  $\Hom_k(\Hd,\CC)$ is invariant under right multiplication by scalars in
$\HH^*$ if and only if it is invariant under $R$ and $R^*$.
%Hence There the subspaces of $\Hom_k(\Hd,\CC)$ invariant under right multiplication
%are precisely those subspaces invariant under the action of $R$ and $R^*$.
\end{ansatz}

The analogous statement holds for left multiplication and the action of $L$ and $L^*$. 
It was hoped to prove this, by exhibiting the polynomials in $R$ and $R^*$ 
explicitly, or by a simple inductive argument, but this is complicated by the 
fact that $R$ and $R^*$ do not commute. We now discuss some relevant
algebraic properties 
of $R$, $R^*$, etc.
% (also see Lemma \ref{LRL*R*commute}).

\begin{lemma} We have the commutativity relations
\begin{equation}
\label{Rcommuterels1}
R_\gb R_{\overline{\gb}} -  R_{\overline{\gb}} R_\gb = R_\ga - R_{\overline{\ga}},
\qquad  R_\ga R_{\overline{\ga}} = R_{\overline{\ga}}R_\ga,
\end{equation}
\begin{equation}
\label{Rcommuterels2}
R_\gb R_\ga-R_\ga R_\gb = R_\gb, \qquad
R_\ga R_{\overline{\gb}} -R_{\overline{\gb}}  R_\ga
= R_{\overline{\gb}}, 
\end{equation}
\begin{equation}
\label{Rcommuterels3}
R_{\overline{\ga}} R_\gb -R_\gb R_{\overline{\ga}} = R_\gb, \qquad
R_{\overline{\gb}}  R_{\overline{\ga}} -R_{\overline{\ga}} R_{\overline{\gb}} 
= R_{\overline{\gb}}. 
\end{equation}
Furthermore, on $\Hom_k(\Hd,\CC)$ we have
\begin{equation}
\label{addRalpaRalphab}
R_\ga+R_{\overline{\ga}} = k I, 
\end{equation}
and hence
\begin{equation}
\label{RalphainR}
R_\ga = {1\over2}( R^*R - R R^*+kI), \qquad
R_{\overline{\ga}} = {1\over2}( RR^* - R^*R +kI).
\end{equation}
\end{lemma}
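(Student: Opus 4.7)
The plan is to reduce all the stated relations to routine direct verification, exploiting the fact that each of $R_\ga, R_{\overline{\ga}}, R_\gb, R_{\overline{\gb}}$ is a first-order differential operator (a derivation) vanishing on constants, so the commutator of any two is again a derivation. A derivation on the polynomial algebra is determined by its action on the generators $z_j, w_j, \overline{z_j}, \overline{w_j}$, so each of the six commutator identities in (\ref{Rcommuterels1})--(\ref{Rcommuterels3}) reduces to checking equality on these $4d$ generators, and the formulas (\ref{Rbeta1dim})--(\ref{Lalpha1dim}) make those values immediate. Moreover, since every operator is a sum over $j$ of operators acting only in the variables $(z_j, w_j, \overline{z_j}, \overline{w_j})$, and terms with different $j$ commute completely, it suffices to do the check for $d = 1$.

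As a sample, to verify $[R_\gb, R_{\overline{\gb}}] = R_\ga - R_{\overline{\ga}}$ I would read off from (\ref{Rbeta1dim}) that $R_\gb z = -\overline{w}$ and $R_{\overline{\gb}}(-\overline{w}) = -z$, while $R_{\overline{\gb}} z = 0$, so $[R_\gb, R_{\overline{\gb}}](z) = z = (R_\ga - R_{\overline{\ga}})(z)$; the cases $w, \overline{z}, \overline{w}$ are analogous, and because both sides are derivations the identity extends to all polynomials. The same style of one-line check disposes of $[R_\ga, R_{\overline{\ga}}] = 0$ (both operators separately preserve each monomial $z^a w^b \overline{z}^c \overline{w}^e$) and of the four weight-type relations in (\ref{Rcommuterels2})--(\ref{Rcommuterels3}), which simply record that $R_\gb$ and $R_{\overline{\gb}}$ raise/lower the ``$z,w$ vs $\overline{z},\overline{w}$'' degree by one.

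For the identity $R_\ga + R_{\overline{\ga}} = kI$ on $\Hom_k(\Hd,\CC)$, I observe from (\ref{Ralpha1dim}) (and its $d$-variable analogue) that
\begin{equation*}
R_\ga + R_{\overline{\ga}} = \sum_{j=1}^d \Bigl( z_j \tfrac{\partial}{\partial z_j} + w_j \tfrac{\partial}{\partial w_j} + \overline{z_j} \tfrac{\partial}{\partial \overline{z_j}} + \overline{w_j} \tfrac{\partial}{\partial \overline{w_j}}\Bigr)
\end{equation*}
is the Euler operator measuring total degree in the $4d$ generators, which acts as $k$ on $\Hom_k(\Hd,\CC)$. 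Alternatively, setting $\gb = 0$ in the Taylor formula (\ref{RafTaylor}) gives $f((z+jw)\ga) = \sum_{m=0}^k \ga^m \overline{\ga}^{k-m} p_m$; applying $\partial/\partial\ga + \partial/\partial\overline{\ga}$ and evaluating at $\ga = 1$ yields $\sum_m (m + (k-m)) p_m = k f$. The formulas (\ref{RalphainR}) then follow by combining the two ingredients: from $R = -R_\gb$ and $R^* = R_{\overline{\gb}}$ we get $R^*R - RR^* = -[R_{\overline{\gb}}, R_\gb] = R_\ga - R_{\overline{\ga}}$ by (\ref{Rcommuterels1}), and inverting the $2 \times 2$ linear system with $R_\ga + R_{\overline{\ga}} = kI$ gives the claimed expressions.

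The only real obstacle is organizational: keeping the signs straight when converting between $R_\gb, R_{\overline{\gb}}$ and $R, R^*$ via (\ref{RLdefn}), and presenting the six commutator checks compactly enough that the proof does not degenerate into a four-column table. There is no conceptual difficulty, since everything is finite-dimensional linear algebra once the derivation principle has been invoked.
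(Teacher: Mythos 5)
Your proposal is correct and follows essentially the same route as the paper: the commutator relations are verified by direct computation with the explicit first-order formulas (the paper does the first one by brute-force expansion in an appendix lemma), the identity $R_\ga+R_{\overline{\ga}}=kI$ is the Euler-operator observation obtained by applying both sides to a monomial, and (\ref{RalphainR}) follows by solving the resulting $2\times2$ linear system. Your observation that each commutator of derivations is again a derivation, hence need only be checked on the generators $z_j,w_j,\overline{z_j},\overline{w_j}$, is a small but genuine streamlining of the paper's expand-and-cancel verification, and your sample check of $[R_\gb,R_{\overline{\gb}}]$ on $z$ is accurate.
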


\begin{proof}
The relations (\ref{Rcommuterels1}), (\ref{Rcommuterels2}) and (\ref{Rcommuterels3})
directly from the formulas of the type 
(\ref{Rbeta1dim}) and (\ref{Ralpha1dim}),
e.g., the first is proved in Lemma \ref{LRL*R*commute}. 
For (\ref{addRalpaRalphab}), apply $R_\ga$ and $R_{\overline{\ga}}$ to 
a monomial $f=z^{a_1}w^{a_2}\overline{z}^{a_3}\overline{w}^{a_4}$, 
$|a|=|a_1|+|a_2|+|a_3|+|a_4|=k$,
$$ R_\ga f 
=\sum_j \Bigl( z_j{\partial f\over z_j} +w_j{\partial f\over w_j} \Bigr)
%= \sum_j \Bigl( (a_1)_j z^{a_1}w^{a_2}\overline{z}^{a_3}\overline{w}^{a_4}
%+ (a_2)_j z^{a_1}w^{a_2}\overline{z}^{a_3}\overline{w}^{a_4} \Bigr)
= \sum_j \bigl( (a_1)_j f + (a_2)_j f \bigr)
=(|a_1|+|a_2|)f, $$
$$ R_{\overline{\ga}} f 
=\sum_j \Bigl( \overline{z_j}{\partial f\over\overline{z_j}} 
+\overline{w_j}{\partial f\over\overline{w_j}} \Bigr)
= \sum_j \bigl( (a_3)_j f + (a_4)_j f \bigr)
=(|a_3|+|a_4|)f, $$
and add to get $R_\ga f+R_{\overline{\ga}} f=kf$. The equations
(\ref{RalphainR}) then follow by using (\ref{addRalpaRalphab}) to eliminate
$R_{\overline{\ga}}$ and $R_\ga$ from $R_\gb R_\ga-R_\ga R_\gb = R_\gb$.
\end{proof}

In view of (\ref{RalphainR}) all ``polynomials'' in $R_\gb,R_{\overline{\gb}},
R_\ga,R_{\overline{\ga}}$ can be written as polynomials in the 
noncommuting variables $R$ and $R^*$.

\begin{example}
The commutativity relations (\ref{Rcommuterels2}) and
(\ref{Rcommuterels3}) can be used to exchange $\ga$ and $\gb$ factors
as follows
%$$R_\gb R_\ga-R_\ga R_\gb = R_\gb, \qquad R_\ga R_{\overline{\gb}} - R_{\overline{\gb}}  R_\ga = R_{\overline{\gb}}.  $$
$$ R_\gb R_\ga = (R_\ga+I) R_\gb, \qquad
R_\ga R_\gb = R_\gb (R_\ga -I),  $$ 
$$ R_\ga R_{\overline{\gb}} = R_{\overline{\gb}} (R_\ga+I), \qquad
R_{\overline{\gb}}  R_\ga = (R_\ga-I) R_{\overline{\gb}}, $$
%$$ R_{\overline{\ga}} R_\gb -R_\gb R_{\overline{\ga}} = R_\gb, \qquad R_{\overline{\gb}}  R_{\overline{\ga}} -R_{\overline{\ga}} R_{\overline{\gb}} = R_{\overline{\gb}},  $$
$$ R_{\overline{\ga}} R_\gb 
=R_\gb(R_{\overline{\ga}} +I),\qquad
R_\gb R_{\overline{\ga}} = (R_{\overline{\ga}}-I) R_\gb, $$
$$ R_{\overline{\gb}}  R_{\overline{\ga}} = 
(R_{\overline{\ga}}+I) R_{\overline{\gb}}, \qquad 
R_{\overline{\ga}} R_{\overline{\gb}} = 
R_{\overline{\gb}}  (R_{\overline{\ga}}-I). $$
In this way, one might hope to put the ``polynomials'' giving
$R_a$ in terms of $R$ and $R^*$ alluded to 
in Ansatz \ref{ansatzhopetoprove} into some canonical form,
which could then be proved, e.g., by verifying it on the monomials.
To this end, we have formulas such as
$$ R_{a_1,a_2,a_3,0}= {\partial^{a_1+a_2+a_3}\over\partial\ga^{a_1}\partial\gb^{a_2}\partial\overline{\ga}^{a_3}}
\Big| = \Bigl( \prod_{j_1=0}^{a_1-1}(R_\ga-j_1I)\Bigr)
R_\gb^{a_2}
\Bigl( \prod_{j_3=0}^{a_3-1}(R_{\overline{\ga}}-j_3I) \Bigr), $$ 
%\quad a_1,a_2,a_3\ge0, $$
$$ R_{a_1,0,a_3,a_4}=  {\partial^{a_1+a_3+a_4}\over\partial\ga^{a_1} \partial\overline{\ga}^{a_3} \partial\overline{\gb}^{a_4} }
\Big| = \Bigl( \prod_{j_3=0}^{a_3-1}(R_{\overline{\ga}}-j_3I) \Bigr) 
R_{\overline{\gb}}^{a_4}
\Bigl( \prod_{j_1=0}^{a_1-1}(R_\ga-j_1I)\Bigr), $$
%\quad a_1,a_3,a_4\ge0, $$
$$ {\partial^{b_1+b_2}\over\partial\gb^{b_1}\partial\overline{\gb}^{b_2}}\Big|
= R_\gb^{b_1} R_{\overline{\gb}}^{b_2}
+b_1 b_2 R_\gb^{b_1-1}R_{\overline{\ga}}R_{\overline{\gb}}^{b_2-1}
+\hbox{lower order terms}, $$
where the lower order terms are zero
when either of $b_1$ or $b_2$ takes the value $0$ or $1$,
and
$$ l.o.t = b_1(b_1-1){\partial\over\partial\overline{\ga}^2\partial\gb^{b_1-2}}\Big|, \qquad b_2=2, $$
%$$ l.o.t = b_2(b_2-1){\partial\over\partial\overline{\ga}^2\partial\overline{\gb}^{b_2-2}}\Big|, \qquad b_1=2, $$
$$ l.o.t = 3b_1(b_1-1){\partial\over\partial\overline{\ga}^2\partial\gb^{b_1-2}
\partial\overline{\gb}} \Big|
- 2b_1(b_1-1)(b_1-2){\partial\over\partial\overline{\ga}^3\partial\gb^{b_1-3}}\Big|
, \qquad b_2=3. $$
However, a general formula has yet to be obtained.
\end{example}

\section{Inner products on the quaternionic sphere}

There are two natural (unitarily invariant) inner products defined on 
polynomials from $\Hd\to\CC$ that we consider.
% coincide for the harmonic polynomials (polynomials restricted to the sphere).
Let
%The constant $c_t(\Fd)$ in the variational inequality
%(\ref{SidWelchineq}) relates to averaging on the unit sphere
$$ \SS=\SS(\Fd):=\{x\in\Fd:\norm{x}=1\} = \{x\in\RR^{md}:\norm{x}=1\} $$
be the unit sphere in $\Fd$, and
$\gs$ be the surface area measure on $\SS$,
normalised so that $\gs(\SS)=1$.
We note that surface area measure invariant under unitary maps on $\Fd$,
i.e., for $U$ unitary
$$ \int_{\SS(\Fd)} f(Ux)\,d\gs(x)=\int_{\SS(\Fd)} f(x)\,d\gs(x), \qquad\forall f. $$
%This follows from the result for $\RR^{md}$ and the fact that the unitary maps on
%$\Fd$ correspond to a subgroup of the unitary maps on $\RR^{md}$.
The first inner product we consider is defined on complex-valued functions restricted to the 
{\bf quaternionic sphere} $\SS=\SS(\Hd)$ by
\begin{equation}
\label{firstinpro}
\inpro{f,g}=\inpro{f,g}_\SS := \int_{\SS(\Hd)} \overline{f(x)}g(x)\, d\gs(x).
\end{equation}
This can be calculated
from the
well known integrals of the monomials in $z,w,\overline{z},\overline{w}\in\Cd$
% For the monomials in $z,w,\overline{z},\overline{w}\in\Cd$
(polynomials in $2d$ complex variables)
%, we can calculate this from the well known integrals
$$ \int_{\SS(\Hd)} z^{\ga_1}w^{\gb_1}\overline{z}^{\ga_2}\overline{w}^{\gb_2}\,
d\gs
= \int_{\SS(\CC^{2d})} z^{\ga_1}w^{\gb_1}\overline{z}^{\ga_2}\overline{w}^{\gb_2}\,
d\gs(z,w), $$ %\quad \ga\ne\gb, $$
which are zero for $(\ga_1,\gb_1)\ne(\ga_2,\gb_2)$, and otherwise
\begin{equation}
\label{intSformula}
\int_{\SS(\Hd)} z^{\ga_1}w^{\gb_1}\overline{z}^{\ga_2}\overline{w}^{\gb_2}\, d\gs
= {(2d-1)!\ga_1!\gb_1!\over(2d-1+|\ga_1|+|\gb_1|)!}
={\ga_1!\gb_1!\over(2d)_{|\ga_1|+|\gb_1|}}
, 
\qquad (\ga_1,\gb_1)=(\ga_2,\gb_2).
\end{equation}
Here $(x)_n:=x(x+1)\cdots(x+n-1)$ is the Pochhammer symbol.

For a polynomial 
$f=\sum_\ga f_\ga z^{\ga_1}w^{\ga_2}\overline{z}^{\ga_3} \overline{w}^{\ga_4}$ 
mapping $\Hd\to\CC$,
%$\Hom_k(\Hd,\CC)$,
let $\tilde f$ be the polynomial obtained by replacing the coefficient 
$f_\ga\in\CC$ by its
conjugate $\overline{f_\ga}$, and $f(\partial)$ be the 
differential operator obtained replacing $z$ by ${\partial\over\partial z}$, 
etc, i.e.,
$$ \tilde{f}=\sum_\ga\overline{f_\ga}z^{\ga_1}w^{\ga_2}\overline{z}^{\ga_3}
\overline{w}^{\ga_4}, \qquad
f(\partial)=\sum_\ga f_\ga{\partial^{\ga_1+a_2+\ga_3+\ga_4} f\over
\partial z^{\ga_1} \partial w^{\ga_2} \partial\overline{z}^{\ga_3}
\partial \overline{w}^{\ga_4}}. $$
The second inner product is given by
\begin{equation}
\label{secondinpro}
\dinpro{f,g}:=\tilde{f}(\partial)g (0)
=\sum_\ga \ga! \overline{f_\ga} g_\ga .
\end{equation}

The inner products (\ref{firstinpro}) and
(\ref{secondinpro}) are both prominent in the theory of spherical harmonics.
The first is natural for 
Fourier expansions on the sphere, and the second, which is variously known as the
{\bf apolar} \cite{V00}, {\bf Bombieri} \cite{Z94} or {\bf Fischer inner product}
\cite{Ghent14},
 is also widely used. 
Not withstanding the fact that they are defined on different spaces, 
these inner products are different, since the monomials are orthogonal in the second, but not in the first in general, e.g.,
$$ \inpro{z_1\overline{z_1},w_1\overline{w_1}}
=\int_{\SS(\Hd)} |z_1w_1|^2\,d\gs
= {1\over 2d(2d+1)}\ne0, \qquad 
\dinpro{z_1\overline{z_1},w_1\overline{w_1}}=0. $$
Nevertheless, these inner product are scalar multiples of each other 
in the following sense
$$ \dinpro{f,g} = (2d)_k \inpro{f,g}, \qquad f\in\Harm_k(\Hd,\CC), \ g\in\Hom_k(\Hd,\CC), $$
which follows from \cite{FX13} (Theorem 1.1.8) as presented in 
\cite{BSW17} (Lemma 2).

The homogeneous polynomials of different degrees are orthogonal to each other 
for both inner products, giving the orthogonal direct sums
$$ \bigoplus_{k\ge0} \Hom_k(\Hd,\CC)\Big|_{\SS(\Hd)}, \qquad
\bigoplus_{k\ge0} \Hom_k(\Hd,\CC), $$
respectively. For simplicity, we will primarily consider the further
decomposition of $\Harm_k(\Hd,\CC)$, with it being understood that this
leads to a corresponding refinement of the direct sums
\begin{equation}
\label{Fischerdecomp} 
\bigoplus_{k\ge0}\Hom_k(\Hd,\CC)
= \bigoplus_{k\ge0}\bigoplus_{0\le j\le{k\over2}} 
\norm{\cdot}^{2j}\Harm_{k-2j}(\Hd,\CC),
\end{equation}
\begin{equation}
\label{HomkHarmdecomp}
\Hom_k(\Hd,\CC)\big|_\SS = \bigoplus_{0\le j\le{k\over2}} \Harm_{k-2j}(\Hd,\CC),
\end{equation}
of the polynomials $\Hd\to\CC$ into irreducibles for the action of $SO(4d)$.
The direct sum (\ref{Fischerdecomp}) is 
%orthogonal for the 
sometimes referred to as the {\bf Fischer decomposition} \cite{Ghent14}.

The adjoints of $R$ and $L$ are the same for both of these inner products.
%We now show that the adjoints of $R$ and $L$ are same for both of these inner products.

\begin{lemma} 
\label{Rstarisadjoint}
The operators $R^*$ and $L^*$ are the adjoints of $R$ and $L$ with
respect to both the inner products (\ref{firstinpro}) and
(\ref{secondinpro}) defined on $\Hom_k(\Hd,\CC)$.
\end{lemma}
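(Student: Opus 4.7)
The plan is to treat the two inner products separately, by different methods.

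For the Fischer inner product $\dinpro{\cdot,\cdot}$, I would use that the monomials $m_a = z^{a_1}w^{a_2}\overline{z}^{a_3}\overline{w}^{a_4}$ form an orthogonal basis for $\Hom_k(\Hd,\CC)$, with $\dinpro{m_a,m_a} = a_1!a_2!a_3!a_4!$. A direct monomial check shows that multiplication by each of $z_j,w_j,\overline{z_j},\overline{w_j}$ is the Hermitian adjoint (in this inner product) of Wirtinger differentiation in the same variable. Since each summand of
$$ R = \sum_j\Bigl(\overline{w_j}{\partial\over\partial z_j} - \overline{z_j}{\partial\over\partial w_j}\Bigr) $$
is a composition of a multiplication and a differentiation in \emph{different} variables, hence commuting, taking Hermitian adjoints termwise immediately gives
$$ R^\dagger = \sum_j\Bigl(z_j{\partial\over\partial\overline{w_j}} - w_j{\partial\over\partial\overline{z_j}}\Bigr) = R^*. $$
The analogous calculation handles $L$.

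For the sphere inner product $\inpro{\cdot,\cdot}_\SS$, the key observation is that right multiplication of $\Hd$ by any unit quaternion $q$ is an orthogonal map of $\RR^{4d}$ (used in the proof of Lemma~\ref{Rmultbyscalars}), hence preserves $\SS$ and $\gs$, so
$$ \inpro{f((z+jw)q(t)),\,g((z+jw)q(t))}_\SS = \inpro{f,g}_\SS $$
whenever $q(t)$ is a smooth curve of unit quaternions with $q(0)=1$. I would apply this to two such curves: $q_1(t)=\cos t + j\sin t$ and $q_2(t)=\cos t + ji\sin t$. Differentiating at $t=0$, using the chain rule together with the definitions $R=-R_\gb$, $R^*=R_{\overline{\gb}}$, the infinitesimal generators $\frac{d}{dt}\big|_0 f((z+jw)q_\ell(t))$ come out to be $-R+R^*$ (for $\ell=1$, since $\dot\gb(0)=\dot{\overline{\gb}}(0)=1$) and $-i(R+R^*)$ (for $\ell=2$, since $\dot\gb(0)=i=-\dot{\overline{\gb}}(0)$). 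Each generator must be skew-Hermitian with respect to $\inpro{\cdot,\cdot}_\SS$, which yields two operator identities
$$ (R^*)^\dagger - R^\dagger = R - R^*, \qquad R^\dagger + (R^*)^\dagger = R + R^*, $$
where $^\dagger$ denotes adjoint. Adding and subtracting gives $R^\dagger=R^*$ (and $(R^*)^\dagger=R$). The identical argument with \emph{left} multiplication by unit quaternions yields $L^\dagger=L^*$.

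The only delicate point is identifying the two generators correctly, in particular the Wirtinger bookkeeping for the curve $q_2$, where $\gb(t)=i\sin t$ is pure imaginary so that $\dot{\overline{\gb}}(0)\ne\dot\gb(0)$. Once both generators are in hand, the rest is elementary linear algebra, and no explicit integration by parts on $\SS$ is required.
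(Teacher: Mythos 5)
Your proposal is correct, and for the sphere inner product it takes a genuinely different route from the paper. The paper's own proof is a brute-force verification on pairs of monomials: for $\inpro{\cdot,\cdot}_\SS$ it invokes the explicit moment formula (\ref{intSformula}) and matches the surviving terms of $\inpro{f,Rg}$ and $\inpro{R^*f,g}$, and for $\dinpro{\cdot,\cdot}$ it uses orthogonality of monomials and compares coefficients case by case. Your Fischer argument is essentially the same computation, but organised more cleanly through the elementary adjoint pairs (differentiation in a formal variable versus multiplication by that variable); the one thing worth stating explicitly is that this standard relation holds here precisely because $z,w,\overline{z},\overline{w}$ are treated as independent formal variables in (\ref{secondinpro}), with no extra conjugation of the variable, after which $R^\dagger$ matches $R^*$ of (\ref{Rformulagen}) term by term. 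Your sphere argument is the genuinely new ingredient: since right multiplication by a unit quaternion is orthogonal on $\RR^{4d}$ and $\gs$ is rotation-invariant, the maps $f\mapsto f(\cdot\, e^{jt})$ and $f\mapsto f(\cdot\, e^{kt})$ are one-parameter unitary groups on $\Hom_k(\Hd,\CC)|_\SS$, so their generators $R_\gb+R_{\overline{\gb}}=-R+R^*$ and $iR_\gb-iR_{\overline{\gb}}=-i(R+R^*)$ are skew-Hermitian, and the resulting two linear equations determine $R^\dagger=R^*$ and $(R^*)^\dagger=R$; your Wirtinger bookkeeping for the second curve ($\dot\gb(0)=i$, $\dot{\overline{\gb}}(0)=-i$) is the only delicate step and you have it right. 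What this buys over the paper's computation is that no integration over $\SS$ is needed, and the argument applies verbatim to \emph{any} inner product invariant under right scalar multiplication by $\HH^*$ -- which explains, rather than merely verifies, the paper's observation that the adjoints of $R$ and $L$ are the same for both inner products (the Fischer case could equally be subsumed this way, since $\dinpro{\cdot,\cdot}$ is also invariant under this action). The parallel argument with left multiplication for $L$ goes through exactly as you say.
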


\begin{proof} This is by direct computation. See Section \ref{appendixnumber}.
\end{proof}

\noindent
This result for $R$ was given in \cite{Ghent14} Lemma 5 for the inner product
(\ref{secondinpro}). 

The adjoint can also be calculated using the following property.

\begin{example}
\label{RLconjugateidentities}
An elementary calculation shows the identities
\begin{equation}
\label{RLconjugateformula}
\overline{Rf}= -R^*(\overline{f}), \qquad
\overline{Lf}= -L^*(\overline{f}),
\end{equation}
and so, on subspaces $V$, we have
\begin{equation}
\label{Rconjids}
\overline{R^\ga V}= (R^*)^\ga \overline{V},
\qquad \overline{\ker R^*|_V} = \ker R|_{\overline{V}}.
\end{equation}
\end{example}

The next result follows from the fact that scalar 
multiplication by $\HH^*$ is in $O(\RR^{4d})$, and hence maps harmonic 
polynomials to harmonic polynomials. 

\begin{lemma}
\label{LapRandLcommute}
The operators $R,R^*,L$ and $L^*$ commute
with the Laplacian $\Delta$, and so map harmonic functions
to harmonic functions.
\end{lemma}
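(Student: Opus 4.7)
The plan is to derive the commutation directly from the statement (noted immediately above the lemma) that scalar multiplication of $\Hd$ by a unit quaternion is an orthogonal transformation of $\RR^{4d}$. Write $\phi_q(x) := xq$ for $q = \ga + j\gb \in \HH$, viewed as an $\RR$-linear map $\RR^{4d} \to \RR^{4d}$. The explicit matrix $M_{\ga+j\gb}$ from the first example of Section 2 shows that $\phi_q = |q|\,\psi_q$ with $\psi_q$ orthogonal whenever $q\ne 0$. Since orthogonal maps commute with the Laplacian and a real scaling by $c$ conjugates $\Delta$ by $c^2$, this yields the pointwise identity
\begin{equation*}
\Delta\bigl(f\circ\phi_q\bigr) \;=\; |q|^2\,(\Delta f)\circ\phi_q
\end{equation*}
for every polynomial $f:\Hd\to\CC$ and every $q\in\HH$.

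Next I would differentiate this identity in the parameter $q$ and evaluate at $q=1$. Both sides are polynomial in $(\ga,\gb,\overline{\ga},\overline{\gb})$ for each fixed $x$, so differentiation in $q$ is legitimate and commutes with $\Delta$ (acting in $x$) by Clairaut's theorem. Using $|q|^2 = \ga\overline{\ga} + \gb\overline{\gb}$ one has $|q|^2|_{(1,0)} = 1$ together with
\begin{equation*}
\partial_\gb|q|^2\big|_{(1,0)} \;=\; \overline{\gb}\big|_{(1,0)} \;=\; 0, \qquad \partial_{\overline{\gb}}|q|^2\big|_{(1,0)} \;=\; \gb\big|_{(1,0)} \;=\; 0.
\end{equation*}
Applying $\partial/\partial\gb$ and $\partial/\partial\overline{\gb}$ to the identity, evaluating at $(1,0)$, and recalling the definitions $R = -R_\gb$, $R^* = R_{\overline{\gb}}$ from (\ref{RLdefn}), we obtain $\Delta R f = R\,\Delta f$ and $\Delta R^* f = R^*\,\Delta f$. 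The identical argument for left multiplication $q \cdot x$ (which is likewise of the form $|q|\cdot(\text{orthogonal})$) gives $[\Delta,L] = [\Delta,L^*] = 0$. The harmonic-preservation conclusion is then immediate: if $\Delta f = 0$, then $\Delta(Tf) = T\Delta f = 0$ for $T\in\{R,R^*,L,L^*\}$.

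There is no real obstacle here. The point deserving care is precisely the vanishing of $\partial_\gb|q|^2$ and $\partial_{\overline{\gb}}|q|^2$ at $q=1$, which is the algebraic reason that $R$ and $R^*$ (the $\gb$-derivatives, rather than the $\ga$-derivatives) are the ``right'' operators to commute with $\Delta$; for $R_\ga$ and $R_{\overline{\ga}}$ the analogous calculation produces a lower-order correction, consistent with $R_\ga + R_{\overline{\ga}} = kI$ on $\Hom_k(\Hd,\CC)$ by (\ref{addRalpaRalphab}), which does commute with $\Delta$. As a sanity check one can also verify $[\Delta,R]=0$ directly from the explicit formulas (\ref{Rformulagen}) and (\ref{Laplaciandefn}): the only nonzero contributions come from $[\partial_{\overline{z_j}}\partial_{z_j},R]$ and $[\partial_{\overline{w_j}}\partial_{w_j},R]$, and these produce mixed second-order terms $\mp\partial_{z_j}\partial_{w_j}$ that cancel in the sum over $j$.
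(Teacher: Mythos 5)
Your argument is correct, and it takes a genuinely different route from the paper's proof. The paper proves Lemma \ref{LapRandLcommute} in the appendix by brute force: it expands $R\Delta$ and $\Delta R$ as explicit third-order differential operators using (\ref{Rformulagen}) and (\ref{Laplaciandefn}) and checks that the extra $\gd_{jk}$ terms cancel — essentially the computation you relegate to your final ``sanity check''. Your main argument instead promotes the paper's own motivating remark (that multiplication by $q\in\HH^*$ lies in $O(\RR^{4d})$, stated just before the lemma but not actually used in its proof) into a proof of the full commutation: since $x\mapsto xq$ is $|q|$ times an orthogonal map, $\Delta(f\circ\phi_q)=|q|^2(\Delta f)\circ\phi_q$, and differentiating in $\gb$ or $\overline{\gb}$ at $q=1$ kills the $|q|^2$ correction because $\partial_\gb|q|^2$ and $\partial_{\overline\gb}|q|^2$ vanish there. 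This is a real gain in insight: it explains structurally why the $\gb$-derivatives $R=-R_\gb$ and $R^*=R_{\overline\gb}$ commute with $\Delta$ on the nose while the $\ga$-derivatives only do so up to the degree operator (consistent with (\ref{addRalpaRalphab})), and it makes clear that this lemma and the harmonicity statement in Lemma \ref{Rmultbyscalars} are the same conformal fact. What the paper's computation buys in exchange is self-containedness — it verifies the coordinate formulas (\ref{Rformulagen}) directly against (\ref{Laplaciandefn}) without relying on the $\RR^{4d}$ identification or on the interchange of $\Delta$ with the parameter derivatives, though that interchange is unproblematic here since everything is polynomial in $(\ga,\gb,\overline\ga,\overline\gb)$ and in $x$.
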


\begin{proof}
A direct proof is given in Section \ref{appendixnumber}.
\end{proof}

It follows from (\ref{Lmatscalarcommute/associative}) 
that the action of $R$ and $R^*$ commutes with that of $U\in U(\Hd)$.
In this regard, recall from (\ref{actiononpolys}) and
(\ref{Raf}) that
$$ (U\cdot f)(z+jw) = f(U(z+jw)). $$
%and (on polynomials)
%$$ R f = -R_\gb f
%=-\lim_{\gb\to0\atop \overline{\gb}\ {\rm fixed}} {f\left((z+jw)(1+j\gb)\right) -f\left((z+jw)\right)\over\gb}. $$
%$$ R f = -R_\gb f
%=-\lim_{\gb\to0} {f\left((z+jw)(1+j\gb)\right) -f\left((z+jw)\right)\over\gb}. $$

\begin{lemma} 
\label{RandU(Hd)commute}
The operators $R$ and $R^*$ commute with the action of $U(\Hd)$.
\end{lemma}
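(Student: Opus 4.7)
The plan is to argue directly from the definition of $R$ and $R^*$ as partial derivatives of the right-multiplication action, combined with associativity of matrix multiplication and quaternionic scalar multiplication, which is precisely identity (\ref{Lmatscalarcommute/associative}) already flagged by the authors as the key fact.

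First I would fix $U \in U(\HH^d)$ and $f \in \Hom_k(\HH^d,\CC)$ and unpack the two sides. By (\ref{actiononpolys}), $(U \cdot f)(z+jw) = f(U(z+jw))$. Using the definition (\ref{Raf}) with $a=(0,1,0,0)$ (and then $a=(0,0,0,1)$) together with $R = -R_\gb$ and $R^* = R_{\overline{\gb}}$ from (\ref{RLdefn}), I obtain
\begin{equation*}
R(U \cdot f)(z+jw) = -\frac{\partial}{\partial \gb}\, (U \cdot f)\bigl((z+jw)(\ga+j\gb)\bigr)\Big|_{\ga=1,\gb=0}.
\end{equation*}
Substituting the definition of the $U$-action puts $f$ on the outside:
\begin{equation*}
R(U \cdot f)(z+jw) = -\frac{\partial}{\partial \gb}\, f\bigl(U \cdot \bigl((z+jw)(\ga+j\gb)\bigr)\bigr)\Big|_{\ga=1,\gb=0}.
\end{equation*}

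The heart of the argument is now to invoke (\ref{Lmatscalarcommute/associative}), i.e.\ the associativity identity $U(v \gamma) = (Uv)\gamma$ for $v \in \HH^d$ and $\gamma \in \HH$. Applying this with $v = z+jw$ and $\gamma = \ga+j\gb$, and setting $z' + jw' := U(z+jw)$, the expression becomes
\begin{equation*}
R(U \cdot f)(z+jw) = -\frac{\partial}{\partial \gb}\, f\bigl((z'+jw')(\ga+j\gb)\bigr)\Big|_{\ga=1,\gb=0} = (Rf)(z'+jw') = \bigl(U \cdot (Rf)\bigr)(z+jw).
\end{equation*}
The same calculation with $\partial/\partial\overline{\gb}$ in place of $\partial/\partial\gb$ handles $R^*$. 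Hence $R \circ U = U \circ R$ and $R^* \circ U = U \circ R^*$ as operators on $\Hom_k(\HH^d,\CC)$, and by Lemma \ref{LapRandLcommute} the same holds on $\Harm_k(\HH^d,\CC)$.

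There is no real obstacle: the crux is simply recognizing that $R$ and $R^*$ are built, by differentiation in $\gb$ and $\overline{\gb}$, from the \emph{right} action of $\HH^*$, while $U(\HH^d)$ acts on the \emph{left}, and that the two sides commute by associativity. I would note in passing that the identical argument, using the dual identity for left multiplication by $\ga+j\gb$, shows that $L$ and $L^*$ likewise commute with $U(\HH^d)$; indeed $L$ and $L^*$ commute with any $\HH$-linear map acting on the left, orthogonal or not.
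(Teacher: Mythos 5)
Your proof is correct and follows essentially the same route as the paper: both arguments unpack the definition of $R$ (respectively $R_a$) as a derivative of the right scalar action and then use the associativity identity $U\bigl(v(\ga+j\gb)\bigr)=(Uv)(\ga+j\gb)$ to slide $U$ past the right multiplication, the only cosmetic difference being that the paper runs the chain of equalities starting from $U\cdot R_a f$ for general $R_a$ while you start from $R(U\cdot f)$ for the first-order operators. Nothing is missing.
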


\begin{proof} We will show, more generally, that the operators 
$R_a$ of (\ref{Raf}) commute with the action of $U(\Hd)$.
Let $U\in U(\Hd)$. Then for $f=f(z+jw)$, we have
\begin{align*}
(U\cdot R_a f )
&= {\partial^{a_1+a_2+a_3+a_4}\over \partial\ga^{a_1} \partial\gb^{a_2} 
\partial\overline{\ga}^{a_3} \partial\overline{\gb}^{a_4}}
 f\left(U(z+jw)(\ga+j\gb)\right)\Big|_{\ga=1,\gb=0} \cr
&= {\partial^{a_1+a_2+a_3+a_4}\over \partial\ga^{a_1} \partial\gb^{a_2} 
\partial\overline{\ga}^{a_3} \partial\overline{\gb}^{a_4}}
 (U\cdot f)\left((z+jw)(\ga+j\gb)\right)\Big|_{\ga=1,\gb=0} \cr
&= R_a(U\cdot f).
\end{align*}
%\begin{align*}
%(U\cdot Rf) % (z+jw)
%&= -\lim_{\gb\to0} {f\left(U(z+jw)(1+j\gb)\right) -f\left(U(z+jw)\right)\over\gb} \cr
%&= -\lim_{\gb\to0} {(U\cdot f)\left((z+jw)(1+j\gb)\right) -(U\cdot f)\left((z+jw)\right)\over\gb} \cr
%&= R (U\cdot f), % (z+jw).
%\end{align*}
i.e., $R_a$ commutes with the action of $U(\Hd)$.
%For $R$ acting on functions which are not polynomials, use the same
%argument, with the Wirtinger derivative (\ref{Wirtingerdefn}).
%
%The proof for $R^*_{\overline{\gb}}$ is similar.
\end{proof}

We note that, by the same reasoning, 
the operators $L$ and $L^*$ do not commute with the (left) action of $U(\Hd)$.

\vfill

\section{The action of $R$ and $L$ on polynomials}

Using (\ref{Rformula}) to apply $R$ to a univariate monomial 
$f=z^{a_1}w^{a_2}\overline{z}^{a_3}\overline{w}^{a_4}$ gives
\begin{align*}
R f
&= \overline{w}{\partial\over\partial z}
(z^{a_1}w^{a_2}\overline{z}^{a_3}\overline{w}^{a_4})
-\overline{z}{\partial\over\partial w}
(z^{a_1}w^{a_2}\overline{z}^{a_3}\overline{w}^{a_4}) \cr
&= a_1 z^{a_1-1}w^{a_2}\overline{z}^{a_3}\overline{w}^{a_4+1}
-a_2 z^{a_1}w^{a_2-1}\overline{z}^{a_3+1}\overline{w}^{a_4},
\end{align*}
which is a sum of monomials in which the degree in $z$ and $w$ has decreased by $1$,
whilst the degree in $\overline{z}$ and $\overline{w}$ has increased by $1$.
This type of phenomenon occurs for all of the operators $R,R^*,L,L^*$
(in every dimension), and we 
now make definitions which allow us to account for these changes in degrees.
With standard multi-index notation, we have
\begin{align*}
\Hom_H(p,q) & :=\spam\{
z^{\ga_1} w^{\ga_2} \overline{z}^{\ga_3} \overline{w}^{\ga_4}:
|\ga_1|+|\ga_2|=p,|\ga_3|+|\ga_4|=q\}, \cr
\Hom_K(p,q) & :=\spam\{
z^{\ga_1} w^{\ga_2} \overline{z}^{\ga_3} \overline{w}^{\ga_4}:
|\ga_1|+|\ga_4|=p,|\ga_2|+|\ga_3|=q\}, \cr
\Hom_k^{(a,b)}(\Hd) & := \Hom_K(k-a,a)\cap \Hom_H(k-b,b) \cr
&\,=\spam\{z^{\ga_1} w^{\ga_2} \overline{z}^{\ga_3} \overline{w}^{\ga_4}:
|\ga_1|+|\ga_4|=k-a,|\ga_2|+|\ga_3|=a
, \cr
& \qquad\qquad \qquad \qquad \qquad\ \ \,
|\ga_1|+|\ga_2|=k-b,|\ga_3|+|\ga_4|=b
\}.
\end{align*}
We observe that
$$ \overline{\Hom_H(p,q)}=\Hom_H(q,p), \qquad
\overline{\Hom_K(p,q)}=\Hom_K(q,p). $$
%Think about harmonic functions. We seemingly have
$$ \Delta \Hom_H(a,b) = \Hom_H(a-1,b-1), \qquad
 \Delta \Hom_K(a,b) = \Hom_K(a-1,b-1). $$
%\overline{\Hom_k^{(a,b)}(\Hd)} =\Hom_k^{(k-a,k-b)}(\Hd). $$
The subspaces of harmonic polynomials contained in these are denoted
\begin{align*}
H(p,q) & := \Harm_k(\Hd,\CC) \cap \Hom_H(p,q), \qquad p+q=k, \cr
K(p,q) & := \Harm_k(\Hd,\CC) \cap \Hom_K(p,q), \qquad p+q=k, \cr
H_k^{(a,b)}(\Hd) & :=\Harm_k(\Hd,\CC)\cap\Hom_k^{(a,b)}(\Hd) \cr
&\, = K(k-a,a)\cap H(k-b,b).
\end{align*}
When either $p$ or $q$ above is negative, then we have, by 
definition, the zero subspace.
The subspaces $H(p,q)$ are the irreducible subspaces of
$\Harm_k(\CC^{2d},\CC)\cong\Harm_k(\Hd,\CC)$ 
under the action of (left) multiplication
by $U(\CC^{2d})$, e.g., see \cite{R80}, from where we borrow the
notation $H(p,q)$.
Since $U(\Hd)$ is a subgroup of $U(\CC^{2d})$, the decomposition
of $\Harm_k(\Hd,\CC)$ into $U(\Hd)$-irreducibles 
%for the action of $U(\Hd)$
is obtained by decomposing each $H(p,q)$.
% as a sum of $U(\Hd)$-irreducibles.

The following dimensions are easily calculated
%of $\Hom_H(p,q)$ and $\Hom_K(p,q)$ are easily seen to be
\begin{equation}
\label{HomHpqdim}
\dim_\CC(\Hom_H(p,q)) = \dim_\CC(\Hom_K(p,q))
= {p+2d-1\choose 2d-1}{q+2d-1\choose 2d-1},
\end{equation}
\begin{equation}
\label{Hpqdim}
\dim_\CC(H(p,q)) = \dim_\CC(K(p,q))
= (p+q+2d-1) {(p+2d-2)!(q+2d-2)!\over p! q! (2d-1)! (2d-2)!},
\end{equation}
whilst those of %the dimensions of 
$\Hom_k^{(a,b)}(\Hd)$ and $H_k^{(a,b)}(\Hd)$
are more complicated
(Lemmas \ref{Homabdecomplemma} and \ref{Habdecomplemma}).

\begin{example}
\label{enlargefield}
%The irreducible subspaces of $\Harm_k(\CC^{2d},\RR)$ are $H(p,q)$ 
	Let $H(p,q)^\RR$ be $H(p,q)\subset \Harm_k(\CC^{2d},\CC)$ 
	viewed as a real vector space, which is invariant under the action of $U(\CC^{2d})$.
Complexifying this space gives
$$ \CC H(p,q)^\RR = H(p,q)\oplus H(q,p). $$
Thus we observe that enlarging the field, which preserves the invariance of subspaces, 
	does not always preserve irreducibility.
\end{example}

\begin{lemma} 
\label{hardtounderstandlemma}
For all integers $a$ and $b$, we have
\label{kerRstarRorthogdecomp}
\begin{align*}
 R \Hom_H(a,b) \subset \Hom_H(a-1,b+1), & \quad
R^* \Hom_H(a,b) \subset \Hom_H(a+1,b-1), \cr
 L \Hom_K(a,b) \subset \Hom_K(a-1,b+1), & \quad
L^* \Hom_K(a,b) \subset \Hom_K(a+1,b-1),
\end{align*}
and $\Hom_K(a,b)$ is invariant under right multiplication by $\HH^*$, 
$\Hom_H(a,b)$ is invariant under left multiplication by $\HH^*$, 
and more generally by $U(\Hd)$,
which gives
\begin{align*}
 R \Hom_K(a,b) \subset \Hom_K(a,b), & \quad
R^* \Hom_K(a,b) \subset \Hom_K(a,b), \cr
 L \Hom_H(a,b) \subset \Hom_H(a,b), & \quad
L^* \Hom_H(a,b) \subset \Hom_H(a,b).
\end{align*}
In particular, for $\ga,\gb\ge0$, we have
\begin{align}
L^\ga R^\gb \Hom_k^{(a,b)}(\Hd,\CC) &\subset 
\Hom_k^{(a+\ga,b+\gb)}(\Hd,\CC), \label{LRtoHomab} \\
%\qquad 0\le a,b\le k, \quad \ga,\gb\ge0.
(L^*)^\ga (R^*)^\gb \Hom_k^{(a,b)}(\Hd,\CC)& \subset 
\Hom_k^{(a-\ga,b-\gb)}(\Hd,\CC). 
\label{LRstartoHomab}
%,\qquad 0\le a,b\le k, \quad \ga,\gb\ge0.
\end{align}
Moreover, for the inner products (\ref{firstinpro}) and (\ref{secondinpro}) 
we have the orthogonal direct sums
\begin{align}
\Hom_H(k-a,a) &= (\Hom_H(k-a,a)\cap \ker R^*) \oplus R \Hom_H(k-a+1,a-1),
\label{HomHkerRandR*} \\
\Hom_H(k-a,a) &= (\Hom_H(k-a,a)\cap \ker R) \oplus R^* \Hom_H(k-a-1,a+1),
\label{HomHkerR*andR} %\\ 
\end{align}
\end{lemma}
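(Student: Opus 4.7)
The lemma splits into three parts: the shift rules for $R,R^*,L,L^*$ between the $\Hom_H$- and $\Hom_K$-bigradings, the $\HH^*$-invariance of $\Hom_K(a,b)$ (right) and of $\Hom_H(a,b)$ (left, and more generally under $U(\Hd)$), and the orthogonal direct-sum decompositions. I would tackle these in this order.

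For the shift rules, I would apply the explicit formulas (\ref{Rformulagen})--(\ref{Lformulagen}) to a generic monomial $z^{\ga_1}w^{\ga_2}\overline{z}^{\ga_3}\overline{w}^{\ga_4}$. Each summand of $R$ is either $\overline{w_j}\partial/\partial z_j$ or $-\overline{z_j}\partial/\partial w_j$, so it lowers the total $(z,w)$-degree by $1$ and raises the total $(\overline{z},\overline{w})$-degree by $1$, giving $R\Hom_H(a,b)\subset\Hom_H(a-1,b+1)$. Simultaneously each such summand trades a $z$-factor for a $\overline{w}$-factor (both of which lie in the $K$-pair $\{z,\overline{w}\}$ of degree $p$) or a $w$-factor for an $\overline{z}$-factor (both in the $K$-pair $\{w,\overline{z}\}$ of degree $q$), so the $K$-bidegree is preserved and $R\Hom_K(a,b)\subset\Hom_K(a,b)$. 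The same calculation, with indices permuted, yields the corresponding statements for $R^*$, $L$, and $L^*$.

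For the $\HH^*$-invariance I would substitute directly. Right multiplication acts by $z\mapsto z\ga-\overline{w}\gb$, $w\mapsto \overline{z}\gb+w\ga$ and the conjugate versions; each $K$-pair $\{z,\overline{w}\}$, $\{w,\overline{z}\}$ is closed under this linear substitution, so $\Hom_K(a,b)$ is preserved. For left multiplication, and more generally for the $U(\Hd)$-action given on $\CC^{2d}$ by
$$ \pmat{z\cr w}\mapsto \pmat{A&-\overline{B}\cr B&\overline{A}}\pmat{z\cr w}, $$
the pair $(z,w)$ is sent to a linear combination of $(z,w)$ (and the conjugates to conjugates), so the $H$-bidegree is preserved and $\Hom_H(a,b)$ is invariant. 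The composite inclusions (\ref{LRtoHomab}) and (\ref{LRstartoHomab}) then follow by iterating the shift rules inside the appropriate invariant $H$- or $K$-piece.

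The orthogonal direct sums (\ref{HomHkerRandR*}) and (\ref{HomHkerR*andR}) reduce to standard finite-dimensional linear algebra once the shift rules and Lemma \ref{Rstarisadjoint} are in hand. By the shift rules, $R:\Hom_H(k-a+1,a-1)\to\Hom_H(k-a,a)$ and $R^*:\Hom_H(k-a,a)\to\Hom_H(k-a+1,a-1)$ are linear maps between finite-dimensional inner-product subspaces, and by Lemma \ref{Rstarisadjoint} they are mutual adjoints under both of the inner products (\ref{firstinpro}) and (\ref{secondinpro}). The standard identity $(\ker T^*)^\perp=\ran T$, applied within $\Hom_H(k-a,a)$, immediately yields (\ref{HomHkerRandR*}); (\ref{HomHkerR*andR}) follows from the same argument applied to the shifted pair $R^*:\Hom_H(k-a-1,a+1)\to\Hom_H(k-a,a)$ and its adjoint $R$. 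The only real obstacle is bookkeeping: for each of the four operators $R,R^*,L,L^*$ one must keep straight which of the two bigradings is shifted by $(\pm 1,\mp 1)$ and which is fixed, after which every step is formal.
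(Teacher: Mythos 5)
Your proposal is correct and follows essentially the same route as the paper: the inclusions are verified by applying the explicit formulas for $R,R^*,L,L^*$ (and the substitution rules for left/right multiplication) to monomials, and the orthogonal direct sums (\ref{HomHkerRandR*})--(\ref{HomHkerR*andR}) come from the adjoint relation of Lemma \ref{Rstarisadjoint} via $f\in\ker R^*\cap\Hom_H(k-a,a)\Iff f\perp R\Hom_H(k-a+1,a-1)$, which is exactly the paper's argument. You merely spell out the "elementary direct calculations" that the paper leaves implicit.
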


\begin{proof}
The inclusions follow by (elementary) direct calculations.

Let $f\in \Hom_H(k-a,a)$. Since $R^*f\in \Hom_H(k-a+1,a-1)$, we have
\begin{align*}
f\in\ker R^* & \Iff R^*f=0
\Iff \inpro{R^*f,g}=0,\quad\forall g\in \Hom_H(k-a+1,a-1) \cr
& \Iff \inpro{f,Rg}=0,\quad\forall g\in \Hom_H(k-a+1,a-1) \cr
& \Iff f\in (R\Hom_H(k-a+1,a-1))^\perp,
\end{align*}
so that
\begin{align*}
\Hom_H(k-a,a) &= (R\Hom_H(k-a+1,a-1))^\perp \oplus R\Hom_H(k-a+1,a-1) \cr
&= (\Hom_H(k-a,a)\cap \ker R^*) \oplus R \Hom_H(k-a+1,a-1),
\end{align*}
which gives (\ref{HomHkerRandR*}). The proof of (\ref{HomHkerR*andR}) is 
similar.
\end{proof}

By restricting (\ref{LRtoHomab}),  (\ref{LRstartoHomab}), (\ref{HomHkerRandR*}), 
and (\ref{HomHkerR*andR}) 
to the harmonic polynomials, we have
\begin{align}
L^\ga R^\gb H_k^{(a,b)}(\Hd,\CC) \subset 
H_k^{(a+\ga,b+\gb)}(\Hd,\CC), \label{LRtoHab} \\
(L^*)^\ga (R^*)^\gb H_k^{(a,b)}(\Hd,\CC) \subset 
H_k^{(a-\ga,b-\gb)}(\Hd,\CC),
\label{LRstartoHab}
\end{align} 
\begin{align}
H(k-a,a) &= (H(k-a,a)\cap \ker R^*) \oplus R H(k-a+1,a-1),
\label{HkerRandR*} \\
H(k-a,a) &= (H(k-a,a)\cap \ker R) \oplus R^* H(k-a-1,a+1),
\label{HkerR*andR} %\\ 
\end{align}

Henceforth, all ``orthogonal'' direct sum decompositions will hold for both
the inner products (\ref{firstinpro}) and (\ref{secondinpro}), 
unless stated otherwise. 
%Here is the first such example.

We now give some technical results, related to
the following commutativity relations.

\begin{lemma}
\label{LRL*R*commute}
The operators $L$ and $L^*$ commute with $R$ and $R^*$, and we have
\begin{equation}
\label{RR*commute}
R^*R-RR^* = \sum_j \Bigl( z_j{\partial\over\partial z_j}
+w_j{\partial\over\partial w_j}
-\overline{z_j}{\partial\over\partial\overline{z_j}}
-\overline{w_j}{\partial\over\partial\overline{w_j}} \Bigr),
\end{equation}
\begin{equation}
\label{LL*commute}
L^*L - LL^* = \sum_{j} \Bigl( 
z_j {\partial\over\partial z_j} -w_j{\partial\over\partial w_j}
-\overline{z_j}{\partial\over\partial\overline{z_j}} 
+\overline{w_j}{\partial\over\partial\overline{w_j}} \Bigr). 
\end{equation}
\end{lemma}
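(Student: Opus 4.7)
The plan is to exploit the fact that each of $R, R^*, L, L^*$ is a first-order differential operator with polynomial coefficients on $\CC^{4d}$. For any two such operators $A = \sum_i a_i \partial_i$ and $B = \sum_j b_j \partial_j$, an elementary Leibniz-rule calculation shows that the second-order parts of $AB$ and $BA$ coincide, so their commutator is first-order and is given by
$$ [A,B] \;=\; \sum_j (A b_j)\,\partial_j \;-\; \sum_i (B a_i)\,\partial_i. $$
With this identity in hand, the proof of both statements in the lemma reduces to straightforward bookkeeping on the coefficients supplied by (\ref{Rformulagen}) and (\ref{Lformulagen}).

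For the four ``cross'' commutators $[R,L]$, $[R,L^*]$, $[R^*,L]$, $[R^*,L^*]$, I would verify vanishing by direct computation. For $[R,L]$, one finds $R(w_j) = -\overline{z_j}$, $R(-\overline{z_j}) = 0$, $L(\overline{w_j}) = -\overline{z_j}$, and $L(-\overline{z_j}) = 0$, so that both sums $\sum_j (Rb_j)\partial_j$ and $\sum_i (La_i)\partial_i$ reduce to $-\sum_j \overline{z_j}\,\partial/\partial z_j$ and cancel. The three remaining cross commutators follow the same template. A more conceptual alternative is available and worth mentioning: since $R, R^*$ act as first-order derivatives in the parameter of right multiplication $v\mapsto vq_R$ and $L, L^*$ act as derivatives in the parameter of left multiplication $v\mapsto q_L v$, associativity $q_L(vq_R) = (q_Lv)q_R$ makes the two actions commute on $\Hd$, and mixed partial derivatives in the disjoint parameter sets $\{q_L\}$ and $\{q_R\}$ automatically commute, which delivers $[R,L] = [R,L^*] = [R^*,L] = [R^*,L^*] = 0$ at a stroke.

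For the remaining identities, the same commutator formula produces, from $R^*(\overline{w_j}) = z_j$ and $R^*(-\overline{z_j}) = w_j$, the contribution $\sum_j(z_j\,\partial/\partial z_j + w_j\,\partial/\partial w_j)$, and, from $R(-w_j) = \overline{z_j}$ and $R(z_j) = \overline{w_j}$, the subtracted contribution $\sum_j(\overline{z_j}\,\partial/\partial \overline{z_j} + \overline{w_j}\,\partial/\partial \overline{w_j})$; assembling these yields exactly (\ref{RR*commute}). The analogous computation with $L^*, L$ in place of $R^*, R$ yields (\ref{LL*commute}), and here the conceptual argument is unavailable because right multiplication by $\HH^*$ is itself noncommutative, so the direct calculation is genuinely needed.

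The argument is elementary and the only real difficulty is organisational: each of $R, R^*, L, L^*$ is a two-term operator whose coefficients are conjugates of the variables being differentiated, so it is easy to scramble signs or confuse $w$ with $\overline{w}$. I would therefore tabulate once and for all the effect of each of $R, R^*, L, L^*$ on the four groups of basic coordinates $z_j, w_j, \overline{z_j}, \overline{w_j}$, and then read off the six required commutators mechanically from that table.
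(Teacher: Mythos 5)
Your proposal is correct and is essentially the paper's own argument: the appendix proof of Lemma \ref{LRL*R*commute} is exactly this direct Leibniz-rule computation, written out with the second-order cross terms and the $\gd_{jk}$ terms displayed explicitly rather than packaged into the general first-order commutator identity $[A,B]=\sum_j(Ab_j)\partial_j-\sum_i(Ba_i)\partial_i$ that you invoke. Your coefficient computations ($R(w_j)=-\overline{z_j}$, $R^*(\overline{w_j})=z_j$, $R^*(-\overline{z_j})=w_j$, etc.) all check out and reproduce (\ref{RR*commute}) and (\ref{LL*commute}), and your associativity remark for the vanishing of the cross commutators is the same observation the paper records at (\ref{Lmatscalarcommute/associative}).
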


\begin{proof} 
This is by direct computation. See Section \ref{appendixnumber}.
\end{proof}

Clearly, the right hand side of 
(\ref{RR*commute}) and of (\ref{LL*commute}) maps the monomial $m_a$ 
of (\ref{monomialma}) to a scalar multiple of itself, and so we obtain
%to the monomials 
%$z^\ga w^\gb\overline{z}^\gd \overline{w}^\gga$ gives
\begin{equation}
\label{RR*commf}
R^* R f =  R R^* f +(a-b)f, \qquad f\in \Hom_H(a,b),
\end{equation}
\begin{equation}
\label{LL*commf}
L^* L f =  L L^* f +(a-b)f, \qquad f\in \Hom_K(a,b).
\end{equation}
We can iterate these to obtain formulas which interchange $R$ and $R^*$,
and $L$ and $L^*$.

\begin{lemma}
\label{R*Rbeta}
We have
\begin{equation}
\label{RR*betacommf}
R^* R^\gb f =  R^\gb R^* f +\gb(a-b-\gb+1)R^{\gb-1}f, \quad f\in \Hom_H(a,b), 
\end{equation}
\begin{equation}
\label{LL*betacommf}
L^* L^\gb f =  L^\gb L^* f +\gb(a-b-\gb+1)L^{\gb-1}f, \quad f\in \Hom_K(a,b), 
\end{equation}
which also holds for $\gb=0$ (in the obvious way).
\end{lemma}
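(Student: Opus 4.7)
The plan is to prove both identities by induction on $\gb\ge0$, using equations (\ref{RR*commf}) and (\ref{LL*commf}) from the preceding paragraph as the base step, together with the fact that $R$ (resp.\ $L$) shifts the bidegrees tracked by $\Hom_H$ (resp.\ $\Hom_K$) in a controlled way. I will carry out the argument for $R$; the proof for $L$ is line-by-line identical, with $\Hom_H$ replaced by $\Hom_K$ and with (\ref{LL*commf}) in place of (\ref{RR*commf}).

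First I note the base case. For $\gb=0$ both sides of (\ref{RR*betacommf}) equal $R^*f$ (the ``$\gb-1$'' term is interpreted as $0$), and for $\gb=1$ the identity is exactly (\ref{RR*commf}). The key structural input is Lemma \ref{hardtounderstandlemma}, which gives $R\,\Hom_H(a,b)\subset\Hom_H(a-1,b+1)$, so by iteration $R^\gb f\in\Hom_H(a-\gb,b+\gb)$ whenever $f\in\Hom_H(a,b)$. This means we may legitimately apply the single-step commutation relation (\ref{RR*commf}) to $R^\gb f$, yielding
\begin{equation*}
R^*R(R^\gb f) \;=\; R R^*(R^\gb f) + \bigl((a-\gb)-(b+\gb)\bigr)R^\gb f
\;=\; RR^*R^\gb f + (a-b-2\gb)R^\gb f.
\end{equation*}

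Now I perform the inductive step. Assuming (\ref{RR*betacommf}) at level $\gb$, I apply $R$ on the left to obtain
\begin{equation*}
RR^*R^\gb f \;=\; R^{\gb+1}R^*f + \gb(a-b-\gb+1)R^\gb f.
\end{equation*}
Combining this with the displayed identity above gives
\begin{equation*}
R^*R^{\gb+1}f \;=\; R^{\gb+1}R^*f + \bigl[\gb(a-b-\gb+1)+(a-b-2\gb)\bigr]R^\gb f.
\end{equation*}
A small algebraic simplification of the bracket,
\begin{equation*}
\gb(a-b-\gb+1)+(a-b-2\gb) = (\gb+1)(a-b)-\gb(\gb+1) = (\gb+1)\bigl(a-b-\gb\bigr),
\end{equation*}
yields precisely the desired formula at level $\gb+1$, since $(\gb+1)(a-b-\gb) = (\gb+1)\bigl(a-b-(\gb+1)+1\bigr)$. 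The proof of (\ref{LL*betacommf}) proceeds identically, using the inclusion $L\,\Hom_K(a,b)\subset\Hom_K(a-1,b+1)$ from Lemma \ref{hardtounderstandlemma} and the base commutation (\ref{LL*commf}).

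I do not foresee a serious obstacle: the only non-bookkeeping point is remembering that the bidegrees shift by $(-1,+1)$ at each application of $R$ (or $L$), so that the ``$a-b$'' appearing in the commutator of $R^*$ with $R$ becomes $a-b-2\gb$ at the $\gb$-th iterate. The minor algebraic collapse of $\gb(a-b-\gb+1)+(a-b-2\gb)$ into $(\gb+1)(a-b-\gb)$ is the one place one must be careful, but it is straightforward.
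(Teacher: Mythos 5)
Your proof is correct and follows essentially the same route as the paper: induction on $\gb$ using the one-step commutator (\ref{RR*commf}) together with the bidegree shift $R\,\Hom_H(a,b)\subset\Hom_H(a-1,b+1)$ from Lemma \ref{hardtounderstandlemma}. The only (cosmetic) difference is that you peel the extra factor of $R$ off the left of $R^{\gb+1}$, applying (\ref{RR*commf}) to $R^\gb f\in\Hom_H(a-\gb,b+\gb)$ and the inductive hypothesis to $f$, whereas the paper writes $R^\gb f=R^{\gb-1}(Rf)$ and applies the inductive hypothesis to $Rf\in\Hom_H(a-1,b+1)$; both collapses of the coefficient give $\gb(a-b-\gb+1)$ as required.
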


\begin{proof} We now prove the first equation, using 
induction on $\gb$. The case $\gb=0$ is trivial,
and the case $\gb=1$ is
%(which we will use) is given 
%by Lemma \ref{LRL*R*commute}, i.e., 
(\ref{RR*commf}). 
Suppose the formula holds for $\gb-1\ge0$, 
then $Rf\in\Hom_H(a-1,b+1)$, and so, using (\ref{RR*commf}), we have
\begin{align*} 
R^* R^\gb f
&= R^* R^{\gb-1}(Rf)
= R^{\gb-1} R^* Rf +(\gb-1)\bigl((a-1)-(b+1)-(\gb-1)+1\bigr)R^{\gb-2} Rf \cr
&= R^{\gb-1} (R^* Rf) +(\gb-1)(a-b-\gb)R^{\gb-1}f \cr
& = R^{\gb-1}\bigl( R R^* f +(a-b)f \bigr) +(\gb-1)(a-b-\gb)R^{\gb-1}f \cr
& = R^\gb R^* f +\gb(a-b-\gb+1)R^{\gb-1}f,
\end{align*}
which completes the induction.
The proof of the second equation is very similar.
% proceeds in the same way.
\end{proof}

%Now use $R^*f\in\Hom_H(a+1,b-1)$, to calculate
%$$ (R^*)^2 R^\gb f = R^*R^\gb R^* f +\gb(a-b-\gb+1)R^*R^{\gb-1}f $$
%$$ = R^*R^\gb R^* f +\gb(a-b-\gb+1)R^*R^{\gb-1}f $$
%$$ = R^\gb R^* R^* f +\gb((a+1)-(b-1)-\gb+1)R^{\gb-1}R^*f
%+\gb(a-b-\gb+1) \{ R^{\gb-1} R^* f +(\gb-1)(a-b-(\gb-1)+1)R^{\gb-2}f\} $$
%$$ = R^\gb (R^*)^2 f + 2\gb(a-b-\gb+2)R^{\gb-1}R^*f
%+\gb(\gb-1)(a-b-\gb+1)(a-b-\gb+2)R^{\gb-2}f $$
%Now the cube
%\begin{align*}
%(R^*)^3 R^\gb f
%&= R^*R^\gb (R^*)^2 f + 2\gb(a-b-\gb+2)R^*R^{\gb-1}R^*f
%+\gb(\gb-1)(a-b-\gb+1)(a-b-\gb+2)R^*R^{\gb-2}f \cr
%R^*R^\gb (R^*)^2 f 
%R^\gb R^* (R^*)^2 f +\gb((a+2)-(b-2)-\gb+1)R^{\gb-1} (R^*)^2f \cr & + 2\gb(a-b-\gb+2) \{ R^{\gb-1} R^* R^* f +(\gb-1)((a+1)-(b-1)-(\gb-1)+1)R^{\gb-2} R^*f \} \cr & +\gb(\gb-1)(a-b-\gb+1)(a-b-\gb+2) \{R^{\gb-2} R^* f +(\gb-2)(a-b-(\gb-2)+1)R^{\gb-3}f \} \end{align*} 
%$$ = R^\gb(R^*)^3 f + 3\gb(a-b-\gb+3)R^{\gb-1} (R^*)^2f + 3 \gb(\gb-1)(a-b-\gb+2)(a-b-\gb+3) + $$
%which suggests
%$$ (R^*)^\ga R^\gb f =\sum_{j=0}^\ga R^{\gb-j} (R^*)^{\ga-j} {\ga\choose j} (\gb-j+1)_j(a-b-\gb+\ga-j+1)_j $$

%Playing around with the formulas of Lemma \ref{R*Rbeta}
%naturally leads to the following very useful generalisation,
%which allows one to ``move $R^*$ past $R$'', and similarly for $L$.

Here are the most general formulas, which we will use.

\begin{lemma}
\label{RR*commutegeneral}
For all choices of $\ga$ and $\gb$, we have
% (Move $R^*$ past $R$) We have
%$$ (R^*)^\ga R^\gb f = \sum_{c=0}^\ga 
%{\ga\choose c} (\gb-c+1)_c (a-b-\gb+\ga-c+1)_c R^{\gb-c} (R^*)^{\ga-c}f. $$
\begin{equation}
\label{R*alphaRbeta}
(R^*)^\ga R^\gb f = \sum_{c=0}^\ga {\ga\choose c} (-\gb)_c (b-a+\gb-\ga)_c 
R^{\gb-c} (R^*)^{\ga-c}f, \quad f\in\Hom_H(a,b), 
\end{equation}
\begin{equation}
\label{RalphaR*beta}
R^\ga (R^*)^\gb f = \sum_{c=0}^\ga {\ga\choose c} (-\gb)_c (a-b+\gb-\ga)_c 
(R^*)^{\gb-c} R^{\ga-c}f, \quad f\in\Hom_H(a,b), 
\end{equation}
\begin{equation}
\label{L*alphaLbeta}
(L^*)^\ga L^\gb f = \sum_{c=0}^\ga {\ga\choose c} (-\gb)_c (b-a+\gb-\ga)_c 
L^{\gb-c} (L^*)^{\ga-c}f, \quad f\in\Hom_K(a,b),
\end{equation}
\begin{equation}
\label{LalphaL*beta}
L^\ga (L^*)^\gb f = \sum_{c=0}^\ga {\ga\choose c} (-\gb)_c (a-b+\gb-\ga)_c 
(L^*)^{\gb-c} L^{\ga-c}f, \quad f\in\Hom_K(a,b). 
\end{equation}
Here the terms involving a negative power of an operator 
have a zero coefficient.
\end{lemma}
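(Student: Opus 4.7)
The plan is to prove (\ref{R*alphaRbeta}) by induction on $\alpha$, with base case $\alpha=0$ (trivial, since only $c=0$ survives) and base case $\alpha=1$ being exactly Lemma \ref{R*Rbeta}. For the inductive step, I would take $(R^*)^{\alpha+1}R^\beta f = R^*\bigl((R^*)^\alpha R^\beta f\bigr)$, expand using the hypothesis, and apply Lemma \ref{R*Rbeta} term by term. The key bookkeeping is that for $f\in\Hom_H(a,b)$ we have $(R^*)^{\alpha-c}f\in\Hom_H(a+\alpha-c,b-\alpha+c)$ by Lemma \ref{hardtounderstandlemma}, so applying (\ref{RR*betacommf}) with degrees $a'=a+\alpha-c$, $b'=b-\alpha+c$ and exponent $\gamma=\beta-c$ contributes the two terms
\begin{equation*}
R^{\beta-c}(R^*)^{\alpha+1-c}f + (\beta-c)\bigl(a-b+2\alpha-c-\beta+1\bigr)R^{\beta-c-1}(R^*)^{\alpha-c}f.
\end{equation*}

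After reindexing $c\mapsto c+1$ in the second family and collecting coefficients of $R^{\beta-c}(R^*)^{\alpha+1-c}f$, the inductive step reduces to a single Pochhammer/binomial identity. Writing $x:=b-a+\beta-\alpha$ and using $(-\beta)_c=-(\beta-c+1)(-\beta)_{c-1}$ and $(x)_c=(x+c-1)(x)_{c-1}$, the identity to verify is
\begin{equation*}
\binom{\alpha}{c}(x+c-1) - \binom{\alpha}{c-1}(\alpha+2-c-x) = \binom{\alpha+1}{c}(x-1).
\end{equation*}
Splitting $\binom{\alpha+1}{c}=\binom{\alpha}{c}+\binom{\alpha}{c-1}$ on the right and cancelling the $\binom{\alpha}{c}(x-1)$ terms collapses this to the standard relation $c\binom{\alpha}{c}=(\alpha+1-c)\binom{\alpha}{c-1}$, which is Pascal-type and immediate. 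This combinatorial step is the main (and only nontrivial) obstacle; the rest is bookkeeping.

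Equation (\ref{RalphaR*beta}) is obtained by the same induction but with the roles of $R$ and $R^*$ interchanged. Since $RR^*-R^*R$ acts as $-(a-b)=(b-a)$ on $\Hom_H(a,b)$, the analogue of Lemma \ref{R*Rbeta} reads $RR^{*\beta}f=R^{*\beta}Rf+\beta(b-a-\beta+1)R^{*\beta-1}f$; running the induction with this substitution replaces every occurrence of $b-a$ by $a-b$ in the Pochhammer factor, giving exactly (\ref{RalphaR*beta}). Alternatively, one can deduce (\ref{RalphaR*beta}) from (\ref{R*alphaRbeta}) by conjugating via (\ref{RLconjugateformula}), which sends $R\leftrightarrow -R^*$ and $\Hom_H(a,b)\leftrightarrow\Hom_H(b,a)$.

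Finally, (\ref{L*alphaLbeta}) and (\ref{LalphaL*beta}) follow by the identical argument with $R,R^*$ replaced by $L,L^*$: Lemma \ref{R*Rbeta} supplies the analogous one-step commutation (\ref{LL*betacommf}) on $\Hom_K(a,b)$, and Lemma \ref{hardtounderstandlemma} gives $L\Hom_K(a,b)\subset\Hom_K(a-1,b+1)$ and $L^*\Hom_K(a,b)\subset\Hom_K(a+1,b-1)$, so the induction goes through verbatim with $\Hom_K$ in place of $\Hom_H$. No new combinatorics arise in this second half; once the $R/R^*$ case is settled, the $L/L^*$ case is a relabelling.
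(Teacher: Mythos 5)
Your proposal is correct and follows essentially the same route as the paper: induction on $\ga$ with $\gb$ fixed, applying the one-step commutation (\ref{RR*betacommf}) of Lemma \ref{R*Rbeta} to $(R^*)^{\ga-c}f\in\Hom_H(a+\ga-c,\,b-\ga+c)$ and collecting coefficients, with the inductive step reducing to the same Pochhammer/binomial identity (which you verify via Pascal's rule, where the paper factors and simplifies directly). The remaining three identities are handled the same way in both treatments, so no further comment is needed.
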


\begin{proof} We now prove (\ref{R*alphaRbeta}), by
induction on $\ga$, with $\gb$ fixed. 
The case $\ga=0$ is immediate. 
Suppose that (\ref{R*alphaRbeta}) %the result 
holds for $\ga-1\ge0$, then 
%$$ (R^*)^\ga R^\gb f = \sum_{c'=0}^{\ga-1} {\ga-1\choose c'} (\gb-c'+1)_{c'} (a-b-\gb+\ga-1-c'+1)_{c'} R^* R^{\gb-c'} (R^*)^{\ga-1-c'}f. $$
$$ (R^*)^\ga R^\gb f = \sum_{c'=0}^{\ga-1} 
{\ga-1\choose c'} (-\gb)_{c'} (b-a+\gb-\ga+1)_{c'} R^* R^{\gb-c'} (R^*)^{\ga-1-c'}f. $$
Since $g=(R^*)^{\ga-1-c'}f\in H(a+\ga-1-c',b-(\ga-1-c'))$, 
(\ref{RR*betacommf}) of Lemma \ref{R*Rbeta} gives
%$$ R^* R^{\gb-c'} g = R^{\gb-c'}R^*g+(\gb-c')(a+\ga-1-c'-(b-(\ga-1-c'))-(\gb-c')+1)R^{\gb-c'-1}g  $$ $$ = R^{\gb-c'}(R^*)^{\ga-c'}f+(\gb-c')(a-b+2\ga-1-c'-\gb)R^{\gb-c'-1} (R^*)^{\ga-1-c'}f $$
\begin{align*}
R^* R^{\gb-c'} g
&= R^{\gb-c'}R^*g+(\gb-c')(a+\ga-1-c'-(b-(\ga-1-c'))-(\gb-c')+1)R^{\gb-c'-1}g \cr
& = R^{\gb-c'}(R^*)^{\ga-c'}f
+(-\gb+c')(b-a-2\ga+1+c'+\gb)R^{\gb-c'-1} (R^*)^{\ga-1-c'}f,
\end{align*}
and we have
\begin{align*}
(R^*)^\ga R^\gb f = \sum_{c'=0}^{\ga-1} &
{\ga-1\choose c'} (-\gb)_{c'} (b-a+\gb-\ga)_{c'} \Bigl\{R^{\gb-c'}(R^*)^{\ga-c'}f \cr
&+(-\gb+c')(b-a-2\ga+1+c'+\gb)R^{\gb-c'-1} (R^*)^{\ga-1-c'}f\Bigr\}.
\end{align*}
The coefficient of $R^{\gb-c}(R^*)^{\ga-c}f$ in the above formula for 
$(R^*)^\ga R^\gb f$ is
\begin{align*}
& {\ga-1\choose c} (-\gb)_{c} (b-a+\gb-\ga+1)_{c} \cr
& +{\ga-1\choose c-1} (-\gb)_{c-1} (b-a+\gb-\ga+1)_{c-1}
(-\gb+c-1)(b-a-2\ga+1+(c-1)+\gb) \cr
&\qquad = {(\ga-1)!\over c! (\ga-c)!} (-\gb)_c (b-a+\gb-\ga+1)_{c-1}
\bigl\{ \quad
% (\ga-c)(b-a+\gb-\ga+c)+c(b-a-2\ga+c+\gb) 
\bigr\}
= {\ga!\over c!(\ga-c)!}(-\gb)_c (b-a+\gb-\ga)_c, 
\end{align*}
where
$$
\bigl\{\quad \bigr\} = (\ga-c)(b-a+\gb-\ga+c)+c(b-a-2\ga+c+\gb)
= \ga(b-a+\gb-\ga). $$
%\begin{align*}
%\bigl\{\quad \bigr\} &= (\ga-c)(b-a+\gb-\ga+c)+c(b-a-2\ga+c+\gb)  \cr
%&= \ga(b-a+\gb-\ga). 
%\end{align*}
%$$ \bigl\{\quad \bigr\} = (\ga-c)(b-a+\gb-\ga+c)+c(b-a-2\ga+c+\gb) = \ga(b-a+\gb-\ga).  $$
Thus we obtain the desired formula for $(R^*)^\ga R^\gb f$, 
which completes the induction.

The other formulas follow in a similar fashion.
%The formula (\ref{L*alphaLbeta}), follows by the same argument,
%as do the other two, beginning with (\ref{RR*commf}) and (\ref{LL*commf}) 
%applied in the form
%$$ R R^* f =  R^* R f +(b-a)f, \qquad f\in \Hom_H(a,b), $$
%$$ L L^* f =  L^* L f +(b-a)f, \qquad f\in \Hom_K(a,b), $$
%respectively.
\end{proof}

\section{$U(\Hd)$-invariant subspaces}

It follows from Lemma \ref{RR*commutegeneral} that 
$R$ and $R^*$ inverses of each other in some sense.

%This is a general result, but I think more instructive to prove the cases
%required on the fly in Lemma \ref{RowMovementsLemma}.

\begin{lemma} 
\label{RRstarcancelcor}
For $\ga\le\gb$, and $\gb>\ga+a-b$ or $\gb\le a-b$, 
we have
\begin{align}
\label{RRstarcancelI}
(R^*)^\ga R^\gb (\ker R^*\cap\Hom_H(a,b))
&= R^{\gb-\ga} (\ker R^*\cap\Hom_H(a,b)), \\
\label{RRstarcancelII}
R^\ga (R^*)^\gb (\ker R\cap\Hom_H(b,a))
&= (R^*)^{\gb-\ga} (\ker R\cap\Hom_H(b,a)), 
\end{align}
otherwise
\begin{equation}
\label{RRstarcancelIII}
(R^*)^\ga R^\gb (\ker R^*\cap\Hom_H(a,b))=0, \qquad
R^\ga (R^*)^\gb (\ker R\cap\Hom_H(b,a))=0.
\end{equation}
\end{lemma}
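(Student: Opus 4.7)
The plan is to reduce each identity to a single-term computation using the explicit commutation formulas (\ref{R*alphaRbeta}) and (\ref{RalphaR*beta}) of Lemma \ref{RR*commutegeneral}, and then analyze when the surviving scalar coefficient vanishes.

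For the first identity, I would fix $f\in \ker R^*\cap\Hom_H(a,b)$ and apply (\ref{R*alphaRbeta}):
$$(R^*)^\ga R^\gb f = \sum_{c=0}^\ga \binom{\ga}{c}(-\gb)_c(b-a+\gb-\ga)_c\, R^{\gb-c}(R^*)^{\ga-c}f.$$
Because $R^*f=0$, one has $(R^*)^{\ga-c}f=0$ for every $c<\ga$, so the only term that can survive is $c=\ga$, giving
$$(R^*)^\ga R^\gb f \;=\; (-\gb)_\ga\,(b-a+\gb-\ga)_\ga\, R^{\gb-\ga}f.$$
(When $\gb-\ga<0$, the factor $(-\gb)_\ga$ contains $-\gb+\gb=0$ and the whole expression is zero, consistent with the convention of Lemma \ref{RR*commutegeneral}.) From here the two identities of the lemma are purely a matter of tracking when the scalar coefficient is nonzero.

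The coefficient analysis is the heart of the argument. Under the hypothesis $\ga\le\gb$ the factor $(-\gb)_\ga=\prod_{j=0}^{\ga-1}(-\gb+j)$ is a product of strictly negative integers and is therefore nonzero. The factor $(b-a+\gb-\ga)_\ga=\prod_{j=0}^{\ga-1}(b-a+\gb-\ga+j)$ vanishes precisely when some $j\in\{0,1,\dots,\ga-1\}$ equals $a-b+\ga-\gb$, i.e.\ precisely when $a-b<\gb\le a-b+\ga$. Its complement (within $\ga\le\gb$) is exactly the stated condition \emph{$\gb>\ga+a-b$ or $\gb\le a-b$}, under which the scalar is nonzero and the identity $(R^*)^\ga R^\gb f = c\,R^{\gb-\ga}f$ with $c\ne 0$ yields the set equality (\ref{RRstarcancelI}). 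Outside this range (and also when $\ga>\gb$, where $(-\gb)_\ga$ picks up the zero factor $-\gb+\gb$), the scalar is $0$, establishing the corresponding case of (\ref{RRstarcancelIII}).

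The second identity (\ref{RRstarcancelII}) follows by a symmetric argument: for $f\in\ker R\cap\Hom_H(b,a)$, apply (\ref{RalphaR*beta}) (with the roles of $a$ and $b$ exchanged, since now $f\in\Hom_H(b,a)$); only the $c=\ga$ term survives, and the resulting scalar is again $(-\gb)_\ga(b-a+\gb-\ga)_\ga$, so the same case analysis concludes the proof. No step presents a real obstacle — the only thing to watch is keeping the Pochhammer bookkeeping correct and making sure the hypothesis $\ga\le\gb$ is what guarantees $(-\gb)_\ga\ne 0$, while the second Pochhammer controls the ``resonance'' that forces the expression to $0$.
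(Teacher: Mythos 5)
Your proposal is correct and follows essentially the same route as the paper: both reduce via (\ref{R*alphaRbeta}) to the single surviving term $(-\gb)_\ga(b-a+\gb-\ga)_\ga R^{\gb-\ga}f$ and then determine exactly when that scalar vanishes. Your coefficient analysis (including the observation that $\ga>\gb$ forces the zero factor $-\gb+\gb$ in $(-\gb)_\ga$) is, if anything, spelled out more explicitly than in the paper's own proof.
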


\begin{proof} For $f\in\ker R^*\cap\Hom_H(a,b)$, $R^*f=0$, 
and so (\ref{R*alphaRbeta}) reduces to
$$ (R^*)^\ga R^\gb f = (-\gb)_\ga(b-a+\gb-\ga)_\ga R^{\gb-\ga}f, $$
The condition for the constant above to be nonzero is $\ga\le\gb$, and
the $\ga$ factors
$$ b-a+\gb-\ga,\quad b-a+\gb-\ga+1, \quad \ldots \quad b-a+\gb-1 $$
of $(b-a+\gb-\ga)_\ga$ are not zero,  
i.e., $b-a+\gb-\ga>0$ or $b-a+\gb-1<0$.
This gives the first case, with the other following by 
the same argument.
\end{proof}

By repeated applications of (\ref{HomHkerRandR*}) and (\ref{HomHkerR*andR}),
we obtain the following.

\begin{lemma}
\label{2orthoexps}
We have the orthogonal direct sums
\begin{align} 
\label{orthdirectsumI}
\Hom_H(k-b,b) &= \bigoplus_{j=0}^{b}
R^{b-j} \bigr(\ker R^*\cap \Hom_H(k-j,j)\bigl), \quad b\le k-b, \\
\label{orthdirectsumII}
\Hom_H(k-b,b) &= \bigoplus_{j=0}^{k-b}
(R^*)^{k-b-j} \bigr(\ker R\cap \Hom_H(j,k-j)\bigl),
\quad k-b\le b.
\end{align} 
Further
\begin{enumerate}[\rm(i)]
\item For $a>b$, $R$ is 1-1 on $\Hom_H(a,b)$.
\item For $a\le b$, $R$ maps $\Hom_H(a,b)$ onto $\Hom_H(a-1,b+1)$.
\end{enumerate}
\end{lemma}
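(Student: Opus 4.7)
The plan is to first extract a single identity that drives the argument, then derive parts (i) and (ii), and finally obtain the direct sum decompositions by iteration. Pairing the commutator relation (\ref{RR*commf}) with $f$ and invoking the adjoint relation from Lemma \ref{Rstarisadjoint} yields the key identity
$$\|Rf\|^2 - \|R^*f\|^2 = (a-b)\|f\|^2, \qquad f \in \Hom_H(a,b).$$
Part (i) is immediate: if $a > b$ and $f \ne 0$, then $\|Rf\|^2 \ge (a-b)\|f\|^2 > 0$. For (ii), I apply (\ref{HomHkerRandR*}) at the target to write $\Hom_H(a-1,b+1) = (\ker R^* \cap \Hom_H(a-1,b+1)) \oplus R\Hom_H(a,b)$; any $g$ in the kernel piece satisfies $R^*g = 0$, so the identity applied in $\Hom_H(a-1,b+1)$ gives $\|Rg\|^2 = (a-b-2)\|g\|^2 \le -2\|g\|^2$, forcing $g = 0$. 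Hence $R\Hom_H(a,b) = \Hom_H(a-1,b+1)$.

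For (\ref{orthdirectsumI}) I induct on $b$, with $b \le k-b$ throughout. The base case $b=0$ is immediate, since $\Hom_H(k,0)$ consists of polynomials in $z, w$ alone and is therefore annihilated by $R^*$, giving $\Hom_H(k,0) = \ker R^* \cap \Hom_H(k,0)$. For the inductive step, (\ref{HomHkerRandR*}) gives
$$\Hom_H(k-b,b) = (\ker R^* \cap \Hom_H(k-b,b)) \oplus R\,\Hom_H(k-b+1, b-1),$$
and part (i) applies to $\Hom_H(k-b+1, b-1)$ since $k-b+1 > b-1$ whenever $b \le k-b$. Injectivity of $R$ on this space preserves the directness of the inductive decomposition of $\Hom_H(k-b+1,b-1)$ when $R$ is applied, producing the claimed sum.

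The most delicate point is the orthogonality between distinct summands $R^{b-j}(\ker R^* \cap \Hom_H(k-j,j))$. The $j=b$ summand is orthogonal to all others by (\ref{HomHkerRandR*}) applied at the top level. For $j < j' \le b$, I transport via the adjoint,
$$\langle R^{b-j}u, R^{b-j'}v\rangle = \langle (R^*)^{b-j'} R^{b-j} u, v\rangle,$$
and collapse the right side using (\ref{R*alphaRbeta}) with $u \in \ker R^*$: only the $c = b-j'$ term survives, leaving a scalar multiple of $R^{j'-j}u$, which lies in $R\,\Hom_H(k-j'+1, j'-1)$. By (\ref{HomHkerRandR*}) at level $j'$, this is orthogonal to $v \in \ker R^* \cap \Hom_H(k-j',j')$, completing the orthogonality. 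The decomposition (\ref{orthdirectsumII}) follows by the symmetric argument, replacing (\ref{HomHkerRandR*}) by (\ref{HomHkerR*andR}) and using the analogue of (i) for $R^*$ valid when $b > a$ (obtained by reading the same identity with the roles of $R$ and $R^*$ swapped). The main obstacle is the bookkeeping in the orthogonality computation; the algebraic content is driven entirely by the collapse in (\ref{R*alphaRbeta}).
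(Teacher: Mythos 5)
Your proof is correct, and while it follows the same skeleton as the paper's --- iterating the orthogonal splittings (\ref{HomHkerRandR*}) and (\ref{HomHkerR*andR}) to telescope $\Hom_H(k-b,b)$ down to the kernel pieces --- the way you establish (i) and (ii) is genuinely different and, to my mind, cleaner. The paper proves injectivity summand-by-summand by invoking Lemma \ref{RRstarcancelcor} (i.e., $R^*R$ acting as a nonzero scalar on each $R^{b-j}(\ker R^*\cap\Hom_H(k-j,j))$), and gets surjectivity in (ii) by showing $R^*$ is injective on the target and passing to adjoints. Your identity $\norm{Rf}^2-\norm{R^*f}^2=(a-b)\norm{f}^2$, obtained by pairing (\ref{RR*commf}) with $f$ and using Lemma \ref{Rstarisadjoint}, delivers (i) in one line on all of $\Hom_H(a,b)$ (not just on the summands), and delivers (ii) by killing the kernel piece $\ker R^*\cap\Hom_H(a-1,b+1)$ outright via positivity; this is the standard $\mathfrak{sl}_2$-weight argument and avoids Lemma \ref{RRstarcancelcor} entirely for these two parts. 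You also make explicit a point the paper glosses over: that applying $R$ to the orthogonal decomposition of $\Hom_H(k-b+1,b-1)$ preserves orthogonality of the summands. Your adjoint-transport computation, collapsing $(R^*)^{b-j'}R^{b-j}u$ via (\ref{R*alphaRbeta}) to a multiple of $R^{j'-j}u\in R\,\Hom_H(k-j'+1,j'-1)$ and then citing (\ref{HomHkerRandR*}) at level $j'$, is exactly the right way to close that gap, though it does reimport the algebraic content of Lemma \ref{RRstarcancelcor} through (\ref{R*alphaRbeta}). In short: same decomposition, more elementary injectivity/surjectivity arguments, and a more careful orthogonality verification.
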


\begin{proof}
Apply 
(\ref{HomHkerRandR*}) and (\ref{HomHkerR*andR}) repeatedly.
For $b\le k-b$, we have
\begin{align*}
& \Hom_H(k-b,b) 
= ( \ker R^* \cap \Hom_H(k-b,b))\oplus R\Hom_H(k-b+1,b-1) \cr
& \quad = (\ker R^*\cap\Hom_H(k-b,b)) \cr
& \qquad\qquad 
\oplus R\{(\ker R^*\cap\Hom_H(k-b+1,b-1)) \oplus R \Hom_H(k-b+2,b-2)\} \cr
& \quad = (\ker R^*\cap \Hom_H(k-b,b)) \oplus R(\ker R^*\cap\Hom_H(k-b+1,b-1))\cr
&\qquad\qquad\oplus R^2(\ker R^*\cap\Hom_H(k-b+2,b-2))\oplus \cdots
\oplus R^b(\ker R^*\cap\Hom_H(k,0)).
\end{align*}
Similarly, for $k-b\le b$, we have
\begin{align*}
\Hom_H(k-b,b) 
&= (\ker R\cap \Hom_H(k-b,b)) 
\oplus R^*(\ker R\cap \Hom_H(k-b-1,b+1)) \cr
& \qquad \oplus (R^*)^2(\ker R\cap \Hom_H(k-b-2,b+2))\oplus\cdots \cr
& \qquad\qquad \cdots\oplus (R^*)^{k-b} (\ker R\cap \Hom_H(0,k)),
\end{align*}
which gives (\ref{orthdirectsumI}) and (\ref{orthdirectsumII}).

To show the injectivity of (i), 
it suffices to show that for $k-b>b$, i.e., $b+1\le k-b$,
$R$ is 1-1 on each summand in (\ref{orthdirectsumI}), i.e., 
$$ R^{b-j} \bigr(\ker R^*\cap \Hom_H(k-j,j)\bigl), \qquad 0\le j\le b. $$
This follows from
$$ R^*R R^{b-j} \bigr(\ker R^*\cap \Hom_H(k-j,j)\bigl)
= R^{b-j} \bigr(\ker R^*\cap \Hom_H(k-j,j)\bigl), $$
which is (\ref{RRstarcancelI}) of Lemma \ref{RRstarcancelcor}
for $\ga=1$, $\gb=b-j+1$, $a=k-j$, $b=j$, since
$$ b+1\le k-b, \quad j\le b 
%\Implies b+j < k 
\Implies b+j+1\le k, \quad\hbox{i.e., the condition $\gb\le a-b$ holds}. $$

For $a\le b$, a similar argument shows that $R^*$ is 1-1 on $\Hom_H(a-1,b+1)$.
Here, when $a=0$, $\Hom_H(a-1,b+1)=0$. Therefore, 
$(R^*|_{\Hom_H(a-1,b+1)})^* = R|_{\Hom_H(a,b)}$ is onto, and we have (ii). 
\end{proof}

The following result says that the $j$-terms in the expansions
of Lemma \ref{2orthoexps} 
(only one of which holds for a given $b$, $2b\ne k$) are in fact equal. 
This then allows for a single expansion for both cases
(Lemma \ref{HomkRdecomp}).
%This example can be dressed up to obtain ...

\begin{lemma} 
\label{RowMovementsLemma}
(Row movements) For $0\le j\le{k\over2}$, we have
\begin{align}
R^{k-2j+1} (\ker R^*\cap \Hom_H(k-j,j))&=0,
\label{Rpowerzero} \\
(R^*)^{k-2j+1} (\ker R\cap \Hom_H(j,k-j))&=0,
\label{Rstarpowerzero}
\end{align}
and for $j\le a\le k-j$, we have
\begin{equation}
\label{RtoRstargeneral}
R^{a-j} (\ker R^*\cap \Hom_H(k-j,j))
= (R^*)^{k-a-j}(\ker R\cap \Hom_H(j,k-j)).
%\quad j\le a\le k-j,
\end{equation}
%with 
%\begin{equation}
%\dim(R^{a-j} (\ker R^*\cap \Hom_H(k-j,j)))
%= \dim(\ker R^*\cap \Hom_H(k-j,j)). %  \quad j\le a\le k-j.
%\end{equation}
Furthermore, all of the results above hold with 
$\Hom_H(p,q)$ replaced by $H(p,q)$.
\end{lemma}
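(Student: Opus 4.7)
The plan is to reduce all three claims to the commutation identities of Lemma~\ref{RR*commutegeneral}, using the adjoint property of Lemma~\ref{Rstarisadjoint} together with one dimension count based on complex conjugation.

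First, I would prove (\ref{Rpowerzero}). For $f\in\ker R^*\cap\Hom_H(k-j,j)$, Lemma~\ref{Rstarisadjoint} gives $\|R^m f\|^2=\inpro{f,(R^*)^m R^m f}$. Since $R^*f=0$, the formula (\ref{R*alphaRbeta}) with $\alpha=\beta=m$ collapses to its $c=\alpha$ term and yields $(R^*)^m R^m f=(-m)_m\,(2j-k)_m\,f$. Taking $m=k-2j+1$, the last factor of $(2j-k)_m$ equals $(2j-k)+m-1=0$, so $\|R^{k-2j+1}f\|^2=0$ and $R^{k-2j+1}f=0$. The argument for (\ref{Rstarpowerzero}) is identical with $R$ and $R^*$ swapped, using (\ref{RalphaR*beta}).

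For (\ref{RtoRstargeneral}) both sides lie in $\Hom_H(k-a,a)$. Repeating the norm computation with $m=a-j$ gives $\|R^{a-j}f\|^2=(-1)^{a-j}(a-j)!\,(2j-k)_{a-j}\|f\|^2$; for $j\le a\le k-j$ the factors of $(2j-k)_{a-j}$ are consecutive integers in the range $[\,2j-k,\,a+j-k-1\,]$, all strictly negative (or the range is empty when $a=j$), so this quantity is positive for $f\ne 0$. Hence $R^{a-j}$ is injective on $\ker R^*\cap\Hom_H(k-j,j)$, and dually $(R^*)^{k-a-j}$ is injective on $\ker R\cap\Hom_H(j,k-j)$. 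Combined with the complex-conjugation identity (\ref{Rconjids}), which gives $\dim_\CC(\ker R^*\cap\Hom_H(k-j,j))=\dim_\CC(\ker R\cap\Hom_H(j,k-j))$, the two sides of (\ref{RtoRstargeneral}) have the same finite dimension.

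To exhibit one inclusion, set $h:=R^{k-2j}f$ for $f\in\ker R^*\cap\Hom_H(k-j,j)$; then $h\in\Hom_H(j,k-j)$, and by (\ref{Rpowerzero}) we have $Rh=R^{k-2j+1}f=0$, so $h\in\ker R\cap\Hom_H(j,k-j)$. A second application of (\ref{R*alphaRbeta}), now with $\alpha=k-a-j$ and $\beta=k-2j$, again leaves only the $c=\alpha$ term and gives
\[ (R^*)^{k-a-j}h=(-(k-2j))_{k-a-j}\,(a+j-k)_{k-a-j}\,R^{a-j}f, \]
in which both Pochhammer products are sequences of strictly negative integers for $j\le a<k-j$, and are empty (hence equal to $1$) for $a=k-j$. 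In either case the scalar is nonzero, so $R^{a-j}f\in(R^*)^{k-a-j}(\ker R\cap\Hom_H(j,k-j))$; together with the dimension equality this proves (\ref{RtoRstargeneral}). The promotion to $H(p,q)$ is immediate: by Lemma~\ref{LapRandLcommute} the operators $R,R^*$ commute with the Laplacian and therefore preserve $\Harm_k(\Hd,\CC)$, so every step above restricts verbatim to the harmonic subspaces $\ker R^*\cap H(k-j,j)$ and $\ker R\cap H(j,k-j)$. The main technical obstacle I anticipate is the Pochhammer bookkeeping: the product $(2j-k)_n$ must vanish precisely at $n=k-2j+1$ for parts (i) and (ii) but remain nonzero throughout $0\le n\le k-2j$ in the argument for (iii), and an analogous vigilance is required for $(a+j-k)_n$ in the inclusion step.
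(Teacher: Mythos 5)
Your proposal is correct, and it rests on the same key input as the paper's proof --- the collapsed commutation formula $(R^*)^{\ga}R^{\gb}f=(-\gb)_\ga(b-a+\gb-\ga)_\ga R^{\gb-\ga}f$ for $f\in\ker R^*\cap\Hom_H(a,b)$ coming from Lemma \ref{RR*commutegeneral}, together with the adjoint property of Lemma \ref{Rstarisadjoint} --- but the deduction of (\ref{RtoRstargeneral}) is organised differently. The paper (via Lemma \ref{RRstarcancelcor}) first proves the extreme case $R^{k-2j}(\ker R^*\cap\Hom_H(k-j,j))=\ker R\cap\Hom_H(j,k-j)$ by a two-sided sandwich using $(R^*)^{k-2j}R^{k-2j}$ and $R^{k-2j}(R^*)^{k-2j}$ acting as the identity on the respective kernels, and then obtains the general $a$ by composing set equalities through this pivot: $R^{a-j}(\ker R^*\cap\cdots)=R^{a-j}(R^*)^{k-2j}(\ker R\cap\cdots)=(R^*)^{k-j-a}(\ker R\cap\cdots)$. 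You instead prove injectivity of $R^{a-j}$ and $(R^*)^{k-a-j}$ on the two kernels by a positive-norm computation, equate the dimensions of the two sides via the conjugation identity (\ref{Rconjids}), and exhibit a single inclusion by an explicit nonzero scalar identity; equality then follows from the dimension count. Your route has the side benefit of re-deriving the injectivity statements that the paper establishes separately in Lemma \ref{2orthoexps}, and your norm argument $\norm{R^{k-2j+1}f}^2=\inpro{f,(R^*)^{k-2j+1}R^{k-2j+1}f}=0$ is the correct fleshed-out version of (\ref{Rpowerzero}) (the paper's citation of the parameters ``$\ga=0$, $\gb=k-2j+1$'' there reads as a slip; the working choice is $\ga=\gb=k-2j+1$, which is what you use). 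The Pochhammer bookkeeping you flag as the main hazard checks out: $(2j-k)_n$ vanishes first at $n=k-2j+1$ and the factors of $(-(k-2j))_{k-a-j}$ and $(a+j-k)_{k-a-j}$ are all strictly negative (or the products are empty) for $j\le a\le k-j$, and the restriction to $H(p,q)$ via Lemma \ref{LapRandLcommute} is as routine as you claim.
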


\begin{proof} 
The equations (\ref{Rpowerzero}) and (\ref{Rstarpowerzero}) follow
from Lemma \ref{RRstarcancelcor} for the choice $\ga=0$, $\gb=k-2j+1$, $a=k-j$, 
$b=j$.
%Let 
%%$f\in\ker R^*\cap \Hom_H(k-j,j)$ and $g\in\ker R\cap \Hom_H(j,k-j)$.
%$$ f\in\ker R^*\cap \Hom_H(k-j,j), \qquad g\in\ker R\cap \Hom_H(j,k-j). $$
%Then (\ref{R*alphaRbeta}) and (\ref{RalphaR*beta}) from Lemma \ref{RR*commutegeneral} 
%give
%\begin{equation}
%\label{R*gaRgaf}
%(R^*)^\ga R^\ga f = (-\ga)_\ga (2j-k)_\ga\, f,
%\qquad
%R^\ga (R^*)^\ga g = (-\ga)_\ga (2j-k)_\ga\, g,
%\end{equation}
%where $(-\ga)_\ga \ne 0$. 
%In particular, for $\ga=k-2j+1$, 
%the constant $(2j-k)_\ga=0$, 
%giving
%%the constant above is zero, giving
%$$ \inpro{R^{k-2j+1}f,R^{k-2j+1}f}
%= \inpro{(R^*)^{k-2j+1}R^{k-2j+1}f,f}=0 
%\Implies R^{k-2j+1}f=0, $$
%$$ \inpro{(R^*)^{k-2j+1}g,(R^*)^{k-2j+1}g}
%= \inpro{R^{k-2j+1}(R^*)^{k-2j+1}g,g}=0 
%\Implies (R^*)^{k-2j+1}g=0. $$
%Thus, we obtain (\ref{Rpowerzero}) and (\ref{Rstarpowerzero}),
These give the inclusions
$$ R^{k-2j} (\ker R^*\cap \Hom_H(k-j,j)) \subset \ker R\cap \Hom_H(j,k-j), $$
$$ (R^*)^{k-2j} (\ker R\cap \Hom_H(j,k-j)) \subset \ker R^*\cap \Hom_H(k-j,j). $$

We now prove the cases $a=k-j$ and $a=j$ in (\ref{RtoRstargeneral}), 
i.e.,
\begin{align}
R^{k-2j} (\ker R^*\cap \Hom_H(k-j,j)) &= \ker R\cap \Hom_H(j,k-j), 
\label{kerRstartokerR} \\
(R^*)^{k-2j} (\ker R\cap \Hom_H(j,k-j)) &= \ker R^*\cap \Hom_H(k-j,j),
\label{kerRtokerRstar}
\end{align}
Taking $\ga=\gb=k-2j$ in Lemma \ref{RRstarcancelcor} gives
%For $\ga=k-2j$, the constant 
%$(-\ga)_\ga (2j-k)_\ga$ in (\ref{R*gaRgaf}) is nonzero, which gives
\begin{equation}
\label{R*Rfix}
(R^*)^{k-2j} R^{k-2j} (\ker R^*\cap \Hom_H(k-j,j)) 
= \ker R^*\cap \Hom_H(k-j,j),
\end{equation}
$$ R^{k-2j} (R^*)^{k-2j} (\ker R\cap \Hom_H(j,k-j))
= \ker R\cap \Hom_H(j,k-j). $$
Thus we have
\begin{align*}
\ker R\cap \Hom_H(j,k-j)
& = R^{k-2j} (R^*)^{k-2j} (\ker R\cap \Hom_H(j,k-j)) \cr
& \subset R^{k-2j} (\ker R^*\cap \Hom_H(k-j,j)) \cr
& \subset \ker R\cap \Hom_H(j,k-j),
\end{align*}
which gives
(\ref{kerRstartokerR}), with (\ref{kerRtokerRstar}) following
similarly. Now (\ref{R*Rfix}) and (\ref{kerRstartokerR}) give
%Finally,
\begin{align}
\label{halfofit}
R^{a-j}(\ker R^*\cap \Hom_H(k-j,j)) 
& = R^{a-j} (R^*)^{k-2j}R^{k-2j}(\ker R^*\cap \Hom_H(k-j,j)) \cr
& = R^{a-j} (R^*)^{k-2j}(\ker R\cap \Hom_H(j,k-j)).
%& = (R^*)^{k-2j-(a-j)}(\ker R\cap \Hom_H(j,k-j)) \cr
%& = (R^*)^{k-a-j}(\ker R\cap \Hom_H(j,k-j)),
\end{align}
Taking $\ga=a-j$, $\gb=k-2j$ in Lemma \ref{RRstarcancelcor} gives
%For $f\in \ker R\cap \Hom_H(j,k-j)$, (\ref{RalphaR*beta}) gives
%$$ R^{a-j} (R^*)^{k-2j}f= (2j-k)_{a-j} (R^*)^{k-2j-(a-j)} f, $$
%where $(2j-k)_{a-j}\ne0$, we obtain
$$ R^{a-j} (R^*)^{k-2j}(\ker R\cap \Hom_H(j,k-j)) 
= (R^*)^{k-j-a} (\ker R\cap \Hom_H(j,k-j)), $$
which together with (\ref{halfofit}) gives (\ref{RtoRstargeneral}).
\end{proof}

We now present a key technical result.

\begin{lemma}
\label{HomkRdecomp}
We have the orthogonal direct sum decompositions
\begin{align}
\label{HomkUorthogdecomp}
\Hom_k(\Hd,\CC) &= \bigoplus_{0\le j\le {k\over2}}
\bigoplus_{j\le b\le k-j} \Hom_H(k-b,b)_{k-2j},  \\
\label{HkUorthogdecomp}
\Harm_k(\Hd,\CC) &= \bigoplus_{0\le j\le {k\over2}}
\bigoplus_{j\le b\le k-j} H(k-b,b)_{k-2j},
\end{align}
into $U(\Hd)$-invariant subspaces,
where
%$$ \Hom_H(k-b,b)_{k-2j} 
%:= R^{b-j} \bigr(\ker R^*\cap \Hom_H(k-j,j)\bigl)
%= (R^*)^{k-b-j} \bigr(\ker R\cap \Hom_H(j,k-j)\bigl), $$
%$$ H(k-b,b)_{k-2j} 
%:= R^{b-j} \bigr(\ker R^*\cap H(k-j,j)\bigl)
%= (R^*)^{k-b-j} \bigr(\ker R\cap H(j,k-j)\bigl), $$
\begin{align}
\label{Irreducibledefn}
\Hom_H(k-b,b)_{k-2j} 
&:=  R^{b-j} \bigr(\ker R^*\cap \Hom_H(k-j,j)\bigl) \cr
&\ = (R^*)^{k-b-j} \bigr(\ker R\cap \Hom_H(j,k-j)\bigl) \cr
&\ \subset 
\Hom_H(k-b,b), \\
\label{Irreducibledefn2}
H(k-b,b)_{k-2j} 
&:= R^{b-j} \bigr(\ker R^*\cap H(k-j,j)\bigl) \cr
&\ = (R^*)^{k-b-j} \bigr(\ker R\cap H(j,k-j)\bigl) \cr
&\ \subset H(k-b,b).
\end{align}
\end{lemma}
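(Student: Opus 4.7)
The plan is to build the decomposition in two stages: first refine $\Hom_k(\Hd,\CC)$ according to the bidegree coming from the $\Hom_H(p,q)$ splitting, and then refine each $\Hom_H(k-b,b)$ via the $R,R^*$-expansion already established in Lemma \ref{2orthoexps}. To begin, I would record the standard orthogonal decomposition
\[
\Hom_k(\Hd,\CC) \;=\; \bigoplus_{b=0}^{k}\Hom_H(k-b,b),
\]
which is immediate from orthogonality of monomials of different $H$-bidegree for both inner products (\ref{firstinpro}) and (\ref{secondinpro}).

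Next, for each fixed $b$ I would apply Lemma \ref{2orthoexps}: if $b\le k-b$, use (\ref{orthdirectsumI}) to write $\Hom_H(k-b,b)=\bigoplus_{j=0}^{b}R^{b-j}(\ker R^*\cap\Hom_H(k-j,j))$, and if $k-b\le b$, use (\ref{orthdirectsumII}) analogously in terms of $(R^*)^{k-b-j}(\ker R\cap\Hom_H(j,k-j))$. In either case, the index $j$ runs from $0$ to $\min(b,k-b)$. Substituting these into the $b$-expansion and swapping the order of summation yields
\[
\Hom_k(\Hd,\CC)=\bigoplus_{0\le j\le k/2}\ \bigoplus_{j\le b\le k-j}\Hom_H(k-b,b)_{k-2j},
\]
which is (\ref{HomkUorthogdecomp}). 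The equality of the two formulas in (\ref{Irreducibledefn}) is exactly the ``row movement'' identity (\ref{RtoRstargeneral}) of Lemma \ref{RowMovementsLemma}, so both expressions describe the same subspace and the definition is unambiguous.

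For the harmonic version (\ref{HkUorthogdecomp}), I would observe that by Lemma \ref{LapRandLcommute}, $R$ and $R^*$ preserve $\Harm_k(\Hd,\CC)$, so intersecting every step of the above argument with $\Harm_k(\Hd,\CC)$ gives the parallel orthogonal expansions of each $H(k-b,b)$, after noting $\Harm_k\cap\Hom_H(p,q)=H(p,q)$ and $\ker R^*\cap H(p,q)=\Harm_k\cap(\ker R^*\cap\Hom_H(p,q))$. This provides (\ref{HkUorthogdecomp}) and the second identity in (\ref{Irreducibledefn2}), again via (\ref{RtoRstargeneral}) applied inside $\Harm_k$.

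Finally, $U(\Hd)$-invariance of each summand follows from three ingredients already assembled: $\Hom_H(k-j,j)$ (and therefore $H(k-j,j)$) is $U(\Hd)$-invariant by Lemma \ref{hardtounderstandlemma}; the operators $R$ and $R^*$ commute with the action of $U(\Hd)$ by Lemma \ref{RandU(Hd)commute}; hence $\ker R^*\cap\Hom_H(k-j,j)$ is $U(\Hd)$-invariant, and applying the $U(\Hd)$-equivariant operator $R^{b-j}$ preserves this invariance. The main obstacle I expect is purely bookkeeping: checking that the range of the double index $(j,b)$ obtained by swapping summation order matches $0\le j\le k/2$, $j\le b\le k-j$, and verifying that the two formulas for $\Hom_H(k-b,b)_{k-2j}$ really do agree in the boundary cases $2b=k$ where both descriptions apply simultaneously. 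This last point is precisely what Lemma \ref{RowMovementsLemma} supplies, so no new computation beyond that lemma should be required.
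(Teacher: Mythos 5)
Your proposal is correct and follows essentially the same route as the paper: decompose each $\Hom_H(k-b,b)$ via Lemma \ref{2orthoexps}, reconcile the two formulas for $\Hom_H(k-b,b)_{k-2j}$ using the row-movement identity (\ref{RtoRstargeneral}), pass to harmonics by intersecting with $\ker\gD$ (Lemma \ref{LapRandLcommute}), and obtain $U(\Hd)$-invariance from the equivariance of $R$, $R^*$ (Lemma \ref{RandU(Hd)commute}). The only difference is cosmetic ordering of the steps.
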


\begin{proof} 
Since the Laplacian operator $\gD$ commutes with $R$ and $R^*$ (Lemma \ref{LapRandLcommute}),  
the decomposition
(\ref{HkUorthogdecomp}) follows from (\ref{HomkUorthogdecomp}) 
by taking the intersection with the harmonic polynomials. 
We therefore consider just the decomposition of $\Hom_H(k-b,b)$.

Since $\Hom_H(k-b,b)$ and $H(k-b,b)$ are invariant under $U(\CC^{2d})$,
they are invariant under $U(\Hd)$. 
Moreover, the action of $U(\Hd)$ commutes with $R$ and $R^*$ 
(Lemma \ref{RandU(Hd)commute}), and so the summands
in (\ref{HomkUorthogdecomp}) and (\ref{HkUorthogdecomp}) are $U(\Hd)$-invariant.
As an indicative calculation, let $U\in U(\Hd)$, then
$$ f\in\ker R^* \Iff % R^*f=0 \Iff 
U\cdot(R^*f)=0
\Iff R^*(U\cdot f)=0 \Iff U\cdot f\in\ker R^*, $$
and so
\begin{align*}
U\cdot H(k-b,b)_{k-2j} 
&= U\cdot R^{b-j} \bigr(\ker R^*\cap H(k-j,j)\bigl)
%= R^{b-j} U\cdot \bigr(\ker R^*\cap H(k-j,j)\bigl) \cr
= R^{b-j} \bigr(U\cdot\ker R^*\cap U\cdot H(k-j,j)\bigl) \cr
&= R^{b-j} \bigr(\ker R^*\cap H(k-j,j)\bigl) 
=H(k-b,b)_{k-2j}.
\end{align*}

Since 
$$ j\le b\le k-j \Iff j\le b,\quad j\le k-b \Iff
j\le\min\{b,k-b\}, $$
the direct sum (\ref{HomkUorthogdecomp}) can be rearranged as
$$ \Hom_k(\Hd,\CC) 
= \bigoplus_{0\le b\le k} \bigoplus_{j=0}^{\min\{b,k-b\}}
\Hom_H(k-b,b)_{k-2j}. $$
By Lemma \ref{RowMovementsLemma},
$$ R^{b-j} \bigr(\ker R^*\cap \Hom_H(k-j,j)\bigl)
= (R^*)^{k-b-j} \bigr(\ker R\cap \Hom_H(j,k-j)\bigl),$$
which gives the equalities in (\ref{Irreducibledefn}) and (\ref{Irreducibledefn2}),
and so   %, in light of (\ref{Homabdecomp}), 
it suffices to show the orthogonal direct sums
\begin{align*}
\Hom_H(k-b,b) &= \bigoplus_{j=0}^{b}
R^{b-j} \bigr(\ker R^*\cap \Hom_H(k-j,j)\bigl), \quad b\le k-b, \cr
\Hom_H(k-b,b) &= \bigoplus_{j=0}^{k-b}
(R^*)^{k-b-j} \bigr(\ker R\cap \Hom_H(j,k-j)\bigl),
\quad k-b\le b.
\end{align*}
These are given by Lemma \ref{2orthoexps}.
\end{proof}

%\begin{example}
%We note the $U(\Hd)$-invariant subspaces $H(k-b,b)_{k-2j}\subset H(k-b,b)$ decompose
%the $U(\CC^{2d}$-invariant subspace $H(k-b,b)$
%\end{example}

To calculate the dimensions of various irreducibles,
we will need the following.

\begin{lemma}
\label{H(k-j,j)capKerR*dim} 
Let $0\le j\le {k\over2}$. For $d=1$, we have the following dimensions
$$  \dim \bigr(\ker R^*\cap \Hom_H(k-j,j)\bigl)=k-2j+1, \quad
 \dim \bigr(\ker R^*\cap H(k-j,j)\bigl)
=\begin{cases}
1, & j=0; \cr
0, & j\ne 0.
\end{cases} $$
For $d\ge 2$, we have
\begin{equation}
\label{HomH(k-b,b)k-2jdim}
\dim \bigr(\ker R^*\cap \Hom_H(k-j,j)\bigl)
%\dim(\Hom_H(k-b,b)_{k-2j})
= (k-2j+1) {(k-j+2d-1)!(j+2d-2)!\over(k-j+1)!j!(2d-1)!(2d-2)!},
\end{equation}
\begin{equation}
\label{H(k-b,b)k-2jdim}
\dim \bigr(\ker R^*\cap H(k-j,j)\bigl)
%\dim(H(k-b,b)_{k-2j})
= (k-2j+1) (k+2d-1) {(k-j+2d-2)!(j+2d-3)!\over(k-j+1)!j!(2d-1)!(2d-3)!}.
\end{equation}
\end{lemma}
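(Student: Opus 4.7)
The plan is to identify $\ker R^*\cap\Hom_H(k-j,j)$ as an orthogonal complement inside $\Hom_H(k-j,j)$, read off its dimension by subtraction, and then repeat the argument after intersecting with $\Harm_k(\Hd,\CC)$.

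First I apply the orthogonal decomposition \eqref{HomHkerRandR*} with $a=j$ to obtain
$$
\Hom_H(k-j,j)=\bigl(\ker R^*\cap\Hom_H(k-j,j)\bigr)\oplus R\,\Hom_H(k-j+1,j-1).
$$
Lemma \ref{2orthoexps}(i) says $R$ is $1$-$1$ on $\Hom_H(a,b)$ whenever $a>b$, and since $0\le j\le k/2$ forces $k-j+1>j-1$, this applies to $\Hom_H(k-j+1,j-1)$. Taking dimensions therefore gives
$$
\dim\bigl(\ker R^*\cap\Hom_H(k-j,j)\bigr)=\dim\Hom_H(k-j,j)-\dim\Hom_H(k-j+1,j-1),
$$
with the boundary case $j=0$ absorbed by $\Hom_H(k+1,-1)=0$. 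Substituting the product of binomial coefficients from \eqref{HomHpqdim}, pulling out the common factorial prefactor, and simplifying the bracket should collapse the difference to $(2d-1)(k-2j+1)$ times that prefactor, yielding \eqref{HomH(k-b,b)k-2jdim}. The $d=1$ count either specializes this formula or falls out from $\dim\Hom_H(p,q)=(p+1)(q+1)$, giving $(k-j+1)(j+1)-(k-j+2)j=k-2j+1$.

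For the harmonic version I need the same decomposition with $\Hom_H$ replaced everywhere by $H$. The inclusion $\supset$ is immediate since $R$ and $R^*$ commute with $\Delta$ (Lemma \ref{LapRandLcommute}). For the reverse inclusion, given $f\in H(k-j,j)$ I decompose $f=f_0+Rg$ via \eqref{HomHkerRandR*} and apply $\Delta$, getting $0=\Delta f_0+R\,\Delta g$ inside $\Hom_H(k-j-1,j-1)$. Here $\Delta f_0$ lies in $\ker R^*\cap\Hom_H(k-j-1,j-1)$ (because $R^*\Delta f_0=\Delta R^*f_0=0$) while $R\,\Delta g$ lies in $R\,\Hom_H(k-j,j-2)$, so the direct-sum decomposition of $\Hom_H(k-j-1,j-1)$ furnished by another application of \eqref{HomHkerRandR*} forces both $\Delta f_0=0$ and $R\,\Delta g=0$; injectivity of $R$ on $\Hom_H(k-j,j-2)$ (valid for $j\le k/2$) then gives $\Delta g=0$. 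Thus
$$
H(k-j,j)=\bigl(\ker R^*\cap H(k-j,j)\bigr)\oplus R\,H(k-j+1,j-1),
$$
and the same subtraction-of-dimensions argument with \eqref{Hpqdim} in place of \eqref{HomHpqdim} yields \eqref{H(k-b,b)k-2jdim}.

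The main obstacle is the algebraic simplification of the telescoping difference. After extracting the common factorial prefactor, one is reduced to checking the polynomial identity $(k-j+1)(j+2d-2)-(k-j+2d-1)j=(2d-2)(k-2j+1)$ for the harmonic case, and its analogue with $2d-1$ for the $\Hom_H$ case. Both are routine expansions, but they must be matched against the precise $(2d-1)!(2d-3)!$ denominators quoted in the statement, which is the one place that requires care. A secondary subtlety is the $d=1$ boundary, where $(2d-3)!$ does not literally make sense and one simply evaluates the telescoping directly using $\dim H(p,q)=p+q+1=k+1$.
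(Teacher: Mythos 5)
Your proposal is correct and follows essentially the same route as the paper's proof: decompose $\Hom_H(k-j,j)$ (respectively $H(k-j,j)$) via (\ref{HomHkerRandR*}) (respectively its harmonic restriction (\ref{HkerRandR*}), which the paper records earlier and which you re-derive by commuting with $\Delta$), use the injectivity of $R$ on $\Hom_H(k-j+1,j-1)$ from Lemma \ref{2orthoexps} to turn the complementary summand into a dimension difference, and simplify with (\ref{HomHpqdim}) and (\ref{Hpqdim}). One remark: carried out honestly, your telescoping at $d=1$, $j=0$ gives $\dim\bigl(\ker R^*\cap H(k,0)\bigr)=(k+1)-0=k+1$, consistent with $\ker R^*\cap H(k,0)=H(k,0)$ being the $(k+1)$-dimensional space of holomorphic monomials, rather than the value $1$ displayed in the statement, so that entry of the lemma appears to be a slip in the paper rather than a defect of your argument.
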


\begin{proof}
From (\ref{HomHkerRandR*}), and the fact $R$ is 1-1 on $\Hom_H(k-j+1,j-1)$
(Lemma \ref{2orthoexps}), we have
\begin{align*}
\dim(\Hom_H(k-j,j)\cap \ker R^*) 
&= \dim\left(\Hom_H(k-j,j)\right)-\dim\left(R \Hom_H(k-j+1,j-1)\right) \cr
&=\dim\left(\Hom_H(k-j,j)\right)-\dim\left(\Hom_H(k-j+1,j-1)\right).
\end{align*}
%With $p=k-j$, $q=j$, u
Using this and (\ref{HomHpqdim}), 
with $p=k-j$, $q=j$,
    % and (\ref{HomHpqdim}), 
we calculate (\ref{HomH(k-b,b)k-2jdim}) for $d\ge1$
\begin{align*}
& \dim( \ker R^* \cap \Hom_H(k-j,j)) \cr
& \qquad = {1\over(2d-1)!^2} \Bigl\{ {(p+2d-1)!(q+2d-1)!\over p! q!}
- {(p+2d)!(q+2d-2)!\over (p+1)! (q-1)!} \Bigr\} \cr
& \qquad = {(p+2d-1)!(q+2d-2)!\over (p+1)!q!(2d-1)!^2}
\{ (q+2d-1)(p+1)-(p+2d)q\} \cr
& \qquad = {(p+2d-1)!(q+2d-2)!\over (p+1)!q!(2d-1)!^2}
(p-q+1)(2d-1). 
\end{align*}
The other formula follows in a similar way, from 
%either from 
\begin{align*}
\dim(\ker R^* & \cap H(k-j,j)) = \dim(H(k-j,j))-\dim(H(k-j+1,j-1)) \cr
&= {k-j+2d-1\choose 2d-1}{j+2d-1\choose 2d-1}
- {k-j+2d\choose 2d-1}{j+2d-2\choose 2d-1},
\end{align*}
with the $d=1$ case calculated separately,
which completes the proof.
%or
%$$ \dim(\ker R^* \cap H(k-j,j))
%=  \dim(\ker R^*\cap\Hom_H(k-j,j))
%-\dim(\ker R^*\cap\Hom_H(k-j-1,j-1)). $$
%The latter follows from the fact that the Laplacian commutes with $R^*$
%and maps $\Hom_k^{(a,b)}$ onto $\Hom_{k-2}^{(a-1,b-1)}$.
%\begin{align*}
%&\dim(\ker R^*\cap H(k-j,j)) \cr
%&\qquad = \dim(H(k-j,j))-\dim(H(k-j+1,j-1)) \cr
%&\qquad = {(k+2d-1)\over(2d-1)!(2d-2)!} \Bigl\{
%{(p+2d-2)!(q+2d-2)!\over p!q!} - {(p+2d-1)!(q+2d-3)!\over (p+1)!(q-1)!}
%\Bigr\} \cr
%&\qquad = (k+2d-1) {(p+2d-2)!(q+2d-3)!\over (p+1)!q! (2d-1)!(2d-2)!}
%\bigl\{ (q+2d-2)(p+1)-(p+2d-1)q\bigl\} \cr
%&\qquad = (k+2d-1) {(p+2d-2)!(q+2d-3)!\over (p+1)!q! (2d-1)!(2d-2)!}
%(2d-2)(p-q+1),
%\end{align*}
\end{proof}

\begin{example} For $j=0$, (\ref{H(k-b,b)k-2jdim}) reduces to
$$ \dim \bigr(\ker R^*\cap H(k,0)\bigl)
%= (k+1) (k+2d-1) {(k+2d-2)!(2d-3)!\over(k+1)!(2d-1)!(2d-3)!}
= {(k+2d-1)!\over k! (2d-1)!}
= \dim\bigl(H(k,0)\bigl), $$
so that $\ker R^*\cap H(k,0)$ is the holomorphic polynomials, i.e.,
$$ \ker R^*\cap H(k,0) = H(k,0) 
=\bigoplus_{|\ga+\gb|=k} \spam\{ z^\ga w^\gb \} \quad
\hbox{(orthogonal direct sum)}. $$
%=\spam\{ z^\ga w^\gb \}_{|\ga+\gb|=k}. $$
We also observe, from the proof of Lemma \ref{H(k-j,j)capKerR*dim}, that
for $0\le j\le {k\over 2}$, we have
$$ \ker R^*\cap H(k-j,j)
=\{f\in H(k-j,j) : f\perp \bigoplus_{0\le a<j} H(k-a,a)\}, $$
so the by applying Gram-Schmidt to a spanning sequence ordered so that its
elements are in $H(k,0),H(k-1,1), \ldots H(k-j,j)$, successively, 
the corresponding elements are an orthonormal basis for
$\ker R^*\cap H(k,0), \ldots, \ker R^*\cap H(k-j,j)$.
\end{example}

The results of this section can found or deduced from those of
the work of \cite{Ghent14}. Their variables $z_1,\ldots, z_{2p}$ 
correspond to ours via
$$ z_1,\ldots,z_{2p}  \quad\longleftrightarrow\quad
z_1,w_1,\ldots,z_p,w_p, $$
and they define operators
$$ \gep=R^*, \quad \gep^\dagger= R. $$
The decomposition (\ref{HkUorthogdecomp}) for $H(k-b,b)$ of Lemma \ref{HomkRdecomp} 
is presented as the two cases in Lemma \ref{2orthoexps} (Theorems 5.1 and 5.2 of \S 5 \cite{Ghent14}).

\section{Visualising the action of $L$ and $R$ on subspaces}

The action of $L$ and $R$ given in Lemma \ref{hardtounderstandlemma} leads to
the following orthogonal direct sums.

\begin{lemma}
\label{Homabdecomplemma}
We have the orthogonal direct sum decomposition into $(k+1)^2$ subspaces
%$$ \Hom_k(\Hd,\CC) = \bigoplus_{p+q=k\atop a+b=k}
%\Hom_H(p,q)\cap \Hom_K(a,b),  $$
\begin{equation}
\label{Homabdecomp}
\Hom_k(\Hd,\CC) = \bigoplus_{0\le a,b\le k} \Hom_k^{(a,b)}(\Hd).
\end{equation}
For $0\le a,b\le k$, let $m_a=m_a^{(k)}:=\min\{a,k-a\}$, $m_b=m_b^{(k)}:=\min\{b,k-b\}$,
and
\begin{equation}
\label{mMdefn}
m=m_{a,b}^{(k)}:=\min\{m_a,m_b\}, \quad 
 M=M_{a,b}^{(k)}:=\max\{m_a,m_b\}, \quad 
  c:=\min\{a,b\}.
\end{equation}
Then
\begin{equation}
\label{Homabstructure}
\Hom_k^{(a,b)}(\Hd) =\spam\{ z^{\ga_1}w^{\ga_2}\overline{z}^{\ga_3}
\overline{w}^{\ga_4}\}_{(\ga_1,\ga_2,\ga_3,\ga_4)\in A}, 
\end{equation}
%\begin{align*} \Hom_k^{(a,b)}(\Hd) 
%&=\spam\{z^{\ga_1} w^{\ga_2} \overline{z}^{\ga_3} \overline{w}^{\ga_4}:
%|\ga_1|=k-(a+b-c)-j, \cr &\qquad |\ga_2|=a-c+j, |\ga_3|=c-j,|\ga_4|=b-c+j,0\le j\le m\}, \end{align*}
where $A=A_{a,b,k}$ is given by 
$$ A:=\{\ga:|\ga_1|=k-(a+b-c)-j,|\ga_2|=a-c+j, 
|\ga_3|=c-j,|\ga_4|=b-c+j,0\le j\le m\}. $$
In particular, we have
\begin{align}
\label{Homabdim}
\dim\bigl(\Hom_k^{(a,b)}(\Hd,\CC)\bigr)
&= \sum_{j=0}^m {k-M-j+d-1\choose d-1} {j+d-1\choose d-1} \cr
& \qquad\times {m-j+d-1\choose d-1} {M-m+j+d-1 \choose d-1}.
\end{align}
%$$ F(k,a,b,d):=\sum_{j=0}^m {k-M-j+d-1\choose d-1} {j+d-1\choose d-1}
%{m-j+d-1\choose d-1} {M-m+j+d-1 \choose d-1}. $$
%\begin{align*} \Hom_k^{(a,b)}(\Hd) 
%:&= \Hom_K(k-a,a)\cap \Hom_H(k-b,b) \cr
%&=\spam\{z^{\ga_1} w^{\ga_2} \overline{z}^{\ga_3} \overline{w}^{\ga_4}:
%|\ga_1+\ga_4|=k-a,|\ga_2+\ga_3|=a , \cr
%& \qquad\qquad \qquad \qquad \qquad\ \,
%|\ga_1+\ga_2|=k-b,|\ga_3+\ga_4|=b \} \cr
%&=\spam\{z^{\ga_1} w^{\ga_2} \overline{z}^{\ga_3} \overline{w}^{\ga_4}:
%|\ga_1|=k-(a+b-c)-j,|\ga_2|=a-c+j, \cr
%& \qquad\qquad \qquad \qquad \qquad\ \, |\ga_3|=c-j,|\ga_4|=b-c+j,0\le j\le m\}, \end{align*}
\end{lemma}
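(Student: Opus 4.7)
The plan is to prove all three claims directly from the monomial basis of $\Hom_k(\Hd,\CC)$. For the decomposition (\ref{Homabdecomp}), each degree-$k$ monomial $m_\ga=z^{\ga_1}w^{\ga_2}\overline{z}^{\ga_3}\overline{w}^{\ga_4}$ lies in the unique $\Hom_H(k-b,b)$ with $b=|\ga_3|+|\ga_4|$ and the unique $\Hom_K(k-a,a)$ with $a=|\ga_2|+|\ga_3|$, hence in a single summand $\Hom_k^{(a,b)}(\Hd)$ indexed by $(a,b)\in\{0,\ldots,k\}^2$. Orthogonality under the apolar inner product (\ref{secondinpro}) is immediate, since distinct monomials are apolar-orthogonal. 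For the $L^2$ inner product (\ref{firstinpro}), the integral formula (\ref{intSformula}) gives $\inpro{m_\ga,m_{\ga'}}_\SS=0$ unless $\ga_1-\ga_3=\ga'_1-\ga'_3$ and $\ga_2-\ga_4=\ga'_2-\ga'_4$ as elements of $\ZZ^d$; summing the entries of these equalities and using the constraints defining $\Hom_k^{(a,b)}$, which give $|\ga_1|-|\ga_3|=k-a-b$ and $|\ga_2|-|\ga_4|=a-b$, shows that any two non-orthogonal monomials lie in the same summand.

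For the monomial description (\ref{Homabstructure}), $m_\ga\in\Hom_k^{(a,b)}(\Hd)$ is equivalent to the four equations
\[ |\ga_1|+|\ga_2|=k-b,\quad |\ga_3|+|\ga_4|=b,\quad |\ga_1|+|\ga_4|=k-a,\quad |\ga_2|+|\ga_3|=a. \]
These form a rank-three linear system, and parameterising by $|\ga_3|=c-j$ yields $(|\ga_1|,|\ga_2|,|\ga_3|,|\ga_4|)(j)=(k-(a+b-c)-j,\,a-c+j,\,c-j,\,b-c+j)$ as in the statement. The nonnegativity of all four values reduces to $0\le j\le \min\{c,\,k-a-b+c\}$, and a short case analysis on whether each of $a,b$ exceeds $k/2$ shows this upper bound always equals $m=\min\{m_a,m_b\}$. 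Since the number of multi-indices $\ga_i\in\ZZ_+^d$ with $|\ga_i|=n_i$ is $\binom{n_i+d-1}{d-1}$, counting monomials gives
\[ \dim_\CC\Hom_k^{(a,b)}(\Hd)=\sum_{j=0}^{m}\prod_{i=1}^{4}\binom{|\ga_i|(j)+d-1}{d-1}. \]

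To obtain (\ref{Homabdim}), it remains to identify this sum with the stated one. Since each summand depends only on the multiset $\{|\ga_1|(j),|\ga_2|(j),|\ga_3|(j),|\ga_4|(j)\}$, it is enough to show that as $j$ runs over $\{0,\ldots,m\}$, the sequence of such multisets coincides, either directly or after the involution $j\mapsto m-j$, with the sequence $\{k-M-j,\,j,\,m-j,\,M-m+j\}$. Both are sequences of multisets of four nonnegative integers summing to $k$; the two natural pair-partitions of $\{|\ga_i|(j)\}$ are $\{k-a,a\}$ and $\{k-b,b\}$, which equal $\{k-m_a,m_a\}$ and $\{k-m_b,m_b\}$ respectively, and since $\{m_a,m_b\}=\{m,M\}$ these match the pair-partitions $\{k-M,M\}$ and $\{k-m,m\}$ of the target multiset. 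The main obstacle is this last matching: the parameterisation (\ref{Homabstructure}) uses $c=\min\{a,b\}$ while (\ref{Homabdim}) uses $m$ and $M$ built from $m_a,m_b$, so correctly identifying the two indexings of $j$ (as either the identity or the reversal $j\mapsto m-j$) requires a case split on whether each of $a$ and $b$ exceeds $k/2$.
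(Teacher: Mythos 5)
Your proposal is correct and follows essentially the same route as the paper: work with the monomial basis, note that each monomial lies in a unique summand and that monomials are orthogonal, parameterise the quadruple $(|\ga_1|,\ldots,|\ga_4|)$ by a single index $j$, and count multi-indices. The only differences are cosmetic: you verify orthogonality for the integral inner product (\ref{firstinpro}) explicitly (the paper only cites orthogonality of monomials for (\ref{secondinpro})), and you match the resulting sum to (\ref{Homabdim}) by a direct multiset/reindexing check where the paper invokes the symmetries of $\Hom_k^{(a,b)}(\Hd)$ to reduce to the case $a=m=c$, $b=M$ --- both steps are routine and both work.
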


\begin{proof} To establish (\ref{Homabdecomp}),
it suffices to show that the direct sums
$$ \Hom_k(\Hd,\CC) = \bigoplus_{a+b=k} \Hom_H(a,b)=
\bigoplus_{p+q=k} \Hom_K(p,q), $$
are orthogonal, which follows immediately since the monomials are orthogonal 
for (\ref{secondinpro}).

%This is immediate for (\ref{secondinpro}),
%where the monomials are orthogonal.
%We will prove the second (the proof of the first being similar)
%for the inner product (\ref{firstinpro}).
%Let
%$$ 
%f=z^{\ga_1}w^{\ga_2}\overline{z}^{\ga_3}\overline{w}^{\ga_4}\in\Hom_K(k-a,a),
%\qquad
%g=z^{\gb_1}w^{\gb_2}\overline{z}^{\gb_3}\overline{w}^{\gb_4}\in\Hom_K(k-b,b), $$
%i.e.,
%$$ |\ga_1+\ga_4|=k-a, \quad |\ga_2+\ga_3|=a, \qquad
%|\gb_1+\gb_4|=k-b, \quad |\gb_2+\gb_3|=b. $$
%Then
%$$ \inpro{f,g}
%= \int_\SS \overline{z}^{\ga_1}\overline{w}^{\ga_2}z^{\ga_3}w^{\ga_4}
%z^{\gb_1}w^{\gb_2}\overline{z}^{\gb_3}\overline{w}^{\gb_4}\,d\gs(z,w), $$
%which is zero unless
%$$ \ga_3+\gb_1= \ga_1+\gb_3, \qquad \ga_4+\gb_2=\ga_2+\gb_4, $$
%which implies
%%$$ \ga_2+\ga_3+\gb_1+\gb_4
%%= (\ga_2+\gb_4)+(\ga_3+\gb_1)
%%= (\ga_4+\gb_2)+(\ga_1+\gb_3)
%%= \ga_1+\ga_4+\gb_2+\gb_3 $$
%$$ (\ga_2+\gb_4)+(\ga_3+\gb_1)
%= (\ga_4+\gb_2)+(\ga_1+\gb_3), $$
%and so we obtain
%\begin{align*}
%a+(k-b) &=|\ga_2+\ga_3|+|\gb_1+\gb_4| \cr
%& = |\ga_1+\ga_4|+|\gb_2+\gb_3|=(k-a)+b \Implies a=b.
%\end{align*}

We now consider (\ref{Homabstructure}).
Let $f=z^{\ga_1}w^{\ga_2}\overline{z}^{\ga_3}\overline{w}^{\ga_4}
%\in\Hom_K(k-a,a)\cap\Hom_H(k-b,b)$, i.e.,
\in\Hom_k^{(a,b)}(\Hd,\CC)$, i.e.,
%. Then 
$$ 
|\ga_1|+|\ga_4|=k-a, \quad |\ga_2|+|\ga_3|=a
, \qquad
|\ga_1|+|\ga_2|=k-b, \quad |\ga_3|+|\ga_4|=b
. $$
The above equations imply that once an allowable value of 
$|\ga_1|,|\ga_2|,|\ga_3|,|\ga_4|$ is specified,
then the others are uniquely determined. The allowable
values are determined by an equation where the right hand
side is $m=\min\{a,k-a,b,k-b\}$, and so we must 
treat (four) cases. First consider the case $a\le b$, i.e., 
$m=a,k-b$, for which we have
$$ 0\le j=|\ga_2|\le m\in\{a,k-b\}, \quad
|\ga_3|=a-j, \quad |\ga_1|=k-b-j, \quad 
|\ga_4| %=k-a-(k-b-j)
= j+b-a, $$
and hence
\begin{align*}
\Hom_k^{(a,b)}(\Hd,\CC)
& =\spam\{ z^{\ga_1}w^{\ga_2}\overline{z}^{\ga_3}\overline{w}^{\ga_4}: 
|\ga_1|=k-b-j,|\ga_2|=j, |\ga_3|=a-j, \cr
&\qquad\qquad\qquad\qquad\qquad\qquad
 |\ga_4|= j+b-a,0\le j\le m \}.
\end{align*}
The corresponding condition for the case $a\ge b$ is
$$ 0\le j=|\ga_4|\le m\in\{b,k-a\}, \quad
|\ga_1|=k-a-j,\quad |\ga_2|=a-b+j, \quad |\ga_3|=b-j, $$
and so we obtain (\ref{Homabstructure}).

It follows from symmetries of the space $\Hom_k^{(a,b)}(\Hd,\CC)$,
or by direct calculation, that its dimension, the cardinality of $A$,
depends only on $m,M$ (and $k$). Therefore, by %considering 
the case $m=a=c$, $M=b$, and the fact $\ga_1,\ldots,\ga_4\in\ZZ_+^d$, 
we obtain (\ref{Homabdim}).
\end{proof}

\begin{example}
For $d=1$, we have
\begin{equation}
\label{Homkabonedim}
\dim(\Hom_k^{(a,b)}(\HH))=m+1, \qquad m:=\min\{a,k-a,b,k-b\}.
\end{equation}
\end{example}

\begin{example}
For $k=1$, we have
\begin{align*} 
\Hom_1^{(0,0)}(\Hd)&=\spam\{z_1,\ldots,z_d\}, \qquad
\Hom_1^{(0,1)}(\Hd) =\spam\{\overline{w_1},\ldots,\overline{w_d}\}, \cr
\Hom_1^{(1,0)}(\Hd)& =\spam\{w_1,\ldots,w_d\}, \qquad
\Hom_1^{(1,1)}(\Hd)=\spam\{\overline{z_1},\ldots,\overline{z_d}\}.
\end{align*}
\end{example}

The corresponding result for harmonic polynomials is the following.
%Let $\gD$ be the {\bf Laplacian} operator on functions $\Hd\to\CC$, which is
%given by
%$$ {1\over4}\gD = \sum_{j=1}^d \Bigl(
%{\partial^2\over\partial\overline{z_j} \partial z_j}
%+{\partial^2\over\partial\overline{w_j} \partial w_j} \Bigr). $$

\begin{lemma}
\label{Habdecomplemma}
We have the orthogonal direct sum decomposition into $(k+1)^2$ subspaces
%$$ \Hom_k(\Hd,\CC) = \bigoplus_{p+q=k\atop a+b=k}
%\Hom_H(p,q)\cap \Hom_K(a,b),  $$
\begin{equation}
\label{Habdecomp}
\Harm_k(\Hd,\CC) = \bigoplus_{0\le a,b\le k} H_k^{(a,b)}(\Hd).
\end{equation}
where
\begin{equation}
\label{dimHkabdiff}
\dim(H_k^{(a,b)}(\Hd))
= \dim(\Hom_k^{(a,b)}(\Hd))
-\dim(\Hom_{k-2}^{(a-1,b-1)}(\Hd)).
\end{equation}
In particular, for $d=1$, we have
\begin{equation}
\label{Hkabonedim}
\dim(H_k^{(a,b)}(\HH))=1, \qquad 0\le a,b\le k,
\end{equation}
and for $d>1$, with $m$ and $M$ given by (\ref{mMdefn}),
we have
\begin{equation}
\label{Hkabdim}
\dim(H_k^{(a,b)}(\Hd,\CC)) = F(k,m,M,d), \qquad 0\le a,b\le k,
\end{equation}
where
\begin{align}
F(k,m,M,d)
&:= \sum_{j=0}^{m}
{j+d-1\choose d-1}{M-m+j+d-1 \choose d-1} \label{FkmMddef} \\
&\quad\times { (m-j+1)_{d-2}(k-M-j+1)_{d-2} \over (d-1)!(d-2)! }
 (k-M+m-2j+d- 1).  \nonumber
\end{align}
\end{lemma}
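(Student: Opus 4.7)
The plan is to deduce (\ref{Habdecomp}) from the homogeneous decomposition (\ref{Homabdecomp}) of Lemma \ref{Homabdecomplemma} by intersecting with $\ker\gD$, then to obtain the dimension recursion (\ref{dimHkabdiff}) from the Fischer decomposition, and finally to simplify the resulting combinatorial difference into $F(k,m,M,d)$.

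For the first step, observe from (\ref{Laplaciandefn}) that each term $\partial_{\overline{z_j}}\partial_{z_j}$ of $\gD$ lowers both $|\ga_1|$ and $|\ga_3|$ by one, and each $\partial_{\overline{w_j}}\partial_{w_j}$ term lowers both $|\ga_2|$ and $|\ga_4|$ by one, so $\gD$ carries $\Hom_k^{(a,b)}(\Hd)$ into $\Hom_{k-2}^{(a-1,b-1)}(\Hd)$. Writing any $f\in\Hom_k(\Hd,\CC)$ via (\ref{Homabdecomp}) as $f=\sum_{a,b}f_{a,b}$ with $f_{a,b}\in\Hom_k^{(a,b)}(\Hd)$, the images $\gD f_{a,b}$ lie in distinct summands of the degree $k-2$ analogue of (\ref{Homabdecomp}), so $\gD f=0$ iff each $\gD f_{a,b}=0$. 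This gives (\ref{Habdecomp}).

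For the recursion, the Fischer decomposition (\ref{Fischerdecomp}) reads $\Hom_k(\Hd,\CC)=\Harm_k(\Hd,\CC)\oplus\|\cdot\|^2\Hom_{k-2}(\Hd,\CC)$. Since $\|x\|^2=\sum_j(z_j\overline{z_j}+w_j\overline{w_j})\in\Hom_2^{(1,1)}(\Hd)$, multiplication by $\|\cdot\|^2$ shifts the bigrading by $(1,1)$ and is injective. Intersecting with $\Hom_k^{(a,b)}(\Hd)$ yields $\Hom_k^{(a,b)}(\Hd)=H_k^{(a,b)}(\Hd)\oplus\|\cdot\|^2\Hom_{k-2}^{(a-1,b-1)}(\Hd)$, and comparing dimensions produces (\ref{dimHkabdiff}).

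For the closed form, apply (\ref{dimHkabdiff}). When $d=1$, (\ref{Homkabonedim}) gives $\dim\Hom_k^{(a,b)}(\HH)=m+1$; the shift to $(k-2,a-1,b-1)$ replaces $m$ by $m-1$ when $m\ge1$ (so the shifted dimension is $m$), and when $m=0$ one of $a-1,k-a-1,b-1,k-b-1$ is negative, forcing the shifted subspace to be $\{0\}$. Either way the difference is $1$, proving (\ref{Hkabonedim}). When $d\ge2$, substitute (\ref{Homabdim}) into (\ref{dimHkabdiff}); the shift $(k,m,M)\mapsto(k-2,m-1,M-1)$ leaves the factors $\binom{j+d-1}{d-1}\binom{M-m+j+d-1}{d-1}$ invariant while turning $\binom{k-M-j+d-1}{d-1}\binom{m-j+d-1}{d-1}$ into $\binom{k-M-j+d-2}{d-1}\binom{m-j+d-2}{d-1}$. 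Taking differences and applying Pascal's identity $\binom{n+d-1}{d-1}-\binom{n+d-2}{d-1}=\binom{n+d-2}{d-2}$ together with the ratio $\binom{n+d-1}{d-1}/\binom{n+d-2}{d-2}=(n+d-1)/(d-1)$ collapses each matched pair to
\[
\binom{k-M-j+d-2}{d-2}\binom{m-j+d-2}{d-2}\,\frac{k-M+m-2j+d-1}{d-1}.
\]
Rewriting $\binom{n+d-2}{d-2}=(n+1)_{d-2}/(d-2)!$, and accounting for the unmatched top term $j=m$ of the first sum (which via $(1)_{d-2}=(d-2)!$ equals $\binom{k-M-m+d-1}{d-1}\binom{m+d-1}{d-1}\binom{M+d-1}{d-1}$, matching the $j=m$ value of $F$), produces exactly $F(k,m,M,d)$. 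The main obstacle is this last combinatorial reduction: one must correctly align the two sums, treat the unmatched $j=m$ boundary term, and perform the Pascal manipulation on the right pair of binomials so that the linear factor $k-M+m-2j+d-1$ emerges cleanly.
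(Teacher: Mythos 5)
Your proposal is correct and follows essentially the same route as the paper: intersect the bigraded homogeneous decomposition with the harmonics, obtain $\dim(H_k^{(a,b)})=\dim(\Hom_k^{(a,b)})-\dim(\Hom_{k-2}^{(a-1,b-1)})$ from the behaviour of $\gD$ on the bigrading, and then perform the same binomial-coefficient subtraction (the paper extends the shifted sum to $j=m$ by a vanishing term rather than treating it as unmatched, but the algebra is identical). The only cosmetic difference is that you justify the dimension recursion via the Fischer complement $\Hom_k^{(a,b)}=H_k^{(a,b)}\oplus\norm{\cdot}^2\Hom_{k-2}^{(a-1,b-1)}$ and injectivity of multiplication by $\norm{\cdot}^2$, where the paper invokes surjectivity of $\gD:\Hom_k^{(a,b)}\to\Hom_{k-2}^{(a-1,b-1)}$ — equivalent facts.
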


\begin{proof}
The dimension formula (\ref{dimHkabdiff}) follows
since $H_k^{(a,b)}(\Hd)$ is the kernel of $\gD$ restricted
to $\Hom_k^{(a,b)}(\Hd)$, and
$\Delta \Hom_k^{(a,b)}(\Hd) = \Hom_{k-2}^{(a-1,b-1)}(\Hd)$.

To develop an explicit formula from (\ref{Homabdim}), we need
to take account of the case when $a\in\{0,k\}$ or $b\in\{0,k\}$,
i.e., $m=0$,
% or $b=0,k$,
in which case
$\dim(H_{k-2}^{(a-1,b-1)}(\Hd))=0$.

For $d=1$, (\ref{Hkabonedim}) holds for $m=0$,
and otherwise (\ref{Homkabonedim}) gives
\begin{align*}
\dim(\Hom_{k-2}^{(a-1,b-1)}(\HH))
&= \min\{a-1,k-2-(b-1),b-1,k-2-(a-1)\}+1 \cr
&= \min\{a,k-b,b,k-a\}
= \dim(\Hom_k^{(a,b)}(\HH))-1.
\end{align*}
A similar (elementary) calculation gives
$$ m_{a-1,b-1}^{(k-2)}= m_{a,b}^{(k)}-1, \qquad
M_{a-1,b-1}^{(k-2)}= M_{a,b}^{(k)}-1. $$
Hence, with $m:=m_{a,b}^{(k)}$ and $M:=M_{a,b}^{(k)}$,
(\ref{Homabdim}) gives
\begin{align*}
\dim\bigl(\Hom_{k-2}^{(a-1,b-1)}(\Hd,\CC)\bigr)
&= \sum_{j=0}^{m-1} {(k-2)-(M-1)-j+d-1\choose d-1} {j+d-1\choose d-1} \cr
&\qquad {(m-1)-j+d-1\choose d-1} {(M-1)-(m-1)+j+d-1 \choose d-1},
\end{align*}
where for $d>1$ the ``$j=m$'' term above is zero by virtue of
${(m-1)-m+d-1\choose d-1}=0$.
Thus (\ref{Homabdim}) gives
$$ \dim(H_k^{(a,b)}(\Hd,\CC)) = \sum_{j=0}^{m}
{j+d-1\choose d-1}{M-m+j+d-1 \choose d-1}\Bigl\{\quad\Bigr\}, $$
where
\begin{align*}
\Bigl\{\quad\Bigr\}
&= {k-M-j+d-1\choose d-1}{m-j+d-1\choose d-1}  \cr
&\qquad - {k-1-M-j+d-1\choose d-1} {m-1-j+d-1\choose d-1}, %  \cr
%&= {(k-M-1-j+d-1)!(m-1-j+d-1)!\over (d-1)!^2 (k-M-j)!(m-j)!}\cr
%&\qquad \times \left\{(k-M-j+d-1)(m-j+d-1)-(k-M-j)(m-j)\right\}, \cr
%&= {(k-M-1-j+d-1)!(m-1-j+d-1)!\over (d-1)!^2 (k-M-j)!(m-j)!}
%(d-1) ( k - M + m - 2 j + d - 1),  \cr
%&= { (m-j+1)_{d-2}(k-M-j+1)_{d-2} \over (d-1)!(d-2)! } (k-M+m-2j+d- 1) .
\end{align*}
which simplifies to give (\ref{FkmMddef}).
\end{proof}

\begin{example}
For $k=2$, we have three cases $(m,M)=(0,0),(0,1),(1,1)$, giving
\begin{align*}
\dim(H_2^{(a,b)}(\Hd)) &=F(2,0,0,d) = \hbox{${1\over 2}d(d+1)$}, \qquad
(a,b)\in\{(0,0),(2,0),(0,2),(2,2)\}, \cr
\dim(H_2^{(a,b)}(\Hd)) &=F(2,0,1,d) = d^2, \, \quad\qquad\qquad
(a,b)\in\{(1,0),(0,1),(1,2),(2,1)\}, \cr
\dim(H_2^{(a,b)}(\Hd)) &=F(2,1,1,d) = 2 d^2-1, \quad\qquad
(a,b)\in\{(1,1)\}.
\end{align*}
These formulas also hold for $d=1$. We also have
$$ \dim(H_k^{(a,b)}(\Hd)) % =F(k,0,0,d) 
= {k+d-1\choose d-1}
=\dim(\Hom_k^{(a,b)}(\Hd)), 
\quad
(a,b)\in\{(0,0),(k,0),(0,k),(k,k)\}, $$
with the corresponding spaces given by $A=A_{a,b,k}$ of Lemma \ref{Homabdecomplemma}, e.g.,
$$ H_k^{(0,0)}(\Hd) =\bigoplus_{|\ga|=k} \spam\{z^\ga\},
\quad\hbox{(orthogonal direct sum)}. $$
%What about $m=M={k\over2}$?
%and we have
%$$ \dim = (2d+1)(4d-1)  $$
\end{example}
%gggggggg

%  tm:=(Factorial(j+d-1)/Factorial(j)/Factorial(d-1))^2/Factorial(d-1)/Factorial(d-2)
%*(product(k/2-j+x,x=1..d-2))^2*(k-2*j+d-1);

The results of this section for $\Harm_k(\Hd,\CC)$ can be summarised
as follows.

\begin{schematic}
\label{schematicHarmk}
The orthogonal decomposition (\ref{Habdecomp}) of $\Harm_k(\Hd,\CC)$
can be displayed as a square matrix/array/table
%$$ \mat{&\cr K(k,0) \cr K(k-1,1) \cr \vdots\cr K(0,k) }
%% \mat{ \mat{H(k,0)& H(k-1,1) &\cdots&H(0,k)} \cr
% \mat{ \mat{H(k,0)\, H(k-1,1) \cdots H(0,k)} \cr
%\bmat{ H_k^{(0,0)} & H_k^{(0,1)} & \cdots & H_k^{(0,k)} \cr
%H_k^{(1,0)} & H_k^{(1,1)} & \cdots & H_k^{(1,k)} \cr
%\vdots & \vdots & & \vdots \cr
%H_k^{(k,0)} & H_k^{(k,1)} & \cdots & H_k^{(k,k)} } } \qquad
%\mat{ &&\cr &\moveLb&\cr &&\cr \moveRb&&\quad\moveR\cr&&\cr &\moveL &} $$
\begin{equation}
\label{schematicpict}
\mat{&\cr K(k,0) \cr K(k-1,1) \cr \vdots\cr K(0,k) }
 \mat{ \mat{H(k,0)\quad\ H(k-1,1)\ \, \cdots \quad H(0,k)} \cr
\bmat{ H_k^{(0,0)}(\Hd) & H_k^{(0,1)}(\Hd) & \cdots & H_k^{(0,k)}(\Hd) \cr
H_k^{(1,0)}(\Hd) & H_k^{(1,1)}(\Hd) & \cdots & H_k^{(1,k)}(\Hd) \cr
\vdots & \vdots & & \vdots \cr
H_k^{(k,0)}(\Hd) & H_k^{(k,1)}(\Hd) & \cdots & H_k^{(k,k)}(\Hd) } }
\qquad \mat{ &&\cr &\moveLb&\cr &&\cr \moveRb&&\quad\moveR\cr&&\cr &\moveL &}
\end{equation}
where the rows are indexed by $K(k-a,a)$ and the columns by $H(k-b,b)$.
Here
\begin{itemize}
\item $\Harm_k(\Hd,\CC)$ is the orthogonal direct sum of the $(k+1)^2$
entries of the matrix.
\item The subspace in the $K(k-a,a)$ row and $H(k-b,b)$ column is
$$ H_k^{(a,b)}(\Hd)=K(k-a,a)\cap H(k-b,b). $$
%\item $K(k-a,a)$ is the orthogonal direct sum of the entries in the $K(k-a,a)$-row
\item $H(k-a,b)$ is the orthogonal direct sum of the entries of its column.
\item $K(k-a,a)$ is the orthogonal direct sum of the entries of its row.
\item Multiplication by $L$ moves down the columns, and $L^*$ up them.
\item Multiplication by $R$ moves right along the rows, and $R^*$ to the left of them.
Further, multiplication by $R$ is 1-1 on the left hand side (half) of the table, and is
onto on the right hand side.
\item Left multiplication by $\HH^*$ (and more generally by $U(\Hd)$) moves within the columns.
\item Right multiplication by $\HH^*$ moves within the rows.
\item Multiplication by $L_\ga$, $L_{\overline{\ga}}$, $R_\ga$ and
$R_{\overline{\ga}}$ does not move the entries of the matrix.
\end{itemize}
\end{schematic}

\noindent
There is a similar ``square'' for the decomposition (\ref{Homabdecomp})
of $\Hom_k(\Hd,\CC)$.

There are also ``symmetries'' which permute the entries of the square,
such as
$$ \overline{H_k^{(a,b)}(\Hd)}=H_k^{(k-a,k-b)}(\Hd). $$
It is convenient to imagine zero subspaces outside of the square matrix, which
then encodes properties such as
$$ R^{k+1}\Harm_k(\Hd,\CC) =0, \qquad R^* H(k,0)=0, \qquad L^2 K(1,k-1)=0. $$

\section{The one variable case}

We now consider the $d=1$ case in great detail.
Though this is somewhat degenerate, and usually not considered,
it provides motivation and illustrates the main features
of the general case.

By Lemma \ref{Habdecomplemma}, $\dim(\Harm_k(\HH,\CC))=(k+1)^2$,
and the square matrix/table (\ref{schematicpict}) for $\Harm_k(\HH,\CC)$
consists of the one-dimensional
subspaces $\{H_k^{(a,b)}(\HH)\}_{0\le a,b\le k}$.
Since the polynomial $z^k$ is holomorphic, it is harmonic,
and so
$$ H_k^{(0,0)}(\HH)=\spam_\CC\{z^k\}. $$
We consider what are the other harmonic monomials in $\Hom_k(\HH,\CC)$.

\begin{example}
The monomial $z^{\ga_1}w^{\ga_2}\overline{z}^{\ga_3}\overline{w}^{\ga_4}$,
$|\ga|=k$,
is harmonic if and only if
$$ \left({\partial^2\over\partial\overline{z}\partial z}
+{\partial^2\over\partial\overline{w}\partial w}\right)
z^{\ga_1}w^{\ga_2}\overline{z}^{\ga_3}\overline{w}^{\ga_4}
= \ga_1\ga_3 z^{\ga_1-1}w^{\ga_2}\overline{z}^{\ga_3-1}\overline{w}^{\ga_4}
+ \ga_2\ga_4 z^{\ga_1}w^{\ga_2-1}\overline{z}^{\ga_3}\overline{w}^{\ga_4-1}
= 0, $$
i.e., $\ga_1\ga_3=\ga_2\ga_4=0$. This gives $4k$ harmonic monomials of degree $k$.
\end{example}

The harmonic monomials of degree $k$ lie on the four ``edges'' (of length $k+1$) of
the square table, %square matrix,
which are given by
$\ga_2=\ga_3=0$ (top edge),
$\ga_3=\ga_4=0$ (left edge),
$\ga_1=\ga_2=0$ (right edge),
$\ga_1=\ga_4=0$ (bottom edge),
with
$$ H_k^{(\ga_2+\ga_3,\ga_3+\ga_4)}(\HH)=\spam_\CC\{z^{\ga_1}w^{\ga_2}
\overline{z}^{\ga_3}\overline{w}^{\ga_4} \},
\qquad |\ga|=k, \quad \ga_1\ga_3=\ga_2\ga_4=0. $$
An elementary calculation shows that
\begin{equation}
\label{onedimanchor}
L^k R^k (z^k) = (-1)^k k!^2 \overline{z}^k,
\end{equation}
and so it follows from the Schematic \ref{schematicHarmk},
that by applying
$L$ and $R$ to the upper left corner $z^k \in H_k^{(0,0)}(\HH)$,
that we can ``fill out the table'' with nonzero polynomials
in the subspaces, which (in this case) gives a basis for them,
e.g., for $k=2$, we have
\begin{equation}
\label{univariatesquaresimple}
\mat{ z^2 & \hskip-0.2truecm \moveR\cr \hskip-0.4truecm\moveL &}
\qquad \mat{&\cr K(2,0)\cr K(1,1)\cr K(0,2)}
 \mat{ \mat{H(2,0)&H(1,1)&H(0,2)} \cr \bmat{ z^2 & 2z\overline{w} & 2\overline{w}^2 \cr
2zw & 2w\overline{w}-2z\overline{z} & -4\overline{z}\overline{w} \cr
2w^2 & -4\overline{z}w & 4\overline{z}^2} }.
\end{equation}
Since $L$ and $R$ commute, it makes no difference how one fills out the
table by applying $L$ and $R$, e.g., the middle entry can be obtained as either of
$$ RL(z^2)=R(2zw) = 2w\overline{w}-2z\overline{z}, \qquad
LR(z^2) =L(2z\overline{w}) = 2w\overline{w}-2z\overline{z}. $$
Even in this simple example, one can observe the following features of the
general case:
\begin{itemize}
\item The harmonic functions on the edges of the square have the simplest
description, with the formulas becoming more complicated as one moves
towards the centre. % of the square.
\item There are symmetries of the polynomials given by
certain permutations of
$z,w,\overline{z},\overline{w}$.
% $z$, $w$, $\overline{z}$ and $\overline{w}$.
\item One can move around the square table by
applying $L$ and $L^*$ (down and up)
and $R$ and $R^*$ (across and back).
%\item An orthogonal decomposition of the homogeneous component
%for each irreducible in $\Harm_k(\Hd,\CC)$ given by a ``square'' array/matrix.
%\item One can also move around the square by using symmetries of $\Harm_k(\Hd,\CC)$.
\end{itemize}

We now show that $L^aR^b (z^k)$ has an increasingly
complicated formula as one moves towards the centre of the table.
%shows precisely which monomials are involved.

\begin{lemma}
\label{pablemma}
The unique harmonic polynomial $p_k^{(a,b)}$  in $H_k^{(a,b)}(\HH)$ is given by
\begin{align}
\label{pabformula}
p_k^{(a,b)}
&= \sum_{j=0}^m {(-1)^j\over j!} { (-c)_j (a+b-c-k)_j
\over (|b-a|+1)_j} z^{k-(a+b-c)-j}w^{a-c+j}\overline{z}^{c-j}\overline{w}^{b-c+j} \cr
%&= (-1)^c {|a-b|!(k-(a+b-c))!\over k!(a+b-c)!} L^aR^b(z^k), \cr
%&= (-1)^c {(k-a-b+c)!\over k!(|b-a|+1)_c} L^aR^b(z^k), \cr
&=  {(k-a-b+c)!\over k!(c-a-b)_c} L^aR^b(z^k),
\end{align}
where
$$ m=\min\{a,b,k-a,k-b\}, \quad c=\min\{a,b\}={1\over2}(a+b-|b-a|). $$
\end{lemma}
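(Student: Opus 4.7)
The plan is to exploit that $\dim H_k^{(a,b)}(\HH) = 1$ by (\ref{Hkabonedim}), so uniqueness is automatic and the whole task reduces to exhibiting a nonzero element with the stated shape and normalization. Since $z^k$ is holomorphic it is harmonic, and so spans $H_k^{(0,0)}(\HH)$; by (\ref{LRtoHab}), the polynomial $L^a R^b(z^k)$ lies in $H_k^{(a,b)}(\HH)$. Hence the lemma reduces to computing $L^a R^b(z^k)$ explicitly and matching the result against the sum in (\ref{pabformula}).

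First I would compute $R^b(z^k)$. With $R = \overline{w}\,\partial/\partial z - \overline{z}\,\partial/\partial w$, the second summand annihilates every polynomial in $z$ alone, and a one-line induction on $b$ gives $R^b(z^k) = \frac{k!}{(k-b)!}\,\overline{w}^b z^{k-b}$ for $0 \le b \le k$. Next I would handle $L^a$ via the key observation that $L = A + B$ with $A := w\,\partial/\partial z$ and $B := -\overline{z}\,\partial/\partial\overline{w}$ touching disjoint pairs of variables, so $A$ and $B$ commute. The binomial theorem therefore applies, and since $A$ affects only the $z$-factor and $B$ only the $\overline{w}$-factor, each term $A^{a-j}B^j$ evaluated on $\overline{w}^b z^{k-b}$ factors as $(A^{a-j} z^{k-b})(B^j \overline{w}^b)$, yielding the closed form
\begin{equation*}
L^a R^b(z^k) = \sum_j \binom{a}{j}\frac{k!}{(k-a-b+j)!}\,\frac{b!}{(b-j)!}\,(-1)^j\, z^{k-a-b+j}w^{a-j}\overline{z}^j\overline{w}^{b-j},
\end{equation*}
summed over $\max(0,a+b-k)\le j\le \min(a,b)$.

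To align this with the stated sum I would reindex by $j' = c - j$, where $c = \min(a,b)$; the monomial exponents become $z^{k-(a+b-c)-j'}w^{a-c+j'}\overline{z}^{c-j'}\overline{w}^{b-c+j'}$, exactly matching the ansatz, with $j'$ ranging over $0 \le j' \le m$ since $\min(a,b) - \max(0,a+b-k) = \min\{c,\,k-\max(a,b)\} = m$. To pin down the coefficients I would apply the Pochhammer identities $(-c)_{j'} = (-1)^{j'} c!/(c-j')!$, $(a+b-c-k)_{j'} = (-1)^{j'}(k-a-b+c)!/(k-a-b+c-j')!$, and $(|b-a|+1)_{j'} = (|b-a|+j')!/|b-a|!$, together with a short case split on $a \le b$ versus $a \ge b$ (which sets $|b-a|$ to $b-c$ or $a-c$ respectively, so that $(b-c+j')!$ or $(a-c+j')!$ cancels with $(|b-a|+j')!$). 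The $j'$-dependent factors then match exactly, and the remaining constant ratio collapses to $(k-a-b+c)!/(k!(c-a-b)_c)$, proving the second equality of (\ref{pabformula}).

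The main obstacle is the unappealing but entirely mechanical bookkeeping in this coefficient match; the one conceptual concern, showing $L^a R^b(z^k) \ne 0$, is immediate from the $j' = 0$ contribution, whose monomial $z^{k-(a+b-c)}w^{a-c}\overline{z}^c\overline{w}^{b-c}$ has exponents not realized by any other $j'$ in the sum.
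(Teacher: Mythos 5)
Your proof is correct, but it takes a genuinely different route from the paper's. The paper also invokes Lemma \ref{Habdecomplemma} for uniqueness, but then works from the other end: it writes the putative harmonic polynomial as a general linear combination of the monomials spanning $\Hom_k^{(a,b)}(\HH,\CC)$ (via (\ref{Homabstructure})), imposes $\gD f=0$, and obtains a two-term recurrence $c_j(k-b-j)(a-j)+c_{j+1}(j+1)(b-a+j+1)=0$ whose solution yields the coefficients in (\ref{pabformula}). You instead compute $L^aR^b(z^k)$ head-on: $R^b(z^k)=\frac{k!}{(k-b)!}\overline{w}^bz^{k-b}$, then split $L=w\partial/\partial z-\overline{z}\partial/\partial\overline{w}$ into two commuting pieces acting on disjoint variables and expand $(A+B)^a$ binomially. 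Your version has two concrete advantages: it establishes the second equality of (\ref{pabformula}) (the normalization relative to $L^aR^b(z^k)$) and the nonvanishing of $L^aR^b(z^k)$ explicitly, both of which the paper's proof leaves implicit in the phrase ``which gives the desired formula''; the cost is the mechanical Pochhammer bookkeeping in the reindexing $j'=c-j$, which you have set up correctly (I checked the $a\le b$ case: the ratio of your coefficient at $j'$ to that at $j'=0$ is $(-1)^{j'}\binom{a}{j'}\frac{(k-b)!}{(k-b-j')!}\frac{(b-a)!}{(b-a+j')!}$, matching ${(-1)^{j'}\over j'!}{(-c)_{j'}(a+b-c-k)_{j'}\over(|b-a|+1)_{j'}}$, and the $j'=0$ normalization collapses to ${k!(c-a-b)_c/(k-a-b+c)!}$ as required). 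The paper's recurrence approach, for its part, explains more directly why the coefficients take a hypergeometric-type form and does not require identifying the commuting decomposition of $L$. Both arguments are sound.
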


\begin{proof}
We consider the case $a\le b$, i.e., $m=\min\{a,k-b\}$, the other being similar.

By Lemma \ref{Habdecomplemma},
there is a unique (up to a scalar multiple) harmonic polynomial
in $\Hom_k^{(a,b)}(\HH,\CC)$, which by (\ref{Homabstructure}) has the form
$$ f= \sum_{j=0}^m c_j z^{k-b-j}w^j\overline{z}^{a-j}\overline{w}^{b-a+j}. $$
The condition that $f$ be harmonic, i.e., $\nabla f=0$, gives
\begin{align*}
& \sum_{j=0}^{m-1} c_j (k-b-j)(a-j) z^{k-b-j-1}w^j\overline{z}^{a-j-1}\overline{w}^{b-a+j} \cr
& \qquad + \sum_{j=0}^{m-1} c_{j+1} (j+1) (b-a+j+1) z^{k-b-j-1}w^{j}\overline{z}^{a-j-1}\overline{w}^{b-a+j}= 0,
\end{align*}
and equating coefficients of the monomials gives
$$ c_j (k-b-j)(a-j)  + c_{j+1} (j+1) (b-a+j+1) =0 ,
\qquad 0\le j \le m-1, $$
so that
$$ c_{j+1}= - c_j { (k-b-j)(a-j)\over (j+1) (b-a+j+1)}
\Implies
c_j = {(-1)^j\over j!} {(k-b+1-j)_j (a+1-j)_j \over (b-a+1)_j} c_0, $$
which gives the desired formula.
\end{proof}

The indices $\{(a,b)\}_{0\le a,b\le k}$ for
% the polynomials in the square array
the polynomials $p_k^{(a,b)}\in H_k^{(a,b)}(\HH)$ in the square table
can be partitioned into nested squares
%$$ S_0:=\{(a,b):0\le a,b\le k,\{a,b\}\cap\{0,k\}\ne\varnothing\}, $$
%$$ S_1:=\{(a,b):1\le a,b\le k-1,\{a,b\}\cap\{1,k-1\}\ne\emptyset\}, $$
%$$ S_j:=\{(a,b):j\le a,b\le k-j,\{a,b\}\cap\{j,k-j\}\ne\emptyset\},
%\qquad 0\le j\le {k\over2}. $$
\begin{equation}
\label{Smdefn}
S_m:=\{(a,b):\min\{a,b,k-a,k-b\}=m\},
\qquad 0\le m\le {k\over2},
\end{equation}
with $S_0$ giving the ``edges of the table''. These have size
\begin{equation}
|S_m|=
\begin{cases}
4(k-2m), & 0\le m<{k\over 2}; \cr
1, & m={k\over2}.
\end{cases}
\end{equation}
Here is an illustration for the case $k=4$
$$ \smallmat{
\boxed{0} & \boxed{0} & \boxed{0} & \boxed{0} & \boxed{0} \cr
\boxed{0} & \boxed{1} & \boxed{1} & \boxed{1} & \boxed{0} \cr
\boxed{0} & \boxed{1} & \boxed{2} & \boxed{1} & \boxed{0} \cr
\boxed{0} & \boxed{1} & \boxed{1} & \boxed{1} & \boxed{0} \cr
\boxed{0} & \boxed{0} & \boxed{0} & \boxed{0} & \boxed{0} },
\qquad
S_0=\{\boxed{0}\}, \quad S_1=\{\boxed{1}\}, \quad S_2=\{\boxed{2}\}. $$
From the formula (\ref{pabformula}), we have
the first instance of a general phenomenon:
\begin{itemize}
\item The polynomials $p_k^{(a,b)}$, $(a,b)\in S_m$, have $m+1$ terms,
i.e., the complexity of the formula for $p_k^{(a,b)}$ increases as one
gets closer to the center of the square array.
\end{itemize}
We now consider the decomposition of $\Harm_k(\Hd,\CC)$ into
irreducibles.
%the count of all the monomials
%$$ {1\over 6} (k+3)(k+2)k = 4k+2\cdot 4(k-2) + 3 \cdot 4(k-4)+\cdots . $$
%Count the harmonic polynomials
%$$ (k+1)^2= 4k+4(k-2) + 3 \cdot 4(k-4) + \cdots $$
%Essentially, this says that the complexity of the
%formula depends on how far the index $(\ga,\gb)$ is from the edges of
%the array of indices $A=\{0,1,\ldots,k-2j\}^2$.
%Partition $A$ into ``nested squares''
%$$ S_0:=\{(\ga,\gb):0\le\ga,\gb\le k-2j, \{a,b\}\cap\{0,k-2j\}\ne\emptyset\} $$
%$$ S_1:=\{(\ga,\gb):1\le\ga,\gb\le k-2j-1, \{a,b\}\cap\{1,k-2j-1\}\ne\emptyset\} $$

%We can give a single formula for $p_{ab}^{(k)}$,
%which is a scalar multiple of $L^aR^b(z^k)$, as follows
%$$ p_{ab}^{(k)}=\sum_{j=0}^m {(-1)^j\over j!} { (-a)_j (b-k)_j \over (b-a+1)_j}
%z^{k-b-j}w^j\overline{z}^{a-j}\overline{w}^{b-a+j}, \qquad a\le b, $$
%$$ p_{ab}^{(k)} = \sum_{j=0}^m {(-1)^j\over j!}
%{ (-b)_j (a-k)_j \over (a-b+1)_j}
%z^{k-a-j}w^{a-b+j}\overline{z}^{b-j}\overline{w}^j, \qquad b\le a, $$
%where $m:=\min\{a,k-b,b,k-a\}$.
The irreducible representations of
the simply connected compact nonabelian Lie group
$\HH^*=U(\HH)=\Sp(1)=SU(2)$ are well known \cite{H15}.
For now, we need only that there is precisely one irreducible
representation $W_k$ of dimension $k+1$, for each $k\ge0$.

\begin{theorem}
\label{leftrightHdecomp}
For left multiplication by $U(\HH)=\HH^*$, we have the
following orthogonal direct sum of irreducibles
$$ \Harm_k(\HH,\CC) = \bigoplus_{0\le a\le k} H(k-a,a)\cong (k+1)W_k, $$
and for right multiplication by $\HH^*$,
we have the direct sum of irreducibles
$$ \Harm_k(\HH,\CC) = \bigoplus_{0\le b\le k} K(k-b,b)\cong (k+1)W_k, $$
\end{theorem}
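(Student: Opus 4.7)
The orthogonal direct sum decompositions are already in hand from Schematic~\ref{schematicHarmk} and Lemma~\ref{Habdecomplemma}: since
$$\Harm_k(\HH,\CC) = \bigoplus_{0\le a,b\le k} H_k^{(a,b)}(\HH)$$
with each $H_k^{(a,b)}(\HH)$ one-dimensional, collapsing along columns (resp.\ rows) gives $\Harm_k(\HH,\CC) = \bigoplus_a H(k-a,a) = \bigoplus_b K(k-b,b)$, with each summand of dimension $k+1$. Invariance of $H(k-a,a)$ under left $\HH^*$-multiplication and of $K(k-b,b)$ under right $\HH^*$-multiplication is given by Lemma~\ref{hardtounderstandlemma}. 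So what remains is to identify each $(k+1)$-dimensional summand with the unique irreducible $W_k$ of $\HH^*=SU(2)$ of that dimension.

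The plan for irreducibility is to read off the weights under the maximal torus $T=\{e^{i\theta}\}\subset\HH^*$. For the left action, $e^{i\theta}(z+jw) = e^{i\theta}z + je^{-i\theta}w$ (using $e^{i\theta}j = je^{-i\theta}$), so the coordinate substitutions are $z\mapsto e^{i\theta}z$, $w\mapsto e^{-i\theta}w$, $\overline{z}\mapsto e^{-i\theta}\overline{z}$, $\overline{w}\mapsto e^{i\theta}\overline{w}$. A monomial $z^{a_1}w^{a_2}\overline{z}^{a_3}\overline{w}^{a_4}$ therefore carries $T$-weight $a_1-a_2-a_3+a_4$, which on $\Hom_K(k-a,a)$ equals $(a_1+a_4)-(a_2+a_3)=k-2a$. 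Hence $K(k-a,a)$ is precisely the weight-$(k-2a)$ eigenspace of the left $T$-action, and the column $H(k-b,b)=\bigoplus_a H_k^{(a,b)}(\HH)$ decomposes under $T$ into one-dimensional weight spaces with weights $k,k-2,\ldots,-k$, each of multiplicity one. By the classification of $SU(2)$-representations, any representation with this weight structure is isomorphic to $W_k$, proving the first assertion.

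The right-multiplication case is symmetric. From the formula $(z+jw)(\alpha+j\beta)=(z\alpha-\overline{w}\beta)+j(\overline{z}\beta+w\alpha)$ (Lemma~\ref{Rmultbyscalars}), taking $\alpha=e^{i\theta}$ and $\beta=0$ gives $z\mapsto ze^{i\theta}$, $w\mapsto we^{i\theta}$, so a monomial in $\Hom_H(k-b,b)$ has right $T$-weight $k-2b$. Then $K(k-a,a)=\bigoplus_b H_k^{(a,b)}(\HH)$ realizes the weights $k,k-2,\ldots,-k$ each with multiplicity one under the right torus, forcing $K(k-a,a)\cong W_k$. The main obstacle in the argument is conventional bookkeeping---keeping straight which grading ($H$ or $K$) appears as the weight decomposition for which torus action, and handling the sign from $e^{i\theta}j=je^{-i\theta}$ correctly; once the weights are tabulated, irreducibility is automatic from $SU(2)$-representation theory and no further computation is required.
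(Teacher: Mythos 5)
Your proof is correct, but it establishes irreducibility by a genuinely different route from the paper. The paper's proof also starts from the schematic/Lemma \ref{Habdecomplemma} decomposition into one-dimensional cells, but then argues with the operators $R$ and $R^*$: any subspace invariant under right multiplication is closed under $R$ and $R^*$ (Lemma \ref{Rmultbyscalars}), and writing a nonzero $f\in K(k-a,a)$ as $\sum_{b\le b^*}c_b\,p_k^{(a,b)}$ with $c_{b^*}\ne0$, one applies $(R^*)^{b^*}$ to recover a nonzero multiple of the corner polynomial $p_k^{(a,0)}$ and then $R,\dots,R^k$ to sweep out the whole row; the left case is delegated to Folland. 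You instead observe that the rows and columns of the table are precisely the weight spaces for the maximal torus of the \emph{opposite-side} scalar action, so each row/column carries the weights $k,k-2,\dots,-k$ with multiplicity one, and the character (equivalently, highest-weight) classification of $SU(2)$-representations forces $\cong W_k$. Your weight bookkeeping checks out: on $\Hom_K(k-a,a)$ the left torus acts with weight $(a_1+a_4)-(a_2+a_3)=k-2a$, and on $\Hom_H(k-b,b)$ the right torus acts with weight $(a_1+a_2)-(a_3+a_4)=k-2b$, and invariance of the full row/column under the relevant $\HH^*$-action is indeed supplied by Lemma \ref{hardtounderstandlemma} together with preservation of harmonicity. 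What each approach buys: yours is shorter and imports only standard compact-group facts (complete reducibility plus ``weights determine the representation''), avoiding the explicit formulas for $p_k^{(a,b)}$ and the fact that $R^*$ acts injectively when moving left across the table; the paper's argument is more in keeping with its elementary, constructive $R$/$R^*$ framework and, as a byproduct, exhibits explicitly how every nonzero vector generates the irreducible under the operators, which is the mechanism reused later (Theorem \ref{HarmkIWdecomp}) in higher dimensions.
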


\begin{proof} From the Schematic \ref{schematicHarmk} for $\Harm_k(\HH,\CC)$,
it follows that by taking columns (respectively rows) of the table gives
an orthogonal direct sum of
invariant subspaces for action given by left (respectively right)
multiplication by $\HH^*$ (Ansatz \ref{ansatzhopetoprove}), 
and so it remains only to show that these
$(k+1)$-dimensional subspaces are irreducible.

We now show $K(k-a,a)$ is irreducible for the action given by
right multiplication.
The other case is similar, and can be found in \cite{F95} Theorem 5.37.
We have
$$ K(k-a,a)=\spam_\CC\{p_k^{(a,b)}\}_{0\le b\le k}
=\spam\{ R^b p_k^{(a,0)}\}_{0\le b\le k}. $$
%(\ref{onedimanchor})
%An elementary calculation shows that
%$$ (L^*)^k (R^*)^k (\overline{z}^k) = (-1)^k k!^2 z^k, $$
%and so we can move from the right bottom corner
%$\overline{z}^k\in H_k{(k,k)}$ of the table to the other entries using $L^*$ (moving up) and $R^*$ (moving left).
%We recall (Lemma \ref{hardtounderstandlemma}) that subspaces invariant
%under the right multiplication action are invariant under $R$ and $R^*$.
Consider a nonzero polynomial
$$f=\sum_{0\le b\le b^*} c_b p_k^{(a,b)}\in K(k-a,a), \qquad c_{b^*}\ne 0.$$
Let $V$ be an invariant subspace of $K(k-a,a)$ containing $f$.
Since $R^*$ maps nonzero polynomials left across the table,
we have that $(R^*)^{b^*}f$ is a nonzero multiple of $p_k^{(a,0)}$.
Thus, $V$ contains $p_k^{(a,0)}$, and hence
$R p_k^{(a,0)},\ldots,R^k p_k^{(a,0)}$,
%$R^b p_k^{(a,0)}$,
giving $V=K(k-a,a)$, i.e., $V$ is irreducible.
\end{proof}

In both cases, there is a single homogeneous component
corresponding to $W_k$.

\begin{example} For left multiplication by $\HH^*$, i.e., the
action given by
$$ (\ga+j\gb)(z+jw) = (\ga z-\overline{\gb}w) +j (\overline{\ga}w+\gb z), $$
we have the irreducible representation
$$ H(k,0) =\spam_\CC\{z^k,z^{k-1}w,z^{k-2}w^2,\ldots,w^k\}, $$
which is given by Folland \cite{F95} for the
action of $SU(2)\cong\HH^*$ %on polynomials in $z$ and $w$
given by
$$ \pmat{\ga &-\overline{\gb}\cr\gb&\overline{\ga}} \pmat{z\cr w} = \pmat{\ga z-\overline{\gb}w\cr\overline{\ga}w+\gb z}. $$
Here $L$ and $L^*$ reduce to $L=w{\partial\over\partial z}$
and $L^*=z{\partial\over\partial w}$.
%$$ U_{\ga,\gb}=\pmat{\ga &-\overline{\gb}\cr\gb&\overline{\ga}}, \quad
%U_{\ga,\gb}\pmat{z\cr w} = \pmat{\ga z-\overline{\gb}w\cr\overline{\ga}w+\gb z} $$
\end{example}

We now consider the combined action given by both left and right
multiplication by $\HH^*$, i.e., the action of
$\Sp(1)\times\Sp(1)=U(\HH)\times \HH^*$
given by
$$\bigl( (q_1,q_2)\cdot f\bigr) (q) := f(q_1 q\overline{q_2}). $$
The invariant subspaces for this action are invariant under both
$L$ and $R$ (and their adjoints). This leads to the following.

\begin{theorem} The action of $\Sp(1)\times\Sp(1)$
given by left and right multiplication by $\HH^*$
is irreducible on $\Harm_k(\HH,\CC)$, i.e.,
for all nonzero $f\in\Harm_k(\HH,\CC)$, we have
% is irreducible, i.e.,
\begin{equation}
\label{sptimesspH}
\spam_\CC\{q\mapsto f(q_1 q\overline{q_2}):q_1,q_2\in\HH^*\}
= \Harm_k(\HH,\CC).
\end{equation}
%, \qquad\forall f\in\Harm_k(\HH,\CC), \quad f\ne0. $$
\end{theorem}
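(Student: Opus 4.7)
The plan is to reduce irreducibility of the $\Sp(1)\times\Sp(1)$-action to invariance under the four operators $L,L^*,R,R^*$ and then exploit the fact that in the $d=1$ case the Schematic \ref{schematicHarmk} has one-dimensional cells $H_k^{(a,b)}(\HH)=\spam_\CC\{p_k^{(a,b)}\}$ that are linked by isomorphisms via these operators.

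Fix a nonzero $f\in\Harm_k(\HH,\CC)$ and let $V$ denote the left-hand side of (\ref{sptimesspH}). First I would observe that $V$ is invariant under left and right multiplication by $\HH^*$ by construction. By Lemma \ref{Rmultbyscalars} (applied to right multiplication) every $R_a$, $|a|\ge 1$, preserves $V$; taking $a=(0,1,0,0)$ and $a=(0,0,0,1)$ shows $R=-R_\gb$ and $R^*=R_{\overline\gb}$ preserve $V$. The analogous lemma for left multiplication gives the same for $L$ and $L^*$. Thus $V$ is closed under all four operators $L,L^*,R,R^*$.

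Next, expand $f=\sum_{0\le a,b\le k} c_{a,b}\,p_k^{(a,b)}$ in the basis provided by Lemma \ref{pablemma}. I would then ``collapse'' $f$ to the upper-left corner in two stages. Let $a^*:=\max\{a:c_{a,b}\ne 0\text{ for some }b\}$. Since in $d=1$ each cell is one-dimensional and $L^*$ maps $H_k^{(a,b)}(\HH)$ isomorphically onto $H_k^{(a-1,b)}(\HH)$ for $a\ge 1$ while annihilating $H_k^{(0,b)}(\HH)$, the operator $(L^*)^{a^*}$ kills every row above $a^*$ and sends $(L^*)^{a^*}f$ to a nonzero element of row $0$, namely $\sum_b \alpha_b p_k^{(0,b)}$ with $\alpha_b\ne 0$ iff $c_{a^*,b}\ne 0$. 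Applying $(R^*)^{b^*}$ with $b^*:=\max\{b:\alpha_b\ne 0\}$ now collapses this to a nonzero scalar multiple of $p_k^{(0,0)}=z^k$. Hence $z^k\in V$.

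Finally, by Lemma \ref{pablemma} each $p_k^{(a,b)}$ equals a nonzero scalar multiple of $L^aR^b(z^k)$ for $0\le a,b\le k$; since $V$ is closed under $L$ and $R$ and contains $z^k$, it contains every $p_k^{(a,b)}$, whence $V=\Harm_k(\HH,\CC)$. The main technical point I anticipate is justifying that $L^*$ and $R^*$ are genuine isomorphisms (and not merely nonzero) between adjacent nonzero one-dimensional cells, so that the two-stage collapse produces a nonzero element rather than accidentally vanishing; this is ensured in the $d=1$ setting because (onedimanchor) together with the commutativity of $L$ and $R$ forces every cell in the $(k+1)\times(k+1)$ square to be reached nontrivially from $z^k$, so that each arrow in the table is either zero (only at the boundary $a\in\{0,k\}$ or $b\in\{0,k\}$ in the wrong direction) or a scalar isomorphism.
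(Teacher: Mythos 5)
Your argument is correct and takes essentially the approach the paper intends: the theorem is stated there without a written proof, the justification being the preceding remark that invariant subspaces for this action are closed under $L,R$ and their adjoints, combined with the same collapse-and-fill mechanism used in the proof of Theorem \ref{leftrightHdecomp} --- which is exactly your reduction to $L,L^*,R,R^*$-invariance, collapse to $z^k$, and regeneration of every $p_k^{(a,b)}$ as a nonzero multiple of $L^aR^b(z^k)$ via Lemma \ref{pablemma}. The one point you flag --- that $L^*$ and $R^*$ are isomorphisms between adjacent nonzero one-dimensional cells --- needs slightly more than the nonvanishing of $L^aR^b(z^k)$ alone; it is closed by adjointness (Lemma \ref{Rstarisadjoint}) together with the orthogonality of the cells, since $\inpro{L^*p_k^{(a,b)},p_k^{(a-1,b)}}=\inpro{p_k^{(a,b)},L\,p_k^{(a-1,b)}}\ne0$ whenever $L\,p_k^{(a-1,b)}$ is a nonzero multiple of $p_k^{(a,b)}$.
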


We consider the special case of the linear polynomials ($k=1$).

%In this case $R^*$ acts as an inverse of $R$ on the entries of the square.

\begin{example} The linear polynomial
$$ f(q):=q=x_1+ix_2+jx_3+kx_4, \qquad x_1,x_2,x_3,x_4\in\RR, $$
is in $\Harm_1(\HH,\HH)=\Hom_1(\HH,\HH)$, as
are the coordinate maps $q\mapsto x_\ell$, which are also
in $\Harm_1(\HH,\RR)=\Hom_1(\HH,\RR)$.
These can be written explicitly
in the form (\ref{sptimesspH}) as follows
\begin{align}
\label{txyzeqns}
x_1 &= {1\over4} ( q - iqi - jqj - kqk ), \qquad
x_2 = {1\over4i}( q - iqi + jqj + kqk ), \cr
x_3 &= {1\over4j}( q + iqi - jqj + kqk ), \qquad
x_4 = {1\over4k}( q + iqi + jqj - kqk ).
\end{align}
The formula (\ref{txyzeqns}) is used by \cite{S79} to show that
the ``polynomials of degree $k$ in $q$'',
i.e., sums of %the complex-linear combinations of
the ``monomials''
%$q\mapsto a_0 q a_1 q a_2\cdots q a_{k-1}q a_k$, $\ga_\ell\in\HH$,
$$ q\mapsto a_0 q a_1 q a_2\cdots q a_{k-1}q a_k, \qquad a_0,a_1,\ldots, a_k\in\HH, $$
is precisely $\Hom_k(\HH,\HH)$ as we have defined it, or, equivalently,
the $\HH$-linear combinations
of the monomials in real variables $x_1,x_2,x_3,x_4$.
\end{example}

\section{The irreducible representations of $\Harm_k(\Hd,\CC)$}

We now consider the irreducible representations of $\Harm_k(\Hd,\CC)$
for $d\ge2$ (the case usually considered in the literature).
Here, there is more than just the one irreducible $W_k$ involved.
Our method is to construct rectangular %square 
arrays, like that in Schematic
\ref{schematicHarmk},
corresponding to a given irreducible $W_k,W_{k-2},W_{k-4},\ldots$.
We will say that these are {\bf commuting arrays} if we can move over them
using $L,R,L^*,R^*$, as in the $d=1$ case.
They can be visualised
%We like to think of them
as the ``layers on (square) wedding cake''.

We follow the development of Bachoc and Nebe \cite{BN02}.
For the action given by right multiplication by $\HH^*$,
let $I(W_p)^{(k)}$ be the homogeneous component of $\Harm_k(\Hd,\CC)$
corresponding to the irreducible $W_p$ (of dimension $p+1$), which gives the
orthogonal decomposition
$$ \Harm_k(\Hd,\CC) = \bigoplus_{p\ge0} I(W_p)^{(k)}. $$
The values of $p$ involved in this sum are $p=k-2j$, $0\le j\le {k\over2}$,
which is observed in \cite{BN02}, and follows from our explicit
decomposition (Theorem \ref{HarmkIWdecomp}).

There is also the well known decomposition \cite{IS68}, \cite{R80}  (Chapter 12, \S12.2)
into irreducibles for
the action of left multiplication by $U(\CC^{2d})$
$$ \Harm_k(\Hd,\CC) = \bigoplus_{a+b=k} H(a,b). $$
Since $U(\Hd)$ is the subgroup of $U(\CC^{2d})\subset O(\RR^{4d})$
characterised as those elements of $U(\CC^{2d})$ which commute with
right multiplication by $\HH^*$ (in the group $O(\RR^{4d})$), we have
the orthogonal direct sum of invariant $U(\Hd)$-modules
\begin{equation}
\label{UHdinvariantdecomp}
\Harm_k(\Hd,\CC) = \bigoplus_{0\le j\le{k\over2}} \bigoplus_{j\le b\le k-j}
H(k-b,b)\cap I(W_{k-2j})^{(k)}.
\end{equation}
This is in fact an orthogonal direct sum of $U(\Hd)$-irreducibles
\begin{equation}
\label{areUirreducible}
R_{k-2j}^{(k)}\cong H(k-b,b)\cap I(W_{k-2j})^{(k)}, 
\end{equation}
(see \cite{II87} \S1.2, \cite{BN02} Theorem 4.1, for $k$ even).

%We now proceed as follows.

%To avoid getting too bogged down,
%we motivate and present our description of $I(W_{k-2j})^{(k)}$,
%with a formal proof in a more general setting given later
%in Lemma \ref{HomkRdecomp}.
%We are then able to describe the $U(\Hd)$-irreducible decomposition of $\Harm_k(\Hd,\CC)$.

We now give the irreducibles for right multiplication by $\HH^*$.
For $d=1$, these
% irreducibles for the action given by right multiplication by $\HH^*$
were obtained by taking a row of the square array (\ref{schematicpict}),
i.e., by choosing a (particular) nonzero element
$f\in H(k,0)=H(k,0)\cap\ker R^*$,
and applying $R$ to it $k$ times. Since $R^*$ moves back in the opposite
direction to $R$, it followed that
\begin{equation}
\label{Wkorbitf}
\spam_\CC\{f,Rf,R^2f,\ldots,R^k f\} \cong W_k
\end{equation}
was an irreducible. Exactly the same argument holds for $d\ge2$, i.e.,
for a nonzero $f\in H(k,0)=H(k,0)\cap\ker R^*$ the subspace (\ref{Wkorbitf})
is irreducible. These are all the irreducibles for $W_k$, and for $d=1$ this
is the end of the story (Theorem \ref{leftrightHdecomp}).
For $d\ge2$, there are other irreducibles, constructed
in a similar way: starting with a nonzero $f$ in the second column,
which does not give the irreducible $W_k$, i.e., $f\in H(k-1,1)\cap\ker R^*$,
one obtains the irreducible subspaces
$$  \spam_\CC\{f,Rf,R^2f,\ldots,R^{k-2} f\} \cong W_{k-2},$$
and so forth.

\begin{theorem}
\label{HarmkIWdecomp}
For the action on $\Harm_k(\Hd,\CC)$ given by
right multiplication by $\HH^*$, the homogeneous component
corresponding to the irreducible $W_{k-2j}$, $0\le j\le{k\over2}$, is
\begin{align*}
I(W_{k-2j})^{(k)} &= \sum_{f\in H(k-j,j)\cap\ker R^*} \spam_\CC\{f,Rf,\ldots,R^{k-2j} f\}
\qquad\hbox{(sum of irreducibles)}, \cr
& = \bigoplus_{j\le b\le k-j} R^{b-j} (H(k-j,j)\cap\ker R^*) \qquad \hbox{(orthogonal direct sum)}.
\end{align*}
Moreover, these are the only irreducibles that appear, i.e., we have
%the orthogonal direct sum
$$ \Harm_k(\Hd,\CC) = \bigoplus_{0\le j\le{k\over2}} I(W_{k-2j})^{(k)}\qquad \hbox{(orthogonal direct sum)}, $$
where the summands above are all nonzero for $d\ge2$,
and $\Harm_k(\HH,\CC)=I(W_k)^{(k)}$.
\end{theorem}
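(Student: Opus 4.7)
The plan is to start from the orthogonal decomposition already supplied by Lemma \ref{HomkRdecomp}, namely $\Harm_k(\Hd,\CC)=\bigoplus_{0\le j\le k/2} M_j$ with
\begin{equation*}
M_j:=\bigoplus_{j\le b\le k-j} R^{b-j}\bigl(\ker R^*\cap H(k-j,j)\bigr),
\end{equation*}
and to identify each $M_j$ as the $W_{k-2j}$-isotypic component $I(W_{k-2j})^{(k)}$. The building block is, for each nonzero $f\in\ker R^*\cap H(k-j,j)$, the subspace $V_f:=\spam_\CC\{f,Rf,\ldots,R^{k-2j}f\}$. That $V_f$ is closed under right multiplication by $\HH^*$ follows, via the Proposition converse to Lemma \ref{Rmultbyscalars}, from invariance under $R$ and $R^*$: the $R$-invariance uses Lemma \ref{RowMovementsLemma}, which forces $R^{k-2j+1}f=0$, while $R^*$-invariance comes from specializing Lemma \ref{R*Rbeta} via $R^*f=0$ to
\begin{equation*}
R^* R^\gb f=\gb(k-2j-\gb+1)\,R^{\gb-1}f,\qquad 0\le\gb\le k-2j.
\end{equation*}

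The heart of the argument, and the main obstacle, is irreducibility of $V_f$. The decomposition $V_f=\bigoplus_{j\le b\le k-j}\CC\cdot R^{b-j}f$ consists of one-dimensional summands lying in the pairwise orthogonal subspaces $H(k-b,b)$ of Lemma \ref{hardtounderstandlemma}; these are the weight spaces for the diagonal torus $U(1)\subset\HH^*$ acting by right multiplication (a monomial in $\Hom_H(p,q)$ is multiplied by $\ga^{p-q}$ when $|\ga|=1$, giving distinct characters for distinct $(p,q)$). Any nonzero $\HH^*$-invariant $W\subseteq V_f$ therefore splits along this grading as $W=\bigoplus_b(W\cap\CC\cdot R^{b-j}f)$. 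Let $b_0$ be the smallest index with $W\cap\CC\cdot R^{b_0-j}f\ne 0$; if $b_0>j$, the displayed commutation yields $R^*R^{b_0-j}f=(b_0-j)(k-j-b_0+1)R^{b_0-j-1}f$, a nonzero vector in $\CC\cdot R^{b_0-j-1}f$ (both factors are strictly positive when $j<b_0\le k-j$), contradicting the minimality of $b_0$. Hence $b_0=j$, i.e., $f\in W$, and applying $R$ repeatedly recovers all of $V_f$. Thus $V_f$ is irreducible of dimension $k-2j+1$, i.e., $V_f\cong W_{k-2j}$.

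Summing $V_f$ over a basis of $\ker R^*\cap H(k-j,j)$ exhausts $M_j$ as an orthogonal direct sum of copies of $W_{k-2j}$; the sum over all $f$ in the theorem statement is just a redundant description of the same thing. Because the $W_{k-2j}$ for distinct $j$ have distinct dimensions, they are pairwise non-isomorphic, and since $\Harm_k(\Hd,\CC)=\bigoplus_{j'} M_{j'}$ contains copies of $W_{k-2j}$ only inside $M_j$, it follows that $M_j=I(W_{k-2j})^{(k)}$. The nonvanishing of each $M_j$ for $d\ge 2$ is exactly the positivity of the dimension in formula (\ref{H(k-b,b)k-2jdim}) of Lemma \ref{H(k-j,j)capKerR*dim}, while that same lemma gives $\ker R^*\cap H(k-j,j)=0$ for $j\ne 0$ when $d=1$, leaving only $\Harm_k(\HH,\CC)=I(W_k)^{(k)}$.
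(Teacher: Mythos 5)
Your irreducibility argument for $V_f:=\spam_\CC\{f,Rf,\ldots,R^{k-2j}f\}$ — splitting any $\HH^*$-invariant subspace along the $U(1)$-weight spaces $H(k-b,b)$ and then lowering with $R^*$ via Lemma \ref{R*Rbeta} — is correct and in fact cleaner than what the paper does. But there is a genuine gap one step earlier, at the assertion that $V_f$ is closed under right multiplication by $\HH^*$ in the first place. You justify this by ``the Proposition converse to Lemma \ref{Rmultbyscalars}, from invariance under $R$ and $R^*$,'' but that Proposition only says that $\HH^*$-invariance is equivalent to invariance under the \emph{entire family} of operators $\{R_a\}_{|a|=k}$, not under the two first-order operators $R=-R_\gb$ and $R^*=R_{\overline\gb}$. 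The claim that $R$- and $R^*$-invariance already forces invariance under all the $R_a$ (equivalently under right multiplication by $\HH^*$) is precisely Ansatz \ref{ansatzhopetoprove}, which the paper explicitly does \emph{not} prove at this stage; it is only obtained afterwards, as Corollary \ref{Rrightmultequiv}, \emph{as a consequence of} Theorem \ref{HarmkIWdecomp}. As written, your proof therefore assumes the key unproven step, and everything downstream (irreducibility of $V_f$ as an $\HH^*$-module, identification of $M_j$ with the isotypic component) rests on it.

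The paper's own proof deliberately routes around this: it shows that any irreducible $V$ inside $M_j$ contains a subspace $W=\spam_\CC\{f,Rf,\ldots,R^{k-2j}f\}$ with $\dim V\ge\dim W=k-2j+1$, and then concludes $V=W$ by matching multiplicities against the abstract decomposition of \cite{II87} and \cite{BN02} — an external input, not an elementary argument. If you want to keep your more self-contained structure, you must supply an actual proof that $R,R^*$-invariance implies $\HH^*$-invariance; one way is Lie-theoretic: $\HH^*\cong SU(2)$ is connected, and differentiating $t\mapsto f(q\,e^{ti})$, $f(q\,e^{tj})$, $f(q\,e^{tk})$ at $t=0$ shows the infinitesimal generators of the right action are real-linear combinations of $R^*-R$, $i(R+R^*)$ and $i(R_\ga-R_{\overline\ga})=i(R^*R-RR^*+\hbox{const})$ on $\Hom_k(\Hd,\CC)$, so a finite-dimensional subspace invariant under $R$ and $R^*$ is invariant under the Lie algebra and hence under the group. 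With that inserted, your argument goes through and is arguably a genuine improvement; without it, the step is circular relative to the paper's development.
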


\begin{proof}
From Lemma \ref{HomkRdecomp},
we have the orthogonal direct sum decomposition
$$ \Harm_k(\Hd,\CC) = \bigoplus_{0\le j\le {k\over2}}
\bigoplus_{j\le b\le k-j} R^{b-j} (H(k-j,j)\cap\ker R^*), $$
and so it suffices to show that every nontrivial irreducible subspace
$$ V\subset \bigoplus_{0\le a\le k-2j} R^a (H(k-j,j)\cap\ker R^*)
=\bigoplus_{j\le b\le k-j} R^{b-j} (H(k-j,j)\cap\ker R^*)  $$
has the form
$$ V=\spam_\CC\{f,Rf,\ldots,R^{k-2j} f\}, \quad {f\in H(k-j,j)\cap\ker R^*},$$
so that $V\cong W_{k-2j}$. 
%This, together with dimension formula (\ref{H(k-b,b)k-2jdim}) %(\ref{H(k-j,j)kerR*dim}) 
%gives the result.

Choose a nonzero $g\in V$, and write
$$ g=\sum_{0\le a\le k-2j} R^a f_a, \qquad f_a\in H(k-j,j)\cap\ker R^*. $$
Let $a^*$ be the largest value of $a$ for which $f_a\ne0$. 
Then, by (\ref{R*alphaRbeta}) of Lemma \ref{RR*commutegeneral},
$$ f:=(R^*)^{a^*} g = (R^*)^{a^*} R^{a^*} f_{a^*}
= (-a^*)_{a^*} (2j-k)_{a^*} f_{a^*}, $$
which is a nonzero scalar multiple of $f_{a^*}$  %\in H(k-j,j)\cap\ker R^*$
(for $2j-k=0$, $a^*=0$),
and
%By Lemma \ref{RRstarcancelcor}, we have $(R^*)^{k-j+1}V=0$. 
%Let $0\le b^*\le k-j$ be the largest integer with
%$(R^*)^{b^*} g\ne 0$. Then Lemma \ref{RowMovementsLemma}
%gives that $f:=(R^*)^{b^*}g \in H(k-j,j)\cap\ker R^*$, and
$$ W= \spam_\CC\{f,Rf,\ldots,R^{k-2j} f\}\subset V, \quad R^{k-2j} f\ne0. $$
Hence $\dim(V)\ge\dim(W)= k-2j+1$. By construction, $W$
is invariant under the action of $R$ and $R^*$. Supposing each such $W$ were 
invariant under right multiplication, i.e., $V=W$, 
then our decomposition would have the same multiplicity of each irreducible 
$W_{k-2j}$ in $\Harm_k(\Hd,\CC)$ as 
as the abstract decomposition of \cite{II87} and \cite{BN02}. 
%, which gives the multiplicity of each irreducible $W_{k-2j}$ in $\Harm_k(\Hd,\CC)$.
This must indeed be the case, since otherwise a union of some of the $W$'s would be 
an irreducible, giving a contradiction.
%If $W$ was invariant under right multiplication
%by $\HH^*$, then we would be done. This could, in principal, be proved
%directly by finding an explicit formula for $R_af$, $|a|=k$, in terms
%of $R$ and $R^*$, along the lines of Examples \ref{Harm3example} and
%\ref{Harm3exampleII}. Since we were unable to find such formulas in all cases,
%we appeal to the abstract decomposition of \cite{II87} and \cite{BN02}, which
%gives the multiplicity of each irreducible $W_{k-2j}$ in $\Harm_k(\Hd,\CC)$.
%Starting with the
%lowest order irreducible appearing ($W_0$ for $k$ even and $W_1$ for $k$ odd),
%one concludes that $V$ must have dimension $k-2j$, i.e., be invariant
%under the action given by right multiplication by $\HH^*$ and equal to $W$.  
\end{proof}

The last part of the proof above gives the following.

\begin{corollary} 
\label{Rrightmultequiv}
The irreducibles for right multiplication by $\HH^*$ and for
multiplication by $R$ and $R^*$ are the same.
\end{corollary}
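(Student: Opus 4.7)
The plan is to identify both families of irreducibles with the explicit subspaces constructed in the proof of Theorem~\ref{HarmkIWdecomp}, namely
\[
W = \spam_\CC\{f, Rf, R^2f, \ldots, R^{k-2j}f\}, \qquad f \in H(k-j,j) \cap \ker R^*, \quad 0\le j\le \tfrac{k}{2}.
\]
That proof already shows these are precisely the $\HH^*$-irreducibles in $\Harm_k(\Hd,\CC)$, so only the bridge to $\{R,R^*\}$-irreducibles remains.

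First I would check that each such $W$ is $\{R,R^*\}$-invariant. Invariance under $R$ is by construction, using $R^{k-2j+1}f=0$ from \eqref{Rpowerzero} to ensure the span closes under $R$. Invariance under $R^*$ follows from (\ref{R*alphaRbeta}) in Lemma~\ref{RR*commutegeneral}: since $R^*f=0$, specialising that formula yields $R^*R^a f = a(k-2j-a+1)R^{a-1}f \in W$.

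Next I would show each $W$ is $\{R,R^*\}$-irreducible. Let $V' \subseteq W$ be a nonzero $\{R,R^*\}$-invariant subspace, pick $g = \sum_a c_a R^a f \in V'$ nonzero, and let $a^*$ be the largest index with $c_{a^*}\neq 0$. Applying $(R^*)^{a^*}$ yields $(-a^*)_{a^*}(2j-k)_{a^*}c_{a^*}f$ by exactly the computation used in the proof of Theorem~\ref{HarmkIWdecomp}, a nonzero scalar multiple of $f$. Hence $f \in V'$, and then every $R^b f$ lies in $V'$, giving $V' = W$.

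Finally, for the converse I would invoke complete reducibility: because $R^*$ is the adjoint of $R$ (Lemma~\ref{Rstarisadjoint}), the orthogonal complement of any $\{R,R^*\}$-invariant subspace is again $\{R,R^*\}$-invariant. Combined with the orthogonal decomposition $\Harm_k(\Hd,\CC) = \bigoplus W$ from Theorem~\ref{HarmkIWdecomp}, whose summands are $\{R,R^*\}$-invariant and $\{R,R^*\}$-irreducible by the preceding steps, this forces every $\{R,R^*\}$-irreducible subspace to equal some $W$. The anticipated obstacle is eliminating $\{R,R^*\}$-irreducibles that sit diagonally across summands indexed by the same $j$ (the isotypic $I(W_{k-2j})^{(k)}$); but any such diagonal subspace, being $R^*$-invariant, would contain a nonzero element of $H(k-j,j)\cap\ker R^*$ by applying $(R^*)^{a^*}$ as above, and then by Step 2 would contain the full $W$ corresponding to that element, contradicting properness. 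Thus the $\{R,R^*\}$-irreducibles coincide with the $W$'s, and hence with the $\HH^*$-irreducibles.
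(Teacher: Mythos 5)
Your proposal is correct and follows essentially the same route as the paper: the corollary is read off from the proof of Theorem~\ref{HarmkIWdecomp}, whose extraction argument (apply $(R^*)^{a^*}$ via Lemma~\ref{RR*commutegeneral} to recover a nonzero element of $\ker R^*\cap H(k-j,j)$, then span with powers of $R$) shows that every nonzero $\{R,R^*\}$-invariant subspace contains one of the irreducible $R$-orbits $W$, while the $\HH^*$-side is settled by that theorem's multiplicity comparison with the abstract decomposition. One small point to tighten: an $\{R,R^*\}$-irreducible could a priori sit diagonally across isotypic components with \emph{different} $j$, not only across orbits within a single $I(W_{k-2j})^{(k)}$; this is disposed of by the same technique --- take $0\neq h\in V'\cap\ker R^*$ (nonempty since $R^*$ is nilpotent), write $h=\sum_j h_j$ with $h_j\in\ker R^*\cap H(k-j,j)$, and apply $(R^*)^{k-2j_0}R^{k-2j_0}$ with $j_0$ the smallest index for which $h_{j_0}\neq0$ to isolate a single $f\in\ker R^*\cap H(k-j_0,j_0)$ inside $V'$.
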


This is effectively the Ansatz \ref{ansatzhopetoprove}, 
which we had hoped to prove by elementary means.
We will call a sequence
$$ f,Rf,\ldots, R^{k-2j}f, \qquad 0\le j\le\hbox{${k\over2}$}, $$
or any nonzero scalar multiples of it, an {\bf $R$-orbit} (for $W_{k-2j}$) if
$$ f\in\Hom_H(k-j,j), \qquad R^*f =0. $$
It follows from Theorem \ref{HarmkIWdecomp} that $R^{k-2j}f\ne0$,
and
\begin{equation}
\label{Rorbitdescript}
R\{f\}:=\spam_\CC\{f,Rf,\ldots, R^{k-2j}f\},
\end{equation}
is an irreducible subspace (of dimension $k+1-2j$) for right multiplication by $\HH^*$.

We can now give an explicit form for the $U(\Hd)$-irreducibles.

\begin{theorem}
\label{HarmkUirredthm}
Let $d\ge2$.
For the action on $\Harm_k(\Hd,\CC)$ given by $U(\Hd)=\Sp(d)$,
we have the
following orthogonal direct sum of irreducibles
%We can decompose $\Harm_k(\Hd,\CC)$ into $U(\Hd)$-irreducibles as follows
\begin{align}
\label{HarmkUirrdecomp}
\Harm_k(\Hd,\CC) &= \bigoplus_{0\le j\le {k\over2}}
\bigoplus_{j\le b\le k-j} H(k-b,b)_{k-2j}  \cr
%&= \bigoplus_{0\le b\le k} \bigoplus_{j=0}^{\min\{b,k-b\}} H(k-b,b)_{k-2j} \cr
& \cong \bigoplus_{0\le j\le {k\over2}} (k-2j+1) R_{k-2j}^{(k)},
\end{align}
where
\begin{align}
\label{H(k-b,b)_(k-2j)form}
H(k-b,b)_{k-2j}
&:= R^{b-j} \bigr(\ker R^*\cap H(k-j,j)\bigl) \cr
&\ = (R^*)^{k-b-j} \bigr(\ker R\cap H(j,k-j)\bigl) \cr
&\ = H(k-b,b) \cap I(W_{k-2j})^{(k)} \cr
&\ \cong R_{k-2j}^{(k)} :=  \ker R^*\cap H(k-j,j),
\end{align}
and
%, \quad 0\le j\le {k\over2}.
\begin{equation}
\label{Rkirrdim}
\dim(R_{k-2j}^{(k)})
= (k-2j+1) (k+2d-1) {(k-j+2d-2)!(j+2d-3)!\over(k-j+1)!j!(2d-1)!(2d-3)!}.
\end{equation}
\end{theorem}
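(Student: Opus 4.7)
The plan is to assemble the theorem from pieces already developed in the paper, supplemented by an irreducibility fact imported from the literature. First, I would note that the orthogonal direct sum decomposition
\[
\Harm_k(\Hd,\CC) = \bigoplus_{0\le j\le k/2}\bigoplus_{j\le b\le k-j} H(k-b,b)_{k-2j}
\]
is nothing but (\ref{HkUorthogdecomp}) of Lemma \ref{HomkRdecomp}, with the two equivalent descriptions of $H(k-b,b)_{k-2j}$ as $R^{b-j}$ of a kernel or $(R^*)^{k-b-j}$ of a kernel given in (\ref{Irreducibledefn2}). The identification $H(k-b,b)_{k-2j} = H(k-b,b) \cap I(W_{k-2j})^{(k)}$ is then read off by comparing with Theorem \ref{HarmkIWdecomp}: the $j$-th homogeneous component for the $\HH^*$-action is the sum over $b$ of $R^{b-j}(\ker R^*\cap H(k-j,j))$, and intersecting with $H(k-b,b)$ picks out exactly the summand of bidegree $(k-b,b)$.

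Next I would verify that each summand is $U(\Hd)$-invariant. The space $H(k-b,b)$ is $U(\CC^{2d})$-invariant, hence $U(\Hd)$-invariant since $U(\Hd) \subset U(\CC^{2d})$. The operators $R$ and $R^*$ commute with the $U(\Hd)$-action by Lemma \ref{RandU(Hd)commute}, so kernels and images under them are $U(\Hd)$-invariant; applying these facts to (\ref{H(k-b,b)_(k-2j)form}) gives the invariance. Moreover, the map $R^{b-j}\colon \ker R^*\cap H(k-j,j) \to H(k-b,b)_{k-2j}$ is a $U(\Hd)$-equivariant isomorphism: it is surjective by construction and injective because $(R^*)^{b-j} R^{b-j}$ acts as a nonzero scalar on $\ker R^*\cap H(k-j,j)$, by (\ref{RRstarcancelI}) of Lemma \ref{RRstarcancelcor}. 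This yields the module isomorphism $H(k-b,b)_{k-2j} \cong R_{k-2j}^{(k)}$ and the multiplicity $k-2j+1$, which is the number of allowed values $j\le b \le k-j$.

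The main obstacle is the $U(\Hd)$-irreducibility of $R_{k-2j}^{(k)} = \ker R^*\cap H(k-j,j)$. This is what the reader cannot extract from the elementary $R,R^*$ machinery developed so far; the paper flags this immediately after (\ref{areUirreducible}) by appealing to \cite{II87} \S1.2 and \cite{BN02} Theorem 4.1 (with the multiplicity count for right $\HH^*$-action matching Theorem \ref{HarmkIWdecomp}). I would therefore cite these sources for irreducibility of $R_{k-2j}^{(k)}$. An alternative route, which I would mention but not carry out, is to observe that the joint action of $U(\Hd)\times \HH^*$ on $\Harm_k(\Hd,\CC)$ decomposes by Howe-type duality, so the $\HH^*$-isotypic pieces $I(W_{k-2j})^{(k)}$ are $(k-2j+1)$ copies of a single $U(\Hd)$-irreducible, and this irreducible is realised inside each column $H(k-b,b)$ as $H(k-b,b)_{k-2j}$.

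Finally, the dimension formula (\ref{Rkirrdim}) is a direct quotation of (\ref{H(k-b,b)k-2jdim}) from Lemma \ref{H(k-j,j)capKerR*dim} applied to $\ker R^* \cap H(k-j,j)$, and the second line of (\ref{HarmkUirrdecomp}) is obtained by collecting the $k-2j+1$ isomorphic copies over $b \in \{j,\ldots,k-j\}$. The only nontrivial input is the irreducibility step, which the author's proof presumably handles by the same citation; every other piece is already in place in the paper.
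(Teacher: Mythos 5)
Your proposal is correct and follows essentially the same route as the paper's proof: invariance and the first two descriptions of $H(k-b,b)_{k-2j}$ come from Lemma \ref{HomkRdecomp}, the third from intersecting Theorem \ref{HarmkIWdecomp} with $H(k-b,b)$, the isomorphism and multiplicity from Lemma \ref{RRstarcancelcor} with $\ga=\gb=b-j$ together with the $U(\Hd)$-equivariance of $R$, the dimension from Lemma \ref{H(k-j,j)capKerR*dim}, and the irreducibility of $R_{k-2j}^{(k)}$ from the same literature citation (\ref{areUirreducible}) that the paper invokes. You have correctly identified that the irreducibility is the one step not established by the paper's elementary machinery.
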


\begin{proof} 
By Lemma \ref{HomkRdecomp}, we already have that 
%the expansion 
(\ref{HarmkUirrdecomp}) is an orthogonal direct sum of
$U(\Hd)$-invariant subspaces, with $H(k-b,b)_{k-2j}\subset H(k-b,b)$,
given by the first two formulas in (\ref{H(k-b,b)_(k-2j)form}). 
We therefore need only show that they are $U(\Hd)$-irreducible, 
i.e., given by the formula (\ref{UHdinvariantdecomp}), i.e., the 
third formula, with (\ref{areUirreducible}) holding (the fourth formula).
By Theorem \ref{HarmkIWdecomp}, we have
$$ I(W_{k-2j})^{(k)}
 = \bigoplus_{j\le a\le k-j} R^a (H(k-j,j)\cap\ker R^*). $$
Since $R^a(H(k-j,j)\cap\ker R^*)\subset H(k-j-a,j+a)$,
the only contribution to the intersection
with $ H(k-b,b)$ is when $b=j+a$,
which gives the third formula, i.e.,
$$ H(k-b,b)\cap I(W_{k-2j})^{(k)} =
R^{b-j} \bigr(\ker R^*\cap H(k-j,j)\bigl). $$

We now show, that for $j$ fixed, the $H(k-b,b)_{k-2j}$ are 
isomorphic $U(\Hd)$-irreducibles.  
Taking $\ga=\gb=b-j$ in Lemma \ref{RRstarcancelcor} gives
%\begin{align*}
%(R^*)^{b-j}\Hom_H(k-b,b)_{k-2j} 
%&= (R^*)^{b-j} R^{b-j} \bigr(\ker R^*\cap \Hom_H(k-j,j)\bigl) \cr
%&= \ker R^*\cap \Hom_H(k-j,j). 
%\end{align*}
$$ (R^*)^{b-j}H(k-b,b)_{k-2j} 
= (R^*)^{b-j} R^{b-j} \bigr(\ker R^*\cap H(k-j,j)\bigl)
= \ker R^*\cap H(k-j,j).  $$
This implies the subspaces have the same dimension as 
$\ker R^*\cap H(k-j,j)$, which is given by 
equation (\ref{H(k-b,b)k-2jdim}) of Lemma \ref{H(k-j,j)capKerR*dim}. 
Finally, since the action of $U(\Hd)$ commutes with the action 
of $R$ (and its powers), these subspaces are all $U(\Hd)$-isomorphic
to $R_{k-2j}^{(k)} :=  \ker R^*\cap H(k-j,j)$.
\end{proof}

This decomposition is given in \cite{BN02} Theorem 4.1
(for $k$ even, the summands not given explicitly),
and in \cite{Ghent14} (Theorems 1 and 2).
The presentation of \cite{Ghent14} involves two separate cases
for the decomposition of $H(a,b)$, namely
% i.e., $a\le b$ and $b\le a$, due to their
%approach, which is essentially repeated applications of
%(\ref{HomHkerRandR*}) and (\ref{HomHkerR*andR}). Effectively, they
%write (\ref{HarmkUirrdecomp}) as
%$$ \Harm_k(\Hd,\CC)
%= \bigoplus_{0\le j\le {k\over2}}
%\bigoplus_{j\le b\le k-j} H(k-b,b)_{k-2j}
%= \bigoplus_{0\le b\le k} \bigoplus_{j=0}^{\min\{b,k-b\}} H(k-b,b)_{k-2j}, $$
%\begin{align*} \Harm_k(\Hd,\CC) &= \bigoplus_{0\le j\le {k\over2}}
%\bigoplus_{j\le b\le k-j} H(k-b,b)_{k-2j} = \bigoplus_{0\le b\le k} H(k-b,b) \cr
%&= \bigoplus_{0\le b\le k} \bigoplus_{j=0}^{\min\{b,k-b\}} H(k-b,b)_{k-2j}, \end{align*}
$$ H(k-b,b)_{k-2j} =
\begin{cases}
R^{b-j} (H(k-j,j)\cap\ker R^*),   & k-b\ge b; \cr
(R^*)^{k-b-j} (H(j,k-j)\cap\ker R),  & k-b\le b.
\end{cases}
$$
%equivalently, as presented
%\begin{align*}
%H(k-b,b)
%&=\cH_{k-b,b}^S\oplus R\cH^S_{k-b+1,b-1}\oplus \cdots\oplus R^b\cH^S_{k,0}
% \cr
%&= H(k-b,b)_{k-2b}\oplus \cdots \oplus H(k-b,b)_{k-2} \oplus
%H(k-b,b)_{k}, \quad k-b\ge b, \cr
%H(k-b,b)
%&=\cH_{k-b,b}^{S^\dagger}\oplus R^*\cH_{k-b-1,b+1}^{S^\dagger}\oplus
%\cdots\oplus (R^*)^{k-b}\cH_{0,k}^{S^\dagger} \cr
%&= H(k-b,b)_{k-2(k-b)}\oplus \cdots \oplus H(k-b,b)_{k-2} \oplus  H(k-b,b)_{k},
%\quad k-b\le b,
%\end{align*}
%where
%\begin{align*}
%\cH^S_{k-b+j,b-j}
%& := H(k-b+j,b-j)\cap\ker R^*,
%%= H(k-b+j,b-j)_{k-2(b-j)},
%\qquad 0\le j\le b \le k-b,\cr
%\cH^{S^\dagger}_{k-b-j,b+j}
%&:= H(k-b-j,b+j)\cap\ker R,
%%= H(k-b-j,b+j)_{k-2(k-b-j)},
%\qquad 0\le j\le k-b \le b.
%\end{align*}
%They use the representation theory of semisimple Lie algebras,
%and give the invariant subspaces $\cH^S_{a,b}$ and $\cH^{S^\dagger}_{a,b}$  as
%${\germ{sp}}_{2d}(\CC)$-irreducibles (Theorems 1 and 2).

%In \cite{Br16} it is observed that the decomposition
%for $H(k-b,b)$ does not need to be split into two cases,
%and that either of the formulas
%in (\ref{H(k-b,b)_(k-2j)form}) can be taken,
%but it is not observed that these
%are in fact equal, as the argument given is at level of irreducibles
%up to equivalence (Lemma 8).

We now consider the irreducibles for the action on $\Harm_k(\Hd,\CC)$
given by both left multiplication
by $U\in U(\Hd)$ and right multiplication by $q^*\in\HH^*$, i.e.,
$$ ((U,q^*)\cdot f)(q) := f(U q \overline{q^*}). $$

\begin{theorem}
\label{Qkdecompthm}
For the action on $\Harm_k(\Hd,\CC)$
given by $U(\Hd)\times\HH^*=\Sp(d)\times\Sp(1)$, $d\ge2$, 
we have the
following orthogonal direct sum of irreducibles
%We can decompose $\Harm_k(\Hd,\CC)$ into $U(\Hd)$-irreducibles as follows
\begin{equation}
\label{Qkdecompformula}
\Harm_k(\Hd,\CC) = \bigoplus_{0\le j\le {k\over2}} Q_{k-2j}^{(k)},
\end{equation}
%\quad \hbox{(orthogonal direct sum)},
where
\begin{align}
\label{Qk-2jdef}
Q_{k-2j}^{(k)} &:= \bigoplus_{j\le b\le k-j} H(k-b,b)_{k-2j} \cr
&\ = \bigoplus_{j\le b\le k-j}R^{b-j} \bigr(\ker R^*\cap H(k-j,j)\bigl)
= I(W_{k-2j})^{(k)},
\end{align}
and
\begin{equation}
\label{Qk-2jirreduc}
\dim(Q_{k-2j}^{(k)})
= (k-2j+1)^2 (k+2d-1) {(k-j+2d-2)!(j+2d-3)!\over(k-j+1)!j!(2d-1)!(2d-3)!}.
\end{equation}
\end{theorem}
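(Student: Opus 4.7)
The plan is to build the decomposition (\ref{Qkdecompformula}) directly from the two previous theorems and then verify the three items: that $Q_{k-2j}^{(k)}$ equals $I(W_{k-2j})^{(k)}$ (so the sum is orthogonal and exhausts $\Harm_k$), that each $Q_{k-2j}^{(k)}$ is invariant under $U(\Hd)\times\HH^*$, and that it is irreducible for this combined action. The dimension formula (\ref{Qk-2jirreduc}) will then drop out by counting.

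First I would establish the identity $Q_{k-2j}^{(k)} = I(W_{k-2j})^{(k)}$. By Theorem \ref{HarmkIWdecomp},
$$ I(W_{k-2j})^{(k)} = \bigoplus_{j\le b\le k-j} R^{b-j}\bigl(\ker R^*\cap H(k-j,j)\bigr), $$
and each summand is exactly $H(k-b,b)_{k-2j}$ by definition (\ref{H(k-b,b)_(k-2j)form}). The orthogonality of the $Q_{k-2j}^{(k)}$ and the fact that their sum is all of $\Harm_k(\Hd,\CC)$ then follow directly from Theorem \ref{HarmkIWdecomp}.

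Next I would check invariance under $U(\Hd)\times\HH^*$. Invariance under $U(\Hd)$ is immediate, since each summand $H(k-b,b)_{k-2j}$ is $U(\Hd)$-invariant by Theorem \ref{HarmkUirredthm}. Invariance under right multiplication by $\HH^*$ is automatic once we identify $Q_{k-2j}^{(k)}$ with the $W_{k-2j}$-isotypic component $I(W_{k-2j})^{(k)}$; equivalently, $Q_{k-2j}^{(k)}$ is closed under $R$ and $R^*$, which move between the $H(k-b,b)_{k-2j}$ for different $b$ (same $j$).

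The main step is irreducibility. Let $V\subset Q_{k-2j}^{(k)}$ be a nonzero $U(\Hd)\times\HH^*$-invariant subspace, and pick nonzero $f\in V$. Decompose $f=\sum_{j\le b\le k-j} R^{b-j}g_b$ with $g_b\in R_{k-2j}^{(k)}=\ker R^*\cap H(k-j,j)$, and let $b^*$ be the largest $b$ with $g_b\neq 0$. Exactly as in the proof of Theorem \ref{HarmkIWdecomp}, applying the right-$\HH^*$ operator $(R^*)^{b^*-j}$ and invoking (\ref{R*alphaRbeta}) of Lemma \ref{RR*commutegeneral} (where all but the $c=b^*-j$ term vanishes because $R^*g_b=0$) yields
$$ (R^*)^{b^*-j} f \;=\; (-(b^*-j))_{b^*-j}\,(2j-k)_{b^*-j}\,g_{b^*}, $$
a nonzero scalar multiple of $g_{b^*}\in R_{k-2j}^{(k)}$. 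Hence $V\cap R_{k-2j}^{(k)}$ contains a nonzero vector, and since $R_{k-2j}^{(k)}$ is $U(\Hd)$-irreducible by Theorem \ref{HarmkUirredthm}, $V\supset R_{k-2j}^{(k)}$. Applying $R^{b-j}$ for $j\le b\le k-j$ (using $\HH^*$-invariance of $V$) then gives $V\supset H(k-b,b)_{k-2j}$ for every $b$, so $V=Q_{k-2j}^{(k)}$, proving irreducibility.

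Finally, the dimension formula follows by summing: $Q_{k-2j}^{(k)}$ has $(k-2j+1)$ summands, each isomorphic to $R_{k-2j}^{(k)}$ (whose dimension is given by (\ref{Rkirrdim})), yielding the claimed factor of $(k-2j+1)^2$. The only delicate point is ensuring that the Pochhammer coefficient above is actually nonzero for $0\le b^*-j\le k-2j$; this is built into the hypotheses of Lemma \ref{RRstarcancelcor} (the case $\gb\le a-b$ in the Lemma's notation becomes $b^*-j\le k-2j$ here), so no separate verification is required. Conceptually, the statement is the concrete realisation of the tensor factorisation $Q_{k-2j}^{(k)}\cong R_{k-2j}^{(k)}\otimes W_{k-2j}$ for commuting actions of $U(\Hd)$ and $\HH^*$.
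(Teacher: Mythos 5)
Your proposal is correct and follows essentially the same route as the paper: both establish invariance from the fact that $Q_{k-2j}^{(k)}=I(W_{k-2j})^{(k)}$ is a sum of irreducibles for each factor, then prove irreducibility by extracting a nonzero element of $\ker R^*\cap H(k-j,j)$ from any invariant subspace and invoking the $U(\Hd)$-irreducibility of the $H(k-b,b)_{k-2j}$ together with closure under $R$. The only cosmetic difference is that you re-run the leading-term extraction via $(R^*)^{b^*-j}$ and Lemma \ref{RR*commutegeneral}, whereas the paper simply cites Theorem \ref{HarmkIWdecomp} for the existence of an $\HH^*$-irreducible of the form $\spam_\CC\{f,Rf,\ldots,R^{k-2j}f\}$ inside $V$.
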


\begin{proof} The subspace $Q_{k-2j}^{(k)}$ is invariant
under the actions of $U(\Hd)$ and $\HH^*$, as it is a sum
of irreducibles for each of these actions.
We now show that it is irreducible.
%It remains only to show that $Q_{k-2j}^{(k)}$ is irreducible.

Suppose $V\subset Q_{k-2j}^{(k)}$ is irreducible under
the action of $U(\Hd)\times\HH^*$.
By Theorem \ref{HarmkIWdecomp},
$V\subset I(W_{k-2j})^{(k)}$, and $V$
contains an irreducible for the action of $\HH^*$ of the
form
$$ \spam_\CC\{f,Rf,\ldots, R^{k-2j}f\}, \qquad
0\ne R^{b-j} f \in H(k-b,b)_{k-2j}, \quad j\le b\le k-j. $$
Since each $H(k-b,b)_{k-2j}$ is $U(\Hd)$-irreducible,
we have that $H(k-b,b)_{k-2j}\subset V$, and hence
$V=Q_{k-2j}^{(k)}$ is irreducible.
\end{proof}

In other words, the $\Sp(d)\times\Sp(1)$-irreducibles $Q_{k-2j}^{(k)}$
are precisely the homogeneous components $I(W_{k-2j})$
for right multiplication by $\HH^*$.

The decomposition (\ref{Qkdecompformula}) of $\Harm_k(\Hd,\CC)$
into $\Sp(d)\times\Sp(1)$-irreducibles
is given in \cite{S74} Theorem 2.4,
and \cite{ACMM20} Proposition 2.1 (as the joint eigenfunctions of operators
$\gD_\SS$ and $\gG$),
where the following notations are used (respectively)
$$ Q_{k-2j}^{(k)}
= \begin{cases}
H_{j,{k\over2}-j}, & \hbox{($k$ even)}; \cr
{\tilde H}_{j,{k-1\over2}-j}, & \hbox{($k$ odd)},
\end{cases}
\qquad\qquad
Q_{k-2j}^{(k)} = \cH_{k,j}. $$
Both observe that $Q_{k-2j}^{(k)}$ is invariant
under conjugation, and so has a basis of real-valued polynomials,
and a real-valued zonal function (a function invariant under the
subgroup of $\Sp(d)\times\Sp(1)$ that fixes a point).
The structural form of this zonal is given in
\cite{S74} Proposition 2.8,
and it is given explicitly in
\cite{ACMM20} Proposition 3.1.

The invariance of $Q_{k-2j}^{(k)}$ under conjugation follows 
directly from (\ref{Rconjids}), i.e.,
\begin{align}
\label{irreducsymmetries}
\overline{H(k-b,b)_{k-2j}}
& = \overline{ R^{b-j} \bigr(\ker R^*\cap \Hom_H(k-j,j)\bigl)} \cr
& = (R^*)^{b-j} \bigr(\overline{\ker R^*}\cap \overline{\Hom_H(k-j,j)}\bigl) \cr
& = (R^*)^{b-j} \bigr(\ker R\cap \Hom_H(j,k-j)\bigl)  \cr
&= H(b,k-b)_{k-2j}.
\end{align}

\begin{schematic}
\label{WeddingcakeRk-2j}
(Wedding cake) The orthogonal decomposition
(\ref{HarmkUirrdecomp}) of $\Harm_k(\Hd,\CC)$ into $U(\Hd)$-irreducibles can
be displayed as layers of a ``wedding cake''
\begin{center}
${\footnotesize \begin{array}{ r|lllclll }
%&& \\[-10pt]
%&& \\[-10pt]
%\hline
& H(k,0) & H(k-1,1) & H(k-2,2) & \cdots & H(2,k-2) & H(1,k-1) & H(0,k) \\
& \\[-8pt]
\hline
& \\[-8pt]
& & \moveRb\qquad  & & {\tiny\vdots} & & \qquad\moveR & \cr
R_{k-4}^{(k)}: \qquad & & & \grey H(k-2,2)_{k-4} & \cdots & H(2,k-2)_{k-4} & & \cr
R_{k-2}^{(k)}: \qquad  &
& \grey H(k-1,1)_{k-2} & H(k-2,2)_{k-2} & \cdots & H(2,k-2)_{k-2} & H(1,k-1)_{k-2} & \cr
R_k^{(k)}: \qquad &
\grey H(k,0)_k & H(k-1,1)_k \ \ \, & H(k-2,2)_k \ \ \, & \cdots & H(2,k-2)_k \ \ \,
& H(1,k-1)_k \ \ \, & H(0,k)_k \cr
%& \\[-8pt]
%& H(k,0) & H(k-1,1) & H(k-2,2) & \cdots & H(2,k-2) & H(1,k-1) & H(0,k) \\
\end{array}}
$
\end{center}
%$$
% \smallmat{
%{\tiny\vdots}\qquad & & & & {\tiny\vdots} & & & \cr
%R_{k-4}^{(k)}: \qquad & & & H(k-2,2)_{k-4} & \cdots & H(2,k-2)_{k-4} & & \cr
%R_{k-2}^{(k)}: \qquad  &
%& H(k-1,1)_{k-2} & H(k-2,2)_{k-2} & \cdots & H(2,k-2)_{k-2} & H(1,k-1)_{k-2} & \cr
%R_k^{(k)}: \qquad &
%\grey H(k,0)_k & H(k-1,1)_k \ \ \, & H(k-2,2)_k \ \ \, & \cdots & H(2,k-2)_k \ \ \,
%& H(1,k-1)_k \ \ \, & H(0,k)_k  \cr
%}
%\quad\mat{ \moveRb&\quad\moveR\cr}  $$
where the layers (rows) correspond to the irreducible $R_{k-2j}^{(k)}$
(the bottom layer is $R_k^{(k)}$), and the slices (columns) correspond
to the decomposition of a given $H(k-b,b)$ into $\min\{b,k-b\}+1$ different
irreducibles.
One can move along the
layers using $R$ and $R^*$, as indicated. Therefore, the left most irreducibles
(shaded in grey), i.e.,
%$$ H(k,0)_k=H(k,0),\quad H(k-1,1)_{k-2}, \quad \ldots \quad
%H(k-\floor{{k\over 2}},j)_{k-2j}=H(k-j,j)\cap\ker R^*, $$
$$ H(k-j,j)_k=H(k-j,j)\cap\ker R^*, \qquad 0\le j\le {k\over 2}, $$
are a distinguished copy of each irreducible, 
% that appears in (\ref{HarmkUirrdecomp}), 
from which the other summands in the layer 
can be obtained by applying $R$.
Further, in view of the symmetries (\ref{irreducsymmetries}), i.e., that conjugation
reflects the cake around its centre, only half of these summands need be calculated,
in practice.
Similarly, the right most entries are distinguished, and give the other summands 
by applying $R^*$.
%Starting from left most entry of each layer,
%we have
%$$ \smallmat{
%& & & {\tiny\vdots} & & & &  {\tiny\vdots}  \cr
%& & \ \ \, \bigcirc_2 & \cdots & R^{k-4} \bigcirc_2 & &
%& \quad \qquad\quad \bigcirc_2=H(k-2,2)_{k-4} =H(k-2,2)\cap\ker R^*
%\cong R_{k-4}^{(k)} \cr
%& \ \, \, \bigcirc_1 & \ R\bigcirc_1  & \cdots & R^{k-3}\bigcirc_1 &
%R^{k-2} \bigcirc_1 & & \quad \qquad\quad \bigcirc_1= H(k-1,1)_{k-2}
%= H(k-1,1)\cap\ker R^* \cong R_{k-2}^{(k)} \cr
%\bigcirc_0 & R \bigcirc_0 & R^2 \bigcirc_0 & \cdots & R^{k-2}
%\bigcirc_0 & R^{k-1} \bigcirc_0 & R^k \bigcirc_0
%&\quad\, \bigcirc_0  =  H(k,0)_k = H(k,0)\cap\ker R^* \cong R_k^{(k)}
%} $$
%and starting from the right most entry gives
%$$ \smallmat{
%& & & {\tiny\vdots} & & & &  {\tiny\vdots} \cr
%& & (R^*)^{k-4} \square_2 & \cdots & \quad\quad \square_2 & &
%& \quad \qquad\quad \square_2=H(2,k-2)_{k-4} =H(2,k-2)\cap\ker R
%\cong R_{k-4}^{(k)} \cr
%& (R^*)^{k-2} \square_1 &  (R^*)^{k-3} \square_1  & \cdots &\quad R^*\square_1
%& \quad \square_1 & & \quad \qquad\quad \square_1= H(1,k-1)_{k-2}
%= H(1,k-1)\cap\ker R \cong R_{k-2}^{(k)} \cr
%(R^*)^k \square_0 & (R^*)^{k-1}  \square_0 & (R^*)^{k-2} \square_0 & \cdots
%& (R^*)^2 \square_0 & R^* \square_0 & \square_0
%&\quad  \square_0  =  H(0,k)_k =  H(0,k)\cap\ker R \cong R_k^{(k)}
%} $$
\end{schematic}

%In this terminology the ``icing on the cake'', i.e., the top most
%entry in the $H(k-b,b)$ is called a space of {\bf symplectic
%harmonics} by \cite{Ghent14}

\begin{example}
We consider $\Harm_2(\Hd,\CC)$, for which (\ref{HarmHomdims}) gives
$$ \dim(\Harm_2(\Hd,\CC))=2d(4d+1)-1 =  (2 d + 1) (4 d - 1). $$
For $d=2$, we have the following table, where each line is an $R$-orbit, as in
(\ref{Rorbitdescript}).
\begin{center}
$\begin{array}{ c|ccc }
&& \\[-10pt]
& H(2,0) & H(1,1) & H(0,2) \\
&& \\[-10pt]
\hline
&& \\[-10pt]
\multirow{4}{4em}{$K(2,0)\Bigl\{$}
& z_1^2 & z_1\overline{w_1} & \overline{w_1}^2 \\
& z_2^2 & z_2\overline{w_2} & \overline{w_2}^2 \cr
& z_1z_2 & z_1\overline{w_2}+z_2\overline{w_1} & \overline{w_1}\overline{w_2}\cr
&& \\[-10pt]
& & z_1\overline{w_2}-z_2\overline{w_1} \\
&& \\[-10pt]
\hline
&& \\[-10pt]
\multirow{7}{4em}{$K(1,1)\Bigl\{$}
&z_1w_1 & z_1\overline{z_1}-w_1\overline{w_1} & \overline{z_1}\overline{w_1}\cr
&z_1w_2 & \overline{w_1}w_2-z_1\overline{z_2} & \overline{w_1}\overline{z_2}\cr
&z_2w_1 & \overline{w_2}w_1-z_2\overline{z_1} & \overline{w_2}\overline{z_1}\cr
&z_2w_2 & z_2\overline{z_2}-w_2\overline{w_2} & \overline{z_2}\overline{w_2}\cr
&& \\[-10pt]
&& \overline{z_1}z_2+w_1\overline{w_2} \cr
&& z_1\overline{z_2}+\overline{w_1}w_2 \cr
&& z_2\overline{z_2}+w_2\overline{w_2} -z_1\overline{z_1}-w_1\overline{w_1} \cr
&& \\[-10pt]
\hline
&& \\[-10pt]
\multirow{4}{4em}{$K(0,2)\Bigl\{$}
& w_1^2 & \overline{z_1}w_1&\overline{z_1}^2 \cr
& w_2^2 & \overline{z_2}w_2&\overline{z_2}^2 \cr
& w_1w_2 & \overline{z_1}w_2+\overline{z_2}w_1 & \overline{z_1}\overline{z_2} \cr
&& \\[-10pt]
& & \overline{z_1} w_2 - \overline{z_2} w_1 \\
&& \\[-10pt]
\hline
\end{array}
$
\end{center}
For example, we have the %following
decomposition into irreducibles for
right multiplication by $\HH^*$
$$ K(2,0)=(R\{z_1^2\}\oplus R\{z_2^2\}\oplus R\{z_1z_2\})
\oplus R\{z_1\overline{w_2}-z_2\overline{w_1}\}
\cong 3 W_2 \oplus W_0, $$
where
%$R\{z_1^2\}=\spam\{z_1^2,z_1\overline{w_1},\overline{w_1}^2\}\cong W_2$,
%$R\{z_1\overline{w_2}-z_2\overline{w_1}\}=\spam\{z_1\overline{w_2}-z_2\overline{w_1}\}
%\cong W_0$,
$$ R\{z_1^2\}=\spam\{z_1^2,z_1\overline{w_1},\overline{w_1}^2\}\cong W_2, \quad
R\{z_1\overline{w_2}-z_2\overline{w_1}\}=\spam\{z_1\overline{w_2}-z_2\overline{w_1}\}
\cong W_0, $$
etc.
This calculation was done for $\Hom_2(\HH^2)$, which has a dimension $1$ higher.
Apart from applying $R$ to fill out the rows, the only other calculation
done
was solving $Rf=0$ or $R^*f$ for $f\in H_2^{(1,1)}(\HH^2)=K(1,1)\cap H(1,1)$
gives a 4-dimensional space spanned by
$$ z_1\overline{z_1}+w_1\overline{w_1}, \quad
z_2\overline{z_2}+w_2\overline{w_2}, \quad
\overline{z_1}z_2+w_1\overline{w_2}, \quad
z_1\overline{z_2}+\overline{w_1}w_2, $$
The first two have nonzero constant Laplacian, so their difference is harmonic,
and the second two are harmonic.
Similar calculations give the general decomposition
\begin{align*}
K(2,0) &=\Bigl( \bigoplus_{|\ga|=2}R\{z^\ga\}\Bigr)\oplus
\Bigl( \bigoplus_{1\le j<k\le d}R\{z_j\overline{w_k}-z_k\overline{w_j}\}\Bigr)
\cong {1\over2}d(d+1)W_2\oplus {1\over2}d(d-1)W_0, \cr
K(1,1) &= \Bigl( \bigoplus_{1\le j,k\le d} R\{z_jw_k\}\Bigr)
\oplus\Bigl( \bigoplus_{j\ne k}R\{z_j\overline{z_k}+\overline{w_j}w_k\}
\oplus \bigoplus_{2\le j\le d} R\{ z_j\overline{z_j}+w_j\overline{w_j}
-z_1\overline{z_1}-w_1\overline{w_1}\}\Bigr) \cr
& \cong d^2 W_2\oplus(d^2-1)W_0, \cr
K(0,2) &=\Bigl( \bigoplus_{|\ga|=2}R\{w^\ga\}\Bigr)\oplus
\Bigl( \bigoplus_{1\le j<k\le d}R\{\overline{z_j}w_k-\overline{z_k}w_j\}\Bigr)
\cong {1\over2}d(d+1)W_2\oplus {1\over2}d(d-1)W_0,
\end{align*}
into irreducibles (sums of $R$-orbits).
In particular, the homogeneous components, i.e., the
$\Sp(d)\times\Sp(1)$-irreducibles, are
$$ \Harm_2(\Hd) = Q_2^{(k)}\oplus Q_0^{(k)}=  I(W_2)^{(2)}\oplus I(W_0)^{(2)}
\cong d(2d+1)W_3\oplus (d-1)(2d+1) W_0. $$
\end{example}

\begin{example} Since $H(k,0)\cap\ker R^*=H(k,0)$, we have
$$
% \dim(I(W_k)^{(k)}) = (k+1) {k+2d-1\choose k},
I(W_k)^{(k)} = \bigoplus_{|\ga+\gb|=k} R\{z^\ga w^\gb\}
\cong {k+2d-1\choose k} W_k \quad \hbox{(orthogonal direct sum)}. $$
\end{example}

%llllllllllllllllllllllllllll

\section{Zonal polynomials}

Here we consider the ``zonal polynomials'' for our irreducible representations
of the groups $G=U(\Hd),U(\Hd)\times\HH^*$ on $\Harm_k(\Hd,\CC)$.
There are two common notions of zonal functions:
%that we will consider:
\begin{itemize}
\item The functions fixed by the action of the subgroup $G_{q'}$ which
fixes a point $q'$.
\item The Riesz representer of point evaluation at a point $q'$
(the reproducing kernel).
\end{itemize}
When $G_{q'}$ is a maximal compact subgroup of $G$ these are equivalent.
We will consider the first notion. For a group $G$ acting on $\Hd$,
we define the stabliser (or isotrophy) subgroup of $q'\in\Hd$ to be
those elements which fix $q'$, i.e.,
$$ G_{q'} := \{ g\in G:g\cdot q'=q'\}. $$
A function $\Hd\to\CC$ which is fixed by the action of $G_{q'}$ is 
said to {\bf zonal} (with pole $q'$).
We denote the subspace of zonal functions in a space $V$ of polynomials by 
$$ V^{G_{q'}} :=\{ f\in V: g\cdot f=f\}. $$
We now condsider the zonal polynomials for the group $G=U(\Hd)$.

Recall $\inpro{v,w}=v^* w$ is the Euclidean inner product (\ref{Innerproductdefn}).
For vectors $q=z+jw$, $q'=z'+jw'$ in $\Hd$, %$z,z',w,w'\in\Cd$, 
we define two inner products
\begin{equation}
\label{Hdinnerprods}
\inpro{q',q}_{\Hd}:=\inpro{q',q}\in\HH, \qquad
\inpro{q',q}_{\CC^{2d}}:= \inpro{\pmat{z'\cr w'},\pmat{z\cr w}}\in\CC.
\end{equation}

\begin{lemma}
\label{zonalformqq'}
For $q,q'\in\Hd$, we have
\begin{equation}
\label{Hdinprorelationship}
\inpro{q',q}_{\Hd} = \inpro{q',q}_{\CC^{2d}} + j \inpro{q'j,q}_{\CC^{2d}}.
\end{equation}
For the action of $U(\Hd)$ the following are zonal polynomials $\Hd\to\CC$
with pole $q'$ 
%$$ q\mapsto \inpro{q',q}_{\CC^{2d}}, \qquad q\mapsto \inpro{q'j,q}_{\CC^{2d}}. $$
$$ q\mapsto \inpro{q',q}_{\CC^{2d}}=\overline{z_1'}z_1+\cdots+\overline{z_d'}z_d
+\overline{w_1'}w_1+\cdots+\overline{w_d'}w_d ,$$
$$ q\mapsto \inpro{q'j,q}_{\CC^{2d}}=z_1'w_1+\cdots+z_d'w_d
-w_1'z_1-\cdots-w_d'z_d. $$
%where $q=z+jw$, $z,w\in\Cd$.
When $q'=e_1$, the zonal polynomials above are
$$ z+jw\mapsto z_1, \qquad z+jw\mapsto w_1. $$
\end{lemma}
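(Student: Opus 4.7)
The plan is to handle the three assertions in sequence. First, for the identity (\ref{Hdinprorelationship}), I would expand componentwise. The basic tool is the quaternion conjugation rule $\overline{z+jw}=\overline{z}-jw$ for $z,w\in\CC$, which follows from $\overline{j}=-j$ together with the commutation rule $\alpha j=j\overline{\alpha}$ from (\ref{jzcommute}). Thus each term $\overline{q'_\ell}\,q_\ell=(\overline{z'_\ell}-jw'_\ell)(z_\ell+jw_\ell)$ splits into four products, and applying (\ref{jzcommute}) again to push every factor of $j$ to the left of the complex coefficients yields the pieces $\overline{z'_\ell}z_\ell$, $j z'_\ell w_\ell$, $-j w'_\ell z_\ell$ and $\overline{w'_\ell}w_\ell$ (the last using $j^2=-1$). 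Summing over $\ell$ and using the uniqueness of the decomposition $\HH=\CC\oplus j\CC$, the $\CC$-part becomes $\sum_\ell(\overline{z'_\ell}z_\ell+\overline{w'_\ell}w_\ell)=\inpro{q',q}_{\CC^{2d}}$, while the $j\CC$-part is $j\sum_\ell(z'_\ell w_\ell-w'_\ell z_\ell)$. A short separate computation, using $zj=j\overline{z}$ and $j^2=-1$, gives $q'j=-\overline{w'}+j\overline{z'}$, so $[q'j]_\CC=(-\overline{w'},\overline{z'})$, and the second sum is precisely $\inpro{q'j,q}_{\CC^{2d}}$, proving (\ref{Hdinprorelationship}).

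Second, to show both functions are zonal, I would use the fact, already noted earlier in the paper, that $U(\Hd)\subset U(\CC^{2d})$; so every $U\in U(\Hd)$ preserves the Hermitian form $\inpro{\cdot,\cdot}_{\CC^{2d}}$. If in addition $Uq'=q'$, then by right $\HH$-linearity (\ref{Lmatscalarcommute/associative}) we also have $U(q'j)=(Uq')j=q'j$. For any such $U$,
\begin{equation*}
\inpro{q',Uq}_{\CC^{2d}}=\inpro{Uq',Uq}_{\CC^{2d}}=\inpro{q',q}_{\CC^{2d}},
\end{equation*}
and the same calculation with $q'$ replaced by $q'j$ shows that $q\mapsto\inpro{q'j,q}_{\CC^{2d}}$ is also fixed by the stabiliser $G_{q'}$.

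Finally, for $q'=e_1$ we have $z'=e_1$ and $w'=0$, so $\inpro{e_1,q}_{\CC^{2d}}=z_1$ by direct substitution; and $[e_1 j]_\CC=(0,e_1)$ by the formula for $q'j$ above, giving $\inpro{e_1 j,q}_{\CC^{2d}}=w_1$. The only genuinely delicate part of the argument is the noncommutative bookkeeping in step one, which rests entirely on the commutation formula (\ref{jzcommute}); once that identity is in hand, both the zonal property and the $e_1$ specialisation are immediate.
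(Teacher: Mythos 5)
Your proof is correct, and the first part (the identity (\ref{Hdinprorelationship})) follows exactly the paper's route: compute $q'j=-\overline{w'}+j\overline{z'}$ from (\ref{jzcommute}) and split $\inpro{q',q}_{\Hd}$ into its $\CC$- and $j\CC$-components. The only place you diverge is in establishing zonality: the paper again exploits the identity itself, writing $\inpro{q',q}_{\Hd}=\inpro{Uq',Uq}_{\Hd}=\inpro{q',Uq}_{\Hd}=\inpro{q',Uq}_{\CC^{2d}}+j\inpro{q'j,Uq}_{\CC^{2d}}$ and matching the two components, which yields the invariance of both complex forms in one stroke; you instead invoke $U(\Hd)\subset U(\CC^{2d})$ so that $U$ preserves $\inpro{\cdot,\cdot}_{\CC^{2d}}$ directly, together with (\ref{Lmatscalarcommute/associative}) to get $U(q'j)=(Uq')j=q'j$, and then apply complex unitary invariance twice. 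Both arguments are equally short and rest on facts already established in the paper; yours makes the role of the stabiliser fixing the second pole $q'j$ slightly more explicit, while the paper's keeps everything inside the single quaternionic inner product. The $q'=e_1$ specialisation is immediate either way.
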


\begin{proof} 
Using (\ref{jzcommute}), we calculate
$$ q'j=(z'+jw')j=z'j+jw'j= -\overline{w'}+ j\overline{z'}, $$
and so
\begin{align*}
\inpro{q',q}_{\Hd} 
&= (z'+jw')^*(z+jw)
= ((z')^* -(w')^*j) (z+jw) \cr
%= (z')^*z+ (w')^*w + (z')^* jw -(w')^*jz \cr
&= (z')^*z+ (w')^*w + j(\overline{z'})^*w -j(\overline{w'})^*z \cr
&= \inpro{\pmat{z'\cr w'},\pmat{z\cr w}}
+j \inpro{\pmat{-\overline{w'}\cr \overline{z'}},\pmat{z\cr w}}
= \inpro{q',q}_{\CC^{2d}} + j \inpro{q'j,q}_{\CC^{2d}},
\end{align*}
which is (\ref{Hdinprorelationship}).
Let $U\in U(\Hd)$ with $Uq'=q'$, then we have
$$ \inpro{q',q}_{\Hd}
= \inpro{Uq',Uq}_{\Hd}
= \inpro{q',Uq}_{\Hd}
= \inpro{q',Uq}_{\CC^{2d}} + j \inpro{q'j,Uq}_{\CC^{2d}}, $$
so that 
$$ \inpro{q',Uq}_{\CC^{2d}}=\inpro{q',q}_{\CC^{2d}}, \qquad
\inpro{q'j,Uq}_{\CC^{2d}}=\inpro{q'j,q}_{\CC^{2d}}, $$
which shows that the linear polynomials given are zonal.
\end{proof}

We note that $z_1$ and $w_1$ are zonal polynomials in the $U(\Hd)$-irreducible
subspace 
$$ H(1,0)_1=\spam\{z_1,\ldots,z_d,w_1,\ldots w_d\}, $$
and so the space of zonal polynomials in a given $U(\Hd)$-irreducible 
is not $1$-dimensional, as it is in the real and complex cases.

\begin{example}
The quadratic polynomial $q\mapsto\norm{q}^2=\inpro{q,q}_{\Hd}$ is zonal
(for any $q'$). 
By folk law (the real and complex cases), the zonal polynomials should 
be a function of this and the quaternionic inner product
$q\mapsto \inpro{q',q}_{\Hd}=(q')^* q$.
Using (\ref{txyzeqns}), we have the
explicit formulas:
$$ \inpro{q',q}_{\CC^{2d}} = {1\over2}
(\inpro{q',q}_{\Hd}-i\inpro{q',q}_{\Hd}i), \qquad
\inpro{q'j,q}_{\CC^{2d}} = {1\over2j}(\inpro{q',q}_{\Hd}
+i\inpro{q',q}_{\Hd}i). $$
%the zonal polynomials can be written explicitly as ``polynomials in $\inpro{q',q}_{\Hd}$'', i.e.,
\end{example}

Using the zonal polynomials above,
which commute, since they are complex-valued, 
\cite{BN02} define zonal polynomials in $\Hom_k(\Hd,\CC)$ by 
%$$ [a,b,c,d,r]_{q'}(q):= 
%\inpro{q',q}_{\CC^{2n}}^a
%\inpro{q'j,q}_{\CC^{2n}}^b
%\overline{\inpro{q',q}_{\CC^{2n}}}^c
%\overline{\inpro{q'j,q}_{\CC^{2n}}}^d
%\inpro{q,q}_{\Hd}^r, \quad a+b+c+d+2r=k. $$
\begin{equation}
\label{zonalq'def}
[\ga_1,\ga_2,\ga_3,\ga_4,r]_{q'}(q):= 
\inpro{q',q}_{\CC^{2d}}^{\ga_1}
\inpro{q'j,q}_{\CC^{2d}}^{\ga_2}
\overline{\inpro{q',q}}_{\CC^{2d}}^{\ga_3}
\overline{\inpro{q'j,q}}_{\CC^{2d}}^{\ga_4}
\norm{q}^{2r}, %\inpro{q,q}_{\Hd}^r,
\end{equation}
where $\ga_1+\ga_2+\ga_3+\ga_4+2r=k$.
These span and hence are a basis for the zonal polynomials in $\Hom_k(\Hd,\CC)$ 
(Proposition 4.2, \cite{BN02}).
%Note that the number of these is independent of the dimension $d$.

\begin{example} For a general $q'$, we have
$$ [\ga_1,\ga_2,\ga_3,\ga_4,r]_{q'} \in \Hom_H(\ga_1+\ga_2+r,\ga_3+\ga_4+r), $$
and for $q'=e_1$, we have
\begin{equation}
\label{Homkzonalbasis}
[\ga_1,\ga_2,\ga_3,\ga_4,r]_{e_1} = z_1^{\ga_1}w_1^{\ga_2}\overline{z_1}^{\ga_3}
\overline{w_1}^{\ga_4}\norm{z+jw}^{2r}, \quad \ga_1+\ga_2+\ga_3+\ga_4+2r=k,
\end{equation}
so that 
$$ [\ga_1,\ga_2,\ga_3,\ga_4,r]_{e_1} 
\in \Hom_k^{(\ga_2+\ga_3+r,\ga_3+\ga_4+r)}(\Hd). $$
%$$ [a_1,a_2,a_3,a_4,r]_{e_1} \in \Hom_K(a+d+r,b+c+r)
%\Implies [a,b,c,d,r]_{e_1} \in \Hom_k^{(b+c+r,c+d+r)}. $$
\end{example}

We can take advantage of (\ref{Homkzonalbasis}) to simplify the
proof and presentation of results, since if $U$ is unitary with $Uq'=e_1$, then
we have the following correspondence between zonal polynomials
with poles $q'$ and $e_1$
$$ [a_1,a_2,a_3,a_4,r]_{q'} = [a_1,a_2,a_3,a_4,r]_{e_1}(V\cdot), $$
%which preserves the inner products. 
This follows from the calculation
$$ \inpro{q',q}_\Hd=\inpro{Uq',Uq}_\Hd=\inpro{e_1,Uq}_\Hd, $$
%$$ \qquad \inpro{q'j,q}=\inpro{Vq'j,Vq}=\inpro{e_1j,q}, $$
%(\ref{zonalformqq'}), 
and the fact that such a $U$ can always be constructed, since
$U(\Hd)$ is transitive on the quaternionic sphere.
In effect, a zonal polynomial for $q'$ can be obtained from one with pole $e_1$
by making the substitution
\begin{equation}
\label{e1toq'subs}
z_1\mapsto\inpro{q',q}_{\CC^{2d}}, \qquad
w_1\mapsto\inpro{q'j,q}_{\CC^{2d}}.
\end{equation}

%the above formula to 

The number of zonal functions given by (\ref{zonalq'def}) 
is independent of the dimension $d$.
For $d=1$, these zonal polynomials have linear dependencies, e.g.,
$$ [1,0,1,0,0]+[0,1,0,1,0]= z_1\overline{z_1}+w_1\overline{w_1} = [0,0,0,0,1], $$
and for $d>1$ they are linearly dependent. Thus we obtain the following
dimensions.

\begin{lemma}
For $Z:=U(\Hd)_{q'}$, $d\ge2$, 
%the dimensions of the zonal polynomials are given by
the zonal polynomials have dimensions
\begin{align}
\label{HomkZdim}
\dim(\Hom_k(\Hd,\CC)^Z)
&= \sum_{0\le j\le {k\over 2}} {k-2j+3\choose3},  \\
%\quad
\label{HarmkZdim}
\dim(\Harm_k(\Hd,\CC)^Z)
&= {k+3\choose3} = \sum_{0\le j\le {k\over 2}} (k-2j+1)^2.
\end{align}
Further, if $q'=z'\in\Cd$, e.g., $q'=e_1$, then 
\begin{align}
\dim(\Hom_k^{(a,b)}(\Hd)^Z) &= {1\over2}(m+1)(m+2), \\
\dim(H_k^{(a,b)}(\Hd)^Z) &= m+1,
\end{align}
where 
%$m:=\min\{a,k-a,b,k-b\}$.
 $$ m:=\min\{a,k-a,b,k-b\}. $$
\end{lemma}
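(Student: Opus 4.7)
My plan is to work explicitly at $q'=e_1$ and reduce the general case to it via the transitivity of $U(\Hd)$ on the unit sphere. For any nonzero $q'\in\Hd$ there exists $U\in U(\Hd)$ with $U(q'/\norm{q'})=e_1$; since the stabilizers satisfy $U(\Hd)_{e_1}=U\,U(\Hd)_{q'}U^{-1}$, the pullback $f\mapsto f\circ U$ gives a $\CC$-linear dimension-preserving bijection from $U(\Hd)_{e_1}$-zonal to $U(\Hd)_{q'}$-zonal polynomials. When $q'=z'\in\Cd$, one can moreover choose $U$ of block-diagonal form $\pmat{V & 0 \cr 0 & \overline{V}}$ with $V\in U(\Cd)$ and $Vz'=\norm{z'}e_1^{\CC}$, which lies in $U(\Hd)\cap U(\CC^{2d})$ and preserves both the $H$- and $K$-bigradings, so the bigraded statements also reduce to $q'=e_1$. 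At $q'=e_1$ the basis (\ref{Homkzonalbasis}) presents $\Hom_k(\Hd,\CC)^Z$ as the span of the products $z_1^{\alpha_1}w_1^{\alpha_2}\overline{z_1}^{\alpha_3}\overline{w_1}^{\alpha_4}\norm{q}^{2r}$ over quintuples with $|\alpha|+2r=k$, and these are linearly independent for $d\ge2$. For each fixed $r$, stars-and-bars yields $\binom{k-2r+3}{3}$ tuples, giving (\ref{HomkZdim}) upon summation.

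To obtain (\ref{HarmkZdim}) I would apply the Fischer decomposition (\ref{Fischerdecomp}). Since $\norm{q}^2$ is itself $U(\Hd)$-invariant, multiplication by $\norm{q}^{2j}$ commutes with the $Z$-action and the decomposition restricts to
$$ \Hom_k(\Hd,\CC)^Z = \bigoplus_{0\le j\le k/2} \norm{q}^{2j}\,\Harm_{k-2j}(\Hd,\CC)^Z, $$
hence $\dim\Harm_k(\Hd,\CC)^Z=\dim\Hom_k(\Hd,\CC)^Z-\dim\Hom_{k-2}(\Hd,\CC)^Z$. Reindexing (\ref{HomkZdim}) for $k-2$ by $r\mapsto r+1$ shows that $\dim\Hom_{k-2}^Z$ equals $\dim\Hom_k^Z$ minus its $r=0$ summand, leaving exactly $\binom{k+3}{3}$. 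The alternative expression $\binom{k+3}{3}=\sum_{j=0}^{\lfloor k/2\rfloor}(k-2j+1)^2$ is a standard elementary identity for sums of squares of integers of fixed parity, verified by handling even and odd $k$ separately.

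For the bigraded statements at $q'=e_1$ I would compute the $(a,b)$-bidegrees of each basis monomial. Since $\norm{q}^2\in\Hom_2^{(1,1)}(\Hd)$, and $z_1,w_1,\overline{z_1},\overline{w_1}$ lie in $\Hom_1^{(0,0)}(\Hd),\Hom_1^{(1,0)}(\Hd),\Hom_1^{(1,1)}(\Hd),\Hom_1^{(0,1)}(\Hd)$ respectively, the monomial $z_1^{\alpha_1}w_1^{\alpha_2}\overline{z_1}^{\alpha_3}\overline{w_1}^{\alpha_4}\norm{q}^{2r}$ lies in $\Hom_k^{(a,b)}(\Hd)$ iff $\alpha_2+\alpha_3+r=a$ and $\alpha_3+\alpha_4+r=b$. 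Combined with $|\alpha|=k-2r$, these constraints force $r\le m$, and for each such $r$, parameterizing by $\alpha_3\in[\max(0,a+b-k),\min(a-r,b-r)]$ gives $m-r+1$ valid tuples in every case. Summing yields $\dim\Hom_k^{(a,b)}(\Hd)^Z=\sum_{r=0}^{m}(m-r+1)=\binom{m+2}{2}$. The bigraded Fischer decomposition restricted to $Z$-fixed polynomials,
$$ \Hom_k^{(a,b)}(\Hd,\CC)^Z=H_k^{(a,b)}(\Hd,\CC)^Z\oplus\norm{q}^2\,\Hom_{k-2}^{(a-1,b-1)}(\Hd,\CC)^Z, $$
together with the elementary identity $m_{a-1,b-1}^{(k-2)}=m-1$, then gives $\dim H_k^{(a,b)}(\Hd)^Z=\binom{m+2}{2}-\binom{m+1}{2}=m+1$, with the degenerate case $m=0$ handled directly since then $\Hom_{k-2}^{(a-1,b-1)}=0$.

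The principal bookkeeping obstacle is verifying that the count of valid $\alpha_3$ equals $m-r+1$ regardless of which of $a,b,k-a,k-b$ attains the minimum $m$, which is a short case analysis distinguishing whether $a+b\le k$ or $a+b>k$ (both cases producing the same answer). The one nontrivial external input is the linear independence of the basis (\ref{Homkzonalbasis}) cited from \cite{BN02}, which requires $d\ge2$ since for $d=1$ one has the relation $\norm{q}^2=z_1\overline{z_1}+w_1\overline{w_1}$, producing dependencies among the listed zonal monomials.
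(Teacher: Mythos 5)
Your proposal is correct and follows essentially the same route as the paper: count the explicit zonal basis $[\ga_1,\ga_2,\ga_3,\ga_4,r]_{e_1}$ of \cite{BN02} (reducing to $q'=e_1$ by transitivity), sort it into the spaces $\Hom_k^{(a,b)}(\Hd)$ when $q'\in\Cd$, and obtain the harmonic dimensions as $\dim\Hom_k^Z-\dim\Hom_{k-2}^Z$. The only cosmetic differences are that you invoke the $Z$-restricted Fischer decomposition where the paper uses the surjectivity of $\gD$ on zonal polynomials (Lemma \ref{LaplacianZonal}) — these are equivalent — and that you count the admissible multi-indices by stars-and-bars directly rather than via the univariate dimension formula (\ref{Homkabonedim}).
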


\begin{proof}
Since the zonal polynomials in (\ref{Homkzonalbasis}) are clearly linearly 
independent and span $\Hom_k(\Hd,\CC)^Z$ (see \cite{BN02} Proposition 4.2),
it suffices to count them, 
which gives
$$ \dim(\Hom_k(\Hd,\CC)^Z) = \sum_{0\le r \le{k\over2}}
\dim(\Hom_{k-2r}(\HH,\CC))
= \sum_{0\le r\le {k\over 2}} {k-2r+3\choose3}. $$
When $q'=z'$ ($w'=0$), 
each of these zonal polynomials is %lies 
in some $\Hom_k^{(a,b)}(\Hd)$,
so that
\begin{equation}
\label{HomkabZdecomp}
\Hom_k^{(a,b)}(\Hd)^Z
= \bigoplus_{m\le r \le {k\over2}}
\spam_\CC\bigr\{[\ga_1,\ga_2,\ga_3,\ga_4,r]:
\mat{\ga_1+\ga_4=k-a-r,\ \ga_2+\ga_3=a-r\cr
\ga_1+\ga_2=k-b-r,\ \ga_3+\ga_4=b-r } \bigl\},
\end{equation}
and counting again, using (\ref{Homabdim}) and $m_{a-r,b-r}^{(k-2r)}=m+1-r$, gives
\begin{align*}
\dim(\Hom_k^{(a,b)}(\Hd)^Z)
&= \sum_{m\le r\le {k\over2}} \dim(\Hom_{k-2r}^{(a-r,b-r)}(\HH)) \cr
&= 1+2+\cdots+m+(m+1)
= {1\over2}(m+1)(m+2).
\end{align*}
Since the Laplacian maps 
$\Hom_k^{(a,b)}(\Hd)$ onto $\Hom_{k-1}^{(a-1,b-1)}(\Hd)$
and zonal polynomials to zonal polynomials (Lemma \ref{LaplacianZonal}), 
we have
$$ \dim(\Harm_k(\Hd,\CC)^Z) 
= \dim(\Hom_k(\Hd,\CC)^Z)-\dim(\Hom_{k-2}(\Hd,\CC)^Z)
= {k+3\choose3}, $$
\begin{align*}
\dim(H_k^{(a,b)}(\Hd)^Z)
&= \dim(\Hom_k^{(a,b)}(\Hd)^Z)
-\dim(\Hom_{k-2}^{(a-1,b-1)}(\Hd)^Z) \cr
&= {1\over2}(m+1)(m+2)-{1\over2}(m-1+1)(m-1+2)
= m+1,
\end{align*}
which completes the proof.
\end{proof}

We will give a simple example first, which motivates 
the general and constructive result to follow. 

\begin{example}
\label{exshowsymms}
For $q'=e_1$, the unique zonal polynomial
in $H_{k}^{(0,0)}(\Hd)$ is
%we have $\dim(H_{k}^{(0,0)}(\Hd)^Z)=1$, so there is a unique zonal polynomial
%in $H_{k}^{(0,0)}(\Hd)$, which (for $q'=e_1$) is clearly 
$$ [k,0,0,0,0] = z_1^k. $$
We may apply $L$ (down) and $R$ (right) to this, as in the univariate case
(Schematic \ref{schematicHarmk} and Lemma \ref{pablemma}),
to obtain $(k+1)^2$ zonal polynomials in $I(W_k)^{(k)}$.
$$ \mat{
z_1^k & k z_1^{k-1} \overline{w_1}
& \cdots & k! \overline{w_1}^k \cr
k z_1^{k-1}w_1 & k(k-1) z_1^{k-2} w_1 \overline{w_1} -kz_1^{k-1}\overline{z_1}
&\cdots  & -k!k \overline{z_1} \overline{w_1}^{k-1} \cr
k(k-1) z_1^{k-2}w_1^2 &
k(k-1) \{(k-2) z_1^{k-3} w_1^2 \overline{w_1} -2z_1^{k-2}w_1\overline{z_1}\}
& \cdots & k!k(k-1) \overline{z_1}^2 \overline{w_1}^{k-2} \cr
\vdots & \vdots  & & \vdots \cr
k! z_1w_1^{k-1} & k!w_1^{k-1}\overline{w_1} -k!(k-1)z_1w_1^{k-2}\overline{z_1}
& \cdots & (-1)^{k-1} k!^2 \overline{z_1}^{k-1}\overline{w_1} \cr
k! w_1^k  & -k!k w_1^{k-1}\overline{z_1}
& \cdots & (-1)^k k!^2 \overline{z_1}^k \cr
 } $$
\end{example}

\begin{theorem} 
\label{HarmonicZonalconst}
Let $q'=e_1$.
For $d\ge2$, there is a unique harmonic zonal polynomial
$$ P_{k-2j}^{(k)}=P_{k-2j,d}^{(k)} \in \ker L^*\cap\ker R^*\cap H_k^{(j,j)}(\Hd), \qquad 0\le j\le{k\over2}, $$
given by 
\begin{equation}
\label{Zk-2j(k)explicitform}
P_{k-2j}^{(k)} := \sum_{b+c+r=j} {(-1)^r \over b!c!r!}
{(k+2-j-r)_r\over(k+2d-1-r)_{r}}[k-j-b-r,b,c,b,r],
\end{equation}
which has ${1\over2}(j+1)(j+2)$ terms.  Let
\begin{equation}
\label{Zk-2jab(k)explicitform}
P_{k-2j,a,b}^{(k)} := L^{a-j} R^{b -j} P_{k-2j}^{(k)}, \qquad
j\le a,b\le k-j.
\end{equation}
%Furthermore, t
Then the zonal polynomials (with pole $e_1$) in $\Harm_k(\Hd,\CC)$ 
have the following orthogonal direct sum decomposition
into one-dimensional subspaces
\begin{equation}
\label{HarmkPonedimdecomp}
\Harm_k(\Hd,\CC)^Z = \bigoplus_{0\le j\le {k\over2}}
\bigoplus_{j\le a,b\le k-j} \spam\{ P_{k-2j,a,b}^{(k)}\}.
\end{equation}
%\begin{equation}
%\label{HarmkZonedimdecomp}
%\Harm_k(\Hd,\CC)^Z = \bigoplus_{0\le j\le {k\over2}}
%\bigoplus_{j\le a,b\le k-j} \spam\{ L^{a-j} R^{b -j} P_{k-2j}^{(k)}\}.
%\end{equation}
%\begin{equation}
%\label{HarmkZonedimdecomp}
%\Harm_k(\Hd,\CC)^Z = \bigoplus_{0\le j\le {k\over2}}
%\bigoplus_{j\le a,b\le k-j} \spam\{ P_{k-2j,a,b}^{(k)}\}, 
%\qquad P_{k-2j,a,b}^{(k)} := L^{a-j} R^{b -j} P_{k-2j}^{(k)}\}.
%\end{equation}
\end{theorem}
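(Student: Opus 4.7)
The plan is to establish uniqueness of $P_{k-2j}^{(k)}$ in $\ker L^* \cap \ker R^* \cap H_k^{(j,j)}(\Hd)^Z$, then produce the explicit formula by direct computation on the zonal basis, and finally transport via $L, R$ to obtain the full basis of zonals.

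For uniqueness, I would use Theorem \ref{Qkdecompthm} to write $H_k^{(j,j)}(\Hd) = \bigoplus_{0 \le i \le j}(H_k^{(j,j)}(\Hd) \cap Q_{k-2i}^{(k)})$, noting $\dim H_k^{(j,j)}(\Hd)^Z = j+1$. The zonal spanning the $i$-th summand has the form $L^{j-i}R^{j-i}P_{k-2i}^{(k)}$ (constructed in the next step), and the commutation formula (\ref{LL*betacommf}) applied to $P_{k-2i}^{(k)} \in \Hom_K(k-i,i) \cap \ker L^*$ gives $L^* L^{j-i}P_{k-2i}^{(k)} = (j-i)(k-i-j+1) L^{j-i-1}P_{k-2i}^{(k)} \ne 0$ for $0 \le i < j$, so these elements are not in $\ker L^*$. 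Hence $\ker L^* \cap \ker R^* \cap H_k^{(j,j)}(\Hd)^Z$ is at most one-dimensional, so the polynomial (if it exists) is unique up to scalar.

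For existence and the explicit formula, I first identify the $U(\Hd)_{e_1}$-zonal basis of $\Hom_k^{(j,j)}(\Hd)$: the grading constraints on $\Hom_k^{(j,j)}(\Hd)$ force $\ga_2 = \ga_4$ in $[\ga_1,\ga_2,\ga_3,\ga_4,r]_{e_1}$, giving the spanning set $\{[k{-}j{-}b{-}r, b, c, b, r]: b+c+r=j\}$ of size ${j+2 \choose 2}$. Direct computation with (\ref{Rformulagen}) and (\ref{Lformulagen}) shows the $j > 1$ summands cancel (derivatives of $\|q\|^{2r}$ in the antisymmetric cross-terms of $R^*$ and $L^*$ annihilate), yielding
\begin{align*}
R^*[\ga_1,\ga_2,\ga_3,\ga_4,r] &= -\ga_3[\ga_1,\ga_2{+}1,\ga_3{-}1,\ga_4,r] + \ga_4[\ga_1{+}1,\ga_2,\ga_3,\ga_4{-}1,r], \\
L^*[\ga_1,\ga_2,\ga_3,\ga_4,r] &= \ga_2[\ga_1{+}1,\ga_2{-}1,\ga_3,\ga_4,r] - \ga_3[\ga_1,\ga_2,\ga_3{-}1,\ga_4{+}1,r].
\end{align*}
Writing $P = \sum_{b+c+r=j} \lambda_{b,c,r}[k{-}j{-}b{-}r, b, c, b, r]$ and imposing $L^*P = R^*P = 0$ gives coupled recurrences whose solution (up to scalar), together with harmonicity $\Delta P = 0$ verified via $\tfrac{1}{4}\Delta\|q\|^{2r} = r(r+2d-1)\|q\|^{2r-2}$ and the Laplacian's action on the monomial factor, reproduces (\ref{Zk-2j(k)explicitform}); the denominator factor $(k+2d-1-r)_r$ emerges precisely to cancel Laplacian contributions, selecting the harmonic representative among the broader family of non-harmonic solutions to $L^*P = R^*P = 0$ in $\Hom_k^{(j,j)}(\Hd)^Z$.

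For the basis property, setting $P_{k-2j,a,b}^{(k)} := L^{a-j}R^{b-j}P_{k-2j}^{(k)}$, the operators $L$ and $R$ commute (Lemma \ref{LRL*R*commute}), preserve pole-$e_1$ zonality (by shift formulas analogous to those above), and shift gradings per (\ref{LRtoHab}), so $P_{k-2j,a,b}^{(k)} \in H_k^{(a,b)}(\Hd)^Z$. Nonvanishing follows from the injectivity statements in Theorem \ref{HarmkUirredthm}: since $L^{a-j}$ and $R^{b-j}$ restrict to isomorphisms from $\ker L^* \cap \ker R^* \cap H_k^{(j,j)}(\Hd)$ onto $H_k^{(a,b)}(\Hd) \cap Q_{k-2j}^{(k)}$. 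Linear independence across $j$ is automatic because distinct $j$ correspond to distinct isotypic components $Q_{k-2j}^{(k)}$, and the count $\sum_{0 \le j \le k/2}(k-2j+1)^2 = {k+3 \choose 3} = \dim \Harm_k(\Hd,\CC)^Z$ matches the total dimension, giving the orthogonal basis (\ref{HarmkPonedimdecomp}). The main technical obstacle is extracting the closed-form Pochhammer-ratio coefficients from the coupled three-index recurrences while simultaneously enforcing harmonicity; this is where the dimension-dependent factor $(k+2d-1-r)_r$ arises, and its derivation will likely require induction on $j$ or a generating-function identity rather than a direct pattern-match.
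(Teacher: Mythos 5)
Your overall strategy is the same as the paper's: identify the pole-$e_1$ zonal spanning set $\{[k{-}j{-}b{-}r,b,c,b,r]\}_{b+c+r=j}$ of $\Hom_k^{(j,j)}(\Hd)^Z$ (the constraint $\ga_2=\ga_4$ and the count $\tfrac12(j+1)(j+2)$ are exactly as in the paper), impose $\gD f=0$, $R^*f=0$, $L^*f=0$ using the shift formulas of Lemmas \ref{Rmapszonaltozonal} and \ref{LaplacianZonal} (your displayed formulas for $R^*$ and $L^*$ agree with (\ref{Rstarzonal}) and (\ref{Lstarzonal})), and then obtain (\ref{HarmkPonedimdecomp}) by applying $L^{a-j}R^{b-j}$ and matching the dimension count (\ref{HarmkZdim}). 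Two differences are worth noting. First, you prove uniqueness up front by decomposing $H_k^{(j,j)}(\Hd)^Z$ along the isotypic components $Q_{k-2i}^{(k)}$ and using (\ref{LL*betacommf}) to show the $i<j$ representatives are not killed by $L^*$; the paper instead extracts uniqueness at the end, as a byproduct of the dimension count showing the constructed subspaces already exhaust $\Harm_k(\Hd,\CC)^Z$. Your argument is a nice alternative, though as stated it leans on the polynomials $P_{k-2i}^{(k)}$ for $i<j$ before they are constructed, so you should organise it as an induction on $j$ (or simply adopt the paper's order of argument). Second, and more substantively, you propose to \emph{derive} the coefficients by solving the coupled recurrences coming from $L^*P=R^*P=\gD P=0$, and you yourself flag this as the main unresolved obstacle. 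The paper sidesteps this entirely: since the closed form (\ref{Zk-2j(k)explicitform}) is already in hand, it suffices to \emph{verify} it, i.e., to substitute the stated coefficients into the three families of coefficient identities (one for each target monomial of $\gD f$, $R^*f$ and $L^*f$) and check that each bracket collapses to zero, which is a short telescoping computation. You should do the same; as written, the existence half of your proof is not complete, but it is closable by this direct verification without any induction or generating functions.
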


\begin{proof} By (\ref{HomkabZdecomp}), 
a general zonal polynomial in $\Hom_k^{(j,j)}(\Hd)$ has the form
$$ f:=\sum_{b+c+r=j} C_{bcr} [k-j-b-r,b,c,b,r], \qquad  C_{bcr}\in\CC, $$
which involves ${1\over2}(j+1)(j+2)$ terms. 
By Lemma \ref{LaplacianZonal}, the condition for $f$ to be harmonic is
\begin{align*}
{1\over 4} \Delta f 
= \sum_{b+c+r=j} C_{bcr} \bigl\{  &
(k-j-b-r)c[k-j-b-r-1,b,c-1,b,r] \cr
&\quad +b^2 [k-j-b-r,b-1,c,b-1,r] \cr
&\quad +r(k+2d-1-r)[k-j-b-r,b,c,b,r-1] \bigr\} = 0,
\end{align*}
which gives ${1\over2}j(j+1)$ equations, and hence 
%${1\over2}(j+1)(j+2)-{1\over2}j(j+1)=j+1$ 
$j+1=\dim((H_k^{(j,j)})^Z)$
free parameters.
Hand calculations indicated that $\Delta f=0$ together with 
the conditions $R^*f=0$ and $L^*f=0$ leads to
a unique (one parameter) solution $f$.
From these special cases, we ``guessed'' the formula 
(\ref{Zk-2j(k)explicitform}). We will now verify directly 
that $f$ defined by (\ref{Zk-2j(k)explicitform})
%this polynomial 
has the desired properties, 
and then conclude that it is unique (by a cardinality argument).

By Lemma \ref{LaplacianZonal} and Lemma \ref{Rmapszonaltozonal}, we have
\begin{align*} \Delta ([k-j-b-r,b,c,b,r])
&= (k-j-b-r)c[k-j-b-r-1,b,c-1,b,r] \cr
&\qquad 
+b^2[k-j-b-r,b-1,c,b-1,r] \cr
&\qquad +r(k+2d-1-r)[k-j-b-r,b,c,b,r-1], \cr
R^* ([k-j-b-r,b,c,b,r])
&=  -c[k-j-b-r,b+1,c-1,b,r] \cr
&\qquad +b[k-j-b-r+1,b,c,b-1,r], \cr
L^* ([k-j-b-r,b,c,b,r])
&= b[k-j-b-r+1,b-1,c,b,r] \cr
&\qquad -c[k-j-b-r,b,c-1,b+1,r].
\end{align*}
Hence, the $[k-j-b'-r'-1,b',c',b',r']$ coefficient of
$\Delta f$ is
\begin{align*}
&{(-1)^{r'}\over(b')!(c')!} {(k+2-j-r')_{r'}\over(k+2d-1-r')_{r'}}
\Bigl\{ {1\over c'+1}(k-j-b'-r')(c'+1) \cr
& + {1\over b'+1}(b'+1)^2-{1\over r'+1}{(k+2-j-r'-1)\over(k+2d-1-r'-1)}
(r'+1)(k+2d-1-r'-1) \Bigr\}=0, 
\end{align*}
the $[k-j-b'-r+1,b',c',b'-1,r]$, $b'\ne0$, coefficient of $R^*f$ is
$$ {(-1)^r\over r!} {(k+2-j-r)_r\over(k+2d-1-r)_{r}}
\Bigl( {1\over(b'-1)!(c'+1)!}(-(c'+1))+{1\over(b')!(c')!} b'\Bigr)=0, $$
and the 
$ [k-j-b'-r+1,b'-1,c',b',r] $, $b'\ne0$, coefficient of $L^*f$ is
$$ {(-1)^r\over r!} {(k+2-j-r)_r\over(k+2d-1-r)_{r}}
\Bigl\{ {1\over(b')!(c')!}(b')-{1\over(b'-1)!(c'+1)!} (c'+1)\Bigr\}=0, $$
so that $f=P_{k-2j}^{(k)} \in \ker(L^*)\cap\ker(R^*)\cap H_k^{(j,j)}(\Hd)^Z$.
Since $P_{k-2j}^{(k)}\in H_k^{(j,j)}(\Hd)$, 
by Lemma \ref{RowandColMovementsLemma} and 
Lemma \ref{Rmapszonaltozonal},
we have the orthogonal direct sum decomposition
$$ \bigoplus_{0\le j\le {k\over2}}
\bigoplus_{j\le a,b\le k-j} \spam\{ L^{a-j} R^{b -j} P_{k-2j}^{(k)}\}
\subset \Harm_k(\Hd,\CC)^Z, $$
and by a dimension count using (\ref{HarmkZdim}),
we obtain (\ref{HarmkPonedimdecomp}), and hence 
the uniqueness of $P_{k-2j}^{(k)}$ up to a scalar multiple.
\end{proof}

It follows from Theorem \ref{HarmonicZonalconst} (also see \cite{BN02}) the 
zonal functions satisfy
$$ \dim ( (I(W_{k-2j})^{(k)})^Z) = (k-2j+1)^2, $$
$$ \dim ( H(k-b,b)_{k-2j}^Z) = k-2j+1, \quad j\le b\le k-j, $$
and for $q'=e_1$, we have
\begin{equation}
\label{1dimzonal}
\dim ( H(k-b,b)_{k-2j}\cap H_k^{(a,b)}(\Hd)^Z)
=\begin{cases}
1, & j\le a,b\le k-j; \cr
0, & \hbox{otherwise}.
\end{cases} 
\end{equation}

Let $Z_{k-2j,a,b}^{(k)}$ be the zonal polynomial with pole $q'$
obtained from $P_{k-2j,a,b}^{(k)}$ 
by making the substitution (\ref{e1toq'subs}).

\begin{corollary}
The zonal polynomials in $\Harm_k(\Hd,\CC)$
have the following orthogonal direct sum decomposition
into one-dimensional subspaces
\begin{equation}
\label{HarmkZPonedimdecomp}
\Harm_k(\Hd,\CC)^Z = \bigoplus_{0\le j\le {k\over2}}
\bigoplus_{j\le a,b\le k-j} \spam\{ Z_{k-2j,a,b}^{(k)}\}.
\end{equation}
\end{corollary}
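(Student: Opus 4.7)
The plan is to deduce (\ref{HarmkZPonedimdecomp}) for a general pole $q'$ from the decomposition (\ref{HarmkPonedimdecomp}) for pole $e_1$ (Theorem \ref{HarmonicZonalconst}) by transporting via a unitary change of coordinates. Since $U(\Hd)$ acts transitively on the quaternionic unit sphere, for $q'$ on the sphere one can choose $V\in U(\Hd)$ with $Vq'=e_1$ (the case of general nonzero $q'$ reduces to this by rescaling).

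First I would identify the substitution rule (\ref{e1toq'subs}) with precomposition by $V$, i.e., the action $f\mapsto V\cdot f$ given by $(V\cdot f)(q):=f(Vq)$. This rests on the identities
$$ z_1(Vq)=e_1^*(Vq)=(V^{-1}e_1)^*q=(q')^*q=\inpro{q',q}_{\CC^{2d}}, $$
and analogously $w_1(Vq)=\inpro{q'j,q}_{\CC^{2d}}$ (using the $\HH$-linear identity $V(q'j)=(Vq')j=e_1 j$), while $\norm{q}^2$ is $V$-invariant. Since any $e_1$-zonal polynomial can be expressed in the basis (\ref{Homkzonalbasis}), the substitution is well-defined on it and coincides with precomposition by $V$; applied to $P_{k-2j,a,b}^{(k)}$ it therefore yields $Z_{k-2j,a,b}^{(k)}=V\cdot P_{k-2j,a,b}^{(k)}$, independently of the choice of $V$ (any two admissible $V$'s differ by an element of $U(\Hd)_{e_1}$, which fixes $P_{k-2j,a,b}^{(k)}$).

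Finally, I would observe that $f\mapsto V\cdot f$ is a unitary isomorphism of $\Harm_k(\Hd,\CC)$ (for both inner products (\ref{firstinpro}) and (\ref{secondinpro})) bijectively sending the $U(\Hd)_{e_1}$-invariants onto the $U(\Hd)_{q'}$-invariants, via the conjugation identity $V\, U(\Hd)_{q'}\,V^{-1}=U(\Hd)_{e_1}$ (since $VUq'=Vq'=e_1$ for each $U\in U(\Hd)_{q'}$). Applying this isomorphism summand by summand to (\ref{HarmkPonedimdecomp}) produces the desired orthogonal direct sum (\ref{HarmkZPonedimdecomp}). The main subtlety is only bookkeeping: reconciling the informal ``substitution'' description of $Z_{k-2j,a,b}^{(k)}$---whose input $P_{k-2j,a,b}^{(k)}=L^{a-j}R^{b-j}P_{k-2j}^{(k)}$ a priori involves all variables after differentiation by $L$ and $R$---with the global unitary action that cleanly transports the whole decomposition.
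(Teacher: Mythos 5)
Your proposal is correct and follows essentially the same route as the paper, whose proof of this corollary is the one line ``apply the substitution (\ref{e1toq'subs}) to (\ref{HarmkPonedimdecomp})''; you have simply made explicit the identification of that substitution with precomposition by a unitary $V$ with $Vq'=e_1$, which the paper itself sets up in the discussion surrounding (\ref{e1toq'subs}). The extra details you supply (unitarity for both inner products, conjugation of stabilisers) are accurate and fill in exactly what the paper leaves implicit.
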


\begin{proof}
Apply the substitution (\ref{e1toq'subs}) to
the orthogonal direct sum %decomposition 
(\ref{HarmkPonedimdecomp}).
\end{proof}

The existence of the zonal polynomials $Z_{k-2j,a,b}^{(k)}$ 
in (\ref{HarmkPonedimdecomp}) is proved
inductively in \cite{BN02}, where they are denoted by $\cZ_{p,w,w'}^{(k)}$. 
We now outline how the two are related.
Here $p=k-2j$, and the ``weight'' parameters $w,w'$ are related to $(a,b)$, 
as follows
\begin{equation}
\label{ww'abidentification}
a={k-w'\over 2}, \quad b={k-w\over 2}, 
\qquad w'=k-2a, \quad w=k-2b,
\end{equation}
which gives the correspondence between indices
$$ (a,b)\in\{0,1,\ldots,k\}^2 \Iff (w,w')\in\{-k,-k+2,\ldots,k-2,k\}^2. $$ 
We note that for $k$ even (the case considered in \cite{BN02}) the
weights $w$ and $w'$ are even, and for $k$ odd, they are odd.
%The presentation of \cite{BN02} is in terms of the ``weight vectors'' of the representation. 
They define the space of zonal polynomials
\begin{align}
\label{Ewwpkdefn}
E_{w,w'}^{(k)} &:=\spam\{[\ga_1,\ga_2,\ga_3,\ga_4,r]_{q'}:\ga_1+\ga_2+\ga_3+\ga_4+2r=k, \cr
&\qquad\qquad\qquad \ga_1+\ga_2-\ga_3-\ga_4=w, \ga_1-\ga_2-\ga_3+\ga_4=w'\},
\end{align}
%It is easily seen, by considering the intersection with 
%$\norm{\cdot}^{2r}\spam\{[\ga_1,\ga_2,\ga_3,\ga_4,0]\}$,
%that % for $q'=z'\in\Cd$,
which satisfies
$$ 
E_{w,w'}^{(k)}=\Hom_k^{(a,b)}(\Hd)^Z, \qquad\hbox{for $q'=z'\in\Cd$}, $$
%where
%Using the identification $\Hd\cong\CC^{2d}$, \cite{BN02} define 
and the space 
%$$ V_w^{(k)} := 
%\{f\in\Harm_k(\Hd,\CC)
%: f(\gl x) = \gl^w f(x),\forall\gl\in\CC,x\in\CC^{2d}, |\gl|=1\}. $$
%Since 
%$$ (\gl z)^{\ga_1} (\gl w)^{\ga_2} (\overline{\gl z})^{\ga_3}
% (\overline{\gl w})^{\ga_4} =\gl^{|\ga_1|+|\ga_2|-|\ga_3|-|\ga_4|} 
% z^{\ga_1} w^{\ga_2} \overline{z}^{\ga_3} \overline{w}^{\ga_4}, $$
%we have, under the identification (\ref{ww'abidentification}), that
$$ V_w^{(k)} = H\bigl({k+w\over2},{k-w\over2}\bigr) = H(k-b,b). $$
In \cite{BN02} (Proposition 4.5), the zonal polynomials $\cZ_{p,w,w'}^{(k)}$
%above 
are characterised by the following properties:
\begin{itemize}
\item $\cZ_{p,w,w'}\in E_{w,w'}^{(k)}$, i.e., $Z_{k-2j,a,b}^{(k)}$
has the structural form given by (\ref{Zk-2j(k)explicitform})
and (\ref{Zk-2jab(k)explicitform}).
\item  $\{\cZ_{p,w,w'}\}_{w'\in\{-p,\ldots,p-2,p\}}$ is a 
basis of (the zonal polynomials in) $I(W_p)^{(k)}\cap V_w^{(k)}$, i.e.,
$\{Z_{k-2j,a,b}^{(k)}\}_{j\le a\le k-j}$ is a basis 
of the zonal polynomials in $I(W_{k-2j})^{(k)}\cap H(k-b,b)$.
\item $\{\cZ_{p,w,w'}\}_{w\in\{-p,\ldots,p-2,p\}}$ is a basis of 
zonal polynomials for an irreducible subspace for right multiplication by $\HH^*$,
(which is isomorphic to $W_p$), i.e., 
$\{Z_{k-2j,a,b}^{(k)}\}_{j\le b\le k-j}$ is an $R$-orbit for a 
$W_{k-2j}$.
\end{itemize}

These follow from our construction, and the observation 
(by Lemma \ref{Rmapszonaltozonal}) that
$$  Z_{k-2j,a,b}^{(k)} = R^{b-j} Z_{k-2j,a,0}^{(k)}, \qquad j\le b\le k-j. $$

\begin{example}
The first three polynomials $Z_{k-2j}^{(k)}=Z_{k-2j,0,0}^{(k)}$ given by 
(\ref{Zk-2j(k)explicitform}) are
\begin{align*}
Z_k^{(k)} &=[k,0,0,0,0], \cr %=z_1^k, \cr
Z_{k-2}^{(k)} &= [k-2,1,0,1,0] + [k-1,0,1,0,0] -{k\over k+2d-2}[k-2,0,0,0,1], \cr
%& = z_1^{k-2}w_1\overline{w_1}+z_1^{k-1}\overline{z_1}-{k\over k+2d-2}
%z_1^{k-2}\norm{z+jw}^2, \cr
%& = z_1^{k-2}w_1\overline{w_1}+z_1^{k-1}\overline{z_1}-{k\over k+2d-2}
%z_1^{k-2}(|z_1|^2+\cdots+|z_d|^2+|w_1|^2+\cdots+|w_d|^2), \cr
Z_{k-4}^{(k)} &= [k-4,2,0,2,0]
+[k-2,0,2,0,0]
+2[k-3,1,1,1,0] 
-{2(k-1)\over k+2d-2} [k-4,1,0,1,1] \cr
&\qquad -{2(k-1)\over k+2d-2} [k-3,0,1,0,1]
+ {(k-1)(k-2)\over(k+2d-2)(k+2d-3)} [k-4,0,0,0,2]. 
\end{align*}
We observe that, except for the first, these depend on the dimension $d$.
\end{example}

\begin{example} 
For $k=1$, the zonal polynomials in (\ref{HarmkZPonedimdecomp}) 
are
$$ Z_1^{(1)}:=[1,0,0,0,0]=z_1, \qquad
R Z_1^{(1)}=[0,0,0,1,0]=\overline{w_1}, $$
$$ L Z_1^{(1)}=[0,1,0,0,0]=w_1, \qquad
-LRZ_1^{(1)}=[0,0,1,0,0]=\overline{z_1}. $$
and for $k=2$, they are given by the schematic
$$ \mat{ & H(1,1)_0 \cr
K(1,1) & [1,0,1,0,0]+[0,1,0,1,0]-{1\over d}[0,0,0,0,1] } $$
$$ \mat{ & H(2,0)_2 & H(1,1)_2 & H(0,2)_2 \cr
K(2,0) & [2,0,0,0,0] & [1,0,0,1,0] & [0,0,0,2,0] \cr
K(1,1) & [1,1,0,0,0] & [0,1,0,1,0]-[1,0,1,0,0] & [0,0,1,1,0] \cr
K(0,2) & [0,2,0,0,0] & [0,1,1,0,0] & [0,0,2,0,0] } $$
with the indexing of rows and columns as before.
\end{example}

%We will now give a 
%constructive proof of (\ref{1dimzonal}), based the following analogue of
%$z_1^{k}$ for $I(W_{k-2j})^{(k)}$, $0<j\le{k\over2}$.
%We denote the space of harmonic zonal functions 
%in $E_{w,w'}^{(k)}$ of (\ref{Ewwpkdefn}),
%with the indexing 
%(\ref{ww'abidentification}), by
%$$ 
%\cH_{a,b}^{(k)}
%=\cH_{a,b,q'}^{(k)}
%:= E_{w,w'}^{(k)} \cap \Harm_k(\Hd,\CC) \subset H(k-b,b), $$
%and note that for $q'=e_1$, we have 
%\begin{equation}
%\label{Ewwpsimp}
%\cH_{a,b}^{(k)} = H_k^{(a,b)}(\Hd).
%\end{equation}

Similarly to the Schematic \ref{WeddingcakeRk-2j}, 
the summands $\{Z_{k-2j,a,b}\}_{j\le a,b\le k-j}$, $0\le j\le{k\over2}$, 
in (\ref{HarmkZPonedimdecomp}) 
can be arranged
as the layers of a ``wedding cake'' (see Figure \ref{Weddingfig}).

\def\nosides{\multicolumn{1}{>{$}c<{$}}}
\begin{table}[h]
\begin{center}
%\begin{tabular}{|c|c|c|c|}
\begin{tabular}{|>{$}c<{$}|>{$}c<{$}|>{$}c<{$}|>{$}c<{$}|>{$}c<{$}|>{$}c<{$}|}
\nosides{} & \nosides{} & \nosides{} & \nosides{H(2,2)_0} & \nosides{}  & \nosides{} \\
\cline{4-4}
%\hline
\nosides{K(2,2)} & \nosides{} & \multicolumn{1}{>{$}c<{$}|}{} & P_0=P_{000}^{(4)} & \nosides{} & \nosides{} \\
\cline{4-4}
\nosides{} & \nosides{} & \nosides{H(3,1)_2} & \nosides{H(2,2)_2} & \nosides{H(1,3)_2}  & \nosides{} \\
\cline{3-5}
%\hline
\nosides{K(3,1)} & \multicolumn{1}{>{$}c<{$}|}{} & P_2=P_{200}^{(4)} & RP_2  & R^2P_2 & \nosides{} \\
\cline{3-5}
\nosides{K(2,2)} & \multicolumn{1}{>{$}c<{$}|}{} & LP_2 & LRP_2 & LR^2P_2 & \nosides{} \\
\cline{3-5}
\nosides{K(1,3)} & \multicolumn{1}{>{$}c<{$}|}{} & L^2P_2 & L^2RP_2 & L^2R^2P_2 & \nosides{} \\
\cline{3-5}
\nosides{} & \nosides {H(4,0)_4} & \nosides{H(3,1)_4} &
\nosides {H(2,2)_4} & \nosides {H(1,3)_4} & \nosides {H(0,4)_4}  \\
\cline{2-6}
\multicolumn{1}{>{$}c<{$}|}{K(4,0)} & P_4=P_{400}^{(4)} & RP_4 & R^2P_4 & R^3P_4 & R^4P_4 \\
\cline{2-6}
\multicolumn{1}{>{$}c<{$}|}{K(3,1)} & LP_4 & LRP_4 & LR^2P_4 & LR^3P_4 & LR^4P_4 \\
\cline{2-6}
\multicolumn{1}{>{$}c<{$}|}{K(2,2)} & L^2P_4 & L^2RP_4 & L^2R^2P_4 & L^2R^3P_4 & L^2R^4P_4 \\
\cline{2-6}
\multicolumn{1}{>{$}c<{$}|}{K(1,3)} & L^3P_4 & L^3RP_4 & L^3R^2P_4 & L^3R^3P_4 & L^3R^4P_4  \\
\cline{2-6}
\multicolumn{1}{>{$}c<{$}|}{K(0,4)} & L^4P_4 & L^4RP_4 & L^4R^2P_4 & L^4R^3P_4 & L^4R^4P_4  \\
\cline{2-6}
\end{tabular}
\end{center}
\caption{Schematic of the $1^2+3^2+5^2$ zonal functions for $\Harm_4(\Hd,\CC)$
	given by (\ref{Zk-2jab(k)explicitform}). } %(\ref{HarmkZPonedimdecomp})
\label{Weddingfig}
\end{table}

%It was expected that $\Harm_k(\Hd,\CC)$ itself would have the analogous 
%decomposition to that of its zonal polynomials, i.e., be the 
%orthogonal direct sum of the subspaces
%$$  \spam\{L^{a-j}R^{b-j} Z_{k-2j,d,q'}^{(k)}\}_{q'\in\HH^*}, 
%\qquad 0\le j\le{k\over2},\quad j\le a,b\le k-j, $$
%with $Z_{k-2j,d,q'}^{(k)}$ giving the reproducing kernel.
%However, this is not case, as will be evident from (\ref{HarmkVdecomp})
%in Theorem \ref{LRorthdecomp}.
%With hindsight, this is not suprising, since the zonal polynomials
%for left and right multiplication by $U(\Hd)$ are the same, whilst
%since for $q'=e_1$, the matrices in $Z=U(\Hd)_{q'}$ have the form
%$$ \pmat{1&0\cr 0 & U_{d-1}}, \qquad U_d-1\in U(\HH^{d-1}), $$
%and so the zonal polynomials for left and right multiplication by $U(\Hd)$
%are the same

We now seek an explicit formula for the zonal polynomial
$L^{\ga} R^{\gb} P_{k-2j}^{(k)}$ of Theorem \ref{HarmonicZonalconst}.
We first determine its structural form.
The Lemma \ref{LaRbstructformlemma}, below,
%Essentially, this 
says that the complexity of the
formula depends on how far the index $(\ga,\gb)$ is from the edges of
the array of indices $A=\{0,1,\ldots,k-2j\}^2$. 
Partition $A$ into ``nested squares'', as in (\ref{Smdefn}),
%$$ S_0:=\{(\ga,\gb):0\le\ga,\gb\le k-2j, \{a,b\}\cap\{0,k-2j\}\ne\emptyset\} $$
%$$ S_1:=\{(\ga,\gb):1\le\ga,\gb\le k-2j-1, \{a,b\}\cap\{1,k-2j-1\}\ne\emptyset\} $$
\begin{equation}
\label{Skjmdefn}
S_{k,j,m}:=\{(\ga,\gb):\min\{j+\ga,j+\gb,k-j-\ga,k-j-\gb\}=m\}, \quad
j\le m \le {k\over2}.
\end{equation}
%The formulas of Proposition \ref{RbetaZprop} are for indices in
%the outermost square $S_j$, and 
%involve ${1\over2}(j+1)(j+2)$ terms.

%First we describe this geometrically
%in terms of the placement of the index $(\ga,\gb)$

\begin{lemma}
\label{LaRbstructformlemma}
Let $0\le \ga,\gb \le k-2j$, $0\le j\le {k\over2}$, and
$$ m:=\min\{ j+\ga, k-j-\gb, j+\gb, k-j-\ga \} \qquad
	\hbox{i.e., $(\ga,\gb)\in S_{k,j,m}$}.  $$
Then $L^\ga R^\gb P_{k-2j}^{(k)} \in K(k-j-\ga,j+\ga) \cap H(k-j-\gb,j+\gb) $ 
has the form
$$ L^\ga R^\gb P_{k-2j}^{(k)} = \sum_{0\le r\le j\atop 0\le b \le m-r}
C_{br}^{(\ga,\gb)} [k-j-\gb-b-r,b,\ga+j-r-b,\gb-\ga+b,r], \quad \ga\le\gb, $$
$$ L^\ga R^\gb P_{k-2j}^{(k)} = \sum_{0\le r\le j\atop 0\le b \le m-r}
C_{br}^{(\ga,\gb)}` [k-j-\ga-b-r,\ga-\gb+b,j+\gb-b-r,b,r], \quad \ga\ge\gb, $$
which involves ${1\over2}(j+1)(2m+2-j)$ terms.
%$$ f(j,m) = {1\over2}(j+1)(2m+2-j). $$
%the number of terms in
%More specifically, it has the form
%$$ \hbox{The number of terms 
%in $L^\ga R^\gb Z_{k-2j}^{(k)}$}
%= f(j,m), \qquad  j\le m\le {k\over 2}. $$
%We have
%$$ f(0,m)=m+1. $$
%$$ f(1,m)= 2m+1. $$
%$$ f(2,m)= 3m. $$
%$$ f(3,m)= 4m-2. $$
%$$ f(4,m)= 5m-5. $$
%$$ f(5,m)= 6m-9. $$
%$$ L^\ga R^\gb Z_{k-2j}^{(k)} = \sum_{0\le r\le j\atop b+c=m-r}
%C_{bcr} [k-j-\gb-b-r,b,c,j+\gb-c-r,r], \quad \ga\le\gb. $$
%$$ f(j,j)={1\over2}(j+1)(j+2). $$
\end{lemma}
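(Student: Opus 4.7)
The plan is to prove this by induction on $\ga+\gb$, with the base case $(\ga,\gb)=(0,0)$ being the formula (\ref{Zk-2j(k)explicitform}) for $P_{k-2j}^{(k)}$ rewritten via the substitution $c=j-b-r$ (so $b+c+r=j$ becomes $0\le b\le j-r$, $0\le r\le j$, matching $m=j$). The containment $L^\ga R^\gb P_{k-2j}^{(k)} \in K(k-j-\ga, j+\ga) \cap H(k-j-\gb, j+\gb)$ is automatic from (\ref{LRtoHab}) applied to $P_{k-2j}^{(k)}\in H_k^{(j,j)}(\Hd)$, so the real content of the lemma is to track the explicit zonal expansion.

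Next I would compute the action of $R$ and $L$ on an arbitrary zonal monomial $[a_1,a_2,a_3,a_4,r]_{e_1}=z_1^{a_1}w_1^{a_2}\overline{z_1}^{a_3}\overline{w_1}^{a_4}\norm{q}^{2r}$. Applying $R=\sum_\ell(\overline{w_\ell}\partial_{z_\ell}-\overline{z_\ell}\partial_{w_\ell})$ and using $\partial_{z_\ell}\norm{q}^2=\overline{z_\ell}$, $\partial_{w_\ell}\norm{q}^2=\overline{w_\ell}$, the contributions from differentiating $\norm{q}^{2r}$ cancel in pairs, leaving
\[
R[a_1,a_2,a_3,a_4,r] = a_1[a_1-1,a_2,a_3,a_4+1,r] - a_2[a_1,a_2-1,a_3+1,a_4,r],
\]
and similarly
\[
L[a_1,a_2,a_3,a_4,r] = a_1[a_1-1,a_2+1,a_3,a_4,r] - a_4[a_1,a_2,a_3+1,a_4-1,r].
\]
For the inductive step in the $\ga\le\gb$ branch I would apply $R$ to each summand of the $(\ga,\gb)$-formula; after re-indexing the second contribution by $b\mapsto b-1$, both contributions fit the claimed $(\ga,\gb+1)$-shape, since the quadruple $(k-j-\gb-b-r,\,b,\,\ga+j-r-b,\,\gb-\ga+b)$ is sent by the $R$-action to quadruples of the same structural form with $\gb$ replaced by $\gb+1$. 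An analogous $L$-step handles the $\ga\ge\gb$ branch, and at $\ga=\gb$ the two candidate formulas agree, so either direction may be taken.

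The main obstacle is verifying that the index range $0\le b\le m-r$ is preserved as $m$ varies. When $(\ga,\gb+1)$ remains in the same nested square $S_{k,j,m}$ (so the new $m'=m$), the ranges match directly. When the application of $R$ crosses into the next square (forcing $m'=m-1$ and $m=k-j-\gb$), the first-term coefficient $k-j-\gb-b-r$ vanishes at the old boundary $b=m-r$, trimming the support to $0\le b\le m'-r$ automatically, while the coefficient $-b$ of the re-indexed second contribution vanishes at $b=0$ and so stays within the new range. A symmetric vanishing controls the $L$-step. Finally, the term count $\tfrac12(j+1)(2m+2-j)$ is simply the cardinality of $\{(b,r):0\le r\le j,\ 0\le b\le m-r\}$, which evaluates to $\sum_{r=0}^j(m+1-r)=(m+1)(j+1)-\tfrac12 j(j+1)$.
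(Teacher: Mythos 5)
Your proof is correct, but it takes a genuinely different route from the paper. The paper's argument is non-inductive and purely structural: it notes that $L^\ga R^\gb P_{k-2j}^{(k)}$ is zonal, that $R$ and $L$ preserve the parameter $r$ (so $0\le r\le j$ is inherited from $P_{k-2j}^{(k)}$), and that the polynomial lies in $K(k-j-\ga,j+\ga)\cap H(k-j-\gb,j+\gb)$; it then simply solves the resulting linear system (\ref{HKcdn}) on the exponents of $[a,b,c,d,r]$ to enumerate which zonal monomials can occur. That approach gets the support $0\le b\le m-r$ for free, with no case analysis at the boundaries of the nested squares. Your inductive computation, applying (\ref{Rzonal}) and (\ref{Lzonal}) term by term and tracking the re-indexing and the vanishing of boundary coefficients, is also valid — I checked that the two contributions of each $R$- or $L$-step do land in the claimed $(\ga,\gb\pm1)$-shape and that the coefficients $k-j-\gb-b-r$ and $b$ (resp.\ their $L$-analogues) kill exactly the terms that would overflow the new range — and it is essentially the engine the paper uses later, in Theorem \ref{LaRbZformalphalessbeta}, to compute the coefficients $C_{br}^{(\ga,\gb)}$ explicitly. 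Two small points of care for your version: (i) the induction on $\ga+\gb$ should be organised so that the hypothesis being invoked is always in the correct branch (e.g.\ first establish all $(0,\gb)$ by $R$-steps, then increase $\ga$ by $L$-steps, using that the two formulas coincide at $\ga=\gb$ to switch branches), and (ii) as with the paper's proof, what is actually established is that the support is \emph{contained} in $\{0\le r\le j,\ 0\le b\le m-r\}$, so the ``number of terms'' is the cardinality of that index set rather than a claimed count of nonzero coefficients.
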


\begin{proof} A general zonal polynomial of degree $k$ has the form
$$ f=\sum_{a+b+c+d+2r=k} C_{abcdr}[a,b,c,d,r].$$
By Lemma \ref{Rmapszonaltozonal}, $L$ and $R$ applied to $[a,b,c,d,r]$ 
preserves the value of $r$, so that
$$f=L^\ga R^\gb P_{k-2j}^{(k)}$$ 
has the same restriction on $r$ 
as $P_{k-2j}^{(k)}$ does, i.e., $0\le r\le j$. 

The condition that
$f=L^\ga R^\gb P_{k-2j}^{(k)} 
%\in H(k-j-\gb,j+\gb)_{k-2j} \cap K(k-j-\ga,j+\ga)$ gives
\in K(k-j-\ga,j+\ga) \cap H(k-j-\gb,j+\gb)$ gives
%$$ a+b+r=k-j-\gb, \quad c+d+r=j+\gb, $$
%$$ a+d+r=k-j-\ga, \quad b+c+r=j+\ga, $$
\begin{align}
\label{HKcdn}
\begin{split}
 a+b+r &=k-j-\gb, \qquad c+d+r=j+\gb, \\
 a+d+r &=k-j-\ga, \qquad b+c+r=j+\ga.
\end{split}
\end{align}

First consider the case $\ga\le\gb$, i.e., $m=j+\ga$ or $m=k-j-\gb$.
If $m=j+\ga$, then (\ref{HKcdn}) gives
% suppose that $m=j+\ga$ (there are three other cases).
$$ b+c = m-r, \qquad a=k-j-\gb-b-r, \qquad d=j+\gb-c-r, $$
so that
$$ L^\ga R^\gb P_{k-2j}^{(k)} = \sum_{0\le r\le j\atop b+c=m-r}
C_{bcr} [k-j-\gb-b-r,b,c,j+\gb-c-r,r], \quad m=j+\ga. $$
Using $b+c+r=j+\ga$ to eliminate $c$ above, gives
\begin{equation}
\label{meqjplusalpha}
L^\ga R^\gb P_{k-2j}^{(k)} = \sum_{0\le r\le j\atop 0\le b \le m-r}
C_{br} [k-j-\gb-b-r,b,j+\ga-r-b,\gb-\ga+b,r].
%\quad m=j+\ga.
\end{equation}
Now consider $m=k-j-\gb$. Then (\ref{HKcdn}) gives
$$ a+b=m-r, \quad a=k-j-\gb-b-r, \quad c=j+\ga-r-b,
\qquad d=\gb-\ga+b,$$
so that (\ref{meqjplusalpha}) holds for $m=j+\ga$ and 
$m=k-j-\gb$, i.e., $\ga\le\gb$. %which is the case $\ga\le\gb$, giving

For the case $\ga\ge\gb$, i.e., $m=j+\gb$ or $m=k-j-\ga$, we have,
respectively
$$ c+d=m-r, \quad a=k-j-\ga-d-r, \quad
b=\ga-\gb+d, \quad c=j+\gb-d-r, $$
$$ a+d=m-r, \quad a=k-j-\ga-d-r, \quad
b=\ga-\gb+d, \quad c=j+\gb-d-r, $$
which (replacing $d$ by $b$) gives the second formula.
%$$ [k-j-\ga-b-r,\ga-\gb+b,j+\gb-b-r,b,r] $$

In the sum, we can have $r=0,1,\ldots,j$ ($j+1$ choices), with
$m+1-r$ choices for $b$, and so the number of terms is
%$(m+1)+m+(m-1)+\cdots +(m+1-j) = {1\over2}(j+1)(2m+2-j)$.
$$ (m+1)+m+(m-1)+\cdots +(m+1-j) = {1\over2}(j+1)(2m+2-j). $$
\end{proof}

%An explicit formula for $L^\ga R^\gb Z_{k-2j}^{(k)}$ can by found 
%by applying (\ref{Rzonal}) and (\ref{Lzonal}) to $Z_{k-2j}^{(j)}$.
An explicit formula for $L^\ga R^\gb P_{k-2j}^{(k)}$ can by found 
by applying (\ref{Rzonal}) and (\ref{Lzonal}).
This gives very complicated coefficients. 
Instead, we used Lemma \ref{pablemma}
%Proposition \ref{RbetaZprop},
and numerous symbolic calculations for low values of $\ga$ and $\gb$,
such as Lemma \ref{RbetaZprop} below,
to conjecture the formulas of Theorems \ref{LaRbZformalphalessbeta}
and \ref{LaRbZgenform}, which were 
then proved for a general $(\ga,\gb)$.

\begin{lemma} 
\label{RbetaZprop}
For $0\le\gb\le k-2j$, $0\le j\le {k\over2}$, 
we have  %We seemingly have 
%$$ R^\gb Z_{k-2j}^{(k)} 
%= \sum_{b+c+r=j} {(-1)^r \over b!c!r!}
%{(k+2-j-r)_r\over(k+2d-1-r)_{r}}(k-2j+1-\gb)_\gb
%[k-j-b-r-\gb,b,c,b+\gb,r]. $$
$$ R^\gb Z_{k-2j}^{(k)} 
= \sum_{b+c+r=j} {(-1)^r \over b!c!r!}
{(k+2-j-r)_r\over(k+2d-1-r)_{r}}(k-2j-\gb+1)_\gb
[k-2j-\gb+c,b,c,b+\gb,r]. $$
%$$ L^\ga Z_{k-2j}^{(k)} 
%= \sum_{b+c+r=j} {(-1)^r \over b!c!r!}
%{(k+2-j-r)_r\over(k+2d-1-r)_{r}}(k-2j-\ga+1)_\ga
%[k-2j-\ga+c,b+\ga,c,b,r]. $$
\end{lemma}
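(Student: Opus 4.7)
The plan is to prove this by induction on $\gb$, with the base case $\gb = 0$ given by the definition (\ref{Zk-2j(k)explicitform}) of $Z_{k-2j}^{(k)} = P_{k-2j}^{(k)}$. The inductive engine is the observation that for the zonal basis polynomials $[a_1,a_2,a_3,a_4,r]_{e_1} = z_1^{a_1}w_1^{a_2}\overline{z_1}^{a_3}\overline{w_1}^{a_4}\norm{q}^{2r}$, the operator $R$ of (\ref{Rformulagen}) satisfies
$$
R[a_1,a_2,a_3,a_4,r]_{e_1} = a_1[a_1-1,a_2,a_3,a_4+1,r]_{e_1} - a_2[a_1,a_2-1,a_3+1,a_4,r]_{e_1}.
$$
This follows by a direct computation: the $\norm{q}^{2r}$ factor is annihilated by the antisymmetric combinations $\overline{w_j}\partial_{z_j}-\overline{z_j}\partial_{w_j}$ that make up $R$, and the derivatives $\partial_{z_j},\partial_{w_j}$ for $j\ge 2$ vanish on $z_1^{a_1}w_1^{a_2}\overline{z_1}^{a_3}\overline{w_1}^{a_4}$.

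For the inductive step, assume the formula holds at level $\gb$, and apply $R$ term by term. Each summand $[k-2j-\gb+c,b,c,b+\gb,r]$ produces a ``first-type'' output $(k-2j-\gb+c)[k-2j-(\gb+1)+c,b,c,b+(\gb+1),r]$, which is already in the target shape for $\gb+1$, and a ``second-type'' output $-b[k-2j-\gb+c,b-1,c+1,b+\gb,r]$. Under the reindexing $(b,c)\mapsto(b+1,c-1)$ (which preserves $b+c+r=j$), this second-type output rewrites as $-(b+1)[k-2j-(\gb+1)+c,b,c,b+(\gb+1),r]$ and therefore also contributes to the target coefficient at index $(b,c,r)$; the case $c=0$ contributes nothing, consistent with the second-type term being absent when $b=0$ in the original indexing.

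Collecting terms, the coefficient of $[k-2j-(\gb+1)+c,b,c,b+(\gb+1),r]$ in $R^{\gb+1}Z_{k-2j}^{(k)}$ becomes
$$
(k-2j-\gb+c)\,\gamma^{(\gb)}_{b,c,r} - (b+1)\,\gamma^{(\gb)}_{b+1,c-1,r},\qquad \gamma^{(\gb)}_{b,c,r}:=\frac{(-1)^r}{b!c!r!}\frac{(k+2-j-r)_r}{(k+2d-1-r)_r}(k-2j-\gb+1)_\gb.
$$
Factoring out the common prefactor, the bracketed expression simplifies as $(k-2j-\gb+c)/(b!c!)-c/(b!c!)=(k-2j-\gb)/(b!c!)$, and the extra factor $(k-2j-\gb)$ combines with $(k-2j-\gb+1)_\gb$ to produce the Pochhammer $(k-2j-\gb)_{\gb+1}$. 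This matches $\gamma^{(\gb+1)}_{b,c,r}$ exactly, which closes the induction.

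The main obstacle is spotting the correct change of indices $(b,c)\mapsto(b+1,c-1)$ that unifies the two output types of $R$; the ``second-type'' terms individually deviate from the structural form of the claim, and only combine cleanly with the ``first-type'' contributions from the shifted index. Once this reindexing is in hand, the algebra reduces to the Pochhammer telescoping $(k-2j-\gb)\cdot(k-2j-\gb+1)_\gb=(k-2j-\gb)_{\gb+1}$ together with the fact that the factor $(k+2-j-r)_r/(k+2d-1-r)_r$ is independent of $\gb$, both of which are immediate.
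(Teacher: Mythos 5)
Your proof is correct and follows essentially the same route as the paper: induction on $\gb$, applying $R$ term by term via its action on the zonal basis $[a,b,c,d,r]$ (the paper's Lemma \ref{Rmapszonaltozonal}, which you rederive directly), then collecting coefficients under the reindexing $(b,c)\mapsto(b+1,c-1)$ and using the Pochhammer identity $(k-2j-\gb)(k-2j-\gb+1)_\gb=(k-2j-\gb)_{\gb+1}$. The only cosmetic difference is that you step from $\gb$ to $\gb+1$ while the paper steps from $\gb-1$ to $\gb$.
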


\begin{proof} 
Use induction on $0\le\gb\le k-2j$, with $\gb=0$ 
being trivial. Suppose that it holds for $\gb-1\ge0$, then
\begin{equation}
\label{RbetaZkform}
R^\gb Z_{k-2j}^{(k)} 
= \sum_{b+c+r=j} {(-1)^r \over b!c!r!}
{(k+2-j-r)_r\over(k+2d-1-r)_{r}}(k-2j-(\gb-1)+1)_{\gb-1}
R([\quad]), 
\end{equation}
where, 
by (\ref{Rzonal})
of Lemma \ref{Rmapszonaltozonal},
%\begin{align*}
%R([\quad]) &= R([k-j-b-r-\gb+1,b,c,b+\gb-1,r]) \cr
%&= (k-j-b-r-\gb+1) [k-j-b-r-\gb,b,c,b+\gb,r] \cr
%& \qquad -b[k-j-b-r-\gb+1,b-1,c+1,b+\gb-1,r].
%\end{align*}
\begin{align*}
R([\quad]) &= R([k-2j-\gb+1+c,b,c,b+\gb-1,r]) \cr
&= (k-2j-\gb+1+c) [k-2j-\gb+c,b,c,b+\gb,r] \cr
& \qquad -b[k-2j-\gb+1+c,b-1,c+1,b+\gb-1,r].
\end{align*}
The $[k-2j-\gb+c',b',c',b'+\gb,r]$, $b'+c'+r=j$, coefficient in 
(\ref{RbetaZkform}) is
%the above formula for $R^\gb Z_{k-2j}^{(k)}$ is
$$ {(-1)^r\over r!} 
{(k+2-j-r)_r\over(k+2d-1-r)_{r}}
(k-2j-\gb+2)_{\gb-1} \Bigl\{ \quad \Bigr\}, $$
where
\begin{align*}
\Bigl\{ \quad \Bigr\} 
&= {1\over(b')!(c'!)}(k-2j-\gb+1+c')-{1\over(b'+1)!(c'!-1)} ((b'+1) \cr
&= {1\over(b')!(c'!)}(k-2j-\gb+1+c'-c') 
= {1\over(b')!(c'!)} (k-2j-\gb+1),
\end{align*}
which completes the induction. 
\end{proof}

%This holds also for $d=1$, $j=0$.
%$$ R^\gb Z_{k}^{(k)} 
%= (k+1-\gb)_\gb [k-\gb,0,0,0,\gb,0]
%= (k+1-\gb)_\gb z^{k-\gb}\overline{w}^\gb. $$

\begin{theorem} 
\label{LaRbZformalphalessbeta}
For $0\le \ga\le\gb \le k-2j$, $0\le j\le {k\over2}$, 
we have
$$ L^\ga R^\gb Z_{k-2j}^{(k)} 
= \sum_{0\le r\le j\atop 0\le b \le m-r}
C_{br}^{(\ga,\gb)} [k-j-\gb-b-r,b,j+\ga-r-b,\gb-\ga+b,r], $$
where $m=\min\{j+\ga,k-j-\gb\}$ and $ C_{br}^{(\ga,\gb)} = A_{br}^{(\ga,\gb)} B_{br}^{(\ga,\gb)}$, with
\begin{equation}
\label{Aalphabeta}
A_{br}^{(\ga,\gb)}:={(-1)^r \over b!(j+\ga-b-r)!r!}
{(k+2-j-r)_r\over(k+2d-1-r)_{r}}(k-2j-\gb+1)_\gb,
\end{equation}
%$$ B_{br}^{(\ga,\gb)}:= \sum_{a_b+a_\gb=\ga}
%(-1)^{a_\gb} {\alpha!\over a_b!a_\gb!}(k-2j-\alpha+1)_{a_b}(b-a_b+1)_{a_b}
%(j-r+1)_{a_\gb}(\gb-a_\gb+1)_{a_\gb}. $$
\begin{align}
\label{Balphabetahyp}
%B_{br}^{(\ga,\gb)} : &= \sum_{a_b+a_\gb=\ga}
% {\alpha!\over a_b!a_\gb!}(k-2j-\alpha+1)_{a_b}(b-a_b+1)_{a_b}
%(j-r+1)_{a_\gb}(-\gb)_{a_\gb} \cr
B_{br}^{(\ga,\gb)} : &= \sum_{u+v=\ga}
 {\alpha!\over u!v!}(k-2j-\alpha+1)_{u}(b-u+1)_{u}
(j-r+1)_{v}(-\gb)_{v} \cr
%&= (r-j-\ga)_\ga (\gb+1-\ga)_\ga\,
&= (1+j-r)_\ga (-\gb)_\ga\,
{}_3F_2\Bigl(\mat{-\ga,-b,k-2j-\ga+1\cr r-j-\ga,\gb+1-\ga};1\Bigl).
\end{align}
% $$ B_{br}^{(\ga,\gb)} = \sum_{a_b+a_\gb=\ga} (j-r+1)_\ga b^{a_b} \gb^{a_\gb} $$ %
The constant $B_{br}^{(\ga,\gb)}$ can also be calculated from 
$B_{br}^{(0,\gb)}:=1$ and the recurrence
%$$ B_{br}^{(0,\gb)}:=1, \qquad B_{br}^{(\ga,\gb)}:=1, \quad 0<\ga\le\gb. $$
\begin{equation}
\label{Balphabetarecurr}
B_{br}^{(\ga,\gb)}
= (k-j-\gb-b+1-r) b B_{b-1,r}^{(\ga-1,\gb)}
-(\gb-\ga+1+b) (j+\ga-b-r) B_{br}^{(\ga-1,\gb)}.
\end{equation}
\end{theorem}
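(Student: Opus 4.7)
The plan is induction on $\ga$ with $\gb$ held fixed. The base case $\ga=0$ is exactly Lemma~\ref{RbetaZprop}: setting $\ga=0$ in the claimed formula gives $m=\min\{j,k-j-\gb\}$, and the constraint $b\le m-r$ with $b+c+r=j$ (where $c=j-b-r$) matches Lemma~\ref{RbetaZprop} once one checks that the multiplicative constant $B_{br}^{(0,\gb)}=1$ (which is immediate from either the sum definition, since only $u=v=0$ contributes, or from the ${}_3F_2$ having $\ga=0$ and thus equalling $1$ times the prefactor $(1+j-r)_0(-\gb)_0=1$).

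For the inductive step, I would compute $L$ applied to a generic basis zonal polynomial $[k-j-\gb-b-r,b,j+\ga-1-r-b,\gb-\ga+1+b,r]$ using the explicit formula for $L$ on zonals (labelled \ref{Lzonal} in the referenced Lemma~\ref{Rmapszonaltozonal}). Just as with the computation of $L^*$, $R^*$ and $\gD$ carried out in the proof of Theorem~\ref{HarmonicZonalconst}, $L$ acts as a sum of two terms that shift one pair of exponents by $\pm 1$ each (preserving $r$ and the total degree), namely producing contributions to $[k-j-\gb-b'-r,b',j+\ga-r-b',\gb-\ga+b',r]$ with $b'=b$ and $b'=b-1$. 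Reading off the coefficient of a fixed basis element in $L(L^{\ga-1}R^\gb Z_{k-2j}^{(k)})$ then expresses $C_{br}^{(\ga,\gb)}$ as a linear combination of $C_{br}^{(\ga-1,\gb)}$ and $C_{b-1,r}^{(\ga-1,\gb)}$. Dividing through by the common prefactor $A_{br}^{(\ga,\gb)}$, which depends on $(\ga,\gb,b,r)$ in a way that telescopes cleanly (observe that $A_{br}^{(\ga,\gb)}/A_{br}^{(\ga-1,\gb)}$ and $A_{br}^{(\ga,\gb)}/A_{b-1,r}^{(\ga-1,\gb)}$ are simple rational factors in $b,r,\ga,\gb$, using $(k-2j-\gb+1)_\gb$ unchanged and the factorial denominators shifting by one), should yield precisely the recurrence~(\ref{Balphabetarecurr}).

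It then remains to verify that the closed-form sum (\ref{Balphabetahyp}) satisfies this recurrence with the initial condition $B_{br}^{(0,\gb)}=1$. I would substitute the sum into (\ref{Balphabetarecurr}) and match the coefficient of $u!^{-1}v!^{-1}$ (or equivalently of ${\ga\choose u}$) on each side, reducing the identity to a Pascal/Chu--Vandermonde type relation
\begin{equation*}
(1+j-r)_v(-\gb)_v \cdot \ga = (j-r+1)_{v}(-\gb)_{v}\,(\text{stuff}) + \cdots,
\end{equation*}
after reindexing the two terms on the right of (\ref{Balphabetarecurr}) so that both are sums over $u+v=\ga-1$ with suitable shifts. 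Equivalently, one can recognise the recurrence as a contiguous relation on the ${}_3F_2$ in~(\ref{Balphabetahyp}) with respect to its upper parameter $-\ga$ and verify it from standard contiguous identities.

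The main obstacle I anticipate is bookkeeping in the second paragraph: once the action of $L$ is expanded, one obtains four terms (two from each summand of $L$ on each of the two ``shifts'' in the basis element), and combining them into a clean two-term recurrence with the coefficients of the form stated in~(\ref{Balphabetarecurr}) requires careful handling of the boundary indices (the cases $b=0$, $r=j$, $b=m-r$ at the edges of the summation range), which is precisely where the nested squares $S_{k,j,m}$ of (\ref{Skjmdefn}) from Lemma~\ref{LaRbstructformlemma} guarantee the ``extra'' terms drop out. Once the recurrence is in hand, the verification of the closed form by the Chu--Vandermonde step is routine.
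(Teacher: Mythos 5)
Your proposal matches the paper's proof in all essentials: induction on $\ga$ with $\gb$ fixed, base case from Lemma \ref{RbetaZprop}, applying $L$ via (\ref{Lzonal}) to the expansion of $L^{\ga-1}R^\gb Z_{k-2j}^{(k)}$ to express $C_{br}^{(\ga,\gb)}$ through $C_{b-1,r}^{(\ga-1,\gb)}$ and $C_{br}^{(\ga-1,\gb)}$, dividing out the telescoping $A$-factor to obtain the recurrence (\ref{Balphabetarecurr}), and finally checking the ${}_3F_2$ closed form against that recurrence. The only caveat is that the direct Chu--Vandermonde coefficient-matching you describe as routine is exactly the step the authors report they could not carry out directly (they illustrate its complexity with the $\ga=1,2$ cases), so the contiguous-relation verification you offer as an equivalent alternative is in fact the necessary route, and is the one the paper uses.
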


\begin{proof} We first prove the result for $B_{br}^{(\ga,\gb)}$ given 
by the recurrence relation (\ref{Balphabetarecurr}),
%The proof is 
by using induction on $\ga$. This is true for $\ga=0$
and all $\gb$ by Lemma \ref{RbetaZprop} (where $m=j$).
Let $A_{-1,r}^{(\ga,\gb)}$ and $B_{-1,r}^{(\ga,\gb)}$ take some value
(it matters not which). Then we have
\begin{equation}
\label{Arecurrence}
A_{b-1,r}^{(\ga-1,\gb)} = b A_{br}^{(\ga,\gb)}, \quad
A_{br}^{(\ga-1,\gb)} = (j+\ga-b-r) A_{br}^{(\ga,\gb)}, \qquad \ga>0.
\end{equation}
Suppose that $\ga>0$, then by the inductive hypothesis, we have
$$ L^{\ga-1} R^\gb Z_{k-2j}^{(k)} 
= \sum_{0\le r\le j\atop 0\le b' \le m-r}
C_{b'r}^{(\ga-1,\gb)} [k-j-\gb-b'-r,b',j+\ga-1-r-b',\gb-\ga+1+b',r]. $$
We apply $L$ to this, using (\ref{Lzonal}), i.e.,
\begin{align*}
& L([k-j-\gb-b'-r,b',j+\ga-1-r-b',\gb-\ga+1+b',r]) \cr
& \quad =(k-j-\gb-b'-r)[k-j-\gb-b'-r-1,b'+1,j+\ga-1-r-b',\gb-\ga+1+b',r] \cr
& \qquad -(\gb-\ga+1+b')[k-j-\gb-b'-r,b',j+\ga-r-b',\gb-\ga+b',r]
\end{align*}
and (\ref{Arecurrence}), to obtain
\begin{align*} 
C_{br}^{(\ga,\gb)} 
&= (k-j-\gb-(b-1)-r) C_{b-1,r}^{(\ga-1,\gb)}
-(\gb-\ga+1+b) C_{br}^{(\ga-1,\gb)} \cr
&= (k-j-\gb-b+1-r) b A_{br}^{(\ga,\gb)} B_{b-1,r}^{(\ga,\gb)}
-(\gb-\ga+1+b) (j+\ga-b-r) A_{br}^{(\ga,\gb)} B_{br}^{(\ga-1,\gb)}.
\end{align*}
Since $A_{br}^{(\ga,\gb)}\ne0$, we may divide the above by it,
to obtain
$$ B_{br}^{(\ga,\gb)}
= (k-j-\gb-b+1-r) b B_{b-1,r}^{(\ga-1,\gb)}
-(\gb-\ga+1+b) (j+\ga-b-r) B_{br}^{(\ga-1,\gb)}, $$
i.e., (\ref{Balphabetarecurr}), which completes the induction.

Finally, we show that the formula (\ref{Balphabetahyp}) 
for $B_{br}^{(\ga,\gb)}$
involving a ${}_3F_2$ hypergeometric series holds, 
i.e., it satisfies the recurrence. This we do by induction on $\ga$.
The case $\ga=0$ is immediate, and the inductive step follows 
from the contiguous relation
\begin{align*}
 (de)\, {}_3F_2\Bigl(\mat{-n,a,c\cr d,e};1\Bigl) 
& = (a+c-d-e+1-n)(-a)\, {}_3F_2\Bigl(\mat{1-n,a+1,c+1\cr d+1,e+1};1\Bigl) \cr
& \qquad -(e-a)(a-d)\, {}_3F_2\Bigl(\mat{1-n,a,c+1\cr d+1,e+1};1\Bigl).
\end{align*}
for hypergeometric functions, for the choice
$$ n=\ga, \quad a=-b, \quad c=k-2j-\ga+1, \quad d=r-j-\ga, \quad
e=\gb+1-\ga. $$
\end{proof}

The recurrence relation (\ref{Balphabetarecurr}) was determined first. 
It suggests that $(b,\gb)\mapsto B_{br}^{\ga,\gb}$ is a polynomial 
of degree $2\ga$, where in fact it has degree $\ga$, as is indicated 
by (\ref{Balphabetahyp}). We could not prove formula  (\ref{Balphabetahyp})
directly, without recourse to the contiguous relation. To indicate
the complexity of such a calculation, 
we give the inductive step %required 
for $\ga=1,2$
%
%\begin{lemma} The polynomial $(b,\gb)\mapsto B_{br}^{\ga,\gb}$
%is of degree $\ga$. Note the inductive formula suggests it is
%of degree $2\ga$.
%\end{lemma}
%
%\begin{proof}
%We use induction on $\ga$. Suppose that $(b,\gb)\mapsto B_{br}^{\ga-1,\gb}$
%has degree $\ga-1$, with $p(b,\gb)$ denoting the homogeneous term of degree $\ga-1$.
%Then by the recursive formula $(b,\gb)\mapsto B_{br}^{\ga,\gb}$
%is a polynomial of degree $\ga+1$, with its homogeneous term of degree
%$\ga+1$ given by
%$$  (-\gb-b)bp(b,\gb) -(\gb+b)(-b)p(b,\gb)=0, $$
%and so $(b,\gb)\mapsto B_{br}^{\ga,\gb}$ has degree $\ga$.
%\end{proof}
%
%$$ B_{br}^{(\ga,\gb)}
%= (k-j-\gb-b+1-r) b B_{b-1,r}^{(\ga-1,\gb)}
%-(\gb-\ga+1+b) (j+\ga-b-r) B_{br}^{(\ga-1,\gb)}, $$
%$$ ((k-j+1-r) -\gb-b) b
%= (k-j+1-r)b -\gb b-b^2 $$
%$$
%-( \gb +b +(1-\ga)) ((j+\ga-r)-b)
%= -\gb (j+\ga-r) +\gb b - b(j+\ga-r)+b^2
%-(1-\ga)(j+\ga-r)+(1-\ga)b $$
%\begin{align*}
%& = ((k-j +1-r) -\gb-b) b -(\gb+b) ((j+1 -r) -b) \cr
%& \quad = (k-j +1-r)b-\gb b -b^2 -\gb (j+1 -r) + \gb b -b(j+1 -r)+b^2
%=(k-2j)b-(j-r+1)\gb.
%\end{align*}
%Note this quadratic in $b$ reduces to a linear.
\begin{align*}
B_{br}^{(1,\gb)} &= (k-j-\gb-b+1-r) b -(\gb+b) (j+1-b-r)
=(k-2j)b-(j-r+1)\gb, \cr
B_{br}^{(2,\gb)}
& = (k-j-\gb-b+1-r) b \{(k-2j)(b-1)-(j-r+1)\gb\} \cr
&\qquad -(\gb-1+b) (j+2-b-r) \{(k-2j)b-(j-r+1)\gb\} \cr
& = (j-r+1)_2(-\gb)_2
+2(k-2j-1)b(j-r+1)(-\gb)
+(k-2j-2)_2(b-1)_2.
\end{align*}

\begin{example}
For $r=j$, the hypergeometric series in (\ref{Balphabetahyp}) can be summed 
using the generalised binomial theorem, or Gauss's summation
for the resulting ${}_2F_1$, to obtain
\begin{align*}
B_{bj}^{(\ga,\gb)}
& = \sum_{u=0}^b {\alpha!\over u!v!}(k-2j-\alpha+1)_{u}
{b!\over(b-u)!} v! (-\gb)_{\ga-b}(-\gb+\ga-b)_{b-u} \cr
&= \ga! (-\gb)_\ga { (\gb-k+2j)_b\over (\gb+1-\ga)_b },
\end{align*}
$$ C_{bj}^{(\ga,\gb)} 
%= {(-1)^j \over b!(\ga-b)!j!} {(k+2-2j)_j\over(k+2d-1-j)_{j}}(k-2j-\gb+1)_\gb
%\ga! (-\gb)_\ga { (\gb-k+2j)_b\over (\gb+1-\ga)_b }
= {(-1)^j \over j!} {(k+2-2j)_j\over(k+2d-1-j)_{j}}(k-2j-\gb+1)_\gb
 (-\gb)_\ga 
{(-1)^b\over b!} { (-\ga)_b (\gb-k+2j)_b\over (\gb+1-\ga)_b }.$$
For the case $j=0$, this further reduces to
$$ C_{b0}^{(\ga,\gb)} 
= (k-\gb+1)_\gb (-\gb)_\ga 
{(-1)^b\over b!} { (-\ga)_b (\gb-k)_b\over (\gb+1-\ga)_b },$$
and we recover the Lemma \ref{pablemma} %and (\ref{pabformula})
as the particular case
$j=0$ and $d=1$.
%The case $r=j$. Since $(-\gb)_{\ga-u}=(-\gb)_{\ga-b}(-\gb+\ga-b)_{b-u}$, we 
\end{example}

The case $\ga\ge\gb$ can easily be obtained in a similar way to 
Theorem \ref{LaRbZformalphalessbeta}.

\begin{theorem}
\label{LaRbZgenform}
For $0\le \ga,\gb \le k-2j$, $0\le j\le {k\over2}$, let
$$ m:=\min\{j+\ga,k-j-\gb,j+\gb,k-j-\ga\}, \qquad c:=\min\{\ga,\gb\}. $$
Then
we have
%$$ L^\ga R^\gb Z_{k-2j}^{(k)} 
%= \sum_{0\le r\le j\atop 0\le b \le m-r}
%C_{br}^{(\ga,\gb)} [k-j-(\ga+\gb-c)-b-r,\ga-c+b,j+c-b-r,\gb-c+b,r], $$
\begin{equation}
\label{LaRbZgenformula}
L^\ga R^\gb Z_{k-2j}^{(k)} 
= \sum_{0\le r\le j\atop 0\le b \le m-r}
C_{br}^{(\ga,\gb)} [k-j+c-\ga-\gb-b-r,\ga-c+b,j+c-b-r,\gb-c+b,r],
\end{equation}
where
and $C_{br}^{(\ga,\gb)}:=A_{br}^{(\ga,\gb)}B_{br}^{(\ga,\gb)}$ is
given by (\ref{Aalphabeta}) and (\ref{Balphabetahyp}) for $\ga\le\gb$, 
and by 
$$ A_{br}^{(\ga,\gb)}:=A_{br}^{(\gb,\ga)}, \quad
B_{br}^{(\ga,\gb)}:=B_{br}^{(\gb,\ga)}, \qquad \ga\ge\gb. $$
%where $c=\min\{\ga,\gb\}$ ($\ga+\gb-c=\max\{\ga,\gb\}$).
For $\gb\ge\ga$, the constant $B_{br}^{(\ga,\gb)}$ can be calculated from
$B_{br}^{(\ga,0)}:=1$ and the recurrence
%$$ B_{br}^{(0,\gb)}:=1, \qquad B_{br}^{(\ga,\gb)}:=1, \quad 0<\ga\le\gb. $$
\begin{equation}
\label{Bbetalessalpharecurr}
B_{br}^{(\ga,\gb)} 
= (k-j-\ga-b+1-r) b B_{b-1,r}^{(\ga,\gb-1)}
-(\ga-\gb+1+b) (j+\gb-b-r) B_{br}^{(\ga,\gb-1)}.
\end{equation}
\end{theorem}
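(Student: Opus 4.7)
The plan is to derive the $\ga \ge \gb$ case from the already-proven $\ga \le \gb$ case (Theorem \ref{LaRbZformalphalessbeta}) via a natural $L \leftrightarrow R$ symmetry of the setup, and to obtain the recurrence (\ref{Bbetalessalpharecurr}) in the same way from (\ref{Balphabetarecurr}).

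Introduce the $\CC$-linear involution $\sigma$ on $\Hom_k(\Hd,\CC)$ given by the formal substitution $w_j \leftrightarrow \overline{w_j}$ for every $j$; on monomials (\ref{monomialma}) it acts as $\sigma(z^{a_1}w^{a_2}\overline{z}^{a_3}\overline{w}^{a_4}) = z^{a_1}w^{a_4}\overline{z}^{a_3}\overline{w}^{a_2}$, and on zonal brackets with pole $e_1$ as
$$\sigma([a_1,a_2,a_3,a_4,r]_{e_1}) = [a_1,a_4,a_3,a_2,r]_{e_1},$$
using that $\norm{q}^{2r}$ is $\sigma$-invariant. Inspection of (\ref{Rformulagen}) and (\ref{Lformulagen}) shows that $\sigma$ sends $\partial_{w_j}\leftrightarrow\partial_{\overline{w_j}}$ while fixing all $z$-quantities, yielding the intertwining relations $\sigma R\sigma = L$ and $\sigma L\sigma = R$. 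Crucially, the formula (\ref{Zk-2j(k)explicitform}) for $P_{k-2j}^{(k)}$ is a sum of brackets $[k-j-b-r,b,c,b,r]$ in which positions $2$ and $4$ carry the same exponent $b$, so each summand is $\sigma$-invariant and hence $\sigma Z_{k-2j}^{(k)} = Z_{k-2j}^{(k)}$.

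Combining these observations with the commutativity of $L$ and $R$ (Lemma \ref{LRL*R*commute}),
$$\sigma\bigl(L^\ga R^\gb Z_{k-2j}^{(k)}\bigr) = R^\ga L^\gb Z_{k-2j}^{(k)} = L^\gb R^\ga Z_{k-2j}^{(k)}.$$
For $\ga \ge \gb$ the right-hand side falls within the regime of Theorem \ref{LaRbZformalphalessbeta}, with general bracket $[k-j-\ga-b-r,\,b,\,j+\gb-b-r,\,\ga-\gb+b,\,r]$ and coefficient $C_{br}^{(\gb,\ga)}$. Applying $\sigma$ (an involution) to both sides swaps positions $2$ and $4$ in every bracket, producing $[k-j-\ga-b-r,\,\ga-\gb+b,\,j+\gb-b-r,\,b,\,r]$, which matches the stated shape (\ref{LaRbZgenformula}) with $c=\gb$ and identifies $C_{br}^{(\ga,\gb)} := C_{br}^{(\gb,\ga)}$, hence $A_{br}^{(\ga,\gb)}:=A_{br}^{(\gb,\ga)}$ and $B_{br}^{(\ga,\gb)}:=B_{br}^{(\gb,\ga)}$; consistency at the diagonal $\ga=\gb$ is automatic. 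For the recurrence, interchanging the dummy names $\ga \leftrightarrow \gb$ in (\ref{Balphabetarecurr}) yields a relation for $B_{br}^{(\gb,\ga)}$ in the swapped regime, and invoking the symmetry $B_{br}^{(\ga,\gb)} = B_{br}^{(\gb,\ga)}$ just established rewrites it as (\ref{Bbetalessalpharecurr}), with base $B_{br}^{(\ga,0)} = B_{br}^{(0,\ga)} = 1$ inherited from Theorem \ref{LaRbZformalphalessbeta}.

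The main obstacle is not computational but structural: recognizing that the natural interchange of $L$ and $R$ is realized by a polynomial-level involution (the formal swap $w_j \leftrightarrow \overline{w_j}$, which is not a unitary action on $\Hd$) and observing that the specific zonal $P_{k-2j}^{(k)}$ constructed in Theorem \ref{HarmonicZonalconst} happens to be $\sigma$-invariant precisely because of its matched exponents in positions $2$ and $4$. Once the symmetry is in hand, the remainder is index bookkeeping, which I would carry out by unpacking the $\ga \le \gb$ expansion under $\sigma$ and checking parameter-by-parameter that the resulting brackets line up with (\ref{LaRbZgenformula}).
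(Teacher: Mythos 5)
Your proof is correct, but it takes a genuinely different route from the paper's. The paper proves the $\ga\ge\gb$ case by simply re-running the inductive machinery of Theorem \ref{LaRbZformalphalessbeta} in the other direction: induction on $\gb$, applying $R$ via (\ref{Rzonal}) to $L^\ga R^{\gb-1}Z_{k-2j}^{(k)}$ at each step, and checking that the defined $B_{br}^{(\ga,\gb)}:=B_{br}^{(\gb,\ga)}$ satisfies (\ref{Bbetalessalpharecurr}) by renaming indices in (\ref{Balphabetarecurr}) — the latter step you do identically. What you do differently is to deduce the $\ga\ge\gb$ expansion formally from the $\ga\le\gb$ one via the involution $w_j\leftrightarrow\overline{w_j}$, using that it intertwines $L$ and $R$, fixes $P_{k-2j}^{(k)}$ (matched exponents in positions $2$ and $4$), swaps positions $2$ and $4$ of each bracket, and that $L$ and $R$ commute. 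This is exactly the $\gs=(24)$ symmetry that the paper only introduces later, in Lemma \ref{eightsymmslemma}, and since you verify its properties directly from (\ref{Rformulagen}), (\ref{Lformulagen}) and (\ref{Zk-2j(k)explicitform}) there is no circularity. Your approach buys a computation-free transfer of the result and makes the underlying symmetry explicit; the paper's approach stays inside one uniform inductive scheme but duplicates work. One small remark: your parenthetical that $\sigma$ "is not a unitary action on $\Hd$" undersells it — $\sigma$ is composition with the map $z+jw\mapsto z+j\overline{w}$, which is an orthogonal (though not $\CC$-linear) map of $\RR^{4d}$, so it even preserves harmonicity; but nothing in your argument depends on this, since you establish all needed properties at the level of formal substitution on monomials.
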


\begin{proof} In light of Theorem \ref{LaRbZformalphalessbeta},
we need only consider the case $\ga\ge\gb$. 
It follows from (\ref{Balphabetarecurr}) that 
$B_{br}^{(\ga,\gb)}:=B_{br}^{(\gb,\ga)}$, $\ga\ge\gb$,
satisfies (\ref{Bbetalessalpharecurr}). 
The formula (\ref{LaRbZgenformula}) can be proved as in Theorem \ref{LaRbZformalphalessbeta},
using induction on $\gb$ and by applying
$R$ to $B_{br}^{(\ga,\gb-1)}$.
\end{proof}

We now consider an alternative formula for the zonal polynomial of (\ref{Zk-2j(k)explicitform}),
i.e.,
$$ P_{k-2j}^{(k)} := \sum_{b+c+r=j} {(-1)^r \over b!c!r!}
{(k+2-j-r)_r\over(k+2d-1-r)_{r}}[k-j-b-r,b,c,b,r], $$
where 
\begin{align*}
[k-j-b-r,b,c,b,r]
&= z_1^{k-j-b-r}w_1^b\overline{z_1}^c\overline{w_1}^b\norm{z+jw}^{2r}
%= z_1^{k-j-b-r-c} |z_1|^{2c} |w_1|^{2b}\norm{z+jw}^{2r} \cr
&= z_1^{k-2j} |z_1|^{2c} |w_1|^{2b}\norm{z+jw}^{2r}.
\end{align*}
By the binomial identity, we have
\begin{align*}
P_{k-2j}^{(k)} &=  \sum_{r=0}^j 
{(-1)^r \over r!} {(k+2-j-r)_r\over(k+2d-1-r)_{r}}
z_1^{k-2j} \norm{z+jw}^{2r}
{1\over (j-r)!}
\sum_{b+c=j-r} {(b+c)!\over b!c!} |z_1|^{2c} |w_1|^{2b} \cr 
&=  z_1^{k-2j} \sum_{r=0}^j {(-1)^r \over r!} {(k+2-j-r)_r\over(k+2d-1-r)_{r}}
\norm{z+jw}^{2r} {1\over (j-r)!} (|z_1|^2+|w_1|^2)^{j-r}. 
\end{align*}
This has the structural form
\begin{equation}
\label{zonalstructform}
P_{k-2j}^{(k)} = z_1^{k-2j} F(\norm{z+jw}^{2},|z_1|^2+|w_1|^2),
\end{equation}
where $F$ is a homogeneous polynomial of degree $j$ with real coefficients.
An elementary calculation shows that the polynomial
$$ |\inpro{z+jw,e_1}|^2= |z_1|^2+|w_1|^2 = z_1\overline{z_1}+w_1\overline{w_1} $$
%$$ |z_a|^2+|w_a|^2 = z_a \overline{z_a}+ w_a\overline{w_a}, \quad 1\le a\le d, $$
is in the kernel of $R$, $R^*$, $L$ and $L^*$, e.g.,
$$ R(z_1\overline{z_1}+w_1\overline{w_1}) 
=\overline{w_1}\overline{z_1}-\overline{z_1}\overline{w_1}=0. $$
Hence, by (\ref{RLproductrule}), all polynomials of the form
\begin{equation}
\label{LRkerform}
g= G(z_1\overline{z_1}+w_1\overline{w_1}, \cdots,z_d\overline{z_d}+w_d\overline{w_d}),
\end{equation}
which include $\norm{z+jw}^{2}$ and $|z_1|^2+|w_1|^2$,
are in the kernel of $R$, $R^*$, $L$ and $L^*$,
and hence
\begin{equation}
\label{kerfactoraction}
T(fg) = T(f)g+f T(g)=T(f)g, \qquad T=R,R^*,L,L^*.
\end{equation}
Applying this to (\ref{zonalstructform}) gives the following.

\begin{theorem} 
Let $q'=e_1$.  For $d\ge2$, $0\le j\le {k\over 2}$, 
the zonal polynomials of Theorem \ref{HarmonicZonalconst}
are given by
%$$ L^\ga R^\gb P_{k-2j}^{(k)} = L^\ga R^\gb (z_1^{k-2j}) F(\norm{z+jw}^{2},|z_1|^2+|w_1|^2), \qquad 0\le\ga,\gb\le k-2j, $$
\begin{equation}
\label{nicezonalformula}
L^\ga R^\gb P_{k-2j}^{(k)} = L^\ga R^\gb (z_1^{k-2j}) F, 
\qquad 0\le\ga,\gb\le k-2j,
\end{equation}
where % $F$ 
$F=F(\norm{z+jw}^{2},|z_1|^2+|w_1|^2)$	does not depend on $\ga$ and $\gb$, and is given by 
%\begin{align*}
%F &= {(-1)^j (k-2j+2)_j \over(k+2d-1-j)_j j!}
%\sum_{s=0}^j (-j)_s {(k-2j+2d-1+j)_s \over (k-2j+2)_s} {1\over s!}
%\norm{z+jw}^{2(j-s)} \bigl(|z_1|^2+|w_1|^2\bigr)^s \cr
%&= {(-1)^j \over(k+2d-1-j)_j} z_1^{k-2j} \norm{z+jw}^{2j}
%P_j^{(k-2j+1,2d-3)}\Bigl(1-2 {|z_1|^2+|w_1|^2\over \norm{z+jw}^2}\Bigr),
%\end{align*}
\begin{align}
\label{Fformula}
F &= {(-1)^j (k-2j+2)_j \over(k+2d-1-j)_j j!}
\sum_{s=0}^j (-j)_s {(k-2j+2d-1+j)_s \over (k-2j+2)_s} {1\over s!}
\norm{z+jw}^{2(j-s)} \bigl(|z_1|^2+|w_1|^2\bigr)^s \cr
&= {(-1)^j \over(k+2d-1-j)_j} \norm{z+jw}^{2j}
P_j^{(k-2j+1,2d-3)}\Bigl(1-2 {|z_1|^2+|w_1|^2\over \norm{z+jw}^2}\Bigr),
\end{align}
%\vskip-0.7truecm
with $P_j^{(k-2j+1,2d-3)}$ a Jacobi polynomial.
\end{theorem}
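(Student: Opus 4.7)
The plan is to combine the two observations already set up in the paragraphs just preceding the theorem: that $P_{k-2j}^{(k)}$ factors as $z_1^{k-2j}$ times a polynomial in the symmetric combinations $z_\ell\overline{z_\ell}+w_\ell\overline{w_\ell}$, and that every such symmetric polynomial is annihilated by each of $R,R^*,L,L^*$. The factorization (\ref{nicezonalformula}) will then drop out of repeated application of the product rule, and the identification of $F$ with a Jacobi polynomial will be a rearrangement of Pochhammer symbols.

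First, I would recall the binomial manipulation carried out in the paragraph preceding the theorem to write
\begin{equation*}
P_{k-2j}^{(k)} \;=\; z_1^{k-2j}\,F(\norm{z+jw}^{2},\,|z_1|^2+|w_1|^2),
\end{equation*}
where $F$ is the explicit degree-$j$ polynomial
\begin{equation*}
F \;=\; \sum_{r=0}^j \frac{(-1)^r}{r!(j-r)!}\,
\frac{(k+2-j-r)_r}{(k+2d-1-r)_r}\,
\norm{z+jw}^{2r}\,(|z_1|^2+|w_1|^2)^{j-r}.
\end{equation*}
Since $\norm{z+jw}^{2}=\sum_\ell(z_\ell\overline{z_\ell}+w_\ell\overline{w_\ell})$ and $|z_1|^2+|w_1|^2=z_1\overline{z_1}+w_1\overline{w_1}$, the polynomial $F$ has the form (\ref{LRkerform}), and so $R F = R^*F = L F = L^* F = 0$ by the remark immediately preceding the theorem.

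Next I would deduce (\ref{nicezonalformula}) by induction. Because $R$ is a first-order derivation (\ref{RLproductrule}) and $RF=0$, (\ref{kerfactoraction}) yields $R(z_1^{k-2j}F) = R(z_1^{k-2j})\,F$, and inductively $R^\gb(z_1^{k-2j}F) = R^\gb(z_1^{k-2j})\,F$ for every $\gb\ge 0$. Applying the same argument with $L$, using $LF=0$ and the commutation $LR=RL$ from Lemma \ref{LRL*R*commute}, gives
\begin{equation*}
L^\ga R^\gb\bigl(z_1^{k-2j}F\bigr) \;=\; L^\ga R^\gb(z_1^{k-2j})\cdot F.
\end{equation*}
This establishes (\ref{nicezonalformula}) and shows that $F$ is independent of $\ga,\gb$.

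Finally I would verify the Jacobi-polynomial formula (\ref{Fformula}). Substituting $s=j-r$ in the series for $F$ rewrites it as
\begin{equation*}
F \;=\; \sum_{s=0}^j \frac{(-1)^{j-s}}{s!(j-s)!}\,
\frac{(k-2j+2+s)_{j-s}}{(k+2d-1-j+s)_{j-s}}\,
\norm{z+jw}^{2(j-s)}(|z_1|^2+|w_1|^2)^s,
\end{equation*}
and using $(-j)_s = (-1)^s j!/(j-s)!$ together with the identities $(k-2j+2)_j/(k-2j+2)_s = (k-2j+2+s)_{j-s}$ and $(k-j+2d-1)_s\,(k+2d-1-j+s)_{j-s} = (k-j+2d-1)_j = (k+2d-1-j)_j$ converts this expression into the first line of (\ref{Fformula}). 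The second line then follows from the standard representation
\begin{equation*}
P_j^{(\alpha,\beta)}(x) \;=\; \frac{(\alpha+1)_j}{j!}\,
{}_2F_1\!\left(-j,\,j+\alpha+\beta+1;\,\alpha+1;\,\tfrac{1-x}{2}\right),
\end{equation*}
applied with $\alpha=k-2j+1$, $\beta=2d-3$, and $\tfrac{1-x}{2}=(|z_1|^2+|w_1|^2)/\norm{z+jw}^2$, after factoring out $\norm{z+jw}^{2j}$.

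The only real obstacle is the bookkeeping in the last step, where the $s!/ (j-s)!$ factorials, the Pochhammer ratios, and the sign $(-1)^{j-s}=(-1)^{j+s}$ must all be checked to produce the ${}_2F_1$ exactly; once the index change $s=j-r$ is made, however, this is a routine Pochhammer identity and the Jacobi polynomial drops out immediately.
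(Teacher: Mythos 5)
Your proposal is correct and follows essentially the same route as the paper: factor $P_{k-2j}^{(k)}=z_1^{k-2j}F$ via the binomial identity, note $F$ has the form (\ref{LRkerform}) so it is killed by $R,R^*,L,L^*$, apply (\ref{kerfactoraction}) repeatedly to pull $F$ out, and then substitute $s=j-r$ and rearrange Pochhammer symbols to recognise the Jacobi polynomial. The extra detail you supply on the ${}_2F_1$ representation and the Pochhammer identities is accurate and merely makes explicit what the paper leaves as routine.
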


\begin{proof} Since the function $F$ of (\ref{zonalstructform}) is of the form 
(\ref{LRkerform}), % and $R$ and $L$ commute, 
we may apply (\ref{kerfactoraction}) repeatedly, to obtain
$$ L^\ga R^\gb P_{k-2j}^{(k)} = L^\ga R^\gb\bigl(z_1^{k-2j}\bigr)
	F(\norm{z+jw}^{2},|z_1|^2+|w_1|^2), $$ 
where $F$ does not depend on $\ga$ and $\gb$, and is given by
$$ F= {P_{k-2j}^{(k)}\over z_1^{k-2j}}
=  \sum_{r=0}^j {(-1)^r \over r!} {(k+2-j-r)_r\over(k+2d-1-r)_{r}}
\norm{z+jw}^{2r} {1\over (j-r)!} (|z_1|^2+|w_1|^2)^{j-r}.  $$
By making the change of variables $s=j-r$, we obtain
\begin{align*}
F &= {(-1)^j (k-2j+2)_j \over(k+2d-1-j)_j j!} 
\sum_{s=0}^j (-j)_s {(k-2j+2d-1+j)_s \over (k-2j+2)_s} {1\over s!} 
\norm{z+jw}^{2(j-s)} \bigl(|z_1|^2+|w_1|^2\bigr)^s \cr
&= {(-1)^j (k-2j+2)_j \over(k+2d-1-j)_j j!} \norm{z+jw}^{2j}
\sum_{s=0}^j (-j)_s {(k-2j+2d-1+j)_s \over (k-2j+2)_s} {1\over s!} 
\Bigl({|z_1|^2+|w_1|^2\over \norm{z+jw}^2}\Bigr)^s,
\end{align*}
so that $F$ can be expressed in terms of a Jacobi polynomial, i.e..
%$$ F = {(-1)^j \over(k+2d-1-j)_j} z_1^{k-2j} \norm{z+jw}^{2j}
%P_j^{(k-2j+1,2d-3)}(1-2t), \quad t := {|z_1|^2+|w_1|^2\over \norm{z+jw}^2}. $$
$$ F = {(-1)^j \over(k+2d-1-j)_j} z_1^{k-2j} \norm{z+jw}^{2j}
P_j^{(k-2j+1,2d-3)}\Bigl(1-2 {|z_1|^2+|w_1|^2\over \norm{z+jw}^2}\Bigr). $$
\end{proof}

The formula (\ref{Fformula}) for the zonal polynomial (reproducing kernel) 
$P_{k-2j}^{(k)}= z_1^{k-2j} F$, involving the Jacobi polynomial, 
appears in \cite{BSW17} (Theorem 8). An explicit formula for the 
factor $L^\ga R^\gb(z_1^{k-2j})$ is given by the formula
(\ref{pabformula}) for the univariate case (replace $z$ by $z_1$, etc).

By writing the zonal polynomials in the form (\ref{nicezonalformula}),
the squares in the table/schematic for the zonal polynomials 
(see Figure \ref{Weddingfig}) become essentially those
for the univariate cases $\Harm_{k-2j}(\HH,\CC)$, as
depicted in (\ref{univariatesquaresimple}).

\section{Symmetries}

%It is evident from 
The polynomials $L^\ga R^\gb P_{k-2j}^{(k)}$
and the spaces $H_k^{(\ga,\gb)}(\Hd)$
have certain natural symmetries that correspond to the
symmetries of the square (see the array in Example \ref{exshowsymms}).

Let the permutation group $\Sym(4)$ act on functions of four
variables in the natural way, i.e.,  
$$\gs\cdot f(x_1,x_2,x_3,x_4) = f(x_{\gs 1},x_{\gs 2},x_{\gs 3},x_{\gs 4}), $$
and hence on functions 
$f(z,w,\overline{z},\overline{w})\in\Hom_k(\Hd,\CC)$. 
There is a subgroup $G$ of $\Sym(4)$ which maps $\Harm_k(\Hd,\CC)$ to itself,
which is generated by the permutations
%$$\gs:=(12), \quad \tau:=(14)(23). $$
\begin{equation}
\label{sigmataudefn}
\gs:=(24), \quad \tau:=(14)(23).
\end{equation}
This group is the dihedral group of symmetries of the square
$$ D_4=\inpro{a,b|a^4=b^2=(ba)^2=1}, \qquad
a=\gs\tau,\quad b=\gs, $$
and hence has order eight.
By considering the action on monomials, one obtains
\begin{equation}
\label{gstauonHk(a,b)}
\gs\cdot H_k^{(\ga,\gb)}(\Hd,\CC) = H_k^{(\gb,\ga)}(\Hd,\CC), \qquad
\tau\cdot H_k^{(\ga,\gb)}(\Hd,\CC) = H_k^{(\ga,k-\gb)}(\Hd,\CC),
\end{equation}
and so $G$ permutes the subspaces $H_k^{(a,b)}(\Hd,\CC)$, where
$$(a,b)\in \{
(\ga,\gb),(\ga,k-\gb),(k-\ga,\gb),(k-\ga,k-\gb),
(\gb,\ga),(\gb,k-\ga-k),(k-\gb,\ga),(k-\gb,k-\ga) \}, $$
via the action on the indices given by
$$ \gs\cdot (\ga,\gb):=(\gb,\ga), \qquad
\tau\cdot (\ga,\gb):=(\ga,k-\gb). $$
It clear from Lemma \ref{Habdecomplemma} that these subspaces do indeed have 
the same dimension. The number of subspaces above can be
$1,2,4,8$, depending on
the position of the index $(\ga,\gb)$ in the square array $\{0,1,\ldots,k\}^2$.
The corresponding symmetries of the zonal polynomials
\begin{equation}
\label{PinHk(ab)}
L^\ga R^\gb P_{k-2j}^{(k)}\in H_k^{(\ga+j,\gb+j)}(\Hd,\CC), 
\qquad 0\le\ga,\gb,\le k-2j,
\end{equation}
are as follows.

\begin{lemma} 
\label{eightsymmslemma}
(Eight symmetries) The zonal polynomials 
$(L^\ga R^\gb P_{k-2j}^{(k)})_{0\le\ga,\gb,\le k-2j}$ 
have the following basic symmetries corresponding to the
	$\gs$ and $\tau$ of (\ref{sigmataudefn})
\begin{align}
\label{Pkbasicsymms}
L^\ga R^\gb P_{k-2j}^{(k)} (z,w,\overline{z},\overline{w})
&= L^\gb R^\ga P_{k-2j}^{(k)} (z,\overline{w},\overline{z},w)
	\qquad \hbox{($\gs$)} \cr
%\qquad \hbox{(reflection in the line $\ga=\gb$)},  $$
%L^\ga R^\gb Z_{k-2j}^{(k)} (z,w,\overline{z},\overline{w})
&= c_{\ga,\gb} L^\ga R^{k-2j-\gb} P_{k-2j}^{(k)} 
( \overline{w},\overline{z},w,z) \qquad \hbox{($\tau$)},
%\qquad\hbox{(reflection in $\gb$-axis}), $$
\end{align}
where $c_{\ga,\gb}$ is a constant. The %five 
	identities
for the remaining nontrivial elements of $G$ are
%These form a group of order $8$ (the dihedral group of the square), which 
%yields the further five identities  %six symmetries/identities
\begin{align} 
\label{Pkfivesymms}
	L^\ga R^\gb P_{k-2j}^{(k)} & (z,w,\overline{z},\overline{w}) \cr
& = c_{\ga,\gb} L^{k-2j-\gb} R^{\ga} 
P_{k-2j}^{(k)} (\overline{w},z,w,\overline{z}) \qquad
\hbox{($\gs\tau$)}  \cr % a^3 =\gs\tau  
& = c_{\gb,\ga} L^\gb R^{k-2j-\ga} P_{k-2j}^{(k)} 
(w,\overline{z},\overline{w},z) \qquad \hbox{($\tau\gs$)} \cr % a=\tau\gs
& = c_{\gb,\ga} L^{k-2j-\ga} R^\gb P_{k-2j}^{(k)} 
(w,z,\overline{w},\overline{z}) \qquad \hbox{($\gs\tau\gs$)} \cr % ba=a^3b = \gs\tau\gs
%&= c_{\ga,\gb} L^{\ga} R^{k-2j-\gb} (\overline{w}, \overline{z} ,w, z) \qquad b a^3 = ab=\tau \cr
& = c_{\ga,\gb}c_{k-2j-\gb,\ga} L^{k-2j-\gb} R^{k-2j-\ga} 
P_{k-2j}^{(k)} (\overline{z}, w ,z, \overline{w}) 
\qquad \hbox{($\tau\gs\tau$)} %  b a^2 = a^2 b=\tau\gs\tau 
\cr
& = c_{\ga,\gb}c_{k-2j-\gb,\ga} L^{k-2j-\ga} R^{k-2j-\gb} 
Z_{k-2j}^{(k)} (\overline{z},\overline{w},z,w) 
\qquad \hbox{($\gs\tau\gs\tau$)}.
\end{align} 
%$$ L^\ga R^\gb Z_{k-2j}^{(k)} (z,w,\overline{z},\overline{w})
%= L^\gb R^\ga Z_{k-2j}^{(k)} (z,\overline{w},\overline{z},w) \hbox{(reflection in the line $\ga+\gb$)} 
%=: \gs\bigl( L^\ga R^\gb Z_{k-2j}^{(k)} (z,w,\overline{z},\overline{w})\bigr)  $$
%$$ L^\ga R^\gb Z_{k-2j}^{(k)} (z,w,\overline{z},\overline{w})
%= c_{\ga,\gb} L^\ga R^{k-2j-\gb} Z_{k-2j}^{(k)} ( \overline{w},\overline{z},w,z)
% =: \tau\bigl( L^\ga R^\gb Z_{k-2j}^{(k)} (z,w,\overline{z},\overline{w}) \bigr) $$
%This involves eight index pairs in $S_m$.
%The group of permutations $D_4=\inpro{(2 4),(14)(23)}\subset \Sym(4)$ acts 
%on the one-dimensional subspaces $\{H_{\ga,\gb}\}$, with orbits lying on 
%$S_m$ of possible
%size $1,2,4,8$. 
\end{lemma}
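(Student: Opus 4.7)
The plan is to prove the two basic identities \eqref{Pkbasicsymms} by direct operator algebra, and derive the remaining six identities \eqref{Pkfivesymms} as compositions in $G=\langle\gs,\tau\rangle\cong D_4$. As a first step, I would establish the operator intertwiners arising from the variable permutations. Viewing $\gs$ and $\tau$ as ring automorphisms of $\CC[z,w,\overline{z},\overline{w}]$ and applying the chain rule to the formulas \eqref{Rformulagen} and \eqref{Lformulagen}, a short direct calculation yields
\[
\gs R = L\gs,\qquad \gs L = R\gs,\qquad \tau R = R^*\tau,\qquad L\tau = -\tau L,
\]
so by iteration $\gs L^\gb = R^\gb\gs$, $\tau R^n = (R^*)^n\tau$, and $\tau L^\ga = (-1)^\ga L^\ga\tau$.

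Next I would determine the action of $\gs$ and $\tau$ on the anchor $P_{k-2j}^{(k)}$ via the factorisation \eqref{zonalstructform}: $P_{k-2j}^{(k)} = z_1^{k-2j}F$ with $F=F(\norm{z+jw}^2,|z_1|^2+|w_1|^2)$ a polynomial in two expressions lying in the common kernel of $R,R^*,L,L^*$ (hence $\gs$- and $\tau$-invariant). Since $\gs$ fixes $z_1$, we get $\gs P_{k-2j}^{(k)} = P_{k-2j}^{(k)}$, and since $\tau z_1 = \overline{w_1}$ we get $\tau P_{k-2j}^{(k)} = \overline{w_1}^{k-2j}F$; a short induction from the Wirtinger form of $R$ gives $R^{k-2j}(z_1^{k-2j}) = (k-2j)!\,\overline{w_1}^{k-2j}$, so $\tau P_{k-2j}^{(k)} = \tfrac{1}{(k-2j)!}R^{k-2j}P_{k-2j}^{(k)}$.

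The basic identities then follow by operator chasing. For $\gs$, using the commutativity of $L$ and $R$ (Lemma \ref{LRL*R*commute}),
\[
\gs(L^\gb R^\ga P_{k-2j}^{(k)}) = R^\gb L^\ga(\gs P_{k-2j}^{(k)}) = L^\ga R^\gb P_{k-2j}^{(k)},
\]
and evaluating at $(z,w,\overline{z},\overline{w})$ gives the first identity. For $\tau$,
\[
\tau(L^\ga R^{k-2j-\gb}P_{k-2j}^{(k)}) = \tfrac{(-1)^\ga}{(k-2j)!}\,L^\ga(R^*)^{k-2j-\gb}R^{k-2j}P_{k-2j}^{(k)};
\]
since $P_{k-2j}^{(k)}\in\ker R^*\cap\Hom_H(k-j,j)$ by Theorem \ref{HarmonicZonalconst}, formula \eqref{R*alphaRbeta} collapses to the single surviving term and reduces the right-hand side to an explicit scalar multiple of $L^\ga R^\gb P_{k-2j}^{(k)}$, yielding the second identity with $c_{\ga,\gb}=\tfrac{(-1)^\ga\gb!}{(k-2j-\gb)!}$.

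Finally, since $G\cong D_4$ is generated by $\gs$ and $\tau$, each of the five remaining elements acts on the 4-tuple by the composition of the two basic substitutions---a routine check reproduces the explicit permutations listed in \eqref{Pkfivesymms}---and the corresponding identity follows by composing the two operator identities above, with the proportionality constants multiplying accordingly. The main delicate point I expect is the (perhaps unexpected) sign in the anticommutation $L\tau=-\tau L$, which is where the factor $(-1)^\ga$ in $c_{\ga,\gb}$ arises; the rest of the argument, including the Pochhammer bookkeeping when pulling $(R^*)^{k-2j-\gb}$ through $R^{k-2j}$ on $\ker R^*$ via \eqref{R*alphaRbeta}, is elementary.
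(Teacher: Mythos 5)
Your proposal is correct, and it takes the route that the paper only flags as an alternative rather than the one it actually uses. The paper's own proof is an abstract uniqueness argument: $\gs$ and $\tau$ send zonal polynomials to zonal polynomials and permute the subspaces $H_k^{(a,b)}(\Hd)$ by (\ref{gstauonHk(a,b)}), so by the one-dimensionality (\ref{1dimzonal}) of the zonal part of each $H(k-b,b)_{k-2j}\cap H_k^{(a,b)}(\Hd)$ the permuted polynomial must be a scalar multiple of the corresponding $L^{\ga'}R^{\gb'}P_{k-2j}^{(k)}$; the constants are left undetermined. You instead prove the intertwining relations $\gs R=L\gs$, $\gs L=R\gs$, $\tau R=R^*\tau$, $\tau L=-L\tau$, compute $\gs\cdot P_{k-2j}^{(k)}=P_{k-2j}^{(k)}$ and $\tau\cdot P_{k-2j}^{(k)}=\tfrac{1}{(k-2j)!}R^{k-2j}P_{k-2j}^{(k)}$ from the factorisation (\ref{zonalstructform}), and then collapse $(R^*)^{k-2j-\gb}R^{k-2j}$ on $\ker R^*\cap\Hom_H(k-j,j)$ via (\ref{R*alphaRbeta}); I have checked that this does yield $c_{\ga,\gb}=(-1)^\ga\gb!/(k-2j-\gb)!$, consistent with the Pochhammer product $(-(k-2j))_{k-2j-\gb}(\gb-k+2j)_{k-2j-\gb}=(k-2j)!(k-2j-\gb)!/\gb!$. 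What your approach buys is the explicit values of the constants (which the paper's argument does not supply) and independence from the isotypic bookkeeping: the paper's argument tacitly needs that $\gs$ and $\tau$ preserve the layer $I(W_{k-2j})^{(k)}$, not merely the spaces $H_k^{(a,b)}(\Hd)^Z$, whereas your operator identities give that for free. Two small points to tidy: the parenthetical ``hence $\gs$- and $\tau$-invariant'' does not follow from membership in $\ker R\cap\ker R^*\cap\ker L\cap\ker L^*$ --- the invariance of $\norm{z+jw}^2$ and $|z_1|^2+|w_1|^2$ under the coordinate permutations should be checked directly (it is immediate); and you should note that the paper's labels $\gs\tau$, $\tau\gs$, etc.\ in (\ref{Pkfivesymms}) follow the composition convention opposite to $(\gs\cdot(\tau\cdot f))=(\gs\tau)\cdot f$, so the ``routine check'' of which permutation of $(z,w,\overline{z},\overline{w})$ goes with which product of generators needs to be done in the right order.
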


\begin{proof} 
% consider the case $d=1$ (the same argument applies in the general case), where the 
%Laplacian is 
%$${1\over4}\Delta = 
%{\partial^2\over\partial\overline{z}\partial z}
%+{\partial^2\over\partial\overline{w}\partial w}, $$
%and 
%$\tau\cdot f(z,w,\overline{z},\overline{w})
%=f(\overline{w},\overline{z},w,z)$, with 
%$f(z,w,\overline{z},\overline{w})$ harmonic. Then
%\begin{align*}
%%{1\over 4} \Delta & f(z,w,\overline{z},\overline{w})
%{1\over 4} \Delta & 
%f(z,w,\overline{z},\overline{w})
%= f_{13}(z,w,\overline{z},\overline{w}
% +f_{24}(z,w,\overline{z},\overline{w})=0 \cr
% & \Implies {1\over 4} \Delta 
%\bigl(\tau\cdot f(z,w,\overline{z},\overline{w})\bigr)
%f (\overline{w},\overline{z},w,z)
%= f_{42}(\overline{w},\overline{z},w,z)
%+f_{31}(\overline{w},\overline{z},w,z) =0.
%\end{align*}
%A direct computation using the formulas of (xxx) establishes
%(\ref{Pkbasicsymms}).
The permutations $\gs$ and $\tau$ map zonal polynomials to zonal polynomials,
and so, in light of (\ref{gstauonHk(a,b)}) and (\ref{PinHk(ab)},
we obtain (\ref{Pkbasicsymms}). This could also be established, 
with values of the constants, from the formulas for 
$L^\ga R^\gb P_{k-2j}^{(k)}$ given in Theorem \ref{LaRbZformalphalessbeta}, 
or by using identities such as
	$$ \gs \cdot (Rf) = Lf, \quad \gs \cdot (Lf) = Rf, \qquad
\tau\cdot (Rf)=R^*f, \quad \tau\cdot (Lf)=-Lf, $$
together with Lemma \ref{RR*commutegeneral}.
\end{proof}

%It is natural to define a corresponding action of $G$ on 
%the indices $(\ga,\gb)$ by
%$$ \gs\cdot (\ga,\gb):=(\gb,\ga), \qquad
%\tau\cdot (\ga,\gb):=(\ga,k-2j-\gb). $$
% we have
%$$ L^\ga R^{\gb} Z_{k-2j}^{(k)}
%= (L,R)^{(\ga,\gb)} Z_{k-2j}^{(k)}
%= c_{\ga,\gb}^g\, g\cdot (L,R)^{g\cdot (\ga,\gb)} Z_{k-2j}^{(k)},
%\qquad g\in G. $$

The formulas in Lemma \ref{eightsymmslemma} 
for $P_{k-2j,a,b}^{(k)}=L^{a-j} R^{b-j} P_{k-2j}^{(k)}$, $\ga=a-j$, $\gb=b-j$, 
do not have
a simple formula for the constants, as the normalisation of 
$L^\ga R^\gb P_{k-2j}^{(k)}$ is biased towards the (starting) polynomial
$P_{k-2j}^{(k)}\in H_k^{(j,j)}(\Hd,\CC)$, which corresponds to a corner 
of the array of indices.
For $k$ even, one could start with the ``centre'' polynomial
$$ C_{k-2j}^{(k)} = P_{k-2j,{k\over2},{k\over2}}^{(k)}
=L^{{k\over2}-j}R^{{k\over2}-j} P_{k-2j}^{(k)} \in H_k^{({k\over2},{k\over2})}
(\Hd,\CC), $$
to obtain zonal polynomials
$$ 
  L^{\max\{\ga,{k\over2}-j\}}
  (L^*)^{\max\{{k\over2}-j-\ga,0\}}
  R^{\max\{\gb,{k\over2}-j\}}
  (R^*)^{\max\{{k\over2}-j-\gb,0\}}
C_{k-2j}^{(k)}. $$
By using Lemma \ref{RR*commutegeneral}, these can be written as
$$ P_{k,k-2j}^{(\ga,\gb)} :=
 {M_\ga!\over \ga!}
 {M_\gb!\over \gb!}
R^\ga L^\gb P_{k-2j}^{(k)},  $$
where
$$ M_\ga:=\max\{\ga,k-2j-\ga\}, \qquad
M_\gb:=\max\{\gb,k-2j-\ga\}. $$
The $\gs$ and $\tau$ symmetries of Lemma \ref{eightsymmslemma} then become
\begin{align}
\label{Pkbasicsymmsnice}
P_{k,k-2j}^{(\ga,\gb)} (z,w,\overline{z},\overline{w})
&= P_{k,k-2j}^{(\gb,\ga)} (z,\overline{w},\overline{z},w)
        \qquad \hbox{($\gs$)} \cr
%\qquad \hbox{(reflection in the line $\ga=\gb$)},  $$
%L^\ga R^\gb Z_{k-2j}^{(k)} (z,w,\overline{z},\overline{w})
	&= (-1)^\ga P_{k,k-2j}^{(\ga,k-2j-\gb)}
( \overline{w},\overline{z},w,z) \qquad \hbox{($\tau$)}.
%\qquad\hbox{(reflection in $\gb$-axis}), $$
\end{align}

\section{The fine scale decomposition for left and right multiplication by $\HH^*$}

By taking the intersection of the decomposition into
irreducibles for right multiplication by $\HH^*$ (Theorem \ref{HarmkIWdecomp})
with the corresponding one for left multiplication,
we obtain the following decomposition into low dimensional subspaces.
All of our decompositions, and others, can be built up from this.

\begin{theorem} 
\label{LRorthdecomp} (Fine scale decomposition)
Let
$$ V_k^{(j_1,j_2)}(\Hd) %V_k^{(j_1,j_2)}=
:=\ker L^* \cap \ker R^* \cap H_{k}^{(j_1,j_2)}(\Hd),\qquad
0\le j_1,j_2\le{k\over2}. $$
Then for $d\ge2$, we have the orthogonal direct sum
\begin{align}
\label{HomkVdecomp}
\Hom_k(\Hd,\CC)
&= \bigoplus_{0\le j_1,j_2\le {k\over2}} 
\bigoplus_{ j_1\le a\le k-j_1\atop j_2\le b\le k-j_2 } 
\bigoplus_{i=0}^{\min\{j_1,j_2\}}
\norm{\cdot}^{2i} L^{a-j_1} R^{b-j_2} V_{k-2i}^{(j_1-i,j_2-i)}(\Hd),
\end{align}
and in particular
\begin{equation}
\label{HarmkVdecomp}
\Harm_k(\Hd,\CC)
= \bigoplus_{0\le j_1,j_2\le {k\over2}} 
\bigoplus_{ j_1\le a\le k-j_1\atop j_2\le b\le k-j_2 } 
 L^{a-j_1} R^{b-j_2} V_{k}^{(j_1,j_2)}(\Hd),
\end{equation}
where
$$ \dim(L^{a-j_1} R^{b-j_2} V_{k}^{(j_1,j_2)}(\Hd))
=\dim(V_{k}^{(j_1,j_2)}(\Hd)). $$
\end{theorem}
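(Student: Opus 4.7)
The plan is to obtain (\ref{HarmkVdecomp}) by applying two successive orthogonal decompositions — first the $R$-decomposition of Lemma \ref{HomkRdecomp}, then the analogous $L$-decomposition — and to derive (\ref{HomkVdecomp}) from (\ref{HarmkVdecomp}) via the Fischer decomposition (\ref{Fischerdecomp}).

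First, I would apply Lemma \ref{HomkRdecomp} to write
\begin{equation*}
\Harm_k(\Hd,\CC) = \bigoplus_{0\le j_2\le k/2}\bigoplus_{j_2\le b\le k-j_2}
R^{b-j_2}\bigl(\ker R^*\cap H(k-j_2,j_2)\bigr).
\end{equation*}
Next, I would establish an $L$-analog of Lemma \ref{HomkRdecomp}: by the symmetric argument (swap the roles of the $H$- and $K$-gradings in Lemma \ref{hardtounderstandlemma}, and use the $L$-analogs of (\ref{HomHkerRandR*})--(\ref{HomHkerR*andR}) together with the $L$-analog of Lemma \ref{RowMovementsLemma} based on (\ref{LL*betacommf})), we obtain, on any $L,L^*$-invariant subspace $W$ of $\Hom_k(\Hd,\CC)$, the orthogonal decomposition
\begin{equation*}
W \cap H(k-j_2,j_2) = \bigoplus_{0\le j_1\le k/2}\bigoplus_{j_1\le a\le k-j_1}
L^{a-j_1}\bigl(\ker L^*\cap W \cap H_k^{(j_1,j_2)}(\Hd)\bigr).
\end{equation*}
I would apply this with $W=\ker R^*$; since $L$ and $L^*$ commute with $R^*$ (Lemma \ref{LRL*R*commute}), $\ker R^*$ is $L,L^*$-invariant, and since $R^*$ also preserves the $K$-grading (Lemma \ref{hardtounderstandlemma}), the intersections make sense. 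Using that $L^{a-j_1}$ commutes with $R^{b-j_2}$, combining the two decompositions yields (\ref{HarmkVdecomp}).

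For the dimension claim, I would observe that $(L^*)^{a-j_1}(R^*)^{b-j_2}L^{a-j_1}R^{b-j_2}$ acts on $V_k^{(j_1,j_2)}(\Hd)\subset H_k^{(j_1,j_2)}(\Hd)$, and by applying (\ref{R*alphaRbeta}) of Lemma \ref{RR*commutegeneral} with $\alpha=\beta=b-j_2$, $a=k-j_2$, $b=j_2$ (and likewise the $L$-analog (\ref{L*alphaLbeta}) with $\alpha=\beta=a-j_1$), this composition reduces on $V_k^{(j_1,j_2)}(\Hd)$ to multiplication by a nonzero Pochhammer scalar, forcing $L^{a-j_1}R^{b-j_2}$ to be injective on $V_k^{(j_1,j_2)}(\Hd)$.

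Finally, to derive (\ref{HomkVdecomp}), I would apply the Fischer decomposition (\ref{Fischerdecomp}) and then apply (\ref{HarmkVdecomp}) to each factor $\Harm_{k-2i}(\Hd,\CC)$ with $(k,j_1,j_2,a,b)$ replaced by $(k-2i,j_1-i,j_2-i,a-i,b-i)$; reindexing by $(j_1,j_2,a,b)$ and noting that the admissible range of $i$ is then $0\le i\le\min\{j_1,j_2\}$ (so that $j_1-i,j_2-i\ge 0$) gives the stated decomposition. The main obstacle will be verifying the $L$-analog of Lemma \ref{HomkRdecomp} — the $R$-story uses the specific commutator (\ref{RR*commf}) and the chain of Lemmas \ref{2orthoexps}--\ref{RowMovementsLemma}, and one must trace through that the same arguments go through verbatim for $L,L^*$ using (\ref{LL*commute}) and (\ref{LL*betacommf}), with the $K$-grading replacing the $H$-grading throughout.
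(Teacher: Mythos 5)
Your proposal is correct, and it rests on the same ingredients as the paper's proof (Lemma \ref{HomkRdecomp} and its $L$-analogue, the commutativity of $L,L^*$ with $R,R^*$ and with multiplication by $\norm{\cdot}^2$, and the Fischer decomposition), but the architecture is inverted. The paper works with $\Hom_k(\Hd,\CC)$ throughout: it inserts the Fischer decomposition $\Hom_H(k-j,j)=\bigoplus_i\norm{\cdot}^{2i}H(k-j-i,j-i)$ into the leading subspaces of the $R$- and $L$-decompositions, then intersects the two resulting global orthogonal decompositions, using uniqueness of the Fischer decomposition to force $i_1=i_2$, and reads off (\ref{HarmkVdecomp}) as the $i=0$ slice. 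You instead nest the two decompositions inside $\Harm_k(\Hd,\CC)$ --- $R$ first, then a relative $L$-decomposition of each $\ker R^*\cap H(k-j_2,j_2)$ --- and only afterwards tensor with the Fischer decomposition to obtain (\ref{HomkVdecomp}). Your nesting makes the spanning automatic (the paper's ``orthogonal sum of pairwise intersections'' step tacitly uses that the relevant orthogonal projections commute, since two orthogonal decompositions need not intersect compatibly in general), at the cost of having to verify the relative $L$-decomposition on the invariant subspace $\ker R^*\cap H(k-j_2,j_2)$; this does go through as you indicate, because that subspace is $L,L^*$-invariant, graded by the $K$-decomposition, and the adjointness argument of (\ref{HomHkerRandR*}) restricts to it. Your dimension argument, via the nonzero scalar $(-\gb)_\gb\,(2j_2-k)_\gb$ coming from (\ref{R*alphaRbeta}) applied to $\ker R^*$ (and its $L$-counterpart), is a direct substitute for the paper's citation of Lemma \ref{RowandColMovementsLemma} and is worth keeping, since it also supplies the injectivity and orthogonality-preservation needed when you push the inner $L$-decomposition forward by $R^{b-j_2}$.
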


\begin{proof} 
We note that for $j\le {k\over2}$, $j\le k-j$, so that
$\min\{j,k-j\}=j$, and
\begin{equation}
\label{HomHFischer}
\Hom_H(k-j,j)
= \bigoplus_{i=0}^j \norm{\cdot}^{2i} H(k-j-i,j-i).
\end{equation}
We observe that Lemma \ref{Rmapszonaltozonal}
implies multiplication of polynomials by $\norm{\cdot}^2$ 
commutes with the action of $R,L,R^*,L^*$. 
Thus from (\ref{HomHFischer}), we obtain
\begin{align*}
\Hom_H(k-j,j)_{k-2j}
&:=\ker R^*\cap \Hom_H(k-j,j) \cr
&= \bigoplus_{i=0}^j \norm{\cdot}^{2i} (\ker R^*\cap  H(k-j-i,j-i)),
\end{align*}
so that Lemma \ref{HomkRdecomp} gives the orthogonal direct sum decomposition
$$ \Hom_k(\Hd,\CC)= \bigoplus_{0\le j\le {k\over2}} \bigoplus_{j\le b\le k-j} 
 \bigoplus_{i=0}^j \norm{\cdot}^{2i}
R^{b-j} (\ker R^* \cap H(k-j-i,j-i)). $$
Similarly, we obtain the orthogonal direct sum decomposition
$$ \Hom_k(\Hd,\CC) = \bigoplus_{0\le j\le {k\over2}} \bigoplus_{j\le a\le k-j} 
 \bigoplus_{i=0}^j \norm{\cdot}^{2i}
L^{a-j} (\ker L^* \cap K(k-j-i,j-i)). $$
Thus $\Hom_k(\Hd,\CC)$ is an orthogonal direct sum of subspaces
$$ \norm{\cdot}^{2i_1}
 L^{a-j_1} (\ker L^* \cap K(k-j_1-i_1,j_1-i_1))
\cap \norm{\cdot}^{2i_2}
 R^{b-j_2} (\ker R^* \cap H(k-j_2-i_2,j_2-i_2)). $$
In view of the uniqueness of the Fischer decomposition, these 
can be nonzero only if $i_1=i_2=i\le\min\{j_1,j_2\}$. 
Since $L,R$ and $\norm{\cdot}^2$ commute,
the intersection above can be written
$$ \norm{\cdot}^{i} L^{a-j_1} R^{b-j_2}
 (\ker L^* \cap \ker R^*
\cap K(k-j_1-i_1,j_1-i_1))
 \cap H(k-j_2-i_2,j_2-i_2)), $$
which gives (\ref{HomkVdecomp}), with the $i=0$ terms giving
(\ref{HarmkVdecomp}). The dimension formula follows from
%the fact that the action of $L$ and $R$ in the two decompositions are $1$--$1$.
Lemma \ref{RowandColMovementsLemma}
\end{proof}

Theorem \ref{LRorthdecomp} also holds for $d=1$, in a 
degenerate way, with 
%$V_k^{(j_1,j_2)}(\HH)=0$, $(j_1,j_2)\ne(0,0)$.
$$ V_k^{(j_1,j_2)}(\HH)=0, \qquad (j_1,j_2)\ne(0,0). $$

\begin{corollary}
The decomposition of zonal polynomials for $Z=U(\Hd)_{q'}$,
$q'=z'\in\Cd$,
corresponding to (\ref{HarmkVdecomp}) is
\begin{equation}
\label{HarmkVdecompZonal}
\Harm_k(\Hd,\CC)^Z
= \bigoplus_{0\le j\le {k\over2}} 
\bigoplus_{ j\le a,b\le k-j} 
	\bigl(L^{a-j} R^{b-j} V_{k}^{(j,j)}(\Hd)\bigr)^Z, % \qquad\dim(V_{k}^{(j,j)}(\Hd)^{U_q})=1
\end{equation}
where
$$ \dim(L^{a-j} R^{b-j} V_{k}^{(j,j)}(\Hd)^Z)=1. $$
Moreover, we have
\begin{equation}
\label{HkabVdecom}
H_k^{(a,b)}(\Hd)
= \bigoplus_{0\le j_1\le m_a^{(k)}\atop 0\le j_2\le m_b^{(k)}}
 L^{a-j_1} R^{b-j_2} V_{k}^{(j_1,j_2)}(\Hd).
\end{equation}
\end{corollary}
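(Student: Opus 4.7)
The plan is to deduce both identities by intersecting the fine scale decomposition (\ref{HarmkVdecomp}) with $H_k^{(a,b)}(\Hd)$ and with the zonal subspace $\Harm_k(\Hd,\CC)^Z$, and then matching the resulting direct sums against dimension counts (and against Theorem \ref{HarmonicZonalconst} in the zonal case).

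For (\ref{HkabVdecom}), I would first observe that by Lemma \ref{hardtounderstandlemma}, multiplication by $L^{a-j_1}$ moves $K(k-j_1,j_1)$ into $K(k-a,a)$, and by $R^{b-j_2}$ moves $H(k-j_2,j_2)$ into $H(k-b,b)$. Since $V_k^{(j_1,j_2)}(\Hd)\subseteq H_k^{(j_1,j_2)}(\Hd)=K(k-j_1,j_1)\cap H(k-j_2,j_2)$, we get $L^{a-j_1}R^{b-j_2}V_k^{(j_1,j_2)}\subseteq H_k^{(a,b)}(\Hd)$. The pairs $(j_1,j_2)$ in (\ref{HarmkVdecomp}) that can land in this particular $H_k^{(a,b)}$ are exactly those with $j_1\le a\le k-j_1$ and $j_2\le b\le k-j_2$, i.e., $0\le j_1\le m_a^{(k)}$ and $0\le j_2\le m_b^{(k)}$. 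Intersecting the orthogonal direct sum (\ref{HarmkVdecomp}) with $H_k^{(a,b)}(\Hd)$ therefore yields (\ref{HkabVdecom}) immediately.

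For (\ref{HarmkVdecompZonal}), take $Z$-invariants of both sides of (\ref{HarmkVdecomp}). Since the operation $V\mapsto V^Z$ is exact on finite-dimensional subspaces of $\Harm_k(\Hd,\CC)$ and preserves direct sums, one obtains
\begin{equation*}
\Harm_k(\Hd,\CC)^Z=\bigoplus_{0\le j_1,j_2\le k/2}\ \bigoplus_{\substack{j_1\le a\le k-j_1\\ j_2\le b\le k-j_2}}\bigl(L^{a-j_1}R^{b-j_2}V_k^{(j_1,j_2)}(\Hd)\bigr)^Z.
\end{equation*}
The main claim is then that every off-diagonal summand (with $j_1\ne j_2$) vanishes and every diagonal summand is one-dimensional. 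For this, Theorem \ref{HarmonicZonalconst} already shows that $P_{k-2j}^{(k)}\in\ker L^*\cap\ker R^*\cap H_k^{(j,j)}(\Hd)=V_k^{(j,j)}(\Hd)$, and moreover $P_{k-2j}^{(k)}$ is zonal, so $P_{k-2j}^{(k)}\in V_k^{(j,j)}(\Hd)^Z$. Since $L$ and $R$ send zonal polynomials with pole $q'=z'\in\Cd$ to zonal polynomials (Lemma \ref{Rmapszonaltozonal}), the polynomial $L^{a-j}R^{b-j}P_{k-2j}^{(k)}$ is a nonzero zonal element of $L^{a-j}R^{b-j}V_k^{(j,j)}(\Hd)$, giving $\dim(L^{a-j}R^{b-j}V_k^{(j,j)}(\Hd))^Z\ge1$ for each of the $(k-2j+1)^2$ admissible $(a,b)$.

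Summing these lower bounds over $0\le j\le k/2$ already yields $\sum_{j}(k-2j+1)^2=\binom{k+3}{3}$, which by (\ref{HarmkZdim}) equals $\dim\Harm_k(\Hd,\CC)^Z$ exactly. Consequently every diagonal summand must contribute dimension exactly one, and every off-diagonal summand ($j_1\ne j_2$) must contribute zero. The main obstacle here is confirming that the $Z$-invariant pieces are genuinely independent in the direct sum — but this is automatic, since they sit inside distinct summands of the already-orthogonal decomposition (\ref{HarmkVdecomp}). A clean way to present this is to first verify $V_k^{(j,j)}(\Hd)^Z=\spn\{P_{k-2j}^{(k)}\}$ (one-dimensionality follows from the same dimension-squeeze once one knows the total zonal count), then use injectivity of $L^{a-j_1}R^{b-j_2}$ on the relevant subspaces (Lemma \ref{2orthoexps} and its $L$-analogue) to conclude that $\dim(L^{a-j}R^{b-j}V_k^{(j,j)}(\Hd))^Z=1$.
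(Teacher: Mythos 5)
Your proof is correct and follows essentially the same route as the paper: the zonal decomposition is read off from Theorem \ref{HarmonicZonalconst} (whose polynomials $P_{k-2j,a,b}^{(k)}=L^{a-j}R^{b-j}P_{k-2j}^{(k)}$ supply the nonzero zonal elements of the diagonal summands, with the count (\ref{HarmkZdim}) forcing each to be exactly one-dimensional and the off-diagonal pieces to vanish), while (\ref{HkabVdecom}) is obtained by grouping the summands of (\ref{HarmkVdecomp}) that lie in $H_k^{(a,b)}(\Hd)$. The one loosely stated step --- that taking $Z$-invariants ``preserves direct sums'' of summands that need not themselves be $Z$-invariant --- is harmless, since your closing dimension squeeze only uses the containment $\bigoplus_{j,a,b}\bigl(L^{a-j}R^{b-j}V_k^{(j,j)}(\Hd)\bigr)^Z\subseteq\Harm_k(\Hd,\CC)^Z$ together with the lower bound of one on each diagonal piece.
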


\begin{proof}
The decomposition (\ref{HarmkVdecompZonal}) is given in 
Theorem \ref{HarmonicZonalconst}, and the decomposition
(\ref{HkabVdecom}) follows
from (\ref{HarmkVdecomp}) by grouping the terms
$L^{a-j_1} R^{b-j_2} V_{k}^{(j_1,j_2)}(\Hd)\in H_k^{(a,b)}(\Hd)$.
\end{proof}

%$$ \Hom_k(\Hd,\CC) = \bigoplus_{0\le j_1,j_2\le {k\over2}} \bigoplus_{ j_1\le a\le k-j_1\atop j_2\le b\le k-j_2 } \bigoplus_{ 0\le i_1\le j_1 \atop 0\le i_2\le j_2} \norm{\cdot}^{2i_1} L^{a-j_1} (\ker L^* \cap K(k-j_1-i_1,j_1-i_2)) \cap \norm{\cdot}^{2i_2} R^{b-j_2} (\ker R^* \cap H(k-j_2-i_2,j_2-i_2)), $$ $$ = \bigoplus_{0\le j_1,j_2\le {k\over2}} \bigoplus_{ j_1\le a\le k-j_1\atop j_2\le b\le k-j_2 } \bigoplus_{i=0}^{\min\{j_1,j_2\}} \norm{\cdot}^{2i} L^{a-j_1} R^{b-j_2} (\ker L^* \cap \ker R^* \cap K(k-j_1-i,j_1-i) \cap H(k-j_2-i,j_2-i)), $$ $$ = \bigoplus_{0\le j_1,j_2\le {k\over2}} \bigoplus_{ j_1\le a\le k-j_1\atop j_2\le b\le k-j_2 } \bigoplus_{i=0}^{\min\{j_1,j_2\}} \norm{\cdot}^{2i} L^{a-j_1} R^{b-j_2} (\ker L^* \cap \ker R^* \cap H_{k-2i}^{(j_1-i,j_2-i)}(\Hd)), $$

The dimension of $V_k^{(a,b)}(\Hd)$ is as follows
(see \S\ref{appendixnumber} for the proof).

\begin{lemma}
\label{dimVkablemma}
For $0\le a,b\le{k\over2}$, we have that
\begin{align}
\label{dimVkabF}
\dim(V_k^{(a,b)}(\Hd)) 
&= F(k,m,M,d)+F(k,m-1,M-1,d) \cr
&\quad-F(k,m-1,M,d)-F(k,m,M-1,d),
\end{align}
where $F$ is given by (\ref{FkmMddef}), and $m=\min\{a,b\}$, $M=\max\{a,b\}$.
In particular,
$$ \dim(V_k^{(a,b)}(\HH^2)) = (m+1)(k-2M+1).  $$
%The dimension formula holds as
%$$ = \sum_{0\le j_1\le{k\over2}\atop 0\le j_2\le {k\over2}}
%(k-2j_1+1)(k-2j_2+1)(\min\{j_1,j_2\}+1)(k-2\max\{j_1,j_2\} +1 ). $$
The zonal polynomials in $V_k^{(a,b)}(\Hd)^{U_q}$ is given by
$$ \dim(V_k^{(a,b)}(\Hd)^{U_q}) = 
\begin{cases}
1, & a=b; \cr
0, & a\ne b.
\end{cases}
$$
\end{lemma}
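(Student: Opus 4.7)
The plan is to extract $\dim V_k^{(a,b)}(\Hd)$ by Möbius-inverting the orthogonal decomposition (\ref{HkabVdecom}). Assuming $0 \le a,b \le \lfloor k/2 \rfloor$, we have $m_a^{(k)} = a$ and $m_b^{(k)} = b$, and by the row- and column-movement Lemma \ref{RowandColMovementsLemma} each map $L^{a-j_1}R^{b-j_2}$ is injective on $V_k^{(j_1,j_2)}(\Hd)$. Taking dimensions in (\ref{HkabVdecom}) therefore gives a double cumulative sum
$$\dim H_k^{(a,b)}(\Hd) = \sum_{j_1=0}^{a}\sum_{j_2=0}^{b}\dim V_k^{(j_1,j_2)}(\Hd),$$
on the lattice $\{0,\dots,\lfloor k/2 \rfloor\}^2$.

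Applying the backward bidifference $\Delta_a\Delta_b$ inverts this sum:
$$\dim V_k^{(a,b)}(\Hd) = \dim H_k^{(a,b)}(\Hd) - \dim H_k^{(a-1,b)}(\Hd) - \dim H_k^{(a,b-1)}(\Hd) + \dim H_k^{(a-1,b-1)}(\Hd),$$
with the convention that terms carrying a negative superscript vanish. Substituting $\dim H_k^{(a,b)}(\Hd) = F(k, m_{a,b}^{(k)}, M_{a,b}^{(k)}, d)$ from Lemma \ref{Habdecomplemma} and using the $(a,b) \leftrightarrow (b,a)$ symmetry of the dimension (to read $F(k,M,m,d)$ as $F(k,m,M,d)$ when $m > M$) produces the four-term formula (\ref{dimVkabF}). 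By the eightfold symmetry of Lemma \ref{eightsymmslemma} it suffices to argue on the subdomain $a \le b \le \lfloor k/2 \rfloor$, where $(m,M) = (a,b)$ and no index-reversal subtleties occur.

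For the $d=2$ specialisation, the inner Pochhammer factors $(m-j+1)_{d-2}(k-M-j+1)_{d-2}$ in (\ref{FkmMddef}) both reduce to $1$, so $F(k,m,M,2)$ collapses to the single sum
$$F(k,m,M,2) = \sum_{j=0}^m (j+1)(M-m+j+1)(k-M+m-2j+1).$$
Expanding the four-term alternating difference in $(m,M)$ and telescoping (each consecutive difference contributes only an endpoint term together with linear pieces that cancel against their partners) produces the compact product $(m+1)(k-2M+1)$; as a sanity check, the case $k=2$, $a=b=1$ recovers $\dim V_2^{(1,1)}(\HH^2) = 2$, in agreement with the earlier example. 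The zonal statement follows from the decomposition (\ref{HarmkVdecompZonal}) combined with Theorem \ref{HarmonicZonalconst}: every zonal polynomial in $\Harm_k(\Hd,\CC)^Z$ arises uniquely from a diagonal summand $V_k^{(j,j)}(\Hd)$ via the action of $L^{a-j}R^{b-j}$, with the one-dimensional zonal part of $V_k^{(j,j)}(\Hd)$ spanned by $P_{k-2j}^{(k)}$, whereas off-diagonal summands $V_k^{(j_1,j_2)}(\Hd)$ with $j_1 \ne j_2$ contain no zonal polynomial at all.

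The main obstacle is the telescoping calculation for $d=2$: individually the manipulations are elementary, but collapsing the four-term alternating $F$-difference cleanly to $(m+1)(k-2M+1)$ needs to be carried out with care, and for general $d$ no comparably compact closed form is likely. A minor secondary subtlety is handling the boundary $a = 0$ or $b = 0$ in the Möbius inversion, which is resolved by the observation that $V_k^{(0,b)}(\Hd) = \ker R^* \cap H_k^{(0,b)}(\Hd)$, since Lemma \ref{hardtounderstandlemma} forces $L^*$ to annihilate $H_k^{(0,b)}(\Hd) \subset K(k,0)$.
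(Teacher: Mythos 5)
Your argument is correct, and it reaches the paper's central identity
$$\dim V_k^{(a,b)}=\dim H_k^{(a,b)}-\dim H_k^{(a-1,b)}-\dim H_k^{(a,b-1)}+\dim H_k^{(a-1,b-1)}$$
by a different route. The paper obtains this by splitting $H_k^{(a,b)}(\Hd)$ directly into the four orthogonal pieces $V_k^{(a,b)}\oplus L(\ker R^*\cap H_k^{(a-1,b)})\oplus R(\ker L^*\cap H_k^{(a,b-1)})\oplus LR\,H_k^{(a-1,b-1)}$ and identifying the dimensions of the last three as differences of $H$-dimensions; you instead M\"obius-invert the double cumulative sum coming from the fine-scale decomposition (\ref{HkabVdecom}). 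Since (\ref{HkabVdecom}) is itself built by iterating exactly those splittings, the two derivations are close cousins, but yours is shorter given that the corollary is already in hand, and your treatment of the boundary cases ($a=0$ or $b=0$, and the index reversal at $a=b$ where $\min\{a,b-1\}=a-1$ forces $F(k,m,M-1,d)$ to be read as $F(k,m-1,M,d)$) matches the care the paper takes. For the zonal statement your route is genuinely different and arguably cleaner: the paper reruns the inclusion--exclusion for the subgroup $U_q$ using $\dim(H_k^{(a,b)}(\Hd)^{U_q})=m_{a,b}+1$, whereas you read the answer off the already-established one-dimensional zonal decomposition (\ref{HarmkVdecompZonal}) together with the orthogonality of (\ref{HarmkVdecomp}), which forces the off-diagonal $V_k^{(j_1,j_2)}$ to carry no zonal polynomials at all. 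The only place you fall short of the paper is the $d=2$ evaluation: you assert that the four-term alternating difference of $F(k,\cdot,\cdot,2)$ telescopes to $(m+1)(k-2M+1)$ but do not carry out the cancellation; the paper does this computation explicitly (the consecutive differences reduce to $2+2j$ before summing), and it is elementary, so this is an omission of routine detail rather than a gap.
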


%We observe
%\begin{itemize}
%\item This decomposition holds in a trivial way for $d=1$.
%\item For $d=2$, the formula shows that all summands have
%dimension at least $1$. This therefore holds in all dimensions $d\ge3$.
%\item This decomposition refines all the others.
%\end{itemize}

%This decomposition refines all the others.

For $d\ge2$, it follows from Lemma \ref{dimVkablemma} that
all the summands in (\ref{HomkVdecomp}) and (\ref{HarmkVdecomp}) are nontrivial.

\begin{example} We have
$$ V_k^{(0,0)}(\Hd)=H_k^{(0,0)}(\Hd)=\spam\{z^\ga:|\ga|=k\}, \quad
\dim(V_k^{(0,0)}(\Hd)) = {k+d-1\choose d-1}. $$
For $k=2$, $d=2$,
$V_2^{(0,0)}(\HH^2)=\spam\{z_1^2,z_1z_2,z_2^2\}$, and
$$ V_2^{(0,1)}(\HH^2)=\spam\{z_1\overline{w_2}-z_2\overline{w_1}\}, \quad
V_2^{(1,0)}(\HH^2)=\spam\{z_1{w_2}-z_2{w_1}\}, $$
$$ V_2^{(1,1)}(\HH^2)=\spam\{
z_1\overline{z_1}+w_1\overline{w_1}
-z_2\overline{z_2}-w_2\overline{w_2},
z_1\overline{z_2} + \overline{z_1}z_2
+w_1\overline{w_2} + \overline{w_1}w_2
\}. $$
Here we can see explicitly, that the zonal polynomials for $Z=U(\HH^2)_{e_1}$ are 
given by 
$$ V_2^{(0,0)}(\HH^2)^Z=\spam\{z_1^2\}, \quad V_2^{(0,1)}(\HH^2)^Z=0, \quad
V_2^{(1,0)}(\HH^2)^Z=0, $$
$$ V_2^{(1,1)}(\HH^2)^Z
=\spam\{2(z_1\overline{z_1}+w_1\overline{w_1})-\norm{(z,w)}^2\}. $$
The decomposition of (\ref{HarmkVdecompZonal}) involves subspaces of
dimensions $3$ (nine), $2$ (one) and $1$ (six), with 
$$ \dim(\Harm_2(\HH^2))=9\cdot 3+1\cdot2+6\cdot1=35. $$
\end{example}

%Here $\dim(\Harm_2(\HH^2))=35$, and the decomposition consists of 
%$9$ subspaces of dimension $3$, $1$ of dimension $2$ and $6$ of dimension $1$.

\section{Conclusion}

The fine scale decomposition of Theorem \ref{LRorthdecomp} refines all the decompositions 
of the harmonic polynomials $\Harm_k(\Hd,\CC)$ under the action of a group 
$G\subset U(\RR^{4d})$ that we have given or described. 
These can be summarised as follows:
$$ \mat{ U(\RR^{4d})\cr |\cr U(\CC^{2d}) \cr
|\cr U(\Hd)\times U(\HH)\cr |\cr U(\Hd)\cr|\cr U(\HH) \cr |\cr 1} \qquad
%\mat{[l] \cite{ABR01} \hbox{(a single irreducible)} \cr \cr \cite{F75}, \cite{R80} \cr
%\cr \cite{S74},\cite{ACMM20} \cr \cr \cite{BN02},\cite{Ghent14} \cr \cr
%\hbox{(a single homogeneous component)} \cr }  $$
\begin{matrix*}[l] 
\hbox{(a single irreducible)} \ \cite{ABR01} \cr \cr 
	\hbox{(the spaces $H(k-b,b)$)} \ \cite{K73}, \cite{F75}, \cite{R80}, \cite{AS13} \cr \cr 
\hbox{(Theorem \ref{Qkdecompthm})} \ \cite{S74},\cite{ACMM20} \cr \cr 
\hbox{(Theorem \ref{HarmkUirredthm})} \ \cite{BN02},\cite{Ghent14} \cr \cr
\hbox{(Theorem \ref{HarmkIWdecomp})} \ \cite{BN02} \cr\cr
\hbox{(a single homogeneous component).}
\end{matrix*} $$
Here the action of $U(\Hd)\times U(\HH)=\Sp(d)\times\Sp(1)$, 
and similar products, is not faithful, since the real unitary scalar matrix
$-I$ belongs to $U(\Hd)$ and $U(\HH)$, where it has the same action.
One can naturally obtain irreducible decompositions by applying 
the given group to components of the fine scale decomposition.  
For example, we have the following.

%I think \cite{BN02} calculate the irreducibles for (the larger group)
%$U(\Hd)\times U(1)$, though they say they are for (the smaller group)
%$U(\Hd)$. The ``zonal polynomials'' are calculated for $U(\Hd)$,
%and consequently can have a larger dimension than $1$. I think
%\cite{S74} calculates the zonal polynomials for $U(\Hd)\times U(1)$.

\begin{corollary}
\label{leftrightmultirred}
Let $d\ge 2$.
For the action %on $\Harm_k(\Hd,\CC)$ 
given by left and right multiplication by $\HH^*=\Sp(1)$, 
i.e., the group $G=\Sp(1)\times\Sp(1)$, we have
the following orthogonal direct sum of 
homogeneous components
\begin{align}
\label{leftrightmultirredhomo}
\Harm_k(\Hd,\CC)
&=\bigoplus_{0\le j_1,j_2\le{k\over2}} \Bigl\{
		\bigoplus_{ j_1\le a\le k-j_1\atop j_2\le b\le k-j_2 }
	L^{a-j_1} R^{b-j_2} V_{k}^{(j_1,j_2)}(\Hd)\Bigr\} 
= \bigoplus_{0\le j_1,j_2\le{k\over2}} I(W_{k-2j_1,k-2j_2} )^{(k)} \cr
&\cong \bigoplus_{0\le j_1,j_2\le{k\over2}} \dim(V_{k}^{(j_1,j_2)}(\Hd))\cdot
	W_{k-2j_1,k-2j_2},
\end{align}
for the irreducibles
\begin{equation}
\label{leftrightmultirredW}
W_{k-2j_1,k-2j_2} \cong %(L,R)\{f\}: =
\spam_\CC\{L^{a-j_1} R^{b-j_2} f 
	\}_{j_1\le a\le k-j_1\atop j_2\le b\le k-j_2}, \quad f\ne0,
	\quad f\in V_{k}^{(j_1,j_2)}(\Hd).
\end{equation}
\end{corollary}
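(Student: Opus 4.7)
The plan is to refine Theorem \ref{LRorthdecomp} by grouping the summands of (\ref{HarmkVdecomp}) according to the pair $(j_1,j_2)$. For each nonzero $f\in V_k^{(j_1,j_2)}(\Hd)$ define
$$W(f) := \spam_\CC\{L^{a-j_1}R^{b-j_2}f : j_1\le a\le k-j_1,\ j_2\le b\le k-j_2\}.$$
By the orthogonality in (\ref{HarmkVdecomp}) the elements $L^{a-j_1}R^{b-j_2}f$ are linearly independent, so $\dim W(f)=(k-2j_1+1)(k-2j_2+1)$, and the inner brace on the right-hand side of (\ref{leftrightmultirredhomo}) is the sum of $W(f)$ as $f$ ranges over a basis of $V_k^{(j_1,j_2)}$. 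The orthogonal direct sum itself is then immediate from Theorem \ref{LRorthdecomp}; what remains is to prove that each $W(f)$ is $\Sp(1)\times\Sp(1)$-invariant, irreducible, and isomorphic to $W_{k-2j_1}\boxtimes W_{k-2j_2}=:W_{k-2j_1,k-2j_2}$.

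For invariance, Corollary \ref{Rrightmultequiv} (and its obvious left analog) reduces invariance under the two multiplications to invariance under $L,L^*,R,R^*$. By Lemma \ref{LRL*R*commute}, $L,L^*$ commute with $R,R^*$, so in particular $L^*(R^{b-j_2}f)=R^{b-j_2}L^*f=0$ and $R^*(L^{a-j_1}f)=0$. The general commutation relations (\ref{L*alphaLbeta}) and (\ref{R*alphaRbeta}) of Lemma \ref{RR*commutegeneral} then collapse (only the $c=\alpha$ term survives) to
$$L^*\,L^{a-j_1}R^{b-j_2}f = \lambda_a\, L^{a-j_1-1}R^{b-j_2}f,\qquad R^*\,L^{a-j_1}R^{b-j_2}f = \mu_b\, L^{a-j_1}R^{b-j_2-1}f,$$
for explicit scalars $\lambda_a,\mu_b$; the action of $L$ and $R$ is obvious. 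Thus $W(f)$ is stable under all four operators.

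To establish irreducibility, let $V\subseteq W(f)$ be a nonzero invariant subspace and pick $0\ne g=\sum c_{a,b}L^{a-j_1}R^{b-j_2}f\in V$. Let $a^*$ be the maximum of the $a$-indices with $c_{a,b}\ne 0$ for some $b$, and $b^*$ the maximum $b$ with $c_{a^*,b^*}\ne 0$. Applying $(L^*)^{a^*-j_1}$ and using the collapsed form of (\ref{L*alphaLbeta}), each surviving term is a scalar multiple of $L^{a-a^*}R^{b-j_2}f$: for $a<a^*$ this is a negative power of $L$ (hence zero), while for $a>a^*$ the maximality of $a^*$ forces $c_{a,b}=0$; only $a=a^*$ contributes. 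The result is a nonzero element of $\spam\{R^{b-j_2}f\}_b$, to which the same reasoning applies with $(R^*)^{b^*-j_2}$ in place of $(L^*)^{a^*-j_1}$: this produces a nonzero scalar multiple of $f$, since the Pochhammer products occurring in (\ref{L*alphaLbeta}) and (\ref{R*alphaRbeta}) are readily seen to be nonzero throughout the admissible ranges $j_1\le a^*\le k-j_1$ and $j_2\le b^*\le k-j_2$. Hence $f\in V$, and then $L^{a-j_1}R^{b-j_2}f\in V$ for every $(a,b)$, so $V=W(f)$.

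Finally, to identify $W(f)$, observe that for each fixed $a$ the row $\spam\{L^{a-j_1}R^{b-j_2}f\}_{j_2\le b\le k-j_2}$ is an $R$-orbit in the sense of (\ref{Rorbitdescript}), so by Theorem \ref{HarmkIWdecomp} it is an irreducible $W_{k-2j_2}$ for right multiplication; symmetrically, each column for fixed $b$ is an irreducible $W_{k-2j_1}$ for left multiplication. The restrictions of $W(f)$ to the two factors of $\Sp(1)\times\Sp(1)$ are therefore $(k-2j_1+1)W_{k-2j_2}$ and $(k-2j_2+1)W_{k-2j_1}$, and since the irreducibles of $\Sp(1)\times\Sp(1)$ are precisely the external tensor products $W_p\boxtimes W_q$, this forces $W(f)\cong W_{k-2j_1}\boxtimes W_{k-2j_2}$. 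Grouping by $(j_1,j_2)$, and noting that distinct pairs yield non-isomorphic irreducibles, the resulting summand is exactly the homogeneous component $I(W_{k-2j_1,k-2j_2})^{(k)}$. The main obstacle I anticipate is simply the bookkeeping needed to verify that the Pochhammer scalars $\lambda_a,\mu_b$ do not vanish for $(a,b)$ in the admissible rectangle; this parallels the calculation in Theorem \ref{HarmkIWdecomp} and presents no genuinely new difficulty.
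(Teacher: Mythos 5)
Your proposal is correct and follows essentially the same route as the paper: the orthogonal direct sum is read off from Theorem \ref{LRorthdecomp}, and invariance, irreducibility and the isomorphism type of each $\spam_\CC\{L^{a-j_1}R^{b-j_2}f\}$ are obtained from Theorem \ref{HarmkIWdecomp} and its $L$-analogue. The paper's proof is terser (it asserts these three properties are ``easily seen''), whereas you spell out the commutation/Pochhammer bookkeeping and the $\boxtimes$ identification explicitly; this is a faithful expansion rather than a different argument.
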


\begin{proof} The orthogonal direct sums in (\ref{leftrightmultirredhomo})
are immediate, and the $I(W_{k-2j_1,k-2j_2} )^{(k)}$ defined is a sum of
the subspaces in (\ref{leftrightmultirredW}).
It is easily seen from Theorem \ref{HarmkIWdecomp}, 
and its analogue for $L$, 
that these subspaces, i.e.,
$$ \spam_\CC\{L^{\ga} R^{\gb} f 
\}_{0\le\ga\le k-2j_1\atop 0\le\gb\le k-2j_2}, \quad f\ne0,
\quad f\in V_{k}^{(j_1,j_2)}(\Hd), $$
are invariant under left and right multiplication 
by $\HH^*$, that the action is irreducible,
and they are isomorphic $\CC G$-modules.
\end{proof}

%This should refine that for $\Sp(d)\times\Sp(1)$. We should also be able
%to workout the irreducibles for $\Sp(d)\times\Sp(d)$.

Finally, we observe that the central idea underlying our development
%One of the main results 
is Corollary \ref{Rrightmultequiv}, 
which can be stated  as follows
\begin{itemize}
\item A subspace of $\Hom_k(\Hd,\CC)$ is invariant under the
group action given by right multiplication by $\HH^*$ if and only if
it is invariant under $R$ and $R^*$.
\end{itemize}
This was taken as an Ansatz, then proved indirectly.
A simple constructive proof has eluded us, i.e., finding an explicit formula
$$ f\bigl((z+jw)(\ga+j\gb)\bigr) 
= \sum_{|a|=k} \ga^{a_1}\gb^{a_2}\overline{\ga}^{a_3}\overline{\gb}^{a_4} p_a(R,R^*), 
\qquad \forall f\in\Hom_k(\Hd,\CC), $$
where $p_a(x,y)$ is a polynomial of degree $k$ in the noncommuting
variables $x$ and $y$.
Such a formula would give additional insight and simplify the theory.

\section{Appendix: technical details}
\label{appendixnumber}

Here we give proofs of some technical results used, which are routine, but not insightful.

%\begin{replemma}
\proclaim{Lemma \ref{Rstarisadjoint}}
The operators $R^*$ and $L^*$ are the adjoints of $R$ and $L$ with respect to both the inner products (\ref{firstinpro}) and (\ref{secondinpro}) defined on $\Hom_k(\Hd,\CC)$.
%\end{replemma}
\endproclaim

\begin{proof} This is by direct computation. We will just consider $R^*$
(the case for $L^*$ is similar).
Without loss of generality, let $f,g\in\Hom_k(\Hd,\CC)$ be the monomials
$$ f=z^{\ga_1}w^{\ga_2}\overline{z}^{\ga_3} \overline{w}^{\ga_4}, \qquad
g=z^{\gb_1}w^{\gb_2}\overline{z}^{\gb_3} \overline{w}^{\gb_4}. $$
%Need to show that $R=R_+$ and $R_{-}$ are adjoints, with respect to an
With $(\ga_i)_j$ denoting the $j$-th entry of the multi-index $\ga_i$, etc,
we have
\begin{align*}
Rg &= \sum_{j=1}^d \bigl\{ (\gb_1)_j z^{\gb_1-e_j}w^{\gb_2}\overline{z}^{\gb_3} 
\overline{w}^{\gb_4+e_j} -(\gb_2)_j z^{\gb_1}w^{\gb_2-e_j}\overline{z}^{\gb_3+e_j} 
\overline{w}^{\gb_4}\bigr\}, \cr
R^* f &= \sum_{j=1}^d \bigl\{-(\ga_3)_j z^{\ga_1}w^{\ga_2+e_j}
\overline{z}^{\ga_3-e_j} \overline{w}^{\ga_4} +(\ga_4)_j z^{\ga_1+e_j}
w^{\ga_2}\overline{z}^{\ga_3} \overline{w}^{\ga_4-e_j}\bigr\}.
\end{align*}

For the first inner product, it follows from (\ref{intSformula}) that
all terms of the inner products
\begin{align*} 
\inpro{f,Rg} &= \sum_j \int_{\SS(\CC^{2d})}
\bigl\{ (\gb_1)_j z^{\ga_3+\gb_1-e_j}w^{\ga_4+\gb_2}\overline{z}^{\ga_1+\gb_3} 
\overline{w}^{\ga_2+\gb_4+e_j} \cr
&\qquad\qquad\qquad -(\gb_2)_j z^{\ga_3+\gb_1}w^{\ga_4+\gb_2-e_j}
\overline{z}^{\ga_1+\gb_3+e_j} \overline{w}^{\ga_2+\gb_4} \bigr\}, \cr
\inpro{R^* f,g} 
&= \sum_j\int_{\SS(\CC^{2d})} \bigl\{ -(\ga_3)_j
z^{\ga_3+\gb_1-e_j} w^{\ga_4+\gb_2} \overline{z}^{\ga_1+\gb_3}
\overline{w}^{\ga_2+\gb_4+e_j} \cr
& \qquad\qquad\qquad +(\ga_4)_j z^{\ga_3+\gb_1} w^{\ga_4+\gb_2-e_j}  
\overline{z}^{\ga_1+\gb_3+e_j}\overline{w}^{\ga_2+\gb_4} \bigr\}, 
\end{align*}are zero, except for when
$\ga_3+\gb_1-e_j=\ga_1+\gb_3$, $\ga_4+\gb_2=\ga_2+\gb_4+e_j$,
in which case
\begin{align*}
\inpro{f,Rg} & = {1\over(2d)_k} \sum_j \bigl\{ (\gb_1)_j 
(\gb_1-e_j+\ga_3)!(\gb_2+\ga_4)!
- (\gb_2)_j (\gb_1+\ga_3)!(\gb_2+\ga_4-e_j)! \bigr\} \cr
&= {1\over(2d)_k} (\gb_1+\ga_3-e_j)!(\gb_2+\ga_4-e_j)!
\{ (\gb_1)_j((\gb_2)_j+(\ga_4)_j)-(\gb_2)_j((\gb_1)_j+(\ga_3)_j)\} \cr
&= {1\over(2d)_k} (\ga_1+\gb_3)!(\ga_2+\gb_4)!
\{ (\gb_1)_j(\ga_4)_j-(\gb_2)_j(\ga_3)_j)\}, \cr
% \cr &= {(2d-1)!(b+\ga-e_j)!(b'+\ga'-e_j)!\over(p+q+1)!} (b_j\ga_j'-b_j'\ga_j),
\inpro{R^*f,g} 
&= {1\over(2d)_k} %  {(2d-1)!\over(2d-1+k)!} 
\bigl\{ -(\ga_3)_j (\gb_1+\ga_3-e_j)!(\gb_2+\ga_4)!
+(\ga_4)_j (\gb_1+\ga_3)!(\gb_2+\ga_4-e_j)! \bigr\} \cr
%&= \sum_j\Bigl\{ -(\ga_3)_j {(2d-1)!(\gb_1+\ga_3-e_j)!(\gb_2+\ga_4)!\over(2d-1+k)!}
%+(\ga_4)_j {(2d-1)!(\gb_1+\ga_3)!(\gb_2+\ga_4-e_j)!\over(2d-1+k)!} \Bigr\} \cr
& = {1\over(2d)_k} (\gb_1+\ga_3-e_j)!(\gb_2+\ga_4-e_j)!
\{ -(\ga_3)_j ((\gb_2)_j+(\ga_4)_j)+(\ga_4)_j((\gb_1)_j+(\ga_3)_j)\}\cr
& = \inpro{f,Rg}, 
%& = {(b+\alpha-e_j)!(b'+\alpha'-e_j)!\over(p+q+1)!} ( \alpha_j'b_j -\ga_j b_j')
\end{align*}
which shows that $R^*$ is indeed the adjoint of $R$.

We now consider the inner product $\dinpro{f,g}$,
for which the monomials are orthogonal.
The inner products $\dinpro{f,Rg}$ and $\dinpro{R^*f,g}$ have nonzero terms
if and only if either
\begin{align*}
& \ga_1=\gb_1-e_j, \quad \ga_2=\gb_2, \quad \ga_3=\gb_3, \quad \ga_4=\gb_4+e_j, \cr
\hbox{or}\qquad
& \ga_1=\gb_1, \quad \ga_2=\gb_2-e_j, \quad \ga_3=\gb_3+e_j, \quad \ga_4=\gb_4.
\end{align*}
For the first case above, these inner products are
\begin{align*}
\dinpro{f,Rg} 
&= (\gb_1)_j
\dinpro{z^{\ga_1}w^{\ga_2}\overline{z}^{\ga_3}\overline{w}^{\ga_4},
z^{\ga_1}w^{\ga_2}\overline{z}^{\ga_3}\overline{w}^{\ga_4} }
= (\gb_1)_j (\gb_1-e_j)!\gb_2!\gb_3!(\gb_4+e_j)!, \cr
\dinpro{R^*f,g} 
&= (\ga_4)_j \dinpro{z^{\gb_1}w^{\gb_2}\overline{z}^{\gb_3}\overline{w}^{\gb_4},
z^{\gb_1}w^{\gb_2}\overline{z}^{\gb_3}\overline{w}^{\gb_4}}
= ((\gb_4)_j+1) \gb_1!\gb_2!\gb_3!\gb_4! = \dinpro{f,Rg}, 
\end{align*}
and, similarly, for the second case
\begin{align*}
\dinpro{f,Rg} 
&=-(\gb_2)_j \dinpro{z^{\ga_1}w^{\ga_2}\overline{z}^{\ga_3}\overline{w}^{\ga_4},
z^{\ga_1}w^{\ga_2}\overline{z}^{\ga_3}\overline{w}^{\ga_4} }
= -(\gb_2)_j \gb_1!(\gb_2-e_j)!(\gb_3+e_j)!\gb_4, \cr
\dinpro{R^*f,g}
&= -(\ga_3)_j \dinpro{
z^{\gb_1}w^{\gb_2}\overline{z}^{\gb_3}\overline{w}^{\gb_4},
z^{\gb_1}w^{\gb_2}\overline{z}^{\gb_3}\overline{w}^{\gb_4}}
= -((\gb_3)_j+1) \gb_1!\gb_2!\gb_3\gb_4! = \dinpro{f,Rg}. 
\end{align*}
Hence $R^*$ is the adjoint of $R$ for the second inner product also.
\end{proof}

\proclaim{Lemma \ref{LapRandLcommute}}
The operators $R,R^*,L$ and $L^*$ commute
with the Laplacian $\Delta$, and so map harmonic functions
to harmonic functions.
\endproclaim

\begin{proof}
We consider $R$, the others being similar. Since
$$ R=\sum_j\Bigl( \overline{w_j}{\partial\over\partial z_j}
- \overline{z_j}{\partial\over\partial w_j}\Bigr), 
\qquad {1\over4}\Delta
= \sum_k \Bigl( {\partial^2\over\partial\overline{z_k}\partial z_k}
+{\partial^2\over\partial\overline{w_k}\partial w_k}\Bigr), $$
we calculate
$$ {1\over4}R\Delta
= \sum_{j,k} \Bigl\{ \overline{w_j}
\Bigl( 
{\partial^3\over\partial z_j\partial\overline{z_k}\partial z_k}
+{\partial^3\over\partial z_j\partial\overline{w_k}\partial w_k} \Bigr)
- \overline{z_j}
\Bigl(
{\partial^3\over\partial w_j\partial\overline{z_k}\partial z_k}
+{\partial^3\over\partial w_j\partial\overline{w_k}\partial w_k}
\Bigr)\Bigr\}, $$
and 
\begin{align*}
{1\over4}\Delta R
& = \sum_{j,k} \Bigl\{ {\partial^2\over\partial\overline{z_k}\partial z_k}
\Bigl( \overline{w_j}{\partial\over\partial z_j}
- \overline{z_j}{\partial\over\partial w_j} \Bigr)
+{\partial^2\over\partial\overline{w_k}\partial w_k}
\Bigl( \overline{w_j}{\partial\over\partial z_j}
- \overline{z_j}{\partial\over\partial w_j} \Bigr) \Bigr\} \cr
& = \sum_{j,k} \Bigl\{ \Bigl(
\overline{w_j}{\partial^3\over\partial\overline{z_k}\partial z_k\partial z_j}
- \overline{z_j} {\partial^3\over\partial\overline{z_k}\partial z_k\partial w_j}
- \gd_{jk} 
{\partial^2\over\partial z_k\partial w_j} \Bigr) \cr
&\qquad + \Bigl( \overline{w_j} {\partial^3\over\partial\overline{w_k}\partial w_k\partial z_j}
+\gd_{jk} {\partial^2\over\partial w_k\partial z_j}
- \overline{z_j}{\partial^3\over\partial\overline{w_k}\partial w_k\partial w_j}
 \Bigr)
\Bigr\},
\end{align*}
which are clearly equal, since the $\gd_{jk}$ terms cancel.

If $f$ is harmonic, i.e., $\gD f=0$, then $\gD (Rf)= R(\gD f)=0$, so $Rf$ is
harmonic.
\end{proof}

\proclaim{Lemma \ref{LRL*R*commute}}
The operators $L$ and $L^*$ commute with $R$ and $R^*$, and we have
\begin{equation}
\label{RR*commute}
R^*R-RR^* = \sum_j \Bigl( z_j{\partial\over\partial z_j}
+w_j{\partial\over\partial w_j}
-\overline{z_j}{\partial\over\partial\overline{z_j}}
-\overline{w_j}{\partial\over\partial\overline{w_j}} \Bigr),
\end{equation}
\begin{equation}
\label{LL*commute}
L^*L - LL^* = \sum_{j} \Bigl( 
z_j {\partial\over\partial z_j} -w_j{\partial\over\partial w_j}
-\overline{z_j}{\partial\over\partial\overline{z_j}} 
+\overline{w_j}{\partial\over\partial\overline{w_j}} \Bigr). 
\end{equation}
\endproclaim

\begin{proof} This is by direct computation. 
We give indicative cases. 
First consider
$$ L = \sum_j \Bigl( w_j{\partial\over\partial z_j}
-\overline{z_j}{\partial\over\partial\overline{w_j}} \Bigr), \qquad
R^* = \sum_k \Bigl( -w_k{\partial\over\partial\overline{z_k}}
+z_k{\partial\over\partial\overline{w_k}} \Bigr). $$
We have
$$ L R^*
%= \sum_{j,k} \left( w_j {\partial\over\partial z_j} 
%\Bigl( -w_k{\partial\over\partial\overline{z_k}}
%+z_k{\partial\over\partial\overline{w_k}} \Bigr) -\overline{z_j} 
%{\partial\over\partial\overline{w_j}} \Bigl( -w_k{\partial\over\partial\overline{z_k}}
= \sum_{j,k} \Bigl\{ w_j \Bigl( -w_k{\partial^2\over\partial z_j\partial\overline{z_k}}
+z_k{\partial^2\over\partial z_j\partial\overline{w_k}} 
+\gd_{jk}{\partial\over\partial\overline{w_k}} \Bigr)
-\overline{z_j} \Bigl( 
-w_k{\partial^2\over\partial\overline{w_j}\partial\overline{z_k}}
+z_k {\partial^2\over\partial\overline{w_j}\partial\overline{w_k}} \Bigr) \Bigr\},$$
%$$ R^*L = \sum_{k,j} \left( -w_k{\partial\over\partial\overline{z_k}}
%\Bigl( w_j{\partial\over\partial z_j} -\overline{z_j}{\partial\over\partial\overline{w_j}} \Bigr)
%+z_k{\partial\over\partial\overline{w_k}} 
%\Bigl( w_j{\partial\over\partial z_j} -\overline{z_j}{\partial\over\partial\overline{w_j}} \Bigr) \right) $$
$$ R^*L = \sum_{k,j} \Bigl\{ 
 -w_k \Bigl( w_j{\partial^2\over\partial\overline{z_k}\partial z_j}
-\overline{z_j}{\partial^2\over\partial\overline{z_k}\partial\overline{w_j}} 
-\gd_{jk}{\partial\over\partial\overline{w_j}} \Bigr)
+z_k \Bigl( w_j{\partial^2\over\partial\overline{w_k}\partial z_j}
-\overline{z_j}{\partial^2\over\partial\overline{w_k}\partial\overline{w_j}} 
\Bigr) \Bigr\}, $$
which are clearly equal (when applied to polynomials).

Now consider $R^*R-RR^*$. We have
%$$ R = \sum_j \Bigl( \overline{w_j}{\partial\over\partial z_j}
%-\overline{z_j}{\partial\over\partial w_j} \Bigr), \qquad
%R^* = \sum_k \Bigl( -w_k{\partial\over\partial\overline{z_k}}
%+z_k{\partial\over\partial\overline{w_k}} \Bigr). $$
% $$ R^*R= \sum_k \Bigl( -w_k {\partial\over\partial\overline{z_k}} \Bigl( \overline{w_j}{\partial\over\partial z_j} -\overline{z_j}{\partial\over\partial w_j} \Bigr) +z_k {\partial\over\partial\overline{w_k}} \Bigl( \overline{w_j}{\partial\over\partial z_j} -\overline{z_j}{\partial\over\partial w_j} \Bigr) \Bigr). $$ 
$$ R^*R= \sum_{k,j} \Bigl\{ -w_k
\Bigl( \overline{w_j}{\partial^2\over\partial\overline{z_k}\partial z_j}
-\overline{z_j}{\partial^2\over\partial\overline{z_k}\partial w_j} 
-\gd_{jk}{\partial\over\partial w_j} \Bigr)
+z_k \Bigl( \overline{w_j}{\partial^2\over\partial\overline{w_k}\partial z_j}
+\gd_{jk}{\partial\over\partial z_j}
-\overline{z_j}{\partial^2\over\partial\overline{w_k}\partial w_j}\Bigr) \Bigr\}, $$
%$$ RR^* = \sum_j \Bigl\{ \overline{w_j} {\partial\over\partial z_j} \Bigl( -w_k{\partial\over\partial\overline{z_k}} +z_k{\partial\over\partial\overline{w_k}} \Bigr) -\overline{z_j} {\partial\over\partial w_j} \Bigl( -w_k{\partial\over\partial\overline{z_k}} +z_k{\partial\over\partial\overline{w_k}} \Bigr) \Bigr\} $$
$$ RR^* = \sum_{j,k} \Bigl\{ \overline{w_j}
\Bigl( -w_k{\partial^2\over\partial z_j\partial\overline{z_k}}
+z_k{\partial^2\over\partial z_j\partial\overline{w_k}} 
+\gd_{jk}{\partial\over\partial\overline{w_k}} \Bigr)
-\overline{z_j}
\Bigl( 
-w_k{\partial\over\partial w_j\partial\overline{z_k}}
-\gd_{jk}{\partial\over\partial\overline{z_k}}
+z_k{\partial^2\over\partial w_j\partial\overline{w_k}} \Bigr) \Bigr\}. $$
Taking the difference of these gives
\begin{align*}
R^*R-RR^*
&= \sum_{j,k} \gd_{jk}\Bigl( w_k{\partial\over\partial w_j}
+z_k{\partial\over\partial z_j}
- \overline{w_j}{\partial\over\partial\overline{w_k}}
-\overline{z_j}{\partial\over\partial\overline{z_k}} \Bigr) \cr
&= \sum_j \Bigl( z_j{\partial\over\partial z_j}
+w_j{\partial\over\partial w_j}
-\overline{z_j}{\partial\over\partial\overline{z_j}}
-\overline{w_j}{\partial\over\partial\overline{w_j}} \Bigr),
\end{align*}
as supposed.
\end{proof}

\proclaim{Lemma \ref{dimVkablemma}}
For $0\le a,b\le{k\over2}$, we have that
\begin{align}
\label{dimVkabF}
\dim(V_k^{(a,b)}(\Hd)) 
&= F(k,m,M,d)+F(k,m-1,M-1,d) \cr
&\quad-F(k,m-1,M,d)-F(k,m,M-1,d),
\end{align}
where $F$ is given by (\ref{FkmMddef}), and $m=\min\{a,b\}$, $M=\max\{a,b\}$.
In particular,
$$ \dim(V_k^{(a,b)}(\HH^2)) = (m+1)(k-2M+1).  $$
%The dimension formula holds as
%$$ = \sum_{0\le j_1\le{k\over2}\atop 0\le j_2\le {k\over2}}
%(k-2j_1+1)(k-2j_2+1)(\min\{j_1,j_2\}+1)(k-2\max\{j_1,j_2\} +1 ). $$
The zonal polynomials in $V_k^{(a,b)}(\Hd)^{U_q}$ is given by
$$ \dim(V_k^{(a,b)}(\Hd)^{U_q}) = 
\begin{cases}
1, & a=b; \cr
0, & a\ne b.
\end{cases}
$$
\endproclaim
%\begin{lemma} \label{dimVkablemma}
%\end{lemma}

\begin{proof}
From Lemma \ref{kerRstarRorthogdecomp}, we have
%$$ \Hom_K(k-a,a) = \bigl(\Hom_K(k-a,a)\cap \ker L^*) \oplus L \Hom_K(k-a+1,a-1),$$
%$$ \Hom_H(k-b,b) = \bigl(\Hom_H(k-b,b)\cap \ker R^*) \oplus R \Hom_H(k-b+1,b-1),$$
%gives
\begin{align*}
K(k-a,a) &= (K(k-a,a)\cap \ker L^*) \oplus L K(k-a+1,a-1), \cr
H(k-b,b) &= (H(k-b,b)\cap \ker R^*) \oplus R H(k-b+1,b-1),
\end{align*}
which gives the orthogonal direct sum decompositions
\begin{align*}
H_k^{(a,b)}(\Hd)
&= (\ker L^*\cap \ker R^*\cap H_k^{(a,b)}(\Hd))
\oplus L (\ker R^*\cap H_k^{(a-1,b)}(\Hd)) \cr
& \qquad \oplus R (\ker L^*\cap H_k^{(a,b-1)}(\Hd))
\oplus LR H_k^{(a-1,b-1)}(\Hd), \cr
 H_k^{(a,b)}(\Hd) & = (H_k^{(a,b)}(\Hd)\cap \ker L^*) \oplus L H_k^{(a-1,b)}(\Hd) \cr
 & \Implies 
H_k^{(a,b-1)}(\Hd)  = (H_k^{(a,b-1)}(\Hd)\cap \ker L^*) \oplus L H_k^{(a-1,b-1)}(\Hd), \cr
H_k^{(a,b)}(\Hd) &= (H_k^{(a,b)}(\Hd)\cap \ker R^*) \oplus R H_k^{(a,b-1)}(\Hd) \cr
& \Implies
 H_k^{(a-1,b)}(\Hd) = (H_k^{(a-1,b)}(\Hd)\cap \ker R^*) \oplus R H_k^{(a-1,b-1)}(\Hd).
\end{align*}
Since the action of $R,L$ and $RL$ in the above summands is $1$--$1$, we obtain 
\begin{align*}
\dim(V_k^{(a,b)}(\Hd))
&= \dim(H_k^{(a,b)}(\Hd)) -\dim(H_k^{(a-1,b-1)}(\Hd)) \cr
& \qquad -\dim(\ker R^*\cap H_k^{(a-1,b)}(\Hd)) 
-\dim(\ker L^*\cap H_k^{(a,b-1)}(\Hd)) \cr
&= \dim(H_k^{(a,b)}(\Hd)) -\dim(H_k^{(a-1,b-1)}(\Hd)) \cr
& \qquad -\{\dim(H_k^{(a-1,b)}(\Hd))-\dim(H_k^{(a-1,b-1)}(\Hd))\}
%\dim(\ker R^*\cap H_k^{(a-1,b)}(\Hd)) 
\cr
& \qquad -\{\dim(H_k^{(a,b-1)}(\Hd))-\dim(H_k^{(a-1,b-1)}(\Hd)\} \cr
%\dim(\ker L^*\cap H_k^{(a,b-1)}(\Hd))
&=\dim(H_k^{(a,b)}(\Hd)) +\dim(H_k^{(a-1,b-1)}(\Hd)) \cr
& \qquad -\dim(H_k^{(a-1,b)}(\Hd)) -\dim(H_k^{(a,b-1)}(\Hd)).
\end{align*}
The formula for $\dim(V_k^{(a,b)}(\Hd))$ above
is symmetric in $a$ and $b$, and so depends only on
$m=\min\{a,b\}$ and $M=\max\{a,b\}$. Since $0\le a,b\le{k\over2}$,
this $m$ and $M$ are the same as those given by (\ref{mMdefn}). 
%Let $F(k,m,M,d)$ be the formula for $\dim(H_k^{(a,b)}(\Hd,\Cd))$ given by (\ref{Hkabdim}). 
Suppose, without loss of generality, 
that $a\le b\le{k\over2}$.
Then we have
$$ m:=m_{a,b}^{(k)}=a,\quad M:=M_{a,b}^{(k)}=b, \qquad
m_{a-1,b-1}^{(k)}=m-1,\quad M_{a-1,b-1}^{(k)}=M-1, $$
$$ m_{a-1,b}^{(k)}=m-1,\quad  M_{a-1,b}^{(k)}=M, \qquad
m_{a,b-1}^{(k)}=m-\gd_{ab},\ M_{a,b-1}^{(k)}=M-1+\gd_{ab}, $$
and hence, for $a\ne b$, by Lemma \ref{Habdecomplemma} we have
\begin{align*}
\dim(V_k^{(a,b)}(\Hd)) 
&= F(k,m,M,d)+F(k,m-1,M-1,d) \cr
&\quad-F(k,m-1,M,d)-F(k,m,M-1,d).
\end{align*}
This is formula (\ref{dimVkabF}), which also holds for $a=b$, 
i.e., $m=M=a=b$, 
in which case % $m=M=a=b$, and
\begin{align*}
\dim(V_k^{(a,a)}(\Hd)) 
&= F(k,m,m,d)+F(k,m-1,m-1,d) \cr
&\quad-F(k,m-1,m,d)-F(k,m-1,m,d).
\end{align*}

For $d=2$, the sum $F(k,m,M,d)$ simplifies to
$$ F(k,m,M,2)=\sum_{j=0}^m (j+1)(M-m+j+1)(k-M+m-2j+1). $$
Thus, from (\ref{dimVkabF}), we calculate
\begin{align*}
\dim(V_k^{(a,b)}(\HH^2))
&= (m+1)^2(k-2m+1)-(m+1)M(k-M-m+2) 
+\sum_{j=0}^{m-1} \bigl\{\quad\bigr\},
\end{align*}
where
\begin{align*}
\bigl\{\quad\bigr\}
&= 2(j+1)(M-m+j+1)(k-M+m-2j+1) \cr
&\qquad -(j+1)(M-m+1+j+1)(k-M+m-1-2j+1) \cr
&\qquad -(j+1)(M-1-m+j+1)(k-M+1+m-2j+1)\cr
&= 2+2j, 
\end{align*}
and so we obtain
\begin{align*}
\dim(V_k^{(a,b)}(\HH^2))
&= (m+1)^2(k-2m+1)-(m+1)M(k-M-m+2)+2\Bigl\{ m + {m(m-1)\over2}\Bigr\} \cr
&= (m+1)(k-2M+1).
\end{align*}

For any subgroup $G$ of $U(\Hd)$, following the above argument
(for $G=1$), we obtain
\begin{align*}
\dim(V_k^{(a,b)}(\Hd)^G)
&=\dim(H_k^{(a,b)}(\Hd)^G) +\dim(H_k^{(a-1,b-1)}(\Hd)^G) \cr
& \qquad -\dim(H_k^{(a-1,b)}(\Hd)^G) -\dim(H_k^{(a,b-1)}(\Hd)^G).
\end{align*}
In particular, for $G=U_q$, we have 
$$ \dim((H_k^{(a,b)}(\Hd)^{U_q})=m_{a,b}+1, \qquad
m_{a,b}=\min\{a,k-a,b,k-b\}, $$
and so 
$$ \dim(V_k^{(a,b)}(\Hd)^{U_q})
= m_{a,b}+m_{a-1,b-1}-m_{a-1,b}-m_{a,b-1}. $$
Calculating this gives
\begin{align*}
m_{a,b}+m_{a-1,b-1}-m_{a-1,b}-m_{a,b-1} &= a +(a-1)-(a-1)-(a-1)=1, \qquad a=b, \cr
m_{a,b}+m_{a-1,b-1}-m_{a-1,b}-m_{a,b-1} &= a +(a-1)-(a-1)-a=0, \qquad a<b,
\end{align*}
as supposed.
\end{proof}

We now show that $R$ and $L$ map the space of zonal polynomials for $q'=e_1$
to itself. This was assumed to be true for a general $q'$, 
but calculations show otherwise (for $L$). 

\begin{lemma}
\label{Rmapszonaltozonal}
 $R$ and $R^*$ map zonal polynomials to zonal polynomials, i.e.,
\begin{align}
R ( [a,b,c,d,r]) &= a[a-1,b,c,d+1,r] - b[a,b-1,c+1,d,r],
\label{Rzonal} \\
R^* ( [a,b,c,d,r]) &= -c[a,b+1,c-1,d,r] +d[a+1,b,c,d-1,r], 
\label{Rstarzonal}
\end{align}
and $L$ and $L^*$ map zonal polynomials for $q'=z'+jw'$ as follows
\begin{align}
\label{Lzonalzp}
\hskip-0.0truecm L ( [a,b,c,d,r]) &=  a [a-1,b,c,d,r] [0,1,0,0,0]_{\overline{z'}} 
 -d [a,b,c,d-1,r] [0,0,1,0,0]_{\overline{z'}} \nonumber \\
&\hskip-0.7truecm  -b[a,b-1,c,d,r] [1,0,0,0,0]_{j\overline{w'}} 
+c[a,b,c-1,d,r][0,0,0,1,0]_{j\overline{w'}}, \\
\label{Lstarzonalzp}
\hskip-0.0truecm L^* ( [a,b,c,d,r]) &=  b [a,b-1,c,d,r] [1,0,0,0,0]_{\overline{z'}}
 -c [a,b,c-1,d,r] [0,0,0,1,0]_{\overline{z'}} \nonumber \\
&\hskip-0.7truecm  -a[a-1,b,c,d,r] [0,1,0,0,0]_{j\overline{w'}} 
+d[a,b,c,d-1,r][0,0,1,0,0]_{j\overline{w'}}.
\end{align}
For $q'=z'\in\RR^n$,  %e.g., $q'=e_1$, 
$L$ and $L^*$ map zonal polynomials
to zonal polynomials, i.e.,
\begin{align}
L ( [a,b,c,d,r]) &=  a[a-1,b+1,c,d,r] -d[a,b,c+1,d-1,r],
\label{Lzonal} \\
L^* ( [a,b,c,d,r]) &=  b[a+1,b-1,c,d,r] -c[a,b,c-1,d+1,r].
\label{Lstarzonal}
\end{align}
%$$ R ( [a,b,c,d,r]) = a[a-1,b,c,d+1,r] - b[a,b-1,c+1,d,r]. $$
%$$ R^* ( [a,b,c,d,r]) = -c[a,b+1,c-1,d,r] +d[a+1,b,c,d-1,r]. $$
%$$ L ( [a,b,c,d,r]) =  a[a-1,b+1,c,d,r] -d[a,b,c+1,d-1,r]. $$
%$$ L^* ( [a,b,c,d,r]) =  b[a+1,b-1,c,d,r] -c[a,b,c-1,d+1,r]. $$
\end{lemma}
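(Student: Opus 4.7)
The plan is to verify all four identities by direct calculation, using two key ingredients: the Leibniz rule (\ref{RLproductrule}) satisfied by each of $R,R^*,L,L^*$, and the fact, noted in the discussion of (\ref{LRkerform}), that $\norm{q}^{2}=\sum_j(z_j\overline{z_j}+w_j\overline{w_j})$ lies in the kernel of all four operators. Thus the factor $\norm{q}^{2r}$ passes out of any $T\in\{R,R^*,L,L^*\}$ via $T(f\norm{q}^{2r})=T(f)\norm{q}^{2r}$, and it suffices to establish each formula for $r=0$, i.e., on products of the four ``building blocks''
$$\inpro{q',q}_{\CC^{2d}},\quad \inpro{q'j,q}_{\CC^{2d}},\quad \overline{\inpro{q',q}}_{\CC^{2d}},\quad \overline{\inpro{q'j,q}}_{\CC^{2d}}.$$

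The first step is to compute $T$ on each individual building block. For $R=\sum_k(\overline{w_k}\partial_{z_k}-\overline{z_k}\partial_{w_k})$ acting on $\inpro{q',q}_{\CC^{2d}}=\sum_k(\overline{z'_k}z_k+\overline{w'_k}w_k)$, a one-line calculation gives
$$R\bigl(\inpro{q',q}_{\CC^{2d}}\bigr)=\sum_k\bigl(\overline{z'_k}\overline{w_k}-\overline{w'_k}\overline{z_k}\bigr)=\overline{\inpro{q'j,q}}_{\CC^{2d}},$$
and analogous calculations show that each of $R,R^*$ sends every building block to another building block for the \emph{same} pole $q'$ (or to zero). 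That this must happen on abstract grounds is expected, since $R,R^*$ commute with the full action of $U(\Hd)$ (Lemma \ref{RandU(Hd)commute}) and in particular with the stabiliser $U(\Hd)_{q'}$. Applying the Leibniz rule to the monomial $[a,b,c,d,0]_{q'}$ and tabulating the four values then yields (\ref{Rzonal}) and (\ref{Rstarzonal}).

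The corresponding step for $L$ and $L^*$ is more delicate, because these operators do \emph{not} commute with $U(\Hd)$ (remark after Lemma \ref{RandU(Hd)commute}) and so a priori need not preserve zonality at all. Nevertheless, a direct computation shows for instance that $L\bigl(\inpro{q',q}_{\CC^{2d}}\bigr)=\sum_k\overline{z'_k}w_k$, which is \emph{not} a building block for pole $q'$ in general, but, under the Cayley-Dickson identification (\ref{CDHwithC}), is recognised as $\inpro{(\overline{z'})j,q}_{\CC^{2d}}=[0,1,0,0,0]_{\overline{z'}}$ (interpreting $\overline{z'}$ as the element $\overline{z'}+j\cdot 0\in\Hd$ and using the rule $q''j=-\overline{w''}+j\overline{z''}$ from Lemma \ref{zonalformqq'}). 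The same calculation on the other three blocks exhibits each as a building block for one of the two shifted poles $\overline{z'}$ or $j\overline{w'}$, with signs dictated by this multiplication rule. The Leibniz rule then delivers (\ref{Lzonalzp}); the identical procedure with $L^*$ gives (\ref{Lstarzonalzp}).

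Finally, for $q'=z'\in\RR^d$ one has $\overline{z'}=z'$ and $w'=0$, so both shifted poles reduce to $q'$ (or vanish identically), and the four terms of (\ref{Lzonalzp}) and (\ref{Lstarzonalzp}) collapse to the two terms of (\ref{Lzonal}) and (\ref{Lstarzonal}); the case $q'=e_1$ of (\ref{Rzonal})--(\ref{Rstarzonal}) is just the specialisation $z'=e_1$ since the $k\ne 1$ terms of $R$ and $R^*$ annihilate the corresponding monomial. There is no real analytic difficulty here; the main obstacle is purely notational bookkeeping — keeping track of the two shifted poles $\overline{z'}$ and $j\overline{w'}$ introduced by $L,L^*$, together with the signs and index shifts produced by the Cayley-Dickson multiplication rule, so that the Leibniz expansion aligns with the right-hand sides of (\ref{Lzonalzp}) and (\ref{Lstarzonalzp}).
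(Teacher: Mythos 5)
Your proposal is correct and follows essentially the same route as the paper's proof: factor $[a,b,c,d,r]$ into the degree-one zonal building blocks times $\norm{q}^{2r}$, use the product rule together with $T(\norm{q}^{2})=0$, compute each operator on the four linear blocks directly, and recognise the images under $L,L^*$ as building blocks for the shifted poles $\overline{z'}$ and $j\overline{w'}$. The only cosmetic difference is that the paper keeps $r$ general and absorbs the $[0,0,0,0,1]^r$ factor inside one Leibniz expansion rather than reducing to $r=0$ first; the content is identical.
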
 

\begin{proof} 
Let $T$ be any of $R,R^*,L$ or $L^*$. 
Since $T([0,0,0,0,1])=0$, by applying the 
product rule (\ref{RLproductrule}) to
$$ [a,b,c,d,r]=[1,0,0,0,0]^a [0,1,0,0,0]^b 
[0,0,1,0,0]^c [0,0,0,1,0]^d [0,0,0,0,1]^r, $$
we obtain
%$$ T([a,b,c,d,r]) = a T([1,0,0,0,0])[a-1,b,c,d,r]+\cdots
%+ d T([0,0,0,1])[a,b,c,d-1,r]. $$
\begin{align}
\label{Tzonalform}
& \hskip-1cm T([a,b,c,d,r]) \cr
& = a [a-1,b,c,d,r] T([1,0,0,0,0])
+ b [a,b-1,c,d,r] T([0,1,0,0,0]) \cr
&\quad
+ c [a,b,c-1,d,r] T([0,0,1,0,0])
+ d [a,b,c,d-1,r] T([0,0,0,1,0]).
\end{align}
Thus it suffices to calculate $T$ applied
to $[1,0,0,0,0],\ldots,[0,0,0,1,0]$ directly. 
We give representative calculations.
By Lemma \ref{zonalformqq'}, for $q'=z'+jw',q=z+jw\in\HH^n$, we have
\begin{align*}
[1,0,0,0,0] &=\overline{z_1'}z_1+\cdots+\overline{z_n'}z_n
+\overline{w_1'}w_1+\cdots+\overline{w_n'}w_n, \cr
[0,1,0,0,0] &=z_1'w_1+\cdots+z_n'w_n
-w_1'z_1-\cdots-w_n'z_n, \cr
[0,0,1,0,0] &=z_1'\overline{z_1}+\cdots+z_n'\overline{z_n}
+w_1'\overline{w_1}+\cdots+w_n'\overline{w_n}, \cr
[0,0,0,1,0] &=\overline{z_1'}\overline{w_1}+\cdots+\overline{z_n'}\overline{w_n}
-\overline{w_1'}\overline{z_1}-\cdots-\overline{w_n'}\overline{z_n}.
\end{align*}
Applying the formulas of (\ref{Rformulagen}) and (\ref{Lformulagen}), gives
\begin{align*}
R([1,0,0,0,0]) 
&= \overline{w_1} \overline{z_1'} +\cdots 
+\overline{w_n} \overline{z_n'}
-\overline{z_1}\overline{w_1'} - \cdots
-\overline{z_n}\overline{w_n'}
= [0,0,0,1,0], \cr
R([0,1,0,0,0]) 
&= -\overline{w_1} w_1' +\cdots 
-\overline{w_n} w_n'
-\overline{z_1}z_1' - \cdots
-\overline{z_n}z_n'
= -[0,0,1,0,0], \cr
R([0,0,1,0,0]) &= R([0,0,0,1,0]) =0, \cr
%\end{align*}
%\begin{align*}
L([1,0,0,0,0]) &=w_1\overline{z_1'}+\cdots+w_n\overline{z_n'} % -\overline{z_1}0-\cdots
                =[0,1,0,0,0]_{\overline{z'}}, \cr
L([0,1,0,0,0]) &=-w_1{w_1'}-\cdots-w_n{w_n'} % -\overline{z_1}0-\cdots
                 =-[1,0,0,0,0]_{j\overline{w'}}, \cr
L([0,0,1,0,0]) &= -\overline{z_1}{w_1'}-\cdots-\overline{z_n}{w_n'}% -w_10-\cdots
                 =[0,0,0,1,0]_{j\overline{w'}}, \cr
L([0,0,0,1,0]) &= -\overline{z_1}\overline{z_1'}-\cdots-\overline{z_n}\overline{z_n'} % -w_10-\cdots
                 = -[0,0,1,0,0]_{\overline{z'}}.
\end{align*}
Similarly, we have
\begin{align*}
R^*([1,0,0,0,0])&=R^*([0,1,0,0,0]) =0, \cr
R^*([0,0,1,0,0])&= -[0,1,0,0,0]_{\overline{z'}}, \qquad
R^*([0,0,0,1,0]) = [1,0,0,0,0]_{j\overline{w'}}, \cr
%\end{align*}
%\begin{align*}
L^*([1,0,0,0,0])&=-[0,1,0,0,0]_{j\overline{w'}}, \qquad
L^*([0,1,0,0,0]) = [1,0,0,0,0]_{\overline{z'}}, \cr
L^*([0,0,1,0,0])&= -[0,0,0,1,0]_{\overline{z'}}, \qquad
L^*([0,0,0,1,0]) = [0,0,1,0,0]_{j\overline{w'}},
\end{align*}
Substituting the above into (\ref{Tzonalform}) then gives the result.
\end{proof}

The action of $\Delta$ %Laplacian 
on zonal polynomials is similar
to the real and complex cases.

\begin{lemma} 
\label{LaplacianZonal}
The Laplacian maps zonal polynomials $\HH^n\to\CC$ to zonal polynomials,
i.e.,
\begin{align} 
\label{Laplacianofzonal}
{1\over4}\Delta ([a,b,c,d,r])
&= ac[a-1,b,c-1,d,r]+bd[a,b-1,c,d-1,r] \cr
&\qquad + r( k+2n-1 -r)[a,b,c,d,r-1].
\end{align}
\end{lemma}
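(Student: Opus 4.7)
The plan is to write $[a,b,c,d,r]$ as a product $fg$, with $f$ a product of harmonic factors and $g=\norm{q}^{2r}$ a radial factor, and to apply the product rule for the Wirtinger form of $\Delta$. By $U(\HH^n)$-transitivity of the unit sphere and the equivariance of the zonal construction under unitary maps (which commute with $\Delta$), I would first reduce to the pole $q'=e_1$, where the linear zonal building blocks become the coordinate monomials $z_1,w_1,\overline{z_1},\overline{w_1}$, and hence
$$ [a,b,c,d,r]_{e_1} = fg, \qquad f:=z_1^a w_1^b \overline{z_1}^c \overline{w_1}^d, \quad g:=\norm{q}^{2r}. $$

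Next I would compute three ingredients. First, applying the explicit formula for $\Delta$ to $f$, only the $j=1$ terms contribute, yielding ${1\over 4}\Delta f = ac[a-1,b,c-1,d,0]+bd[a,b-1,c,d-1,0]$. Second, the standard radial Laplacian on $\RR^{4n}$ gives ${1\over 4}\Delta g = r(r+2n-1)\norm{q}^{2r-2}$. Third, the Wirtinger cross-terms
$$ \sum_{j}\Bigl({\partial f\over\partial z_j}{\partial g\over\partial\overline{z_j}}+{\partial f\over\partial\overline{z_j}}{\partial g\over\partial z_j}+{\partial f\over\partial w_j}{\partial g\over\partial\overline{w_j}}+{\partial f\over\partial\overline{w_j}}{\partial g\over\partial w_j}\Bigr) = r(a+b+c+d)\norm{q}^{2r-2}f, $$
since $\partial g/\partial z_j=r\norm{q}^{2r-2}\overline{z_j}$ etc., and only the $j=1$ derivatives of $f$ are nonzero. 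Combining these via the product rule for ${1\over 4}\Delta(fg)$, and using $k=a+b+c+d+2r$ to collapse $r(r+2n-1)+r(a+b+c+d)=r(k+2n-1-r)$, produces the claimed identity.

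There is no substantive obstacle: once the pole is reduced to $e_1$, the computation is entirely mechanical, and the closure of the zonal space under $\Delta$ (the first assertion of the lemma) is an immediate consequence of the explicit formula. The only care required is in tracking the cross-terms, which is where the factor $r(a+b+c+d)$ arises and combines with the radial contribution $r(r+2n-1)$ to produce the final coefficient $r(k+2n-1-r)$.
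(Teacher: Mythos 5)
Your proposal is correct and follows essentially the same route as the paper's proof: reduce to the pole $e_1$ via unitary equivariance, factor $[a,b,c,d,r]$ into the monomial part $z_1^aw_1^b\overline{z_1}^c\overline{w_1}^d$ and the radial part $\norm{q}^{2r}$, and apply the product rule for $\Delta$, with the radial and cross-term contributions combining via $a+b+c+d=k-2r$ to give the coefficient $r(k+2n-1-r)$. The only difference is organizational (you invoke the standard radial Laplacian formula and isolate the Wirtinger cross-terms, while the paper differentiates $fg^r$ directly), and your coefficients check out.
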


\begin{proof} Given %In view of 
the correspondence (\ref{e1toq'subs}) between zonal 
polynomials with poles $q'$ and $e_1$, and the fact $\Delta$ commutes
with unitary maps, it suffices to prove 
(\ref{Laplacianofzonal}) for $q'=e_1$, i.e.,
$$  [a,b,c,d,r]= f g^r, \qquad f:=z_1^a w_1^b \overline{z_1}^c \overline{w_1}^d,
\quad
g:=\sum_j (z_j\overline{z_j}+w_j\overline{w_j}). $$
Differentiation gives
%$$ {\partial\over\partial z_j}(fg^r)
%= {\partial f\over\partial z_j} g^r + f rg^{r-1}\overline{z_j} , $$
$$ {\partial^2\over\partial\overline{z_j} \partial z_j}(fg^r)
= {\partial^2 f\over\partial\overline{z_j}\partial z_j} g^r
 +{\partial f\over\partial z_j} rg^{r-1} z_j 
 +{\partial f\over\partial\overline{z_j}} rg^{r-1}\overline{z_j}
 +f r(r-1)g^{r-2} z_j\overline{z_j}
 +f rg^{r-1}. $$
For $j\ne1$, all the terms above are zero except for the last two. 
Summing these over all $j=1,\ldots,n$, 
together with the analogous terms for $w_j$, 
we get
$$ r(r-1) f g^{r-2}g+2nr f g^{r-1}
= r(r-1+2n) f g^{r-1} =r(r-1+2n)[a,b,c,d,r-1], $$
and the first three terms (for $j=1$) give
%\begin{align*}
%( ac[a-1,b,c-1,d,r]+bd[a,b-1,c,d-1,r]) 
%&+ r(a[a,b,c,d,r-1] +b[a,b,c,d,r-1]) \cr
%&+ r(c[a,b,c,d,r-1] +d[a,b,c,d,r-1]).
%\end{align*}
$$ ( ac[a-1,b,c-1,d,r]+bd[a,b-1,c,d-1,r]) 
+ r( (a+b)+ (c+d))[a,b,c,d,r-1]. $$
Since $a+b+c+d=k-2r$, adding these gives the result.
\end{proof}

This result is given in \cite{BN02} (Proposition 4.2), without proof.
It implies that:
\begin{itemize}
\item The Laplacian maps $\Hom_k^{(a,b)}(\Hd)$ to
$\Hom_{k-2}^{(a-1,b-1)}(\Hd)$.
\item The Laplacian maps $E_{w,w'}^{(k)}$ of (\ref{Ewwpkdefn}) 
to $E_{w,w'}^{(k-2)}$.
\end{itemize}

Combining Lemma \ref{RowMovementsLemma} and its counter part for $L$ gives the following.

\begin{lemma}
\label{RowandColMovementsLemma}
(Square array) For $0\le j\le{k\over2}$, $j\le a,b\le k-j$, we have
\begin{align*}
L^{a-j} R^{b-j} (\ker L^*\cap\ker R^*\cap H_k^{(j,j)})
& = (L^*)^{k-a-j} (R^*)^{k-b-j} (\ker L\cap \ker R\cap H_k^{(k-j,k-j)}) \cr
& = L^{a-j}(R^*)^{k-b-j}   (\ker L^*\cap\ker R \cap H_k^{(j,k-j)}) \cr
& = (L^*)^{k-a-j} R^{b-j} (\ker L\cap \ker R^*\cap H_k^{(k-j,j)}),
\end{align*}
with
$$ \dim(L^{a-j} R^{b-j} (\ker L^*\cap\ker R^*\cap H_k^{(j,j)})) 
= \dim (\ker L^*\cap\ker R^*\cap H_k^{(j,j)}). $$
\end{lemma}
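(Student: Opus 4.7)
The plan is to combine the row-movement identity (\ref{RtoRstargeneral}) of Lemma \ref{RowMovementsLemma}, which handles the $R, R^*$-direction, with its $L, L^*$-analogue (available from the parallel development for left multiplication), exploiting that $L, L^*, R, R^*$ pairwise commute (Lemma \ref{LRL*R*commute}).

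First, I would check that each of $\ker L, \ker L^*$ is invariant under both $R$ and $R^*$ (and symmetrically $\ker R, \ker R^*$ are invariant under $L, L^*$): for instance, $L^* f = 0$ gives $L^*(Rf) = R(L^* f) = 0$. Consequently, every step in the proof of Lemma \ref{RowMovementsLemma}, which uses only the $R$--$R^*$ commutation identities (Lemma \ref{R*Rbeta}) and the decomposition (\ref{HomHkerRandR*}), runs verbatim inside any $R, R^*$-invariant subspace. Applying it inside $\ker L^* \cap \Hom_H(k-j, j)$, and taking $a = b$ in (\ref{RtoRstargeneral}), I obtain
\begin{equation}
\label{planRrow}
R^{b-j}(\ker L^* \cap \ker R^* \cap H_k^{(j,j)})
= (R^*)^{k-b-j}(\ker L^* \cap \ker R \cap H_k^{(j,k-j)}).
\end{equation}
The $L, L^*$-analogue, run inside the $L, L^*$-invariant subspace $\ker R^* \cap K(k-j, j)$, yields
\begin{equation}
\label{planLcol}
L^{a-j}(\ker L^* \cap \ker R^* \cap H_k^{(j,j)})
= (L^*)^{k-a-j}(\ker L \cap \ker R^* \cap H_k^{(k-j,j)}),
\end{equation}
together with the analogous identities starting from the other two ``corner'' subspaces.

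Next, I would assemble the four equalities by applying the commuting operators $L^{a-j}$ and $R^{b-j}$ to (\ref{planRrow}) and (\ref{planLcol}). Applying $L^{a-j}$ to (\ref{planRrow}) and commuting it past $(R^*)^{k-b-j}$ gives the third equality; applying $R^{b-j}$ to (\ref{planLcol}) gives the fourth. For the first equality, I would chain the column-movement $L^{a-j}(\ker L^* \cap \ker R \cap H_k^{(j,k-j)}) = (L^*)^{k-a-j}(\ker L \cap \ker R \cap H_k^{(k-j,k-j)})$ (obtained by running the $L$-analogue inside $\ker R \cap K(k-j, j)$) with (\ref{planRrow}), producing
$$L^{a-j} R^{b-j} V_{00} = L^{a-j} (R^*)^{k-b-j} V_{01} = (R^*)^{k-b-j} (L^*)^{k-a-j} V_{11},$$
where $V_{00}, V_{01}, V_{11}$ denote the three corner subspaces appearing.

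For the dimension identity, I would apply Lemma \ref{RRstarcancelcor} with $\alpha = \beta = b-j$ and its internal parameters $(a, b) = (k-j, j)$: the hypothesis $\beta \le a - b$ becomes $b - j \le k - 2j$, i.e., $b \le k - j$, which is given, and the conclusion forces $(R^*)^{b-j} R^{b-j}$ to act as a nonzero scalar on $\ker R^* \cap H(k-j,j) \supset V_{00}$. Hence $R^{b-j}$ is injective on $V_{00}$, and the $L, L^*$-analogue gives injectivity of $L^{a-j}$ on $R^{b-j} V_{00}$. The main obstacle I anticipate is verifying that the proof of Lemma \ref{RowMovementsLemma} genuinely ports to $R, R^*$-invariant subspaces; this reduces to checking that (\ref{HomHkerRandR*}) restricts cleanly to $\ker L^*$, which follows from the injectivity of $R$ on $\Hom_H(k-j+1, j-1)$ (Lemma \ref{2orthoexps}(i)) via a short diagram chase showing any $g$ with $Rg \in \ker L^*$ must itself satisfy $L^* g = 0$.
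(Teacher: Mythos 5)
Your proposal is correct and takes essentially the approach the paper intends: the paper offers no written proof of Lemma \ref{RowandColMovementsLemma}, only the remark that it follows by combining Lemma \ref{RowMovementsLemma} with its counterpart for $L$, and your argument (running the row-movement identities inside the $R,R^*$-invariant subspace $\ker L^*\cap K(k-j,j)$, the column-movement identities inside $\ker R^*\cap H(k-j,j)$ and $\ker R\cap H(j,k-j)$, and chaining via the commutativity of $L,L^*$ with $R,R^*$) supplies exactly the details that remark presupposes. Your injectivity argument via Lemma \ref{RRstarcancelcor} with $\ga=\gb$ for the dimension count is also the natural one, and the worry you flag about porting (\ref{HomHkerRandR*}) to $\ker L^*$ is in fact harmless, since the set equalities of Lemma \ref{RowMovementsLemma} rest only on the pointwise scalar identities of Lemma \ref{RRstarcancelcor}, which restrict to any $R,R^*$-invariant subspace.
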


%\vfil\eject

%\bibliographystyle{plain}
\bibliographystyle{alpha}
\bibliography{references}
%\bibliography{allreferences}
\nocite{*}

\vfil\eject
%\input ansatz

%\section{Old calculations}
%\input oldcalculations

\end{document}